\theoremstyle{plain}
\newtheorem{lemma}{Lemma}[section]
  \newtheorem{proposition}[lemma]{Proposition}
  \newtheorem{corollary}[lemma]{Corollary}
  \newtheorem{theorem}[lemma]{Theorem}
  \newtheorem*{theorem*}{Theorem}
  \newtheorem{observation}[lemma]{Observation}
  \newtheorem*{fact*}{Fact}
  \newtheorem*{claim*}{Claim}
  \newtheorem{claim}[lemma]{Claim}
  \newtheorem{slogan}[lemma]{Slogan}
\theoremstyle{definition}
  \newtheorem{definition}[lemma]{Definition}
  \newtheorem{remark}[lemma]{Remark}
  \newtheorem{example}[lemma]{Example}
\theoremstyle{remark}
  \newtheorem{case}{Case}
\newsavebox{\@brx}
\newcommand{\llangle}[1][]{\savebox{\@brx}{\(\m@th{#1\langle}\)}%
  \mathopen{\copy\@brx\kern-0.5\wd\@brx\usebox{\@brx}}}
\newcommand{\rrangle}[1][]{\savebox{\@brx}{\(\m@th{#1\rangle}\)}%
  \mathclose{\copy\@brx\kern-0.5\wd\@brx\usebox{\@brx}}}
\newcommand{\wt}{\widetilde}
\title{The structure theory of Nilspaces I}
\author{Yonatan Gutman, Freddie Manners and P\'{e}ter P. Varj\'{u}}
\address{Yonatan Gutman, Institute of Mathematics, Polish Academy of Sciences,
ul. \'{S}niadeckich~8, 00-656 Warszawa, Poland.}
\email{y.gutman@impan.pl}
\address{Freddie Manners, Mathematical Institute, Radcliffe Observatory Quarter, Woodstock Road, Oxford OX2 6GG}
\email{Frederick.Manners@maths.ox.ac.uk}
\address{P\'{e}ter P. Varj\'{u}, Centre for Mathematical Sciences,
Wilberforce Road, Cambridge CB3 0WA,
UK}
\email{pv270@dpmms.cam.ac.uk}
\thanks{YG was partially supported by the ERC Grant \emph{Approximate Algebraic Structures and Applications} and the NCN (National Science Center, Poland) grant 2016/22/E/ST1/00448. PPV was supported by the Royal Society.}
\keywords{Gowers norms, higher order Fourier analysis, cubespace, nilspace, nilmanifold, Lie groups, fibration}
\subjclass[2010]{Primary 11B30; Secondary 37B05, 54H20.}
\date{\today}
\begin{document}

\newcommand*{\singlesquare}[4]{
  \draw[thin,black] (0,0) node[below left] {#1} -- (1, 0) node[below right] {#2} -- (1, 1) node[above right] {#4} -- (0, 1) node[above left] {#3} -- (0,0);
}

\newcommand*{\doublesquare}[7]{
  \begin{scope}[xscale=#7]
    \draw[thin,black] (0,0) node[below left] {#4} -- (1, 0) node[below] {#5} -- (2, 0) node[below right] {#6} -- (2, 1) node[above right] {#3} -- (1, 1) node[above] {#2} -- (0, 1) node[above left] {#1} -- (0,0);
    \draw[thin,black] (1, 0) -- (1, 1);
  \end{scope}
}

\newcommand*{\threecube}[9]{
  \begin{scope}[x={(#9, 0)}, y={(0, 1)}, z={(0.352, 0.317)}, scale=2]
    \draw (0,0,0) node[below left] {#1} -- (0,0,1) node[below right] {#3} -- (0,1,1) node[above] {#7} -- (0,1,0) node[above left] {#5} -- cycle;
    \draw (0,0,0) -- (1,0,0) node[below right] {#2} -- (1,0,1) node[right] {#4} -- (0,0,1) -- cycle;
    \draw (1,1,1) node[above right] {#8} -- (0,1,1) -- (0,0,1) -- (1,0,1) -- cycle;

    \draw (0,0,0) -- (1,0,0) -- (1,1,0) node [above left] {#6} -- (0,1,0) -- cycle;
    \draw (1,1,1) -- (1,1,0) -- (1,0,0) -- (1,0,1) -- cycle;
    \draw (1,1,1) -- (0,1,1) -- (0,1,0) -- (1,1,0) -- cycle;
  \end{scope}
}

\newcommand*{\inlinetikz}[1]{
  \begin{figure}[H]
    \centering
    \begin{tikzpicture}
      #1
    \end{tikzpicture}
  \end{figure} \noindent
}
\usetikzlibrary{patterns}

\newcommand{\eps}[0]{\varepsilon}

\newcommand{\AAA}[0]{\mathbb{A}}
\newcommand{\CC}[0]{\mathbb{C}}
\newcommand{\EE}[0]{\mathbb{E}}
\newcommand{\FF}[0]{\mathbb{F}}
\newcommand{\NN}[0]{\mathbb{N}}
\newcommand{\PP}[0]{\mathbb{P}}
\newcommand{\QQ}[0]{\mathbb{Q}}
\newcommand{\RR}[0]{\mathbb{R}}
\newcommand{\TT}[0]{\mathbb{T}}
\newcommand{\ZZ}[0]{\mathbb{Z}}

\newcommand{\cA}[0]{\mathcal{A}}
\newcommand{\cB}[0]{\mathcal{B}}
\newcommand{\cC}[0]{\mathcal{C}}
\newcommand{\cD}[0]{\mathcal{D}}
\newcommand{\cE}[0]{\mathcal{E}}
\newcommand{\cF}[0]{\mathcal{F}}
\newcommand{\cH}[0]{\mathcal{H}}
\newcommand{\cG}[0]{\mathcal{G}}
\newcommand{\cK}[0]{\mathcal{K}}
\newcommand{\cM}[0]{\mathcal{M}}
\newcommand{\cN}[0]{\mathcal{N}}
\newcommand{\cP}[0]{\mathcal{P}}
\newcommand{\cR}[0]{\mathcal{S}}
\newcommand{\cS}[0]{\mathcal{S}}
\newcommand{\cT}[0]{\mathcal{S}}
\newcommand{\cU}[0]{\mathcal{U}}
\newcommand{\cW}[0]{\mathcal{W}}
\newcommand{\cX}[0]{\mathcal{X}}
\newcommand{\cY}[0]{\mathcal{Y}}
\newcommand{\cZ}[0]{\mathcal{Z}}

\newcommand{\fg}[0]{\mathfrak{g}}
\newcommand{\fk}[0]{\mathfrak{k}}
\newcommand{\fZ}[0]{\mathfrak{Z}}

\newcommand{\bmu}[0]{\boldsymbol\mu}

\newcommand{\AUT}[0]{\mathbf{Aut}}
\newcommand{\Aut}[0]{\operatorname{Aut}}
\newcommand{\Frob}[0]{\operatorname{Frob}}
\newcommand{\GI}[0]{\operatorname{GI}}
\newcommand{\HK}[0]{\operatorname{HK}}
\newcommand{\HOM}[0]{\mathbf{Hom}}
\newcommand{\Hom}[0]{\operatorname{Hom}}
\newcommand{\Ind}[0]{\operatorname{Ind}}
\newcommand{\Lip}[0]{\operatorname{Lip}}
\newcommand{\LHS}[0]{\operatorname{LHS}}
\newcommand{\RHS}[0]{\operatorname{RHS}}
\newcommand{\Sub}[0]{\operatorname{Sub}}
\newcommand{\id}[0]{\operatorname{id}}
\newcommand{\image}[0]{\operatorname{Im}}
\newcommand{\poly}[0]{\operatorname{poly}}
\newcommand{\trace}[0]{\operatorname{Tr}}
\newcommand{\sig}[0]{\ensuremath{\tilde{\cS}}}
\newcommand{\psig}[0]{\ensuremath{\cP\tilde{\cS}}}
\newcommand{\metap}[0]{\operatorname{Mp}}
\newcommand{\symp}[0]{\operatorname{Sp}}
\newcommand{\dist}[0]{\operatorname{dist}}
\newcommand{\stab}[0]{\operatorname{Stab}}
\newcommand{\HCF}[0]{\operatorname{hcf}}
\newcommand{\LCM}[0]{\operatorname{lcm}}
\newcommand{\SL}[0]{\operatorname{SL}}
\newcommand{\GL}[0]{\operatorname{GL}}
\newcommand{\rk}[0]{\operatorname{rk}}
\newcommand{\sgn}[0]{\operatorname{sgn}}
\newcommand{\uag}[0]{\operatorname{UAG}}
\newcommand{\freiman}[0]{Fre\u{\i}man}
\newcommand{\tf}[0]{\operatorname{tf}}
\newcommand{\codim}[0]{\operatorname{codim}}

\newcommand{\Conv}[0]{\mathop{\scalebox{1.5}{\raisebox{-0.2ex}{$\ast$}}}}
\newcommand{\bs}[0]{\backslash}

\newcommand{\heis}[3]{ \left(\begin{smallmatrix} 1 & \hfill #1 & \hfill #3 \\ 0 & \hfill 1 & \hfill #2 \\ 0 & \hfill 0 & \hfill 1 \end{smallmatrix}\right)  }

\newcommand{\uppar}[1]{\textup{(}#1\textup{)}}

\newcommand{\pol}[0]{\l}

\begin{abstract}
This paper forms the first part of a series by the authors \cites{GMV2,GMV3} concerning the structure theory of \emph{nilspaces} of Antol\'\i n Camarena and Szegedy. A nilspace is a compact space $X$ together with closed collections of \emph{cubes} $C^n(X)\subseteq X^{2^n}$, $n=1,2,\ldots$ satisfying some natural axioms. Antol\'\i n Camarena and Szegedy proved that from these axioms it follows that (certain) nilspaces are isomorphic (in a strong sense) to an inverse limit of nilmanifolds. The aim of our project is to provide a new self-contained treatment of this theory and give new applications to topological dynamics.

This paper provides an introduction to the project from the point of view of applications to
higher order Fourier analysis. We define and explain the basic definitions and constructions related to cubespaces and nilspaces and develop the \emph{weak structure theory}, which is the first stage of the proof of the main structure theorem for nilspaces. Vaguely speaking, this asserts that a nilspace can be built as a finite tower of extensions where each of the successive fibers is a compact abelian group.

We also make some modest innovations and extensions to this theory.  In particular, we consider a class of maps that we term \emph{fibrations}, which are essentially equivalent to what are termed \emph{fiber-surjective morphisms} by Anatol\'{\i}n Camarena and Szegedy; and we formulate and prove a relative analogue of the weak structure theory alluded to above for these maps.  These results find applications elsewhere in the project.
\end{abstract}

\maketitle{}

\tableofcontents{}

\section{An introduction to the project}

This is the first in a series of three papers by the authors concerned with the structure theory of \emph{cubespaces}, the others being \cite{GMV2} and \cite{GMV3}.  Informally, a cubespace is a compact metric space $X$, together with some notion of when a collection of $2^k$ points of $X$ form a ``$k$-cube'', subject to certain further axioms.

The study of cubespaces as axiomatic objects is becoming established as a major theme in the nascent area of higher order Fourier analysis.  This programme has its origins in work of Host and Kra \cite{HK08}, where these objects appeared under the name of  ``parallelepiped structures''.  The study of these objects was furthered by Antol\'\i n Camarena and Szegedy \cite{CS12}, who in the same work formulated a strong structure theorem for cubespaces, subject to certain further hypotheses.  The gist of the structure theorem is that, subject to these extra assumptions, all cubespaces arise in some sense from \emph{nilmanifolds} $X = G/\Gamma$; i.e.~they come from compact homogeneous spaces of nilpotent Lie groups.

The papers of Candela \cites{Can1,Can2} expand on \cite{CS12}, providing more detailed proofs. He also includes
several additional results implicit in \cite{CS12}, particularly about continuous systems of measures.

This structure theory has applications in two broad areas: additive combinatorics, and in particular the inverse theorem for the Gowers norms; and topological dynamics and ergodic theory.  In this paper, we will approach this project from the point of view of someone interested primarily in understanding the former, and in particular Szegedy's proof \cite{S12} of the inverse theorem for the Gowers norms, which relies heavily on this structural result.  The third paper in the series \cite{GMV3} will contain an introduction to the project focussed instead on applications to topological dynamics.  Although the results proved in each paper are strongly relevant to the other, a reader more interested in the dynamical perspective might prefer to start there and refer to this paper only later.

The main purpose of our entire project is to provide a self-contained proof of the main structural result of \cite{CS12}.  In many places our approach will follow that of \cite{CS12}, or of previous work \cites{HK05,HM07,HK08,HKM10,GT10}, very closely.  At other times, we give notably different arguments: sometimes because they are arguably simpler; sometimes to avoid certain technical difficulties (although perhaps at the expense of introducing different ones); and sometimes to obtain sharper conclusions.  Furthermore, we do obtain new results in particular in the dynamical setting, and some of our proofs are optimized so as to prove these concurrently.

Our primary goal, however, is to obtain a fuller understanding of cubespaces and related structures.  In our view, this understanding will continue to find new relevance as the field of higher order Fourier analysis matures.

\subsection{Obstructions to Gowers uniformity}

It is not immediately clear why the structure theory of cubespaces should be relevant to proving an inverse theorem for the Gowers norms: indeed, the deduction of the latter from the former is by no means straightforward \cite{S12}.  To provide some motivation, we will instead sketch a proof of a kind of converse.  That is, we will argue that cubespaces (subject to certain additional hypotheses) are obstructions to Gowers uniformity, and that therefore the inverse theorem itself implies that cubespaces are somehow related to nilmanifolds.

We will not recall in full all the relevant definitions (of the uniformity norms, nilmanifolds, polynomial maps, nilsequences etc.), referring the reader to \cites{GT10,ben-book} but will informally sketch the set-up to motivate our discussion.  For notational simplicity we will focus only on the $U^3$ norm, although these remarks apply more generally.
\begin{enumerate}
  \item Given a function $f \colon \ZZ/N\ZZ \to \CC$ for $N$ a prime (say), the uniformity norm $\|f\|_{U^3}$ is defined in terms of an average over \emph{cube} or \emph{parallelepiped} configurations in $\ZZ/N\ZZ$, e.g.
    \[
      \|f\|_{U^3}^8 = \EE_{c \in C^3(\ZZ/N\ZZ)} f(c_{000}) \overline{f(c_{001}) f(c_{010})} f(c_{011}) \overline{f(c_{100})} f(c_{101}) f(c_{110}) \overline{f(c_{111})}
    \]
    where $C^3(\ZZ/N\ZZ) \subseteq (\ZZ/N\ZZ)^8$ consists of all tuples
    \[
      (c_\omega)_{\omega \in \{0,1\}^3} = x + \omega_1 h_1 + \omega_2 h_2 + \omega_3 h_3
    \]
    for $x, h_1, h_2, h_3 \in \ZZ/N\ZZ$.  It is informative to think of these tuples as elements written on the vertices of a $3$-dimensional cube, as shown.

    \inlinetikz{\threecube{$x$}{$x+h_1$}{$x+h_2$}{$x+h_1+h_2$}{$x+h_3$}{$x+h_1+h_3$}{$x+h_2+h_3$}{$x+h_1+h_2+h_3$}{1.6}}

  \item Given a nilmanifold $G/\Gamma$ (with suitable additional structure) there is also a notion of cubes on $G/\Gamma$, given by a construction due to Host and Kra \cites{HK05,HK08}.  Specifically, suppose $G$ is a $2$-step nilpotent Lie group and $\Gamma$ a discrete co-compact subgroup; then there is a closed subset $C^3(G/\Gamma) \subseteq (G/\Gamma)^8$ somewhat analogous to the parallelepipeds in an abelian group.
  \item There is a plentiful supply of maps $p \colon \ZZ/N\ZZ \to G/\Gamma$ which send cubes to cubes; that is, $p(c) \in C^3(G/\Gamma)$ for any $c \in C^3(\ZZ/N\ZZ)$ (with $p$ applied pointwise).
  \item The cubes on $G/\Gamma$ satisfy a \emph{corner constraint}: given $c \in C^3(G/\Gamma)$, if we know $c_{000}, \dots, c_{110}$ then the last vertex $c_{111}$ is uniquely determined by the others.
  \item By a \emph{nilsequence} on $\ZZ/N\ZZ$ we mean a function of the form $\phi = F \circ p$ where $p$ is as above and $F \colon G/\Gamma \to \CC$ is Lipschitz.
\end{enumerate}

The inverse theorem states roughly that (for very large $N$) if $f \colon \ZZ/N\ZZ \to \CC$, $|f| \le 1$ has $\|f\|_{U^3} \ge \delta$ then $|\langle f, \phi \rangle| \gtrsim_\delta 1$ for some nilsequence $\phi$ whose ``complexity'' is bounded in terms of $\delta$. One can say that nilsequences are the only ``obstructions to Gowers uniformity'': the only reason for a function to have large Gowers norm is if it correlates with a nilsequence.

All known proofs of this statement are significantly hard.  By contrast, the converse statement -- if a function correlates with a nilsequence then it has large Gowers norm -- is quite straightforward.  We will sketch a result along these lines, following the argument from \cite{green-tao-u3}*{Proposition 12.6}. Although the details of the proof are not logically required in what follows, it is useful to record them to motivate Observation \ref{obs:obstruct} below.

\begin{claim*}
  Suppose $\phi = F \circ p$ is a nilsequence in the sense of (v) with $\|F\|_{\infty} \le 1$. Let $f \colon \ZZ/N\ZZ \to \CC$ be such that $|f| \le 1$ and $|\langle f, \phi\rangle| \ge \delta$.  Then $\|f\|_{U^3} = \Omega_{F, \delta}(1)$.
\end{claim*}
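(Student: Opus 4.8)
The plan is to show that $|\langle f,\phi\rangle|$ is controlled by $\|f\|_{U^3}$, up to a constant and an additive error depending only on $F$; since $\delta\le|\langle f,\phi\rangle|$, this gives the claim. The engine is the Gowers--Cauchy--Schwarz inequality: writing $\mathcal C$ for complex conjugation and $x+\omega\cdot h=x+\omega_1h_1+\omega_2h_2+\omega_3h_3$,
\[
  \Bigl|\EE_{x,h_1,h_2,h_3}\,\prod_{\omega\in\{0,1\}^3}\mathcal C^{|\omega|}g_\omega(x+\omega\cdot h)\Bigr|\;\le\;\prod_{\omega\in\{0,1\}^3}\|g_\omega\|_{U^3},
\]
valid for any tuple of bounded functions $(g_\omega)$ and reducing to $\|f\|_{U^3}^8$ when all $g_\omega=f$. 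The work is to put $\langle f,\phi\rangle$ in the shape of the left-hand side with $g_{111}=\overline f$ and every other $g_\omega$ built out of $\phi$ alone.

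First I would treat the model case where $F$ is a \emph{character} of the nilmanifold: a unit-modulus function satisfying the multiplicative cube identity $\prod_\omega F(c_\omega)^{(-1)^{|\omega|}}=1$ for every $c\in C^3(G/\Gamma)$. This is exactly where the corner constraint (iv) is used --- it is what forces such $F$ to be plentiful, since e.g.\ the ``vertical'' characters see only the top-step coordinate, over which (iv) leaves no freedom, and so obey the identity automatically. As $p$ sends cubes to cubes by (iii), $\phi=F\circ p$ satisfies the same identity along every cube of $\ZZ/N\ZZ$; isolating the top vertex and using $|\phi|\equiv1$ gives $\overline{\phi(x+h_1+h_2+h_3)}=\prod_{\omega\ne111}\phi(x+\omega\cdot h)^{(-1)^{|\omega|+1}}$. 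Averaging over $h_1,h_2,h_3\in\ZZ/N\ZZ$ for each fixed top vertex $n$, substituting into $\langle f,\phi\rangle=\EE_nf(n)\overline{\phi(n)}$, and reparametrising by $x=n-h_1-h_2-h_3$ yields
\[
  \langle f,\phi\rangle\;=\;\EE_{x,h_1,h_2,h_3}\,f(x+h_1+h_2+h_3)\prod_{\omega\ne111}\phi(x+\omega\cdot h)^{(-1)^{|\omega|+1}}.
\]
Matching the conjugation pattern identifies the right-hand side as the Gowers inner product of the tuple $g_{111}=\overline f$, $g_\omega=\overline\phi$ $(\omega\ne111)$, whence Gowers--Cauchy--Schwarz together with $\|\cdot\|_{U^3}\le\|\cdot\|_\infty$ gives $|\langle f,\phi\rangle|\le\|f\|_{U^3}\|\phi\|_{U^3}^7\le\|f\|_{U^3}$, so $\|f\|_{U^3}\ge\delta$ here.

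For a general Lipschitz $F$ with $\|F\|_\infty\le1$ I would approximate. The characters of $G/\Gamma$ in the above sense are closed under multiplication and conjugation and separate points (abelianisation characters separate the $[G,G]$-orbits, vertical characters separate within an orbit), so their span is dense in $C(G/\Gamma)$ by Stone--Weierstrass; hence for any $\eps>0$ we may write $F=\sum_jc_jF_j+E$ with each $F_j$ such a character, $\|E\|_\infty\le\eps$, and $\sum_j|c_j|\le M(F,\eps)$. Applying the model case to each $F_j\circ p$ and estimating $E\circ p$ trivially gives $\delta\le|\langle f,\phi\rangle|\le M(F,\eps)\|f\|_{U^3}+\eps$, and taking $\eps=\delta/2$ yields $\|f\|_{U^3}\ge\delta/\bigl(2M(F,\delta/2)\bigr)=\Omega_{F,\delta}(1)$.

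The index-juggling behind the two displays and the bound $\|\cdot\|_{U^3}\le\|\cdot\|_\infty$ are routine. The substantive input --- and the step I expect to be the main obstacle --- is the structural fact that the Host--Kra cubes on $G/\Gamma$ carry a point-separating family of cube-multiplicative characters; establishing this means unpacking the construction of $C^\bullet(G/\Gamma)$ and the corner constraint, and in a write-up I would import it from \cite{green-tao-u3}*{Proposition 12.6} rather than reprove it. It is worth stressing that the phase nature of the $F_j$ is genuinely needed: for generic bounded $f$ one can build a merely \emph{bounded} function $G$ of the seven non-top vertices with $\EE_{x,h}f(v_{111})G=\Omega(1)$ while $\|f\|_{U^3}=o(1)$, so no argument using only boundedness of the ``corner function'' of $\phi$ can work.
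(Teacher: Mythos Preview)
Your model case (cube-multiplicative $F$) is fine, and the overall shape --- reduce to a Gowers inner product of eight one-variable functions and apply Gowers--Cauchy--Schwarz --- is exactly right.  The gap is in the approximation step: the claim that unit-modulus functions on $G/\Gamma$ satisfying $\prod_\omega F(c_\omega)^{(-1)^{|\omega|}}=1$ on all $3$-cubes separate points is false in general.  Your justification that ``vertical characters see only the top-step coordinate, over which (iv) leaves no freedom, and so obey the identity automatically'' confuses \emph{determinism} of $c_{111}$ with \emph{additivity} of its top-step coordinate.  On the Heisenberg nilmanifold $\cH/\Gamma$, the paper's Example~\ref{ex-D-nilmanifold-cubes} computes that for a $3$-cube the alternating sum $\sum_\omega (-1)^{|\omega|} z_\omega$ is a nontrivial function of the horizontal coordinates $(x_\omega,y_\omega)$, not zero; hence the vertical character $e(z)$ is \emph{not} cube-multiplicative.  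In fact one can check that any continuous cube-multiplicative $F\colon \cH/\Gamma\to S^1$ must restrict to a character on each vertical fiber and then, by the cocycle identity, factor through the horizontal torus; so the algebra you feed into Stone--Weierstrass consists only of the horizontal characters $e(mx+ny)$, which do not separate points.  The density claim therefore fails, and with it the passage from the model case to general $F$.

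The paper repairs this by a different decomposition.  Using (iv) it writes $C^3(G/\Gamma)=\{(\tau(y),y):y\in Y\}$ for a continuous $\tau$ on a closed $Y\subseteq (G/\Gamma)^7$, extends $F\circ\tau$ to all of $(G/\Gamma)^7$ by Tietze, and then approximates \emph{that} function in $L^\infty$ by a finite sum of tensor products $\sum_i R_1^{(i)}(x_1)\cdots R_7^{(i)}(x_7)$.  This gives, for every $h$, the identity $\phi(x)=\sum_i\prod_j R_j^{(i)}(p(x+\cdot))+o_{k\to\infty}(1)$; pigeonholing on $i$ produces a genuine eight-function Gowers inner product that is $\gg_{F,\delta}1$, and Gowers--Cauchy--Schwarz finishes as in your argument.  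Note this also answers your closing worry: the paper never bounds $\EE_{x,h} f(v_{111})\,G(v_{\ne 111})$ for an undecomposed seven-variable $G$ --- it first splits $G$ into rank-one pieces, each of which \emph{is} controlled by $\|f\|_{U^3}$.
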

The key point is that the lower bound on $\|f\|_{U^3}$ depends only on the choice of $F$ (and so implicitly of $G/\Gamma$) and on $\delta$; \emph{not} on $N$ or $p$.
\begin{proof}[Proof sketch]
  By property (iv), there is a closed subset $Y \subseteq (G/\Gamma)^7$ and a function $\tau \colon Y \to (G/\Gamma)$ such that
  \[
    C^3(G/\Gamma) = \{( \tau(y), y) \colon y \in Y \} \ .
  \]
  Hence we get a continuous function $F \circ \tau$ on $Y$.  By Tietze's extension theorem, we can extend this to a bounded continuous function $H$ on $(G/\Gamma)^7$.  Any continuous function on a product space can be approximated (up to a small error in $L^\infty$) by a finite sum of products of functions on the factors: that is, we can decompose
  \[
    H(x_1,\dots,x_7) = \sum_{i=1}^k R_1^{(i)}(x_1) \dots R_7^{(i)}(x_7) + H_{\text{err}}
  \]
  for some bounded continuous functions $R_j^{(i)} \colon G/\Gamma \to \CC$, and some bounded continuous $H_{\text{err}} \colon (G/\Gamma)^7 \to \CC$ such that $\|H_{\text{err}}\|_\infty = o_{F;k \to \infty}(1)$.

  Now, for any $x, h_1, h_2, h_3$ in $\ZZ/N\ZZ$ we have that $(p(x), p(x+h_1), p(x+h_2), p(x+h_1+h_2), \dots)$ is in $C^3(G/\Gamma)$, and so
  \begin{align*}
    \phi(x) &= F(p(x)) \\
            &= F(\tau(p(x+h_1), p(x+h_2), p(x+h_1+h_2), \dots, p(x+h_1+h_2+h_3))) \\
            &= \sum_{i=1}^k R_1^{(i)}(p(x+h_1)) R_2^{(i)}(p(x+h_2)) \dots R_7^{(i)}(p(x+h_1+h_2+h_3)) + o_{k\to\infty}(1) \ .
  \end{align*}
  Since $|\langle f, \phi \rangle|$ is bounded away from zero, we deduce that
  \[
    \Bigl| \EE_{x,h_1,h_2,h_3} f(x) \overline{R_1^{(i)}(p(x+h_1)) R_2^{(i)}(p(x+h_2)) \dots R_7^{(i)}(p(x+h_1+h_2+h_3))} \Bigr |
  \]
  is bounded away from zero for some $i \in \{1,\dots,k\}$ (after choosing $k$ appropriately in terms on $F$ and $\delta$).  But this expression is a ``Gowers inner product'' of eight functions, and by the Gowers--Cauchy--Schwarz inequality (essentially multiple applications of Cauchy--Schwarz), this quantity is bounded above by
  \[
    \|f\|_{U^3} \left\|R_1^{(i)} \circ p\right\|_{U^3} \dots \left\|R_7^{(i)} \circ p\right\|_{U^3}
  \]
  and noting that $\left\|R_j^{(i)} \circ p \right\|_{U^3} \le \left\|R_j^{(i)}\right\|_\infty$ which is bounded, we get a lower bound on $\|f\|_{U^3}$ as required.
\end{proof}

The key point is that the only properties of nilmanifolds, nilsequences etc.~that we have used are those described in (ii), (iii) and (iv) above.  So we have in fact shown:

\begin{observation}
  \label{obs:obstruct}
  If $X$ is \emph{any} compact metric space equipped with some suitable notion of ``cubes'' as in (ii), having an abundance of cube-preserving maps $\ZZ/N\ZZ \to X$ as in (iii), and satisfying a corner constraint as in (iv), then functions of the form $F \circ p$ as in (v) obstruct Gowers uniformity on $\ZZ/N\ZZ$ in the sense of the above claim.
\end{observation}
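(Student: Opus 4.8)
The plan is to observe that the proof sketch given above for the Claim never used any property of the nilmanifold $G/\Gamma$, of the maps $p$, or of the cubes $C^3(G/\Gamma)$ beyond the three properties abstracted in (ii), (iii) and (iv), and then simply to re-run that argument in the general setting. First I would invoke the corner constraint (iv): since $C^3(X)$ is a closed subset of $X^8$ in which the final coordinate is a function of the first seven, we may write $C^3(X) = \{(\tau(y),y) \colon y \in Y\}$ for a closed set $Y \subseteq X^7$ and a map $\tau \colon Y \to X$, which is continuous by compactness of $X$ and closedness of $C^3(X)$. Then $F \circ \tau$ is a bounded continuous function on $Y$; extend it by Tietze to a bounded continuous $H$ on all of $X^7$, and approximate $H$ in $L^\infty$, to within an error $o_{k\to\infty}(1)$ depending only on $F$, by a finite sum $\sum_{i=1}^k R_1^{(i)} \cdots R_7^{(i)}$ of tensor products of bounded continuous functions $R_j^{(i)} \colon X \to \CC$, via Stone--Weierstrass on $X^7$.

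Next, fix arbitrary $x,h_1,h_2,h_3 \in \ZZ/N\ZZ$. By (iii) the eight-tuple $(p(x+\omega_1 h_1 + \omega_2 h_2 + \omega_3 h_3))_\omega$ lies in $C^3(X)$, so its $000$-coordinate $p(x)$ is $\tau$ applied to the other seven coordinates, whence $\phi(x) = F(p(x))$ equals $H$ evaluated at those seven shifted values of $p$, and expanding $H$ via the tensor decomposition expresses $\phi(x)$, up to an $o_{k\to\infty}(1)$ error, as $\sum_i \prod_{\omega \ne 000} R^{(i)}_{j(\omega)}(p(x+\omega\cdot h))$. Averaging against $f$, using $|f| \le 1$ and $|\langle f,\phi\rangle| \ge \delta$, and choosing $k = k(F,\delta)$ so that the error contributes at most $\delta/2$, we obtain an index $i$ for which
\[
  \Bigl| \EE_{x,h_1,h_2,h_3}\, f(x) \prod_{\omega \ne 000} \overline{R^{(i)}_{j(\omega)}(p(x+\omega\cdot h))} \Bigr| \gtrsim \delta/k \ .
\]
The left-hand side is a Gowers inner product of eight functions, so the Gowers--Cauchy--Schwarz inequality bounds it above by $\|f\|_{U^3} \prod_{\omega\ne 000} \|R^{(i)}_{j(\omega)} \circ p\|_{U^3}$, and since $\|g \circ p\|_{U^3} \le \|g\|_\infty$, each factor after the first is at most $\|R^{(i)}_{j(\omega)}\|_\infty$, a constant depending only on $F$. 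Rearranging gives $\|f\|_{U^3} \gtrsim_{F,\delta} 1$, with a bound that, crucially, does not involve $N$ or $p$.

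The one point requiring care --- rather than a real obstacle --- is spelling out what ``a suitable notion of cubes'' should mean so that all three ingredients apply: we need $C^3(X) \subseteq X^8$ closed (so that $Y$ is closed and $\tau$ continuous, as Tietze requires); we need (iii) in the form that a \emph{single} fixed $p$ maps \emph{every} cube of $\ZZ/N\ZZ$ into $C^3(X)$, which is exactly what lets us substitute $\phi(x) = H(\dots) + o(1)$ for all $x,h_1,h_2,h_3$ at once; and we need the corner constraint (iv) in the strong form that $c_{111}$ is \emph{determined} by, not merely constrained by, the other seven vertices. Granting these, uniformity of the final bound in $N$ and $p$ is automatic: the decomposition of $H$, hence the value of $k$ and the bounds $\|R^{(i)}_{j(\omega)}\|_\infty$, depends only on $F$ --- equivalently, only on $X$ together with its cube structure --- and on $\delta$.
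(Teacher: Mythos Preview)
Your proposal is correct and is precisely the paper's approach: the paper's ``proof'' of this observation is simply the sentence preceding it, noting that the proof sketch of the Claim used only properties (ii), (iii), (iv), and you have faithfully re-run that argument in the abstract setting with some extra care about what hypotheses are actually needed. The only very minor wrinkle is that (iv) as stated pins down $c_{111}$ while the argument uses the $000$-corner, but this is handled by the evident symmetry of cubes (and the paper itself silently makes the same switch).
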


Let us refer to such a space informally for now as a ``nil-object'' (the formal notion of a \emph{nilspace} will be introduced later).  Then the above observation can be summarized as follows.

\begin{slogan}
  Any ``nil-object'' is an obstruction to Gowers uniformity.
\end{slogan}

But now, the inverse theorem for the $U^3$ norm tells us that any function $F \circ p$ coming from this construction must have something to do with a genuine nilsequence.  Given some technical hypotheses, one can deduce that any such nil-object $X$ must be very closely related to an actual nilmanifold $G/\Gamma$.

The conclusion of work of Szegedy \cite{S12} is that it is possible to go in the other direction.  He argues that all functions $f$ with $\|f\|_{U^3}$ somewhat large correlate with something of the form $F \circ p$ where $p \colon \ZZ/N\ZZ \to X$ and $F \colon X \to \CC$ is continuous, for \emph{some} space $X$ equipped with a notion of cubes, and some cube-preserving $p$, obeying some fairly reasonable additional axioms.  Moreover, this proof is essentially purely analytic in nature, making no attempt to say anything about the structure of $X$.

Assuming this, we conclude:

\begin{slogan}
  The class of all ``nil-objects'' corresponds \emph{precisely} to the obstructions to Gowers uniformity.
\end{slogan}

Hence, the inverse theorem for the Gowers norms is essentially \emph{equivalent} to classifying nil-objects $X$, showing in effect that they are all -- essentially -- nilmanifolds.  This structural result is the goal of \cite{CS12}.

\subsection{An outline of this paper}

The formal notion capturing properties (ii)-(iv) above and replacing the informal concept of a ``nil-object'', is what we term a \emph{nilspace}. A nilspace is a compact topological space satisfying certain ``nilspace axioms'', which are both very abstract (e.g.~they do not explicitly mention any group structure) but simultaneously strong enough for a strong structural result to hold.

The remaining tasks of this paper are as follows:

\begin{enumerate}
  \item to explain the nilspace axioms formally;
  \item to state a precise version of the structure theorem;
  \item to outline the stages in the proof of this theorem; and
  \item to prove some weaker structural results that constitute the first stage of the proof.
\end{enumerate}

The remaining parts of the proof of the structure theorem appear in the companion papers \cites{GMV2, GMV3}, and we will provide pointers to the relevant sections of these works in the outline.

Most of our discussion will be of an expository or heuristic nature.  The only parts that constitute steps in the rigorous proof of the structure theorem are Section \ref{sec:elementary-v2}, and in a way the definitions in Section \ref{sec:cubespace-defns}.  A reader already familiar with this whole approach and seeking only a complete proof could therefore read only these, together with the companion papers \cites{GMV2, GMV3}; needless to say we do not recommend this strategy.

Most results proved in this paper are due to Antol\'\i n Camarena and Szegedy \cite{CS12}, and the majority of the ideas we will discuss originate in \cite{HK08} or \cite{CS12}.
The primary exception is the ``relative'' version of this ``weak structure theory'' treated in Section \ref{sec:elementary-v2}.
This generalization is fairly mechanical when stated in the language of \emph{fibrations}, which are a class of maps between cubespaces satisfying certain additional hypotheses.  This notion is almost equivalent to that of a \emph{fiber-surjective morphism} appearing in \cite{CS12}; indeed, the latter is only defined for maps between two nilspaces, and in that case the definitions are equivalent.  Our reasons for introducing the former are twofold: the definition of a fibration makes sense in greater generality, which we do actually use; and in our view, the alternative definition makes the analogy between relative and non-relative versions more transparent.

Before embarking on (i), we recall a small amount of background about Host--Kra cube groups, which puts the abstract definitions in some context.  This is done in Section \ref{sec:hk}.

The definitions themselves are then expounded in Section \ref{sec:cubespace-defns}.

With these in place, we are in a position to state the full structural result in Section \ref{sec:structural-thm}.  Then, we give a very heuristic outline of the high-level stages of the proof.

This outline is expanded upon in Section \ref{sec:remaining-summary-v2}, which gives a detailed overview of each stage of the proof.

The final two sections, Section \ref{sec:weak-overview} and Section \ref{sec:elementary-v2}, are concerned with the first of these stages, which we have been calling the ``weak structure theory''.  Section \ref{sec:weak-overview} deals with the ``standard'' theory as it appears in \cite{CS12}.  In Section \ref{sec:elementary-v2}, we consider \emph{relative} versions of essentially all the elementary theory, which generalize the earlier results and which will be needed elsewhere in this project.

In the Appendix, we resume the discussion of Host--Kra cubes from Section \ref{sec:hk}, and make a detailed study of cubes on nilmanifolds in some example cases.

Finally, we draw the reader's attention to the two indices at the end of the paper, which can be used to locate the original definitions of the many terms and symbols introduced throughout.

\subsection{Acknowledgments}

First and foremost we owe gratitude to Bernard Host who introduced us to the subject and to Omar Antol\'\i n Camarena and Bal\'azs Szegedy
whose groundbreaking work \cite{CS12} was a constant inspiration
for us.

We would like to thank Emmanuel Breuillard, J\'er\^ome Buzzi,
Yves de Cornulier, Sylvain Crovisier, Eli Glasner, Ben Green, Bernard Host, Micha\pol{} Rams, Bal\'azs Szegedy, Anatoly Vershik and Benjamin
Weiss for helpful discussions. We are grateful to Pablo Candela and Bryna Kra for a careful reading of a preliminary version.

We are grateful to the referee for her or his careful reading of our paper and for her or his many
helpful comments, which greatly improved the presentation of the paper.
\section{Nilmanifolds and their Host--Kra cubes}
\label{sec:hk}

The key motivating examples of nilspaces (being in some sense the only examples) are nilmanifolds $G/\Gamma$ equipped with their Host--Kra cube structures. Certainly anything we define or prove about general nilspaces should hold true for these spaces, and they provide a good source of intuition to guide the definitions and arguments in the abstract setting.

With this in mind, we will now briefly recall the relevant constructions and a few properties.  A much more substantial exposition of this theory, including a number of examples explored in depth, is given in Appendix \ref{app:hk}; this also includes omitted proofs from this section.  The reader unfamiliar with this area might wish to read this exposition before continuing with the bulk of the paper.

The notions and ideas presented in this section originate from \cites{HK05,HK08, GT10}.

We first recall the notion of a filtration.

\begin{definition}
  Let $G$ be a topological group.  By a \emph{filtration of degree $s$}\index{filtration} on $G$, we mean a sequence $G = G_0 \supseteq G_1 \supseteq \dots \supseteq G_{s+1} = \{\id\}$ of closed subgroups, with the property that $[G_i, G_j] \subseteq G_{i+j}$ for all $i,j \ge 0$.  (Here $[\cdot,\cdot]$ denotes commutation, and we use the convention $G_i = \{\id\}$ for all $i \ge s+1$.)

The filtration is called {\em proper}\index{filtration!proper} if $G_0=G_1$.
\end{definition}

For most of the theory we will only consider proper filtrations, but we find it useful to permit $G_0\neq G_1$ in
the definition.
Note that  if $G$ admits a proper filtration of degree $s$, then $G$ is necessarily nilpotent, with nilpotency class at most $s$.

We write $G_\bullet$ in place of $G$ if we wish to emphasize that a group is equipped with a particular filtration.

The standard example of a filtration is the \emph{lower central series}\index{lower central series} filtration on a group, given by $G_0 = G_1 = G$ and $G_{i+1} = [G, G_i]$ for each $i \ge 1$ (for a proof that this is indeed a filtration see \cite{MKS66}*{Theorem 5.3}).  This is the minimal proper filtration on $G$, in the sense that every proper filtration contains it termwise.

The fundamental construction concerning filtered groups is that of the \emph{Host--Kra cube groups}.   These are designed to be the appropriate analogues of parallelepipeds in abelian groups, in the setting of general filtered groups $G_\bullet$.  To describe them, we will first set up some notation.

For a set $X$, we use the notation $X^{\{0,1\}^k}$ to mean the space of all functions $\{0,1\}^k \to X$.  Concretely, this is just $X^{2^k}$ but the reader should always imagine the elements written at the vertices of the discrete cube $\{0,1\}^k$, as in:

\inlinetikz{\threecube{$x(000)$}{$x(001)$}{$x(010)$}{$x(011)$}{$x(100)$}{$x(101)$}{$x(110)$}{$x(111)$}{1}}

We denote by $[k]$\index[nota]{$[k]$} the set $\{1,\ldots, k\}$.
We find it convenient to identify subsets of $[k]$
with vertices of the discrete cube $\{0,1\}^k$, mapping sets to their indicator functions.
In particular, we write $\omega_1\subseteq\omega_2$ for two vertices
if $\omega_1(j)\le\omega_2(j)$ for all $j\in [k]$.

By a \emph{face}\index{face} of the discrete cube $\{0,1\}^k$ we mean a sub-cube obtained by fixing some subset of the coordinates.  An \emph{upper face}\index{face!upper} is one obtained by fixing some subset of the coordinates to equal $1$; we can write this as $\{ \omega \in \{0,1\}^k \colon \omega \supseteq S \}$ for some $S \subseteq [k]$.  For instance, in
\inlinetikz{
  \begin{scope}[x={(1, 0)}, y={(0, 1)}, z={(0.352, 0.317)}, scale=3]
    \draw[fill=black!40] (0,0,0) -- (0,0,1) -- (0,1,1) -- (0,1,0) -- cycle;
    \draw[thin] (0,0,0) -- (1,0,0) -- (1,0,1) -- (0,0,1) -- cycle;
    \draw[thin] (1,1,1) -- (0,1,1) -- (0,0,1) -- (1,0,1) -- cycle;
    \draw[thin] (0,0,0) -- (1,0,0) -- (1,1,0) -- (0,1,0) -- cycle;
    \draw[thin] (1,1,1) -- (1,1,0) -- (1,0,0) -- (1,0,1) -- cycle;
    \draw[thin] (1,1,1) -- (0,1,1) -- (0,1,0) -- (1,1,0) -- cycle;
    \draw[thin] (0,0,0) -- (0,0,1) -- (0,1,1) -- (0,1,0) -- cycle;
    \draw[line width=0.3cm,black!70] (1,1,0) -- (1,1,1);
    \draw[fill=black!70,black!70] (1,1,1) circle[radius=0.05cm];
    \draw[fill=black!70,black!70] (1,1,0) circle[radius=0.05cm];
  \end{scope}
}
an upper face of codimension two and a face of codimension one are indicated.

\begin{definition}
  \label{def:cube-group}
  Let $G_\bullet$ be a filtered topological group, and let $k \ge 0$ be an integer.  The $k$-th Host--Kra cube group\index{Host--Kra cube group}, denoted $\HK^k(G_\bullet)$\index[nota]{$\HK^k(G_\bullet)$}, is a sub-group of $G^{\{0,1\}^k}$ defined as follows.

  For a subset $S \subseteq [k]$, let $F_S \subseteq \{0,1\}^k$\index[nota]{$F_S$} denote the upper face corresponding to $S$, i.e.~$F_S = \{ \omega \in \{0,1\}^k \colon \omega \supseteq S \}$.  For any $x \in G$ and any face $F$ of $\{0,1\}^k$, let $[x]_F$\index[nota]{$[x]_F$} donote the configuration $\{0,1\}^k \to G$ given by
  \[
  [x]_F(\omega) = \begin{cases} x &\colon \omega \in F , \\ \id &\colon \text{otherwise} \ . \end{cases}
  \]
  Then $\HK^k(G_\bullet)$ is the group generated by elements of the form $[x]_{F_S}$ where $S \subseteq \{0,1\}^n$ and $x \in G_{|S|}=G_{\codim(F_S)}$.
\end{definition}

We will illustrate this in the case $k=3$.  First, taking $S = \emptyset$ we are free to take any constant configuration:
\inlinetikz{\threecube{$x$}{$x$}{$x$}{$x$}{$x$}{$x$}{$x$}{$x$}{1}}
for any $x \in G$.

Taking $S$ of size $1$ allows elements that are the identity on a lower face and equal to $x$ on the corresponding upper face for some fixed $x \in G_1$, e.g.:
\inlinetikz{\threecube{$\id$}{$\id$}{$\id$}{$\id$}{$x$}{$x$}{$x$}{$x$}{1}}

Similarly, for $|S| = 2$ we get a configuration equal to $x$ on some upper face of codimension $2$, and the identity elsewhere, where now $x \in G_2$ is fixed, e.g.:
\inlinetikz{\threecube{$\id$}{$\id$}{$\id$}{$\id$}{$\id$}{$\id$}{$x$}{$x$}{1}}

Finally, we can put any element of $G_3$ on the topmost vertex.
\inlinetikz{\threecube{$\id$}{$\id$}{$\id$}{$\id$}{$\id$}{$\id$}{$\id$}{$x$}{1}}

\begin{remark}
  At first glance, these definitions are inherently asymmetric in the sense of treating upper faces preferentially to other faces.  However, it is easy to see that $[x]_F$ is in $\HK^k(G_\bullet)$ for any face $F$ of codimension $r$, provided $x \in G_r$.

  For instance, if $S = \{i\}$ for some $i \in [k]$, and let $F$ denote the lower face $\{\omega_i = 0\}$.  Then
  \[
    [x]_F = [x^{-1}]_{F_S} [x]_{F_{\emptyset}}
  \]
  and this is clearly in $\HK^k(G_\bullet)$.  In general one can argue by induction on the co-dimension.
\end{remark}

Ultimately, though, we are not interested in nilpotent groups but in nilmanifolds $G/\Gamma$, their compact homogeneous spaces.  The notion of Host--Kra cubes over $G$ goes over directly to a notion on $G/\Gamma$.

\begin{definition}
  \label{defn:nilmanifold}
  Let $G_\bullet$ be a degree $s$ filtered Lie group.  Suppose for simplicity that $G$ is connected.\footnote{In \cites{GMV2,GMV3} there are good reasons to relax this assumption, but they do not apply for now.}  Also, let $\Gamma$ be a discrete and co-compact subgroup of $G$ (the latter meaning the quotient $G/\Gamma$ is compact).  Under these hypotheses, the quotient space $G/\Gamma$ (which need not be a group) is termed a \emph{nilmanifold}.\index{nilmanifold}

  Suppose furthermore that $\Gamma \cap G_i$ is discrete and co-compact in $G_i$ for each $i \ge 0$. If this property holds, we say that $\Gamma$ is {\em compatible}\index{compatible (with filtration)} with the filtration. Then for each $k \ge 0$, we define the Host--Kra cubes\index{Host--Kra cubes} $\HK^k(G_\bullet)/\Gamma$ to be the subset of $(G/\Gamma)^{\{0,1\}^k}$ given by the image of $\HK^k(G_\bullet)$ under the quotient map
  \begin{align*}
    \pi \colon G^{\{0,1\}^k} &\to (G/\Gamma)^{\{0,1\}^k} \\
              (g_\omega)_{\omega \in \{0,1\}^k}  &\mapsto (g_\omega \Gamma)_{\omega \in \{0,1\}^k} \ .
  \end{align*}
  Note that we abuse notation to let $\HK^k(G_\bullet)/\Gamma$ denote the \emph{pointwise} quotient by $\Gamma$ rather than a conventional quotient of groups: this notation is not meant to identify $\Gamma$ with a subgroup of $\HK^k(G_\bullet)$.  Equivalently, this quotient may be identified with $\HK^k(G_\bullet) / \left(\Gamma^{\{0,1\}^k}\cap\HK^k(G_\bullet) \right)$.
\end{definition}

The topological conditions are chosen to allow the following conclusion.

\begin{proposition}
  For $G_\bullet$ and $\Gamma$ as in the above definition, the space $\HK^k(G_\bullet)/\Gamma$ is a compact subset of $(G/\Gamma)^{\{0,1\}^k}$ for all $k \ge 0$.
\end{proposition}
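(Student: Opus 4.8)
The plan is to induct on $k$, exploiting the way Host--Kra cube groups behave under restriction to a face; so I would really prove the statement for an arbitrary filtered group $G_\bullet$ together with a subgroup $\Gamma$ satisfying the hypotheses of the statement (in particular $G$ locally compact Hausdorff, as it is a Lie group), since the inductive hypothesis will have to be reapplied to auxiliary filtrations. Fix $k\ge1$ and let $\rho\colon\HK^k(G_\bullet)\to\HK^{k-1}(G_\bullet)$ be the restriction of a configuration to the face $\{\omega\in\{0,1\}^k:\omega_k=0\}$, identified with $\{0,1\}^{k-1}$. Checking on generators, $\rho$ is a group homomorphism onto $\HK^{k-1}(G_\bullet)$, and it is split by the continuous homomorphism $d\colon\HK^{k-1}(G_\bullet)\to\HK^k(G_\bullet)$, $(dc)(\omega_1,\dots,\omega_k)=c(\omega_1,\dots,\omega_{k-1})$, which also carries $\Gamma$-valued configurations to $\Gamma$-valued ones. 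Write $N=\ker\rho$, the (normal) subgroup of configurations in $\HK^k(G_\bullet)$ supported on the face $\{\omega_k=1\}$.

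The first substantive step is to identify $N$, via restriction to $\{\omega_k=1\}$, with $\HK^{k-1}(G_\bullet^{+1})$, where $G_\bullet^{+1}$ is the shifted filtration $(G^{+1})_i=G_{i+1}$. One containment is immediate on generators; for the other I would take $g\in N$, write it as a word in the generators $[x]_{F_S}$ ($x\in G_{|S|}$), and use the commutator identity $\bigl[[x]_{F_S},[y]_{F_T}\bigr]=\bigl[[x,y]\bigr]_{F_{S\cup T}}$ together with $[G_{|S|},G_{|T|}]\subseteq G_{|S|+|T|}\subseteq G_{|S\cup T|}$ to push every generator with $k\notin S$ to the left of every generator with $k\in S$; since $g$ is trivial on $\{\omega_k=0\}$ the left-hand block collapses to the identity, and the remaining right-hand block is exactly a word in the generators of $\HK^{k-1}(G_\bullet^{+1})$. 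The point of this is that $\Gamma\cap G_1$ is compatible with $G_\bullet^{+1}$ (its intersection with $(G^{+1})_i=G_{i+1}$ is $\Gamma\cap G_{i+1}$, cocompact by hypothesis), so both $\HK^{k-1}(G_\bullet)$ and $N\cong\HK^{k-1}(G_\bullet^{+1})$ are covered by the inductive hypothesis.

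I would then prove the a priori stronger statement that there is a \emph{compact} $K\subseteq\HK^k(G_\bullet)$ with $\pi(K)=\HK^k(G_\bullet)/\Gamma$; the conclusion follows since $\pi(K)$ is a continuous image of a compact set. The base case $k=0$ is exactly the hypothesis that $G/\Gamma$ is compact. For the inductive step, pick compact $K_1\subseteq\HK^{k-1}(G_\bullet)$ with $\pi(K_1)=\HK^{k-1}(G_\bullet)/\Gamma$ and compact $K_0\subseteq N$ with $\pi(K_0)=\pi(N)$ (using the identification above and $\Gamma\cap G_1\subseteq\Gamma$), and set $K=d(K_1)\cdot K_0$, which is compact. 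Given $g\in\HK^k(G_\bullet)$, write $g=d(\rho(g))\cdot n$ with $n:=d(\rho(g))^{-1}g\in N$; choose $k_1\in K_1$ with $\rho(g)=k_1\lambda$ for some $\lambda\in\Gamma^{\{0,1\}^{k-1}}\cap\HK^{k-1}(G_\bullet)$, so $d(\rho(g))=d(k_1)d(\lambda)$ with $d(\lambda)\in\Gamma^{\{0,1\}^k}\cap\HK^k(G_\bullet)$, whence
\[
  g=d(k_1)\cdot\bigl(d(\lambda)\,n\,d(\lambda)^{-1}\bigr)\cdot d(\lambda),
\]
and $d(\lambda)\,n\,d(\lambda)^{-1}\in N$ because $N$ is normal. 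Finally choose $k_0\in K_0$ with $d(\lambda)\,n\,d(\lambda)^{-1}=k_0\mu$, $\mu\in\Gamma^{\{0,1\}^k}\cap N$; since $\mu$ and $d(\lambda)$ are $\Gamma$-valued, $\pi(g)=\pi\bigl(d(k_1)k_0\bigr)\in\pi(K)$, closing the induction.

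The hard part is the structural identification of $\ker\rho$ in the second paragraph — the commutator bookkeeping — together with the closely related point that the one way in which $\Gamma$ is awkward, namely that it is not normal, does no damage here: in the displayed computation one only ever pushes a $\Gamma$-valued configuration across the \emph{normal} subgroup $N$, and cocompactness of $N/\Gamma$ (the inductive hypothesis for the shifted filtration) absorbs the resulting conjugate. One could alternatively extract a general lemma — if $1\to N\to H\xrightarrow{\rho}Q\to1$ admits a continuous section and $\Lambda\le H$ meets $N$ and surjects onto $Q$ cocompactly, then $\Lambda$ is cocompact in $H$ — and apply it with $H=\HK^k(G_\bullet)$; I have organised the argument as above precisely so as to avoid having to verify separately that $\HK^k(G_\bullet)$ is a closed, hence locally compact, subgroup of $G^{\{0,1\}^k}$, which would otherwise be an extra ingredient.
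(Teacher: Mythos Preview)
Your proof is correct. The paper itself does not give a proof but defers to \cite{GT10}*{Lemma E.10}; that argument proceeds by invoking the global face-coordinate parametrization of $\HK^k(G_\bullet)$ (essentially Proposition \ref{prop:hk-decomposition} of this paper), observing that the same parametrization applies to $\HK^k(\Gamma_\bullet)$ with $\Gamma_i=\Gamma\cap G_i$, and reading off cocompactness of the latter in the former from cocompactness of each $\Gamma_i$ in $G_i$.

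Your route is genuinely different in organisation: rather than decomposing $\HK^k(G_\bullet)$ all at once, you peel off one coordinate at a time via the split exact sequence $1\to N\to \HK^k(G_\bullet)\to \HK^{k-1}(G_\bullet)\to 1$ and the identification $N\cong \HK^{k-1}(G_\bullet^{+1})$, then induct on $k$. The underlying algebra is the same --- your commutator reordering to identify $N$ is exactly the mechanism behind Proposition \ref{prop:hk-decomposition} --- but the packaging is more structural and, as you note, sidesteps having to establish separately that $\HK^k(G_\bullet)$ is closed in $G^{\{0,1\}^k}$. The referenced proof is shorter once Proposition \ref{prop:hk-decomposition} is in hand; yours is more self-contained and makes transparent exactly how the compatibility hypothesis on $\Gamma\cap G_i$ for \emph{each} $i$ gets used, one level of the filtration per inductive step.
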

See \cite{GT10}*{Lemma E.10} for a proof.

Finally, we summarize some properties of Host--Kra cubes on nilmanifolds that will be of significance in the next section.  The proofs of these properties appear in Appendix \ref{app:hk}, or follow easily from those results.

\begin{proposition}
  \label{prop:nilmanifold-props}
  Let $G_\bullet$ and $\Gamma \subseteq G$ be as in Definition \ref{defn:nilmanifold}, and suppose in particular $G_\bullet$ has degree $s$.  Then the following hold.
  \begin{enumerate}[label=(\roman*)]
    \item (Symmetries) The space $\HK^k(G_\bullet)/\Gamma$ is invariant under
a permutation of the $k$ coordinate axes, reflecting in a coordinate axis, or any combination of these.
    \item (Compatibility) If $c \in \HK^k(G_\bullet)/\Gamma$ then any face of $c$ of dimension $\ell$ (or more generally any ``subcube'', allowing some diagonal slicing) is an element of $\HK^\ell(G_\bullet)/\Gamma$.
    \item (Corner constraint) Suppose $c, c' \in \HK^{s+1}(G_\bullet)/\Gamma$ and $c(\omega) = c'(\omega)$ for all $\omega \ne \vec{1}$.  Then $c = c'$.
    \item (Corner completion) Suppose $\lambda \colon \{0,1\}^k \setminus \{\vec{1}\} \to G/\Gamma$ is a configuration such that for every lower face of $\{0,1\}^k$ of codimension $1$, the restriction of $\lambda$ to that face is in $\HK^{k-1}(G_\bullet)/\Gamma$.  Then $\lambda$ can be extended to an element of $\HK^k(G_\bullet)/\Gamma$.
  \end{enumerate}
\end{proposition}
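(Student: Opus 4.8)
The plan is to prove everything first ``upstairs'' on the group $G$ --- that is, for $\HK^k(G_\bullet)$ viewed as a subgroup of $G^{\{0,1\}^k}$ under pointwise multiplication, and for an \emph{arbitrary} filtered group --- and then descend to $G/\Gamma$. The descent is enabled by two facts recorded in Appendix~\ref{app:hk}: that $\HK^k(G_\bullet)/\Gamma$ is \emph{by definition} the pointwise image $\pi(\HK^k(G_\bullet))$, and that $\HK^k(G_\bullet)\cap\Gamma^{\{0,1\}^k}=\HK^k(\Gamma_\bullet)$, where $\Gamma_i:=\Gamma\cap G_i$ is the induced filtration of $\Gamma$ (a genuine filtration, since $[\Gamma\cap G_i,\Gamma\cap G_j]\subseteq\Gamma\cap G_{i+j}$, and compatible by hypothesis). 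Throughout I will use the Remark freely, so that $[x]_F\in\HK^k(G_\bullet)$ for \emph{every} face $F$ of codimension $r$ with $x\in G_r$; in particular $\HK^k(G_\bullet)$ is generated by all such $[x]_F$, not only the $[x]_{F_S}$ attached to upper faces.

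Parts (i) and (ii) are essentially formal. For (i): the hyperoctahedral group $B_k$ acts on $G^{\{0,1\}^k}$ by group automorphisms permuting the coordinates, and it permutes the faces of any fixed codimension $r$ among themselves; hence for $\tau\in B_k$ one has $\tau([x]_F)=[x]_{\tau(F)}$, which lies in the generating set just described. As $\tau$ is a bijection of that generating set onto itself, it stabilises the generated subgroup $\HK^k(G_\bullet)$ setwise. For (ii): restriction to a face $F\cong\{0,1\}^\ell$ is a homomorphism $\rho_F\colon G^{\{0,1\}^k}\to G^{\{0,1\}^\ell}$, and $\rho_F([x]_{F_S})=[x]_{F\cap F_S}$, where $F\cap F_S$ is a face of $F$ of codimension at most $|S|=\codim F_S$; since $x\in G_{|S|}\subseteq G_{\codim_F(F\cap F_S)}$, this is again an allowed generator of $\HK^\ell(G_\bullet)$ (or the identity, if $F\cap F_S=\emptyset$), so $\rho_F(\HK^k(G_\bullet))\subseteq\HK^\ell(G_\bullet)$. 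A general ``diagonal'' subcube corresponds to restriction along an arbitrary morphism $\{0,1\}^\ell\to\{0,1\}^k$ of discrete cubes; every such morphism factors as a composite of coordinate permutations, face inclusions, and coordinate-repetition maps, each checked on generators exactly as above. Both statements pass to $G/\Gamma$ since all the maps involved commute with the quotient $\pi$.

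Parts (iii) and (iv) rest on the inductive structure of Host--Kra groups: splitting a $k$-cube into its bottom face $\{\omega_k=0\}$ and top face $\{\omega_k=1\}$ identifies $\HK^k(G_\bullet)$ with a (semidirect) product $\HK^{k-1}(G_\bullet)\ltimes\HK^{k-1}(G_{\bullet+1})$ via $c\mapsto\bigl(c|_{\mathrm{bot}},\,(c|_{\mathrm{top}})(c|_{\mathrm{bot}})^{-1}\bigr)$, where $G_{\bullet+1}=(G_1\supseteq G_2\supseteq\cdots)$ is the shifted filtration; this is verified on generators. For the \emph{corner constraint} over $G$ it suffices to show that any $d\in\HK^k(G_\bullet)$ equal to the identity off $\vec{1}$ has $d(\vec{1})\in G_k$: indeed $d|_{\mathrm{bot}}=\id$, so $d$ lies in the second factor $\HK^{k-1}(G_{\bullet+1})$ and is the identity off the corner there, whence by induction $d(\vec{1})\in(G_{\bullet+1})_{k-1}=G_k$; for $k=s+1$ this forces $d(\vec{1})\in G_{s+1}=\{\id\}$, so $d=\id$. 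Applying this to $d=\tilde c^{-1}\cdot\tilde c'$ for any lifts of $c,c'$ gives the corner constraint over $G$, and hence also over $G/\Gamma$: if $c,c'\in\HK^{s+1}(G_\bullet)/\Gamma$ agree off $\vec{1}$, then $e:=\tilde c^{-1}\cdot\tilde c'\in\HK^{s+1}(G_\bullet)$ takes values in $\Gamma$ off $\vec{1}$, so by (ii) each of its codimension-one lower-face restrictions lies in $\HK^s(G_\bullet)\cap\Gamma^{\{0,1\}^s}=\HK^s(\Gamma_\bullet)$; by corner completion over $\Gamma_\bullet$ these extend to some $e''\in\HK^{s+1}(\Gamma_\bullet)$ agreeing with $e$ off $\vec{1}$, and then $e=e''$ by the corner constraint over $G$, so $e(\vec{1})\in\Gamma$ and $c=c'$. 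For \emph{corner completion} over $G$ one builds the extension along the same decomposition: extend the bottom-face cube $\lambda|_{\{\omega_k=0\}}\in\HK^{k-1}(G_\bullet)$ constantly in direction $k$ (the constant-extension map $\HK^{k-1}\to\HK^k$ is again checked on generators), divide it out, and recurse on the top face with the shifted filtration; the hypothesis that every codimension-one lower face of $\lambda$ is a cube is exactly what keeps the induction fed. Over $G/\Gamma$, one first lifts the $k$ lower-face cubes of $\lambda$ to elements of $\HK^{k-1}(G_\bullet)$ agreeing on overlaps --- a routine gluing, since two lifts differ on a codimension-two face by an element of $\HK^{k-1}(\Gamma_\bullet)$, which can be extended constantly and divided out --- and then applies the $G$-version and projects.

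The genuinely substantive ingredients are all ``upstairs'': the semidirect decomposition of $\HK^k(G_\bullet)$ together with its inductive consequences, and the identity $\HK^k(G_\bullet)\cap\Gamma^{\{0,1\}^k}=\HK^k(\Gamma_\bullet)$, whose proof is where the hypotheses on $\Gamma$ (discreteness, co-compactness, compatibility with the filtration) genuinely enter --- both are established in Appendix~\ref{app:hk}. The symmetry and compatibility parts are formal manipulations with generators, and the only delicate point in the descent of (iv) is the compatible choice of lifts on overlapping faces. Topology plays no role beyond the preceding Proposition, which already supplies compactness of all the cube spaces in question.
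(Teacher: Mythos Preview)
Your argument is correct, but your route for (iii) and (iv) differs from the paper's in one systematic way: where you use the inductive semidirect decomposition $\HK^k(G_\bullet)\cong\HK^{k-1}(G_\bullet)\ltimes\HK^{k-1}(G_{\bullet+1})$, the paper instead uses the one-shot ordered product decomposition (Proposition~\ref{prop:hk-decomposition}) to write any configuration as $\prod_i[x_i]_{F_{S_i}}$ with $x_i$ determined by words in the vertices. For (iii), the paper simply observes that if $e=\tilde c^{-1}\tilde c'$ has all vertices off $\vec1$ in $\Gamma$, then all $x_i$ with $i<2^k$ lie in $\Gamma$ (being words in those vertices) while $x_{2^k}\in G_{s+1}=\{\id\}$, so $e(\vec1)\in\Gamma$; this bypasses your detour through corner completion over $\Gamma_\bullet$ followed by uniqueness over $G$. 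For (iv), rather than lifting the $k$ lower faces of $\lambda$ compatibly by iterated correction in $\HK(\Gamma_\bullet)$, the paper shows directly that $\pi\colon G\to G/\Gamma$ is a fibration (same ordered-product trick applied to $b=g^{-1}h$, taking $x_{2^k}=\id$) and then invokes the universal property (Lemma~\ref{lem:universal1}) to conclude $G/\Gamma$ is fibrant. The paper's approach is a bit cleaner, since the ``routine gluing'' you invoke for $k\ge3$ really does require the full partial-cube extension lemma for $\Gamma_\bullet$, not just pairwise correction.

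Two small corrections: neither the semidirect decomposition nor the identity $\HK^k(G_\bullet)\cap\Gamma^{\{0,1\}^k}=\HK^k(\Gamma_\bullet)$ is stated as such in Appendix~\ref{app:hk}; both are immediate consequences of Proposition~\ref{prop:hk-decomposition}, but you should cite that rather than claim they are ``established'' there. And the intersection identity is purely algebraic---it holds for any subgroup $\Gamma$ with the induced filtration---so discreteness, co-compactness, and compatibility do not enter its proof; those hypotheses are used only for the compactness of $\HK^k(G_\bullet)/\Gamma$.
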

Here, and throughout, $\vec{1}$\index[nota]{$\vec{1}$} denotes the element $(1,1,\dots,1) \in \{0,1\}^k$.
\begin{proof}
  Parts (i) and (ii) follow directly from the definition.  For (iii) and (iv), see Proposition \ref{prop:nilmanifold-uniqueness} and Proposition \ref{prop:nilmanifold-completion}.
\end{proof}

\section{Cubespaces and nilspaces}
\label{sec:cubespace-defns}

\subsection{Definitions}

We now give the formal definition and axioms of \emph{nilspaces}, and related notions.  In fact, the notion of a nilspace captures several distinct hypotheses of differing strength.  We will outline these individually, starting with the weakest, before amalgamating them into a final definition. We follow \cites{HK08, CS12} closely, although our terminology differs.

The very weakest structure we will wish to consider is termed a \emph{cubespace}.  Informally, this is just a topological space equipped with some notion of when $2^k$ points form a cube,\footnote{Following \cite{CS12}, we use the term ``cube'' throughout to refer to these distinguished collections of $2^k$ points.  These objects in fact seldom resemble geometric cubes, and the term ``parallelepiped'' used by  Host and Kra \cite{HK08} is more accurate.  However, ``cube'' has a significant advantage in brevity.} and satisfying several fairly basic conditions.

To define these conditions, we need to define some nomenclature for certain maps on the discrete cube $\{0,1\}^k$.

\begin{definition}
  A map $\rho \colon \{0,1\}^k \to \{0,1\}^\ell$ is called a \emph{morphism of discrete cubes}\index{morphism of discrete cubes} if it is the restriction to $\{0,1\}^k$ of an affine-linear map $\ZZ^k \to \ZZ^\ell$.

  Equivalently, this holds if and only if $\rho$ has the form
  \[
    \rho(x_1, \dots, x_k) = (\sigma_1(x_1, \dots, x_k), \dots, \sigma_\ell(x_1, \dots, x_k))
  \]
  where each function $\sigma_i$ is one of:
  \begin{itemize}
    \item identically $0$;
    \item identically $1$;
    \item equal to $x_j$ for some $j$;
    \item equal to $1 - x_j$ for some $j$.
  \end{itemize}
\end{definition}
It is straightforward to see that these definitions are indeed equivalent.

Informally, these morphisms of the discrete cube correspond to the fairly natural operations:
\begin{itemize}
  \item permute the coordinates of $\{0,1\}^k$;
  \item reflect in any coordinate axis;
  \item embed $\{0,1\}^k$ as a ``slice'' in $\{0,1\}^\ell$ for $\ell > k$;
  \item ``project'' $\{0,1\}^k$ onto $\{0,1\}^\ell$ for $\ell < k$ by deleting a coordinate;
\end{itemize}
and functions obtained from these by composition.

\begin{definition}
  A \emph{cubespace}\index{cubespace} is a metric space $X$, together with closed sets $C^k \subseteq X^{\{0,1\}^k}$ of \emph{$k$-cubes} for each $k \ge 0$, satisfying the following condition.  Suppose $\rho \colon \{0,1\}^k \to \{0,1\}^\ell$ is a morphism of discrete cubes and $c \colon \{0,1\}^\ell \to X$ is in $C^\ell$.  Then $c \circ \rho \colon \{0,1\}^k \to X$ is in $C^k$.

  Furthermore, it is always assumed that $C^0 = X$.
\end{definition}

We will typically abuse notation to allow $X$ to refer either to the underlying topological space, or to the full cubespace structure $(X, C^k)$.  We write $C^k(X)$\index[nota]{$C^k(X)$} in place of $C^k$ whenever there is ambiguity about which cubespace we are referring to.

Let us unpack what this means in terms of the elementary operations above.
\begin{itemize}
  \item If $c \in X^{\{0,1\}^k}$ is a $k$-cube, then permuting the $k$ coordinates gives another $k$-cube; e.g.:
    \inlinetikz{
      \begin{scope}[shift={(-2,0)}]
        \singlesquare{$x_{00}$}{$x_{01}$}{$x_{10}$}{$x_{11}$}
        \draw[dashed] (-0.1, -0.1) -- (1.1, 1.1);
      \end{scope}
      \draw[->] (0,0.5) -- (1,0.5);
      \begin{scope}[shift={(2,0)}]
        \singlesquare{$x_{00}$}{$x_{10}$}{$x_{01}$}{$x_{11}$}
      \end{scope}
    }
  \item Similarly, reflecting in any coordinate axis gives another $k$-cube; e.g.:
    \inlinetikz{
      \begin{scope}[shift={(-2,0)}]
        \singlesquare{$x_{00}$}{$x_{01}$}{$x_{10}$}{$x_{11}$}
        \draw[dashed] (0.5, 1.2) -- (0.5, -0.2);
      \end{scope}
      \draw[->] (0,0.5) -- (1,0.5);
      \begin{scope}[shift={(2,0)}]
        \singlesquare{$x_{01}$}{$x_{00}$}{$x_{11}$}{$x_{10}$}
      \end{scope}
    }
  \item If $c \in X^{\{0,1\}^k}$ is a $k$-cube, then restricting to an $\ell$-dimensional ``slice'' gives an $\ell$-cube:
    \inlinetikz{
      \begin{scope}[shift={(-5,0)}]
        \begin{scope}[x={(1, 0)}, y={(0, 1)}, z={(0.352, 0.317)}, scale=3]
          \draw (0,0,0) node[below left] {$x_{000}$} -- (0,0,1) node[below right] {$x_{010}$} -- (0,1,1) node[above] {$x_{110}$} -- (0,1,0) node[above left] {$x_{100}$} -- cycle;
          \draw (0,0,0) -- (1,0,0) node[below right] {$x_{001}$} -- (1,0,1) node[right] {$x_{011}$} -- (0,0,1) -- cycle;
          \draw (1,1,1) node[above right] {$x_{111}$} -- (0,1,1) -- (0,0,1) -- (1,0,1) -- cycle;
          \draw (1,1,1) -- (1,1,0) -- (1,0,0) -- (1,0,1) -- cycle;
          \draw (1,1,1) -- (0,1,1) -- (0,1,0) -- (1,1,0) -- cycle;
          \draw[fill=black!50, opacity=.8] (0,1,0) -- (0,1,1) -- (1,0,1) -- (1,0,0) -- cycle;
          \draw (0,0,0) -- (1,0,0) -- (1,1,0) node [above left] {$x_{101}$} -- (0,1,0) -- cycle;
        \end{scope}
      \end{scope}
      \draw[->] (0,2) -- (1,2);
      \begin{scope}[shift={(2,1.5)}]
        \singlesquare{$x_{100}$}{$x_{001}$}{$x_{110}$}{$x_{011}$}
      \end{scope}
    }
  \item If $c \in X^{\{0,1\}^k}$ is a $k$-cube, the configuration obtained by placing two copies of $c$ adjacent to each other is a $(k+1)$-cube, and so on:
    \inlinetikz{
      \begin{scope}[shift={(-2,0.7)}]
        \singlesquare{$x_{00}$}{$x_{01}$}{$x_{10}$}{$x_{11}$}
      \end{scope}
      \draw[->] (0,1.2) -- (1,1.2);
      \begin{scope}[shift={(2,0)}]
        \threecube{$x_{00}$}{$x_{01}$}{$x_{10}$}{$x_{11}$}{$x_{00}$}{$x_{01}$}{$x_{10}$}{$x_{11}$}{1}
        \begin{scope}[x={(1, 0)}, y={(0, 1)}, z={(0.352, 0.317)}, scale=2]
          \draw[ultra thick] (0,0,0) -- (0,1,0);
          \draw[ultra thick] (1,0,0) -- (1,1,0);
          \draw[ultra thick] (0,0,1) -- (0,1,1);
          \draw[ultra thick] (1,0,1) -- (1,1,1);
        \end{scope}
      \end{scope}
    }
\end{itemize}

\begin{remark}
  \label{rem:constant-cube}
  Note that applying this last point repeatedly, we deduce that any constant configuration (i.e.~$c(\omega) = x$ for all $\omega$) is automatically a cube.
\end{remark}

It is intuitively fairly reasonable that any well-behaved notion of ``cube'' analogous to parallelepipeds or the Host--Kra construction on nilmanifolds should at the very least obey these properties.

We now consider the ``corner constraint'' discussed previously.  Recall we said informally that, for some specific dimension of cube, all but one of the vertices should determine the last one.

\begin{definition}
  We say a cubespace $X$ has \emph{$k$-uniqueness}\index{$k$-uniqueness} if the following holds: whenever $c, c' \in C^k(X)$ and $c(\omega) = c'(\omega)$ for all $\omega \in \{0,1\}^k \setminus \{\vec{1}\}$ then $c = c'$.
\end{definition}

The ``dual'' of this property is equally important but harder to motivate: if all but one vertex of a $k$-cube is specified in a consistent way, then there is \emph{at least one} way to complete the missing vertex to give a cube.

\begin{definition}
  \label{defn:k-completion}
  We say a cubespace $X$ has \emph{$k$-completion}\index{$k$-completion} if the following holds.  Suppose $\lambda \colon \{0,1\}^k \setminus \{\vec{1}\} \to X$ has the property that every ``lower face'' is a $(k-1)$-cube, i.e.~for each $1 \le i \le k$ the map
  \begin{align*}
    \{0,1\}^{k-1} &\to X \\
    (\omega_1, \dots, \omega_{k-1}) &\mapsto \lambda(\omega_1, \dots, \omega_{i-1}, 0, \omega_i, \dots, \omega_{k-1})
  \end{align*}
  is in $C^{k-1}(X)$.  Then there exists some $x \in X$ such that
  \begin{align*}
    c \colon \{0,1\}^k &\to X \\
                  \omega &\mapsto \begin{cases} \lambda(\omega) &\colon \omega \ne \vec{1} \\ x &\colon \omega = \vec{1} \end{cases}
  \end{align*}
  is in $C^k(X)$.
\end{definition}
\inlinetikz{
  \begin{scope}{scale=0.67}
    \begin{scope}[shift={(-5,0)}]
      \begin{scope}[x={(1, 0)}, y={(0, 1)}, z={(0.352, 0.317)}, scale=3]
        \draw[fill=black!40,fill opacity=.4] (0,0,0) -- (0,0,1) -- (0,1,1) -- (0,1,0) -- cycle;
        \draw[fill=black!40,fill opacity=.4]  (0,0,0) -- (1,0,0) -- (1,0,1) -- (0,0,1) -- cycle;
        \draw[fill=black!40,fill opacity=.4]  (0,0,0) -- (1,0,0) -- (1,1,0) -- (0,1,0) -- cycle;
      \end{scope}
    \end{scope}
    \draw[->] (0, 2) -- (1, 2);
    \begin{scope}[shift={(2,0)}]
      \begin{scope}[x={(1, 0)}, y={(0, 1)}, z={(0.352, 0.317)}, scale=3]
        \draw[fill=black!40,fill opacity=.4] (0,0,0) -- (0,0,1) -- (0,1,1) -- (0,1,0) -- cycle;
        \draw[fill=black!40,fill opacity=.4]  (0,0,0) -- (1,0,0) -- (1,0,1) -- (0,0,1) -- cycle;
        \draw[fill=black!40,fill opacity=.4]  (0,0,0) -- (1,0,0) -- (1,1,0) -- (0,1,0) -- cycle;
        \draw[dashed] (1,0,1) -- (1,1,1) -- (0,1,1);
        \draw[dashed] (1,1,1) -- (1,1,0);
        \draw[fill=black] (1,1,1) circle[radius=0.02cm];
      \end{scope}
    \end{scope}
  \end{scope}
}

We call a configuration of the form $\lambda$ (with the same properties) a \emph{$k$-corner}.\index{$k$-corner}

While uniqueness will typically be specified for one particular value of $k$, a well-behaved space will have $k$-completion for \emph{all} $k \ge 0$.

\begin{definition}
  We say a cubespace $X$ is a \emph{nilspace of degree $s$} if $s \ge 0$ is the smallest nonnegative integer such that $X$ has $(s+1)$-uniqueness, and if $X$ has $k$-completion for all $k$.  We say it is simply a \emph{nilspace}\index{nilspace} if it is a nilspace of degree $s$ for some $s$.
\end{definition}

\begin{remark}
One way of motivating the $k$-completion hypothesis is that it guarantees that $X$ is not missing any points, in the following sense.

Suppose $X$ is a compact nilspace of degree $s$, and $S \subseteq X$ an arbitrary closed subset.  We can make $S$ into a cubespace by taking $C^k(S) = C^k(X) \cap S^{\{0,1\}^k}$, i.e.~just taking those cubes of $X$ whose vertices lie in $S$.
We call this cubespace the subcubespace of $X$ {\em induced}\index{induced subcubespace} by $S$.

  With this cubespace structure, it is easy to verify that $(S, C^k(S))$ is still a compact cubespace with $(s+1)$-uniqueness; i.e., it satisfies all the hypotheses of being a (compact) nilspace of degree $s$ except for the $k$-completion one.  However, $(S, C^k(S))$ will typically \emph{not} have $k$-completion for any $k > 1$, unless $S$ takes a very particular form.

  Similarly, given $r \ge 1$ and a closed subset $S' \subseteq C^r(X)$ (obeying some mild conditions) we could form an induced cubespace structure on $X$ with $C^r = S'$ that will have the same uniqueness properties but typically not have $k$-completion for all $k$.

  So, if we were to omit the $k$-completion axiom from the definition of a nilspace, it would become fairly hopeless to ask for a rigid classification theorem for nilspaces, since there are a huge variety of examples of this type.  By contrast, if we do impose $k$-completion, we will see that this rules out such examples in general and imposes significantly more rigidity on the class of nilspaces.
  \footnote{It is natural to ask what less rigid classification one could nonetheless hope for if the $k$-completion condition is omitted.  A natural categorial formulation is whether there is such a thing as canonical \emph{nilspace completion}: given a compact cubespace $X$ with $(s+1)$-uniqueness, is there a degree $s$ nilspace $Y$ and a cubespace morphism (see below) $i \colon X \to Y$ with the property that every cubespace morphism $X \to Y'$, where $Y$ is any nilspace, factors uniquely through $Y$ (and if so, what properties does $i$ have).  In general the answer may be negative, a troubling example being a set $\{x \in \FF_3^n \colon P(x) = 0\}$ where $P$ is a quadratic polynomial.  It seems interesting questions remain here, although the authors are not sure how exactly they should be stated.}
\end{remark}

We will be interested not only in cubespaces and nilspaces in isolation, but also in maps between them.  Informally, a map $X \to Y$ preserves the relevant structures if it takes any cube of $X$ to a cube of $Y$.

\begin{definition}
  Suppose $X$, $Y$ are cubespaces with underlying spaces $X$, $Y$ respectively.  Let $\phi \colon X \to Y$ be a continuous map.  We say $\phi$ is a \emph{cubespace morphism}\index{cubespace!morphism} or just \emph{morphism} if $\phi(C^k(X)) \subseteq C^k(Y)$ for all $k \ge 0$.
\end{definition}

We will need one further technical definition.
\begin{definition}
  A cubespace $X$ is called \emph{ergodic}\index{cubespace!ergodic} if $C^1(X) = X^2$, i.e.~if every pair $(x,x')$ is a $1$-cube.  Furthermore we say it is $k$-ergodic if $C^k(X) = X^{\{0,1\}^k}$, i.e.~if every configuration is a $k$-cube.
\end{definition}

Note that our terminology differs slightly from that in \cite{CS12}, in that we do not insist that a nilspace be ergodic: this is built into the definition of a nilspace in that work.  As a consequence, the phrase ``ergodic nilspace'' will occur frequently in our statements.

It can be shown (although we will not need to do so) that a suitably nice non-ergodic space -- in particular, a non-ergodic nilspace -- decomposes as a disjoint union of ergodic components, whose cubespace structures essentially do not interact at all.  Hence one loses almost no generality by working in the ergodic setting.

Also, we remark that \cite{CS12} has the separate notions of ``abstract'' nilspaces, which have no topology, and ``compact'' ones, which have a topology but are always assumed to be compact.  By contrast, we assume all nilspaces have a topology (though it could be the discrete topology), but do not insist they be compact.  This discrepancy has no particular significance, except that it allows us to consider e.g.~nilpotent Lie groups as nilspaces with their natural topologies.  Again, the phrase ``compact, (ergodic) nilspace'' will occur often in what follows.

\subsection{Host--Kra nilmanifolds are nilspaces}

Having motivated these nilspace axioms (and other definitions) in terms of Host--Kra cubespaces and nilmanifolds, it is reasonable to want to check that the latter are actually instances of the former.

We state some facts of this nature now.  Most aspects of the proofs are straightforward; some, surprisingly, are much less so, and these are deferred to Appendix \ref{app:hk} which expounds the theory of Host--Kra cubespaces more fully than we have done so far.

\begin{proposition}
  \label{prop:hk-cubespace-properties}
  Let $G/\Gamma$ be a (filtered) nilmanifold of degree $s$, in the sense of the previous section, equipped with its Host--Kra cubes.  Then it is a nilspace of degree $s$.

  If the filtration $G_\bullet$ is \emph{proper} (i.e.~$G_0 = G_1$) then $G/\Gamma$ is ergodic.

  If $G'/\Gamma'$ is another nilmanifold, and $\psi \colon G \to G'$ a group homomorphism such that $\psi(G_i) \subseteq G'_i$ for all $i$ and $\psi(\Gamma) \subseteq \Gamma'$, then the induced map $\phi \colon G/\Gamma \to G'/\Gamma'$ is a cubespace morphism.

  Similarly, for any $x \in G_0$, $g \in G_1$, the map $n \mapsto g^n x\, \Gamma$ is a cubespace morphism $\ZZ \to G/\Gamma$.
\end{proposition}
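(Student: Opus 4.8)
The plan is to reduce each of the four assertions to a statement about the Host--Kra cube \emph{groups} $\HK^k(G_\bullet)\le G^{\{0,1\}^k}$ and then transport it through the coordinatewise quotient map $\pi$ of Definition~\ref{defn:nilmanifold}; the genuinely analytic content is already packaged in the compactness Proposition above and in Proposition~\ref{prop:nilmanifold-props}, both of which I will simply cite.

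For the cubespace axioms: closedness of $C^k(G/\Gamma)=\HK^k(G_\bullet)/\Gamma$ is the compactness Proposition above, and $C^0(G/\Gamma)=G/\Gamma$ since $\HK^0(G_\bullet)=G$. To see that $C^\bullet(G/\Gamma)$ is preserved under a morphism of discrete cubes $\rho\colon\{0,1\}^k\to\{0,1\}^\ell$, I first work upstairs in $G$: precomposition $c\mapsto c\circ\rho$ is a group homomorphism $G^{\{0,1\}^\ell}\to G^{\{0,1\}^k}$, so it suffices to send each generator $[x]_{F_S}$ of $\HK^\ell(G_\bullet)$ (with $x\in G_{|S|}$) into $\HK^k(G_\bullet)$. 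Now $[x]_{F_S}\circ\rho$ takes the value $x$ on $\rho^{-1}(F_S)$ and $\id$ elsewhere; writing $\rho=(\sigma_1,\dots,\sigma_\ell)$ with each $\sigma_i$ one of $0,1,x_j,1-x_j$, one checks that $\rho^{-1}(F_S)=\bigcap_{i\in S}\{\sigma_i=1\}$ is either empty, all of $\{0,1\}^k$, or a face $F$ with $\codim F\le|S|$. In each case $[x]_{F_S}\circ\rho$ equals $[x]_F$ (or the identity), which lies in $\HK^k(G_\bullet)$ because $x\in G_{|S|}\subseteq G_{\codim F}$, by the Remark following Definition~\ref{def:cube-group}. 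Since $\pi$ intertwines precomposition by $\rho$ at the two levels, this descends to $G/\Gamma$.

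That $G/\Gamma$ is a nilspace of degree $s$ is then immediate: $(s+1)$-uniqueness is Proposition~\ref{prop:nilmanifold-props}(iii), and $k$-completion for all $k$ is Proposition~\ref{prop:nilmanifold-props}(iv), once one observes that its hypothesis — restriction to each codimension-one lower face being a Host--Kra cube — is exactly the hypothesis of Definition~\ref{defn:k-completion}. For the degree to be exactly $s$ (rather than smaller) one checks that $s$-uniqueness fails, by comparing the constant configuration $\id$ with the cube equal to $g$ at $\vec{1}$ and to $\id$ at every other vertex, for some $g\in G_s$ with $g\Gamma\neq\Gamma$ (which exists provided $G_s\neq\{\id\}$, which is part of what ``degree $s$'' should be taken to mean). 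For ergodicity, if $G_0=G_1$ then $\HK^1(G_\bullet)$ is generated by the constant configurations (values in $G_0$) and the configurations $(\id,y)$ with $y\in G_1$, hence consists of all $(x,xy)$ with $x,y\in G$, i.e.\ all of $G^{\{0,1\}}$; so $C^1(G/\Gamma)=(G/\Gamma)^{\{0,1\}}$.

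The two morphism claims are the same kind of computation. Given $\psi$, applying it coordinatewise is a homomorphism $G^{\{0,1\}^k}\to(G')^{\{0,1\}^k}$ carrying $[x]_{F_S}$ to $[\psi(x)]_{F_S}$ with $\psi(x)\in G'_{|S|}$, hence $\HK^k(G_\bullet)$ into $\HK^k(G'_\bullet)$; since $\psi(\Gamma)\subseteq\Gamma'$ this descends to the continuous induced map $\phi$, giving $\phi(C^k(G/\Gamma))\subseteq C^k(G'/\Gamma')$. For the orbit map $\iota\colon n\mapsto g^nx\,\Gamma$ — with $\ZZ$ carrying its standard cubespace structure, whose $k$-cubes are the affine configurations $c(\omega)=n_0+\sum_i\omega_i m_i$ — the configuration $\iota\circ c$ is the $\pi$-image of
\[
  [g^{n_0}]_{F_\emptyset}\cdot\Bigl(\prod_{i=1}^{k}[g^{m_i}]_{F_{\{i\}}}\Bigr)\cdot[x]_{F_\emptyset},
\]
which evaluates at $\omega$ to $g^{\,n_0+\sum_i\omega_im_i}x$ because all powers of $g$ commute, and each factor is a Host--Kra generator since $g^{m_i}\in G_1$ and $g^{n_0},x\in G_0$; hence $\iota\circ c\in\HK^k(G_\bullet)/\Gamma$. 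The main obstacle here is essentially bookkeeping — confirming that $\rho^{-1}(F_S)$ is always a face of codimension at most $|S|$, that all these coordinatewise operations commute with $\pi$, and the minor point about the exact value of the degree; the substantive facts, namely the corner constraint and corner completion for Host--Kra cubes on nilmanifolds, are precisely what is deferred to Appendix~\ref{app:hk} and invoked here through Proposition~\ref{prop:nilmanifold-props}.
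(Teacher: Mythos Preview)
Your proposal is correct and follows essentially the same approach as the paper. The paper's proof is deliberately terse (it notes being ``light on detail''), citing Proposition~\ref{prop:nilmanifold-props} wholesale for the nilspace claim and writing the orbit-map configuration as the product $[g^{h_1}]_{F_{\{1\}}}\cdots[g^{h_k}]_{F_{\{k\}}}\cdot[g^a x]_{F_\emptyset}$; you fill in more of the cubespace-axiom verification explicitly (the check that $\rho^{-1}(F_S)$ is empty or a face of codimension at most $|S|$) and address the minor point about the degree being exactly~$s$, but the underlying strategy is identical.
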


Here, as throughout these papers, $\ZZ$ is considered as a filtered group with filtration $\ZZ_0 = \ZZ_1 \supseteq \{0\}$, and as a cubespace by setting $C^k(\ZZ) = \HK^k(\ZZ_\bullet)$.

\begin{proof}
  The fact that $G/\Gamma$ is a nilspace is essentially a restatement of Proposition \ref{prop:nilmanifold-props}.

  The ergodicity statement is immediate from the definition of $\HK^1(G_\bullet)$.

  The fact that a group homomorphism $\psi \colon G \to G'$ taking $G_i$ into $G_i'$ maps $\HK^k(G_\bullet)$ into $\HK^k(G_\bullet')$ is straightforward by considering the images of the generators.  Recovering the corresponding statement for the quotients $G/\Gamma$, $G'/\Gamma'$ is routine.

  Finally, we note that for any $x \in G_0$, $g \in G_1$, $a \in \ZZ$ and $h \in \ZZ^k$, the configuration
  \begin{align*}
    \{0,1\}^k &\to G \\
    \omega &\mapsto g^{a + \omega \cdot h} x
  \end{align*}
  is in $\HK^k(G_\bullet)$, since it is a product of $(k+1)$ generators:
\[
[g^{h_1}]_{F_{\{1\}}}\cdots [g^{h_k}]_{F_{\{k\}}}\cdot[g^ax]_{F_\varnothing},
\]
and the last part follows.
\end{proof}

We have been light on detail in this proof: partly because this result is not logically necessary in what follows.  However, working through the details is an excellent exercise for the reader unfamiliar with either these definitions or the machinery of Host--Kra cubes (or both).

\subsection{High-dimensional cubes}

One slightly unsatisfactory feature of our definitions is that the data of a cubespace involves spaces $C^k(X)$ for infinitely many $k \ge 0$.  We are used to considering cubes or parallelepipeds of only bounded dimension in any given problem (e.g.~of dimension $3$ when considering the $U^3$ norm).

In fact, under good hypotheses we can see that for sufficiently large $k$ the data of $C^k(X)$ contains no new information.

As one further reassuring sanity-check, and to see these definitions in action, we state this now.

\begin{proposition}
  \label{prop:high-cubes-boring}
  Let $s \ge 1$ be an integer, $X$ a nilspace of degree $s$, and $k \ge s+1$.  Then a configuration $c \colon \{0,1\}^k \to X$ is in $C^k(X)$, if and only if every face of $c$ of dimension $(s+1)$ is in $C^{s+1}(X)$.
\end{proposition}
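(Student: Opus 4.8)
The plan is to prove this by induction on $k \ge s+1$. The forward direction is trivial: if $c \in C^k(X)$, then every $(s+1)$-dimensional face of $c$ is obtained by restricting $c$ along a morphism of discrete cubes $\{0,1\}^{s+1} \to \{0,1\}^k$, hence lies in $C^{s+1}(X)$ by the cubespace axiom. So the content is the reverse direction. The base case $k = s+1$ is vacuous. For the inductive step, suppose $k \ge s+2$ and $c \colon \{0,1\}^k \to X$ has all $(s+1)$-faces in $C^{s+1}(X)$; we want $c \in C^k(X)$.

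First I would observe that every face of $c$ of dimension $\ell$ with $s+1 \le \ell < k$ is already in $C^\ell(X)$: indeed such a face is itself a configuration on $\{0,1\}^\ell$ all of whose $(s+1)$-faces are $(s+1)$-faces of $c$, hence in $C^{s+1}(X)$, so the inductive hypothesis applies to it. In particular every $(k-1)$-face of $c$ is in $C^{k-1}(X)$. Now the strategy is to build $c$ from below using $k$-completion and then pin it down using $(s+1)$-uniqueness. Consider the restriction $\lambda$ of $c$ to $\{0,1\}^k \setminus \{\vec 1\}$. Every lower face of $\{0,1\}^k$ of codimension $1$ is a $(k-1)$-dimensional face of $c$, hence in $C^{k-1}(X)$ by the previous paragraph; so $\lambda$ is a $k$-corner, and by $k$-completion there exists $x \in X$ with the completed configuration $c' \in C^k(X)$, where $c'(\omega) = c(\omega)$ for $\omega \ne \vec 1$ and $c'(\vec 1) = x$. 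It remains to show $c'(\vec 1) = c(\vec 1)$, i.e.\ $x = c(\vec 1)$.

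The main obstacle — and the only nontrivial point — is this last step: showing the top vertex is forced. Here I would exploit $(s+1)$-uniqueness via a diagonal/subcube argument. Since $k \ge s+2$, pick a morphism $\rho \colon \{0,1\}^{s+1} \to \{0,1\}^k$ of discrete cubes whose image is an $(s+1)$-dimensional subcube of $\{0,1\}^k$ containing $\vec 1$ and such that $\rho^{-1}(\vec 1)$ is the single vertex $\vec 1 \in \{0,1\}^{s+1}$ — for instance, after relabeling, the map $(\omega_1,\dots,\omega_{s+1}) \mapsto (\omega_1,\dots,\omega_s,\omega_{s+1},\omega_{s+1},\dots,\omega_{s+1})$ (diagonally embedding the last coordinate). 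Then $c' \circ \rho$ lies in $C^{s+1}(X)$ because $c' \in C^k(X)$, while $c \circ \rho$ lies in $C^{s+1}(X)$ by hypothesis (it is an $(s+1)$-face-type subcube of $c$; strictly, "cubes" are closed under all morphisms of discrete cubes, and all $(s+1)$-dimensional subcubes obtained by such morphisms are in $C^{s+1}(X)$ once the genuine faces are — this needs Proposition~\ref{prop:nilmanifold-props}(ii)-style reasoning, or one checks it directly from the $(s+1)$-face hypothesis together with the cubespace axioms). The two configurations $c \circ \rho$ and $c' \circ \rho$ on $\{0,1\}^{s+1}$ agree at every vertex except possibly $\vec 1$ (since $c$ and $c'$ agree off $\vec 1$, and $\rho$ maps only $\vec 1$ to $\vec 1$). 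By $(s+1)$-uniqueness they are equal, so $c(\vec 1) = c'(\vec 1) = x$, whence $c = c' \in C^k(X)$, completing the induction.

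The one technical wrinkle to nail down carefully is the claim that an arbitrary $(s+1)$-dimensional subcube of $c$ obtained via a morphism of discrete cubes (allowing diagonal collapses, not just axis-aligned faces) lies in $C^{s+1}(X)$ given only that the honest $(s+1)$-faces do; I expect this to follow by decomposing the morphism and using that cubes of a cubespace are preserved under all discrete-cube morphisms, but it should be stated explicitly rather than waved through.
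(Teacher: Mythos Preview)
Your overall strategy --- induction on $k$, use the inductive hypothesis to see the lower faces are cubes, complete the corner to $c'$, then invoke $(s+1)$-uniqueness to pin down the top vertex --- is exactly the paper's approach.

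The one place you diverge is the final step, and you make it harder than it needs to be. You reach for a \emph{diagonal} morphism $\rho$ and then (correctly) flag the wrinkle: the hypothesis only tells you that honest $(s+1)$-dimensional \emph{faces} of $c$ lie in $C^{s+1}(X)$, not arbitrary diagonal subcubes, so you do not actually know that $c \circ \rho \in C^{s+1}(X)$. This is not a minor technicality you can wave through --- deducing it would require knowing $c$ is a cube, which is exactly what you are trying to prove. The paper avoids this entirely by observing that since $k \ge s+2$, there exists a genuine $(s+1)$-dimensional \emph{upper face} of $\{0,1\}^k$ containing $\vec{1}$ (e.g.\ fix the last $k-s-1$ coordinates to $1$). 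Restricting $c$ to that face gives an $(s+1)$-cube directly by hypothesis, restricting $c'$ to it gives an $(s+1)$-cube by the cubespace axioms, they agree off $\vec{1}$, and $(s+1)$-uniqueness finishes. No diagonals, no wrinkle.
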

\begin{proof}
  The ``only if'' direction is direct from the cubespace axioms.  For the ``if'' direction, we argue by induction on $k$, the base case $k=s+1$ being trivial.

  Given such a $c \colon \{0,1\}^k \to X$, consider the restriction $c|_{\{0,1\}^k \setminus \{\vec{1}\}}$.  By inductive hypothesis it is a $k$-corner.  Hence, there is a cube $c' \in C^k(X)$ such that $c(\omega) = c'(\omega)$ for all $\omega \ne \vec{1}$.  But now, considering any upper face $c'$ of dimension $(s+1)$ and the corresponding one of $c$, and invoking $(s+1)$-uniqueness, we see that $c(\vec{1}) = c'(\vec{1})$ and hence $c$ is a $k$-cube.
\end{proof}

\section{The structure theorem}
\label{sec:structural-thm}

\subsection{Statement}

We are now in a position to state the main structural result.
This result is essentially due to Antol\'\i n Camarena and Szegedy \cite{CS12}*{Theorems 4 and 7};
however, our formulation is somewhat stronger.
We point out the differences below.

\begin{theorem}
  \label{main-structural-v1}
  Let $X = (X, C^k)$ be a compact ergodic nilspace of degree $s$.  Further suppose that the spaces $C^k(X)$ are connected for each $k \ge 0$; so in particular $X = C^0(X)$ is connected.

  Then $X$ is an ``inverse limit of nilmanifolds'' in the following sense. There exists:
  \begin{itemize}
    \item a sequence $G^{(n)}$ of connected Lie groups equipped with filtrations $G^{(n)}_\bullet$ of degree at most $s$ (with $G^{(n)}_i$ also connected for each $i$);
    \item discrete co-compact subgroups $\Gamma^{(n)}$ of $G^{(n)}$ such that $\Gamma^{(n)} \cap G^{(n)}_i$ is discrete and co-compact in $G^{(n)}_i$ for each $i$; and
    \item surjective group homomorphisms $\phi_{n,m} \colon G^{(n)} \to G^{(m)}$ for each $n \ge m$, such that $\phi_{n,m}\left(G^{(n)}_i\right) = G^{(m)}_i$ for each $i \ge 0$ and also $\phi_{n,m}\left(\Gamma^{(n)}\right) \subseteq \Gamma^{(m)}$, so that we get a map of nilmanifolds $G^{(n)}/\Gamma^{(n)} \to G^{(m)}/\Gamma^{(m)}$ that sends cubes (surjectively) to cubes;
  \end{itemize}
  such that
  \begin{itemize}
    \item $X = \varprojlim \left(G^{(n)}/\Gamma^{(n)}\right)$ as a topological space, and
    \item the cubes $C^k(X)$ coincide with the inverse limit of Host--Kra cubes; i.e., writing $\pi_n \colon X \to \left(G^{(n)}/\Gamma^{(n)}\right)$ for the projection map arising from the above inverse limit, we have
      \[
        C^k(X) = \bigcap_n \pi_n^{-1} \left(C^k\left(G^{(n)} / \Gamma^{(n)}\right)\right)  \ .
      \]
  \end{itemize}
\end{theorem}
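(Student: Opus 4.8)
The plan is to prove this by a two-stage bootstrap: first the \emph{weak structure theory} developed in Section \ref{sec:elementary-v2}, which exhibits any nilspace of degree $s$ as a finite tower of compact abelian group extensions; then a \emph{strong} (algebraic) stage, carried out in the companion paper \cite{GMV2}, which under the compactness and connectedness hypotheses upgrades such a tower into a genuine nilmanifold structure, after first passing to an inverse limit to reduce the structure groups to Lie groups. The final identity on cubes will then be essentially automatic once the cube structure is matched level by level.

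First I would invoke the weak structure theorem: a compact ergodic nilspace $X$ of degree $s$ carries a canonical tower
\[
  X = X_s \to X_{s-1} \to \cdots \to X_1 \to X_0 = \{\ast\},
\]
in which each $\pi_i \colon X_i \to X_{i-1}$ is a principal bundle with fibre a compact abelian group $A_i$ (the $i$-th structure group), and is moreover an \emph{extension of degree $i$}: the cube structure of $X_i$ is obtained from that of $X_{i-1}$ by adjoining $A_i$-valued cocycles that are ``polynomial of degree $\le i$''. Connectedness of the $C^k(X)$ forces each $A_i$ to be connected. Next comes a reduction to the Lie case: by Pontryagin duality each connected compact abelian group is an inverse limit of finite-dimensional tori $\TT^d$, and pushing the corresponding quotients through the whole tower realises $X$ as an inverse limit $X = \varprojlim X^{(n)}$ of compact connected ergodic nilspaces of degree $\le s$ whose structure groups are all tori, with the cubes of $X$ the inverse limit of the cubes of the $X^{(n)}$. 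The connecting maps $X^{(n+1)} \to X^{(n)}$ are nilspace morphisms which are fibrewise surjective (fibrations), so by tracking the construction they will be induced by the surjective filtered group homomorphisms required in the statement. It therefore suffices to show that each ``Lie'' $X^{(n)}$ is a nilmanifold $G^{(n)}/\Gamma^{(n)}$ of the asserted type.

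The heart of the matter --- and the step I expect to be the main obstacle --- is the construction of the nilpotent Lie group $G^{(n)}$ and the lattice $\Gamma^{(n)}$ for a compact connected ergodic nilspace with torus structure groups. The natural candidate for $G^{(n)}$ is the group of \emph{translations} of $X^{(n)}$, built inductively on $s$ as an iterated extension whose successive kernels are (the pushforwards of) the structure groups $A_i$. One must show that this translation group is a Lie group of nilpotency class $\le s$, equipped with a filtration of degree $\le s$ with connected pieces, that it acts transitively on $X^{(n)}$ with discrete cocompact stabiliser $\Gamma^{(n)}$ compatible with the filtration, and --- crucially --- that the Host--Kra cubes of $G^{(n)}/\Gamma^{(n)}$ recover the original cube structure on $X^{(n)}$. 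This requires a careful cohomological analysis of degree-$i$ abelian extensions: classifying the relevant spaces of polynomial cocycles, ``straightening'' them into honest group multiplications, and combining this with standard but delicate Lie-theoretic input on extensions of nilpotent Lie groups by tori and on rationality/cocompactness of the resulting lattices.

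Finally I would assemble the $G^{(n)}$, $\Gamma^{(n)}$ and the connecting homomorphisms into an inverse system of nilmanifolds with the required properties, identify $X = \varprojlim (G^{(n)}/\Gamma^{(n)})$ as a topological space, and verify
\[
  C^k(X) = \bigcap_n \pi_n^{-1}\bigl( C^k(G^{(n)}/\Gamma^{(n)}) \bigr),
\]
which follows immediately from the fact that at each finite level the cube structure of $X^{(n)}$ coincides with the Host--Kra cubes of $G^{(n)}/\Gamma^{(n)}$, together with the fact that the cubes of an inverse limit are the inverse limit of the cubes. The points where our formulation is stronger than \cite{CS12} --- in particular the connectedness of the groups $G^{(n)}_i$ and the surjectivity $\phi_{n,m}(G^{(n)}_i) = G^{(m)}_i$ --- are obtained by being careful at the reduction-to-Lie step and in the translation-group construction, rather than by any new idea.
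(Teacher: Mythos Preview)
Your three-stage outline (weak structure theory $\to$ reduce to Lie structure groups via an inverse limit $\to$ identify each approximant with a nilmanifold) matches the paper's plan.  However, there is a genuine circularity in the way you propose to execute it.

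You write that ``Connectedness of the $C^k(X)$ forces each $A_i$ to be connected'', and then use this to invoke the classical fact that connected compact abelian groups are inverse limits of tori.  But the paper singles out exactly this implication (strong connectivity $\Rightarrow$ connected structure groups) as the \emph{hard} direction, one that is only known as a consequence of the full structure theorem itself; see Remark~\ref{rem:connectivity} and Remark~\ref{rem:strong-connectivity}.  The paper therefore does not argue this way.  Instead, Theorem~\ref{toral-structure-theorem} is formulated so as to take \emph{strong connectivity} of $X$ as the hypothesis, together with the assumption that the structure groups are (compact abelian) Lie groups of the form $(\RR/\ZZ)^d \times K$ with $K$ finite --- not necessarily tori.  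The inverse limit step (Theorem~\ref{thm:invlim}) likewise reduces to Lie structure groups without any connectedness input.  The fact that the resulting $G^{(n)}$ and $G^{(n)}_i$ are connected comes out of the proof of Theorem~\ref{toral-structure-theorem}, not as an input to it.  If you want to keep your order of argument you would need an independent proof that strong connectivity forces connected structure groups, and none is known.

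A second, less serious, divergence: you propose to build $G^{(n)}$ ``inductively on $s$ as an iterated extension'' by straightening cocycles.  The paper instead takes $G^{(n)} = \Aut_1^\circ(X^{(n)})$, the identity component of the group of $1$-translations, equipped with its canonical filtration, and the main work is to show this acts transitively --- which is done by proving that the natural map $\Aut_1^\circ(X) \to \Aut_1^\circ(\pi_{s-1}(X))$ is surjective (Proposition~\ref{prop:toral implies surjective}).  Your cocycle-straightening picture is not obviously wrong, but it is a different construction, and the paper's approach has the advantage that the group, its filtration, and the functoriality needed for the connecting homomorphisms $\phi_{n,m}$ are all canonical from the outset.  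Finally, you treat ``pushing the corresponding quotients through the whole tower'' as routine; the paper flags this (Section~\ref{sec:remaining-summary-v2}) as one of the more delicate steps, again requiring a cohomological discreteness argument to show that the pushed-forward extension exists once the approximation is fine enough.
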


\begin{definition}
  We call a cubespace satisfying the hypothesis that $C^k(X)$ is connected for all $k$ \emph{strongly connected}\index{cubespace!strongly connected}.
\end{definition}

\begin{remark}
  \label{rem:inverse-limit}
  The last two conditions, together with the fact that $\phi_{n,m}$ are cubespace morphisms between nilmanifolds (see Proposition \ref{prop:hk-cubespace-properties}), say in some sense that $X$ is an ``inverse limit of nilmanifolds, in the category of cubespaces''.
  This is roughly the formulation one obtains from \cite{CS12}, combining \cite{CS12}*{Theorems 4 and 7}. (In fact the results from \cite{CS12} give slightly more detail than that, specifically that the maps $\phi_{n,m}$ are ``fiber-surjective'', or in our terminology ``fibrations''; see Definition \ref{def:fibration} and Remark \ref{rem:fiber-surjective}.  This same condition comes out of \cite{GMV3}*{Theorem 1.26}, which is equivalent to \cite{CS12}*{Theorem 4}.)

  However, the conclusion of Theorem \ref{main-structural-v1} is stronger than this, in that it gives even more information about the maps $\phi_{n,m}$: namely, that they come from group homomorphisms $G^{(n)} \to G^{(m)}$, and moreover that these group homomorphisms are surjective on the filtrations in the obvious sense.  Neither of these properties holds in general for a cubespace morphism between nilmanifolds.
\end{remark}

\begin{remark}\label{rem:connectivity}
  Without some similar connectivity hypothesis, no complete structure theorem is currently available in general.

  Using Theorem \ref{thm:invlim} (see below) or equivalently \cite{CS12}*{Theorem 4}, one can get some partial information by identifying a compact nilspace with an inverse limit of ``finite rank'' nilspaces, which are in particular e.g.~topological manifolds of finite dimension.  However, the structure of such spaces remains unclear: they need not necessarily be nilmanifolds.  Some structural results in the special case of finite nilspaces appear in \cite{szegedy2010structure}.

  The analogous result to Theorem \ref{main-structural-v1} implied by \cite{CS12} has this strong connectivity hypothesis replaced by an assumption on the ``structure groups'' of $X$ (see below).  That assumption easily implies strong connectivity (see \cite{GMV2}*{Proposition 2.4}), 
but for the converse, we need the full force of our structure theorem.
\end{remark}

For clarity, we summarize the dictionary between statements in these papers and those in \cite{CS12} in Section \ref{sec:comparison} below, once we have introduced the relevant concepts.

\begin{remark}
  It is reasonably straightforward to argue that any cubespace $X$ that is an inverse limit of nilmanifolds, subject to some further technical hypotheses on this inverse limit, satisfies the hypotheses of Theorem \ref{main-structural-v1}.  Hence in some sense this result is best possible.

  One can certainly not remove the inverse limit, as can be seen by considering any compact connected abelian group that is not a Lie group: such an object is a nilspace of degree $1$ but is not a degree $1$ nilmanifold, as the latter are all just tori.

  It is also possible to find an example of a nilspace satisfying the hypotheses for $s=2$, which is not a homogeneous space of any group.  Such an example is due to Rudolph \cite{R95}.  It seems likely that a sharper description of such spaces is possible, but we will not pursue such issues here.
\end{remark}

\subsection{The case $s=1$}\label{sc:1step}

To provide some motivation for this structural result, we briefly comment on the case $s=1$.  In this setting, Theorem \ref{main-structural-v1} is much easier.

\begin{proposition}
  A compact ergodic nilspace $X$ of degree $1$ naturally has the structure of a compact abelian group.
\end{proposition}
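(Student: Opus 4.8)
The plan is to manufacture the group structure directly out of the $2$-cubes, using only ergodicity, $2$-uniqueness, and $k$-completion (so in fact this works for any ergodic nilspace of degree $\le 1$). Fix a basepoint $e\in X$, and adopt the convention that $(a,b,c,d)$ denotes the configuration $\{0,1\}^2\to X$ sending $00\mapsto a$, $01\mapsto b$, $10\mapsto c$, $11\mapsto d$. Given $x,y\in X$, I would define $x\oplus y$ to be the unique $z$ with $(e,x,y,z)\in C^2(X)$: such a $z$ exists because the partial configuration $(e,x,y)$ on $\{0,1\}^2\setminus\{\vec{1}\}$ is a $2$-corner -- its two lower faces are $(e,x)$ and $(e,y)$, which are $1$-cubes since $X$ is ergodic -- so $2$-completion applies; and it is unique by $2$-uniqueness. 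Commutativity of $\oplus$ is then immediate, since swapping the two coordinate axes is a morphism of discrete cubes and carries $(e,x,y,z)$ to $(e,y,x,z)$. That $e$ is an identity follows by pulling back the $1$-cube $(e,x)$ along the projection $\{0,1\}^2\to\{0,1\}$ onto a single coordinate, which produces the $2$-cube $(e,x,e,x)$, i.e.\ $x\oplus e=x$.

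For associativity, I would build an auxiliary $3$-cube. Given $x,y,z$, declare the edges from the origin to be $c(001)=x$, $c(010)=y$, $c(100)=z$, and set the three lower $2$-faces to be the $\oplus$-completions $(e,x,y,x\oplus y)$, $(e,x,z,x\oplus z)$, $(e,y,z,y\oplus z)$; these agree on the edges they share, so together they define a configuration on $\{0,1\}^3\setminus\{\vec{1}\}$ whose three lower faces are exactly these $2$-cubes, hence a $3$-corner. By $3$-completion it extends to some $c\in C^3(X)$, and the only vertex not already pinned down is $c(\vec{1})$. Restricting $c$ along the discrete-cube morphism $(\omega_1,\omega_2)\mapsto(\omega_1,\omega_2,\omega_2)$ gives the $2$-cube $(e,\,x\oplus y,\,z,\,c(\vec{1}))$, so $c(\vec{1})=(x\oplus y)\oplus z$; restricting along $(\omega_1,\omega_2)\mapsto(\omega_1,\omega_1,\omega_2)$ gives the $2$-cube $(e,\,x,\,y\oplus z,\,c(\vec{1}))$, so $c(\vec{1})=x\oplus(y\oplus z)$. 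Hence $\oplus$ is associative. For inverses, I would define $\iota(x)$ to be the unique $y$ with $(x,e,e,y)\in C^2(X)$ (again a corner completion, unique by $2$-uniqueness); reflecting the second coordinate -- a morphism of discrete cubes -- turns this into $(e,x,\iota(x),e)\in C^2(X)$, which says precisely $x\oplus\iota(x)=e$. Thus $(X,\oplus,e,\iota)$ is an abelian group.

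It remains to check the topology. Since $C^2(X)$ is closed in $X^{\{0,1\}^2}$, the graph of $\oplus\colon X^2\to X$ (namely $\{(x,y,z): (e,x,y,z)\in C^2(X)\}$) and the graph of $\iota$ are closed; as $X$ is compact Hausdorff, a map into $X$ with closed graph is continuous, so $X$ is a compact abelian topological group. As a coda one can identify the cubes: a single face ($\{\omega_3=1\}$) of the $3$-cube above yields $(u,\,u\oplus v,\,u\oplus w,\,u\oplus v\oplus w)\in C^2(X)$ for all $u,v,w$, and choosing $u=a$, $v=\iota(a)\oplus b$, $w=\iota(a)\oplus c$ together with $2$-uniqueness shows $C^2(X)=\{(a,b,c,d): a\oplus d=b\oplus c\}$; then Proposition~\ref{prop:high-cubes-boring} forces every $C^k(X)$ to be the Host--Kra cube set of $X$ equipped with its degree-$1$ filtration $X=X_0=X_1\supseteq\{e\}$, and changing the basepoint $e$ only alters the group by a translation isomorphism, so the structure is canonical -- though none of this is needed for the bare statement. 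I expect the only real friction to be the associativity step: carefully assembling the auxiliary $3$-cube (the $3$-corner bookkeeping and the agreement of the prescribed faces on shared edges) and recognising the two ``diagonal'' $2$-faces as genuine sub-cubes via explicit morphisms of discrete cubes.
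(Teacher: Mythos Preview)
Your proof is correct and follows exactly the approach the paper sketches: fix a basepoint $e$, define $x\oplus y$ by unique $2$-corner completion, and verify the group axioms from the cubespace axioms. You have supplied the associativity argument via a $3$-cube (which the paper explicitly omits, noting only that it ``requires the completion property on $3$-dimensional cubes''), and you handle continuity by a closed-graph argument rather than the paper's later homeomorphism argument in Section~\ref{sec:structure-groups}; both are fine.
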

\begin{proof}[Proof sketch]
  We give only a partial account of the proof, since these ideas are covered in detail in Section \ref{sec:structure-groups}.

  Essentially our task is to use the data we have -- namely, unique corner-completion of $2$-cubes -- to recover a group operation on $X$.

  Fix any element $e \in X$, which will be our identity element.  Given $x, y \in X$ consider the $2$-corner:
  \inlinetikz{
    \draw[thin,black] (0,0) node[below left] {$e$} -- (1, 0) node[below right] {$x$};
    \draw[thin,black] (0,0) -- (0, 1) node[above left] {$y$};
    \draw[dashed] (1,0) -- (1,1) node[above right] {$\ast$};
    \draw[dashed] (0,1) -- (1,1);
  }
  and define $x+y$ to be the element obtained by completing it, i.e.~$x+y=\ast$.  We can define inverses by a similar process, and define the identity to be $e$.

  It now suffices to check that this defines a topological abelian group operation.  Many of the corresponding properties are immediate from the nilspace axioms.  The hardest part is checking associativity, which requires the completion property on $3$-dimensional cubes; we omit this part.
\end{proof}

The remaining content of Theorem \ref{main-structural-v1} in this case is that any connected compact abelian group is an inverse limit of tori, which is a classical result.

\subsection{Steps in the proof}

The proof of Theorem \ref{main-structural-v1} splits into three parts.

The first is a weaker, fairly elementary structure theorem.  Roughly speaking, this says that a nilspace of degree $s$ has the structure of a tower of extensions $X \to X_{s-1} \to \dots \to X_0 = \{\ast\}$ where each of the fibers is a compact abelian group.  We refer to this as the \emph{weak structure theorem}.

The second says that, under some strong technical assumptions, a compact ergodic strongly connected nilspace of degree $s$ actually \emph{is} a nilmanifold $G/\Gamma$ with its usual Host--Kra cubes (and a degree $s$ filtration).  These technical assumptions are phrased in terms of the compact abelian groups appearing in the weak structure theorem; specifically, that they are (abelian) Lie groups, i.e.~$(\RR/\ZZ)^d \times K$ for some $d \ge 0$ and $K$ finite.  This is the step where a non-abelian group operation is recovered from just the cubespace structure, the key step being to consider the automorphism group of the cubespace $X$, and to show that it is large enough in a certain sense.

The final step requires us to show that a cubespace satisfying the hypotheses of Theorem \ref{main-structural-v1} can be written as an inverse limit of ones which satisfy the strong technical assumptions alluded to in part two.  Given the previous discussion, this is very closely related to the statement that a connected compact abelian group is an inverse limit of tori, and this fact is a key ingredient in the proof.  However, bolting everything together is one of the more technically difficult aspects of the whole argument.

Only the first part of this plan will be proven fully in this paper, although we will give rather fuller outlines of the latter two below.  These are proven fully in two further papers by the authors, the second part in \cite{GMV2} and the third in \cite{GMV3}.

\section{A more detailed summary of the argument}
\label{sec:remaining-summary-v2}

We will now expand on the outline given above of each of the three stages in the proof of Theorem \ref{main-structural-v1}.

\subsection{An outline of the weak structure theory}\label{sc:outline-weak}

The simplest examples of nilspaces are compact abelian groups $A$ together with their Host--Kra cubes (with the degree $1$ filtration $A = A_0 = A_1 \supseteq A_2 = \{0\}$).  The $k$-cubes of these spaces are precisely the $k$-dimensional parallelepipeds on $A$ in the usual sense:
\begin{align*}
  \{0,1\}^k &\to A \\
  \omega &\mapsto x_0 + \sum_{i=1}^k \omega_i x_i
\end{align*}
where $x_0, \dots, x_k \in A$.

The simplest higher-degree examples come from the Host--Kra construction on $A$ (still a compact abelian group) given the degree $s$ filtration
\[
  A = A_0 = A_1 = \dots = A_s \supseteq A_{s+1} = \{0\}
\]
where cubes are given by $C^k(A) = \HK^k(A_\bullet)$.  We denote this cubespace by $\cD_s(A)$\index[nota]{$\cD_s(A)$}.  It is easy to check (or to deduce from general considerations) that $\cD_s(A)$ is a compact, ergodic (and in fact $s$-ergodic) nilspace of degree $s$.

In fact there are several equivalent ways to define $\cD_s(A)$.  We briefly note some equivalent characterizations.

\begin{proposition}\label{pr:Ds}
  If $c \colon \{0,1\}^{s+1} \to A$ is a configuration, then $c \in C^{s+1}(\cD_s(A))$ if and only if
  \[
    \sum_{\omega \in \{0,1\}^{s+1}} (-1)^{|\omega|} c(\omega) = 0 \ .
  \]
  Moreover, for any $k \ge 0$ and $c \colon \{0,1\}^k \to A$, we have $c \in C^k(\cD_s(A))$ if and only if every face of $c$ of dimension $(s+1)$ is a cube; or, if and only if $c \circ \eta \in C^{s+1}(\cD_s(A))$ for every morphism of discrete cubes $\eta \colon \{0,1\}^{s+1} \to \{0,1\}^k$.
\end{proposition}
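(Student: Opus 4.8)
The plan is to establish the first assertion directly from the definition of the Host--Kra cube group, and then to derive the remaining two equivalences formally from it, from Proposition~\ref{prop:high-cubes-boring}, and from the $s$-ergodicity of $\cD_s(A)$. Write $\partial c:=\sum_{\omega\in\{0,1\}^{s+1}}(-1)^{|\omega|}c(\omega)$; since $A$ is abelian, $\partial\colon A^{\{0,1\}^{s+1}}\to A$ is a group homomorphism. For the degree-$s$ filtration $A=A_0=A_1=\dots=A_s\supseteq A_{s+1}=\{0\}$ we have $A_{|S|}=A$ whenever $|S|\le s$ and $A_{s+1}=\{0\}$, so Definition~\ref{def:cube-group} identifies $C^{s+1}(\cD_s(A))=\HK^{s+1}(A_\bullet)$ with the subgroup of $A^{\{0,1\}^{s+1}}$ generated by the configurations $[x]_{F_S}$ with $x\in A$ and $|S|\le s$. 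The task is thus to show that this subgroup equals $\ker\partial$.

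One inclusion is a one-line computation on generators: $\partial[x]_{F_S}=\bigl(\sum_{\omega\supseteq S}(-1)^{|\omega|}\bigr)x$, and $\sum_{\omega\supseteq S}(-1)^{|\omega|}=(-1)^{|S|}\sum_{T\subseteq[s+1]\setminus S}(-1)^{|T|}=0$ precisely because $[s+1]\setminus S\ne\varnothing$ when $|S|\le s$; hence $\partial$ kills every generator and so all of $\HK^{s+1}(A_\bullet)$.

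For the reverse inclusion I would, given $c$ with $\partial c=0$, peel off generators vertex by vertex in order of non-decreasing Hamming weight. Listing the vertices $\omega\ne\vec{1}$ with $|\omega|$ non-decreasing, and setting $S_\omega=\{i:\omega_i=1\}$ (so $|S_\omega|=|\omega|\le s$), replace $c$ by $c-[c(\omega)]_{F_{S_\omega}}$ at each step. Because $\omega$ is the unique minimal-weight vertex of $F_{S_\omega}$ and all its other vertices have strictly larger weight, this step zeroes $c$ at $\omega$ without disturbing any already-processed vertex, and it subtracts an element of $\ker\partial$ by the previous paragraph, so $\partial c=0$ is maintained. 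After all $\omega\ne\vec{1}$ have been handled, $c$ is supported on $\{\vec{1}\}$ with $\partial c=0$, hence $c\equiv0$; so the original $c$ equals the sum of the generators removed and lies in $\HK^{s+1}(A_\bullet)$. This proves the first claim, and incidentally presents $C^{s+1}(\cD_s(A))$ as a graph over the ``corner data'' $A^{\{0,1\}^{s+1}\setminus\{\vec{1}\}}$, re-deriving $(s+1)$-uniqueness and completion at the critical dimension.

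Finally, in the last two characterisations the ``only if'' directions are immediate from the cubespace axioms, since any face of a cube, and more generally $c\circ\eta$ for a morphism of discrete cubes $\eta$, is again a cube. For the ``if'' direction: when $k\ge s+1$ this is exactly Proposition~\ref{prop:high-cubes-boring} applied to the degree-$s$ nilspace $\cD_s(A)$, and when $k\le s$ there is nothing to prove since $\cD_s(A)$ is $s$-ergodic, hence $k$-ergodic. (If $s=0$, which Proposition~\ref{prop:high-cubes-boring} excludes, one checks directly that each $C^k(\cD_0(A))$ consists of the constant configurations, for which both criteria reduce to constancy.) The face formulation and the formulation via morphisms $\eta\colon\{0,1\}^{s+1}\to\{0,1\}^k$ coincide because the inclusions of $(s+1)$-faces are among such $\eta$, while conversely $c\circ\eta\in C^{s+1}$ whenever $c\in C^k$. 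The one genuinely substantive point — and the main obstacle — is the reverse inclusion in the first claim, i.e.\ making the weight-ordered peeling argument airtight; a secondary point is to keep the logic non-circular, since invoking Proposition~\ref{prop:high-cubes-boring} presupposes that $\cD_s(A)$ is already known to be a nilspace of degree $s$, for which the first claim here is precisely the crucial ingredient.
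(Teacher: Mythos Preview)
Your argument is correct and essentially the same as the paper's: the first claim is handled in Example~\ref{ex:Ds} via the identical observation that the alternating sum is additive and vanishes on each generator $[x]_{F_S}$ with $|S|\le s$, with the reverse inclusion read off from the general decomposition Proposition~\ref{prop:hk-decomposition} (your weight-ordered peeling is exactly the abelian instance of that decomposition), and the second part likewise invokes Proposition~\ref{prop:high-cubes-boring}. Your circularity worry is resolved in the paper by citing Proposition~\ref{prop:nilmanifold-props}, which establishes independently (via the appendix results on $\HK^k(G_\bullet)$) that $\cD_s(A)$ is a nilspace of degree~$s$, so the first claim here is not needed for that step.
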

\begin{proof}
  The first claim is proved in \ref{ex:Ds}.

  It is clear from the cubespace axioms that if $c \in C^k(\cD_s(A))$ and $\eta \colon \{0,1\}^{s+1} \to \{0,1\}^k$ is a morphism of discrete cubes then $c \circ \eta \in C^{s+1}(\cD_s(A))$, and this includes the case of faces of $c$.

  Conversely, by Proposition \ref{prop:nilmanifold-props}, we know that $\cD_s(A)$ is a nilspace of degree $s$. So if $c \colon \{0,1\}^k \to A$ and every face of $c$ of dimension $(s+1)$ is a cube, then $c$ is a cube by Proposition \ref{prop:high-cubes-boring}.
\end{proof}

The weak structure theorem states that, while these are certainly not the only examples of nilspaces, any (compact, ergodic) nilspace $X$ of degree $s$ can be \emph{built up} from spaces of the form $\cD_k(A_k)$ for some compact abelian groups $A_k$ for $1 \le k \le s$.
Indeed, $X$  can be realized as a tower of extensions
\[
X\to \pi_{s-1}(X)\to\pi_{s-2}(X)\to\cdots \to \pi_0(X)=\{*\}.
\]
such that $\pi_{i}(X)$ is a nilspace of degree $i$ and the fibers of the map $\pi_{k}(X)\to \pi_{k-1}(X)$
admit a free transitive action of a compact abelian group $A_{k}$ through which the cubespace structure on the
fiber can be identified by $\cD_k(A_k)$.

Before we can formulate these results more precisely we need to discuss how we can put a cubespace structure on
a quotient of a cubespace.

\begin{definition}\label{df:quotient}
  Let $(X, C^k(X))$ be a compact cubespace, and suppose $\sim$ is a closed equivalence relation on $X$.  The quotient space $X / \sim$ (with the quotient topology) is considered to be a cubespace\index{cubespace!quotient}, by setting $C^k(X/\sim)$ to be $C^k(X) / \sim$, i.e.~the image of $C^k(X)$ under pointwise application of $\sim$.
In other words this means that a configuration in $X/\sim$ is a cube if and only if there is some representative of it that is a cube in $X$.
\end{definition}

It is straightforward from the definition that this indeed defines a cubespace structure.
In the next definition, we specify the equivalence relation that gives rise to the quotients
$\pi_i(X)$.
This may be compared with \cite{CS12}*{Definition 2.3}.

\begin{definition}\label{df:canonical-rel}
  For any $k \ge 0$, we define $\sim_k$\index[nota]{$\sim_k$} on a (compact, ergodic) cubespace $X$ with $n$-completion for all $n$, by $x \sim_k y$ if and only if there exist $c, c' \in C^{k+1}(X)$ such that $c(\vec{1}) = x$, $c'(\vec{1}) = y$ and $c(\omega) = c'(\omega)$ for all $\omega \ne \vec{1}$.
We call $\sim_k$ the $k$-th canonical equivalence relation.\index{canonical equivalence relation}
We denote by $\pi_k$\index[nota]{$\pi_k$} the quotient map $X\to X/\sim_k$ and call it the $k$-th canonical projection\index{canonical projection}.
\end{definition}

If  $\sim$ is an equivalence relation such that the factor $X/\sim$ has $(s+1)$-uniqueness, then it must contain
$\sim_s$, which is the motivation for the definition.
However, it requires proof that $\sim_s$ is indeed an equivalence relation and the quotient is indeed a nilspace.
We will discuss these facts and further properties of $\sim_s$ in Section \ref{sec:canonical-factors}.

We are now ready to state the weak structure theorem, which is proved in Sections \ref{sec:weak-overview} and \ref{sec:elementary-v2}.
See also \cite{CS12}*{Theorem 1}, and also \cite{HK08}*{Section 5} for a related discussion.

\begin{theorem}[Weak Structure Theorem]
  \label{basic-cs-structure-theorem}
  Let $X$ be a compact ergodic nilspace of degree $s$ for some $s \ge 1$.
Then there exists a compact abelian group $A=A_s(X)$, the ``$s$-th structure group'' of $X$,
acting continuously on $X$, such that:
  \begin{enumerate}
    \item the action of $A$  is free on $X$, and its orbits are precisely the fibers of $X\to\pi_{s-1}(X)$;
    \item this induces a pointwise action of $C^k(\cD_s(A))$ on $C^k(X)$: again this action is free, and its orbits are precisely the fibers of the induced map $C^k(X) \to C^k(\pi_{s-1}(X))$.
  \end{enumerate}
\end{theorem}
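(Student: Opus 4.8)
The plan is to follow the pattern of the $s=1$ warm-up in Section~\ref{sc:1step}, carried out \emph{relatively over} the factor $Y:=\pi_{s-1}(X)$ and shifted up one dimension: where that argument recovered a group law from completion of $2$-cubes, here we use completion of $(s+1)$-cubes. I will take for granted the properties of the canonical relation $\sim_{s-1}$ proved in Section~\ref{sec:canonical-factors}: that $\sim_{s-1}$ is a closed equivalence relation, that $Y=\pi_{s-1}(X)$ is a compact ergodic nilspace of degree $s-1$ with $\pi_{s-1}\colon X\to Y$ a morphism under which every cube of $Y$ lifts to a cube of $X$, and the ``homogeneity'' of $\sim_{s-1}$: the set of all completions of a fixed $s$-corner is a single $\sim_{s-1}$-class.

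First I would construct the group. Fix a basepoint $e\in X$ and let $A$ be the underlying set $F_0:=\pi_{s-1}^{-1}(\pi_{s-1}(e))$, a closed --- hence compact --- subset of $X$, which we equip with a group structure. For $z\in X$ write $\nu_z\colon\{0,1\}^s\to X$ for the configuration equal to $e$ off $\vec 1$ and to $z$ at $\vec 1$; by homogeneity, $z\in F_0$ if and only if $\nu_z\in C^s(X)$. For $x,y\in F_0$, define $x*y$ to be the unique value at $\vec 1$ of the completion of the $(s+1)$-corner that carries $x$ at the vertex with a single $0$ in coordinate $s$, carries $y$ at the vertex with a single $0$ in coordinate $s+1$, and equals $e$ at every other vertex: the lower faces of this corner are, up to relabelling, $\nu_x$, $\nu_y$, and constant configurations, hence are cubes, so $(s+1)$-completion yields a completion and $(s+1)$-uniqueness pins down its top value. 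Then I would check: $x*y\in F_0$ (by producing a suitable witnessing $s$-cube from the completed $(s+1)$-cube); $e$ is a two-sided identity and inverses exist (by analogous completions); $*$ is commutative, because transposing coordinates $s$ and $s+1$ is a symmetry of cubes and carries the defining corner for $x*y$ to that for $y*x$; and $*$ is associative, which as in the hardest step of Section~\ref{sc:1step} follows by completing a configuration on $\{0,1\}^{s+2}$ and comparing two of its faces. Continuity of $*$ and of inversion follows from closedness of the sets $C^k$ and compactness of $X$, via a closed-graph argument using uniqueness of completions. This makes $A=(F_0,*,e)$ a compact abelian group.

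Next I would let $A$ act on all of $X$. For $a\in A$ and $z\in X$, choose a $1$-cube $(e,z')\in C^1(X)$ lying over a $1$-cube of $Y$ from $\pi_{s-1}(e)$ to $\pi_{s-1}(z)$ (possible since $Y$ is ergodic and cubes of $Y$ lift) together with an $s$-cube witnessing $z'\sim_{s-1}z$, and define $T_a(z)$ by transporting the operation $x\mapsto a*x$ along these; independence of $T_a(z)$ from the choices, the relation $T_a\circ T_b=T_{a*b}$, continuity of $T_a$, and the fact that each $T_a$ is a cubespace automorphism with $\pi_{s-1}\circ T_a=\pi_{s-1}$ are all cube-manipulation lemmas in the same spirit as above. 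Freeness of the $A$-action on $X$ comes from $(s+1)$-uniqueness, and the orbit statement of~(1) --- that the orbits are exactly the fibers of $\pi_{s-1}$ --- from transitivity of $*$ on $F_0$ together with the transport construction. For~(2), set $(\vec a\cdot c)(\omega):=T_{a(\omega)}(c(\omega))$ for $\vec a\in C^k(\cD_s(A))$ and $c\in C^k(X)$; freeness of this action is inherited pointwise from freeness on $X$. Given $c,c'\in C^k(X)$ with $\pi_{s-1}\circ c=\pi_{s-1}\circ c'$, for each $\omega$ there is a unique $a(\omega)\in A$ with $T_{a(\omega)}(c(\omega))=c'(\omega)$, and the two things to prove --- that $\vec a\cdot c\in C^k(X)$ always, and that the config $(a(\omega))_\omega$ just defined lies in $C^k(\cD_s(A))$ --- both reduce, by Proposition~\ref{prop:high-cubes-boring} applied to $X$ and Proposition~\ref{pr:Ds} applied to $\cD_s(A)$, to the case $k=s+1$. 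There one must show: if $c,c'\in C^{s+1}(X)$ lie over a common base then $\sum_\omega(-1)^{|\omega|}a(\omega)=0$; and conversely that if $\sum_\omega(-1)^{|\omega|}a(\omega)=0$ then $\vec a\cdot c\in C^{s+1}(X)$, the latter by writing such an $\vec a$ as a product of elementary $\cD_s(A)$-cubes supported on two adjacent vertices, each corresponding --- by the construction of the translations $T_a$ --- to a cube-preserving modification of $c$.

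The main obstacle, as this outline already signals, is the $(s+1)$-dimensional layer: the well-definedness and associativity of $*$, together with the two dual assertions of~(2) at $k=s+1$. These are exactly where $(s+1)$-completion and $(s+1)$-uniqueness must be combined with careful gluing of high-dimensional cubes, and where the abstract group $A$ becomes identified with the specific cube structure $\cD_s(A)$; they are the direct relative, one-dimension-up analogues of the remark in Section~\ref{sc:1step} that associativity is the hardest point. Everything else --- compactness of $A$, continuity of the operations, parallel transport between fibers, and the reduction in $k$ for part~(2) --- should be routine, if somewhat lengthy, by comparison.
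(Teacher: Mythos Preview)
Your approach is correct and complete in spirit, but it is organized differently from the paper's. The paper does not fix a basepoint $e$ and work inside a single fiber $F_0$; instead it defines $A$ as a set of equivalence classes of \emph{pairs} lying in a common fiber. Concretely, in the relative setting $f=\pi_{s-1}\colon X\to Y$, the paper sets
\[
  \cM=\{[x,y]\in X\times X:\ f(x)=f(y)\},\qquad [x,x']\sim[y,y']\ \Longleftrightarrow\ [x,y]\sim_{s-1}[x',y']\ \text{in }\cE(X),
\]
where $\cE(X)$ is the edge cubespace, and puts $A=\cM/\sim$. Addition is concatenation of edges, and the action is $a.x=$ the unique $x'$ with $[x,x']\in a$. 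This buys two things: (a) the well-definedness of $\sim$ and its universal replacement property are inherited for free from the canonical relation $\sim_{s-1}$ on $\cE(X)$, so several of the checks you list (equivalence relation, independence of choices, edge replacement in part~(ii)) become one-liners; and (b) the construction is canonical (no basepoint), so the later claim that $A$ depends only on the cubespace structure is immediate. Your basepoint construction is of course isomorphic to this via $a\leftrightarrow[e,a]$, and has the virtue of being very concrete, but the verifications are correspondingly more hands-on.

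One point to tighten: your description of the action via ``transport along a $1$-cube $(e,z')$ together with an $s$-cube witnessing $z'\sim_{s-1}z$'' is more elaborate than necessary and a bit opaque. Since $X$ is ergodic, $(e,z)$ is already a $1$-cube, and the clean basepoint definition is simply to set $T_a(z)$ to be the unique completion at $\vec 1$ of the $(s+1)$-corner $[\llcorner^s(e;a),\ \llcorner^s(z;-)]$, which is valid because its lower faces are $\llcorner^s(e;a)$, duplications of the edge $[e,z]$, and constants. This needs no auxiliary choices. For the forward direction of part~(ii) at $k=s+1$ (that the alternating sum of $a(\omega)$ vanishes), you should say explicitly what the paper does: use the converse direction to subtract off an element of $C^{s+1}(\cD_s(A))$ agreeing with $\vec a$ off $\vec 1$, then invoke $(s+1)$-uniqueness of $X$ to force $a(\vec 1)=0$.
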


We define the $k$-th structure group\index{structure group} $A_k(X)$\index[nota]{$A_k(X)$} of $X$ as the group $A_k(\pi_{k}(X))$ that
arises when we apply this theorem to the nilspace $\pi_{k}(X)$ (with $s=k$).
We will see in the proof that these groups are defined canonically in terms of the cubespace structure.

\begin{remark}\label{rem:cases}
We note that the weak structure theorem allows the following partial reconstruction of cubes on $X$.
Let $c\in C^k(\pi_{s-1}(X))$ be a cube.
If we know that $\wt c\in C^k(X)$ is a cube that projects to $c$ (that is $\pi_{s-1}(\wt c)=c$), then
we can find all cubes projecting to $\wt c$.
Indeed, let $c':\{0,1\}^k\to X$ be a configuration with $\pi_{s-1}(c')=c$, and write $a:\{0,1\}^k\to A_s(X)$
for the unique configuration such that $a(\omega).\wt c(\omega)=c'(\omega)$.
Then $c'$ is a cube if and only if $a\in C^k( \cD_s(A_s))$.
However, the weak structure theorem provides no insight for finding the first lift $\wt c$ of a cube in $C^k(\pi_{s-1}(X))$.
\end{remark}

\begin{remark}
We  note that no topological input is really used in this theorem, except to show topological conclusions.  For instance, if $X$ is a possibly infinite \emph{discrete} nilspace (i.e.~ignoring topology), the same proofs will apply, resulting in a discrete abelian group $A$ acting on the space and so on.

  We will not consider the result in such generality, as we have no need for such statements in applications.
\end{remark}

\subsection{Automorphisms and recovering a nilmanifold structure}

Stage II in our plan for proving Theorem \ref{main-structural-v1} was to establish a strong structural result, stating that $X$ actually is a nilmanifold $G/\Gamma$, under some additional technical assumptions.  We state this result, which is a variant of \cite{CS12}*{Theorem 7}, now.

\begin{theorem}
  \label{toral-structure-theorem}
  Suppose $X$ is a compact, ergodic, strongly connected nilspace of degree $s$.  Further suppose that the structure groups $A_t$ for $1 \le t \le s$ are all (compact abelian) Lie groups, i.e.~isomorphic to tori $(\RR/\ZZ)^d \times K$ for some $d=d(A_t)\ge 0$ and some finite $K$.

  Then $X$ is isomorphic to a nilmanifold $G/\Gamma$, equipped with its Host--Kra cubes.  That is, there exists a connected Lie group $G$ equipped with a filtration of degree $s$ (and such that $G_i$ is also connected for each $i$), and a discrete and co-compact subgroup $\Gamma$ of $G$ compatible with the filtration (cf. Definition \ref{defn:nilmanifold}), such that
  \begin{itemize}
    \item $X$ is homeomorphic to $G/\Gamma$, and
    \item under this homeomorphism, $C^k(X)$ is identified with $\HK^k(G_\bullet)/\Gamma$.
  \end{itemize}
\end{theorem}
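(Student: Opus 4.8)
The plan is to recover a group structure by having a group \emph{act} on $X$, following the strategy sketched above of studying automorphisms of the cubespace. Concretely, let $G$ be the group of \emph{translations} of $X$: continuous maps $\alpha\colon X\to X$ preserving every $C^k(X)$ and inducing, for each $i$, a translation of the canonical factor $\pi_i(X)$ (equivalently, acting compatibly with the whole tower $X\to\pi_{s-1}(X)\to\cdots\to\pi_0(X)=\{\ast\}$). Fix a basepoint $x_0\in X$ and put $\Gamma=\stab_G(x_0)$; the goal is to show $G$ is a connected nilpotent Lie group carrying a degree-$s$ filtration, that $\Gamma$ is discrete, cocompact and compatible with it, and that $X\cong G/\Gamma$ with matching cubes. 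Everything is proved by induction on $s$: the base case $s=1$ is the content of Section \ref{sc:1step}, where $X$ is a connected compact abelian Lie group, hence a torus, and one takes $G=X$, $\Gamma=\{\id\}$. For $s\ge 2$ we may assume the theorem for $\pi_{s-1}(X)$, which by the weak structure theory is a compact ergodic strongly connected nilspace of degree $s-1$ with structure groups $A_1,\dots,A_{s-1}$; so $\pi_{s-1}(X)\cong G'/\Gamma'$ for a connected nilpotent Lie group $G'$, and $G$ maps onto $G'$.

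First I would equip $G$ with the candidate filtration $G_i=\{\alpha\in G:\alpha\text{ descends to }\id\text{ on }\pi_{i-1}(X)\}$, so that $G_0=G_1=G$ by ergodicity, $G_{s+1}=\{\id\}$ (as $\pi_s(X)=X$ by $(s+1)$-uniqueness), and $G_s$ is exactly the $s$-th structure group $A_s=A_s(X)$ acting on $X$ as in Theorem \ref{basic-cs-structure-theorem}. The commutator inclusions $[G_i,G_j]\subseteq G_{i+j}$ are verified directly against Definition \ref{df:canonical-rel}. The freeness statements in Theorem \ref{basic-cs-structure-theorem}, applied at each level of the tower, show $\Gamma$ is \emph{discrete}: a translation close to $\id$ that fixes $x_0$ fixes the $\pi_{s-1}$-fibre through $x_0$, hence descends to a translation of $\pi_{s-1}(X)$ fixing a point and close to $\id$, which by induction is $\id$; iterating down the tower forces $\alpha=\id$.

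The crux — and the step I expect to be the main obstacle — is to show $G$ acts \emph{transitively} on $X$, i.e.\ that translations are plentiful. Given $x,y\in X$ with images $\bar x,\bar y$ in $\pi_{s-1}(X)$, transitivity of $G'$ on $\pi_{s-1}(X)$ (induction) gives a translation $\bar\alpha$ with $\bar\alpha(\bar x)=\bar y$, and one must \emph{lift} $\bar\alpha$ to a translation $\alpha$ of $X$. The set of lifts of $\bar\alpha$, when non-empty, is a torsor under the group of $A_s$-valued configurations on $X$ that respect the cube structure of $\cD_s(A_s)$; non-emptiness amounts to the vanishing of a cohomological obstruction with coefficients in $A_s$. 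This is precisely where the two hypotheses are consumed: strong connectedness of $X$ and the assumption $A_s\cong(\RR/\ZZ)^d\times K$ make $A_s$ divisible enough and the relevant cochain complex finite-dimensional enough for the obstruction to die. (Stated invariantly, $X\to\pi_{s-1}(X)$ is a \emph{fibration}, and one feeds it into the relative weak structure theory of Section \ref{sec:elementary-v2}; developing that relative machinery is partly why it appears in this paper.) Once $\bar\alpha$ is lifted to $\alpha$, the points $\alpha(x)$ and $y$ lie in one fibre of $\pi_{s-1}$, so Theorem \ref{basic-cs-structure-theorem}(i) yields $a\in A_s=G_s$ with $a\cdot\alpha(x)=y$; then $a\alpha\in G$ sends $x$ to $y$.

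With transitivity in hand the remaining points are routine. The group $G$ is locally compact and $\sigma$-compact (it fibres over the Lie group $G'$ with fibre the compact Lie group $A_s$), and as an extension of $G'$ by $A_s$ it has no small subgroups, hence is a Lie group by Gleason--Yamabe; strong connectedness of the $C^k$ forces $G$ and each $G_i$ to be connected. The standard fact that a $\sigma$-compact locally compact group acting transitively on a locally compact Hausdorff space identifies that space with a coset space then gives $X\cong G/\Gamma$; $\Gamma$ is discrete by the above and cocompact since $X$ is compact, and the same transitivity argument run at level $i$ shows $G_i\cdot x_0$ is the (compact) $\pi_{i-1}$-fibre through $x_0$, whence $\Gamma\cap G_i$ is cocompact in $G_i$, i.e.\ $\Gamma$ is compatible with the filtration. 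Finally, to identify $C^k(X)$ with $\HK^k(G_\bullet)/\Gamma$: the inclusion $\HK^k(G_\bullet)/\Gamma\subseteq C^k(X)$ holds because translations preserve cubes and the generators of $\HK^k(G_\bullet)$ are assembled from the $G_i$; for the reverse inclusion, argue by induction on $k$ and $s$ using $(s+1)$-uniqueness and Remark \ref{rem:cases} — a given $c\in C^k(X)$ projects to a Host--Kra cube of $\pi_{s-1}(X)\cong G'/\Gamma'$, corner-completion provides a Host--Kra lift $\wt c$ built from $G$, and $c$ and $\wt c$ then differ by an element of $C^k(\cD_s(A_s))$, which is realized by translations in $G_s=A_s$; hence $c\in\HK^k(G_\bullet)/\Gamma$.
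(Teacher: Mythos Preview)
This paper does not actually prove the theorem; it explicitly defers the full argument to \cite{GMV2}*{Theorem 2.18} and gives only a strategic outline. Your proposal tracks that outline closely: introduce a group of translations acting on $X$, reduce transitivity to lifting translations along $X\to\pi_{s-1}(X)$, treat the lift as a cohomological problem governed by $A_s$, and then identify $X$ with a coset space. On the big picture you are aligned with what the paper sketches.

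Two divergences from the paper's outline are worth noting. First, the paper emphasizes that the full translation group $\Aut_1(X)$ need \emph{not} be connected, and that one must pass by hand to the identity component $\Aut^\circ_1(X)$ (and to the identity components $\Aut^\circ_i(X)$ of each filtration step, which need not equal $\Aut^\circ_1(X)\cap\Aut_i(X)$) \emph{before} running the argument. Your sentence ``strong connectedness of the $C^k$ forces $G$ and each $G_i$ to be connected'' treats connectedness as an output, whereas in the paper's account it is an input. Second, the paper frames the lifting step as local-to-global: one first shows that translations of $\pi_{s-1}(X)$ \emph{sufficiently close to the identity} lift (using a discreteness property of the relevant ``cohomology''), and then uses connectedness of $\Aut^\circ_1(\pi_{s-1}(X))$ to propagate. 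Your heuristic (divisibility of $A_s$, finite-dimensionality of a cochain complex) is plausible but not the mechanism the paper describes. Finally, the filtration on the translation group used in the paper is defined in \cite{GMV2} directly in terms of cubes, not as ``descends to $\id$ on $\pi_{i-1}(X)$''; these may coincide, but the commutator inclusion $[G_i,G_j]\subseteq G_{i+j}$ is not verified via Definition~\ref{df:canonical-rel} in their treatment.
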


A complete proof of this theorem can be found in another paper of the authors \cite{GMV2}*{Theorem 2.18}. 

\begin{remark}
  \label{rem:strong-connectivity}
The difference between Theorem \ref{toral-structure-theorem} and \cite{CS12}*{Theorem 7} is that the latter makes the assumption that the structure groups are tori $\RR^d/\ZZ^d$ -- i.e.~that the finite group $K$ is necessarily trivial, or equivalently that $A_k(X)$ is connected -- in place of strong connectivity.

  As we mentioned in Remark \ref{rem:connectivity}, these two conditions are equivalent, though exactly one of the implications is difficult.  For the purposes of the current sketch, we will assume the stronger hypothesis (that the structure groups are tori), as this makes the exposition somewhat simpler while still addressing the core of the argument.  The theorem is proved as stated in \cite{GMV2}*{Theorem 2.18}, and considerations arising from the weaker (strong connectivity) hypothesis are handled there.
\end{remark}

\begin{remark}
It is possible to formulate a version of Theorem \ref{toral-structure-theorem} without any reference to the structure groups. Then one needs to add some hypotheses on the topology of the space $X$, e.g. that it is locally connected and has finite Lebesgue covering dimension.
However, the proof of this statement requires the general structure theorem discussed in Section \ref{sec:structural-thm}.
For details, we refer to the Appendix of the companion paper \cite{GMV3}. 
\end{remark}

At the heart of the challenge in proving the above theorem is recovering the non-abelian group operation on $G$.  Although we were able to find a very explicit and combinatorial construction of the group operations on the structure groups $A_t$ just by exploiting concatenation and cube completion, there are seemingly fatal obstacles to such a simple approach working for the non-abelian group $G$.  Essentially this is because Theorem \ref{toral-structure-theorem} is simply not true, even in spirit, if the strong connectivity assumptions on $X$ are dropped, and hence any proof must have some non-elementary topological aspect.\footnote{An extreme case is to consider finite (so necessarily totally disconnected) nilspaces.  An example of one of these which is not of the form $G/\Gamma$ is given in \cite{HK08}*{Example 6}.}

Instead, the group operation is recovered in a completely different way: by considering the group of automorphisms of a nilspace $X$.

\begin{definition}
  Write $\Aut(X)$ for the (topological) group of cubespace automorphisms\index{cubespace!automorphism} of $X$; that is, of homeomorphisms $\phi \colon X \to X$ such that both $\phi$ and $\phi^{-1}$ are cubespace morphisms.
\end{definition}

The topology being used here is compact-open.  It turns out that $\Aut(X)$ has attached to it a canonical filtration,
\[
  \Aut(X) = \Aut_0(X) \supseteq \Aut_1(X) \supseteq \dots \supseteq \Aut_s(X) \supseteq \{\id\}
\]
of degree $s$, assuming $X$ is a nilspace of degree $s$.  The definition of the filtration is very explicit and depends only on the cubes of $X$, but we will not go into the details here; for a full description, see \cite{GMV2}*{Definition 2.7}.

It will typically be the case that the first containment $\Aut_0(X) \supseteq \Aut_1(X)$ is strict, and so we have no guarantee that $\Aut(X)$ is nilpotent -- in fact, often it will not be.\footnote{This can be seen even when $X = (\RR/\ZZ)^2$ with the degree $1$ filtration: $\Aut(X)$ contains a copy of $\SL_2(\ZZ)$.}  For this reason it is better to work with the subgroup $\Aut_1(X)$, sometimes called the group of \emph{$1$-translations}.  This inherits a filtration
\[
  \Aut_1(X) = \Aut_1(X) \supseteq \dots \supseteq \Aut_s(X) \supseteq \Aut_{s+1}(X) = \{\id\}\index[nota]{$\Aut_i(X)$}
\]
which guarantees that $\Aut_1(X)$ is nilpotent of nilpotency class at most $s$.

In general, the group $\Aut_1(X)$ need not be connected.  It turns out to be vital to work instead with the identity component of $\Aut_1(X)$, which we term $\Aut^\circ_1(X)$, and similarly with the revised filtration $\Aut^\circ_i(X)$, the identity component of $\Aut_i(X)$.  Note this need \emph{not} be the same as $\Aut^\circ_1(X) \cap \Aut_i(X)$, and hence one should check that this is actually a filtration, but this is not hard to show.

Given this filtration, we can consider the Host--Kra cube-groups $\HK^k(\Aut_1(X)_\bullet)$ or $\HK^k(\Aut_1^\circ(X)_\bullet)$.  Again these have a natural interpretation in terms of the cubes of $X$.  One consequence is that if $x_0 \in X$ and $(\phi_\omega)_{\omega \in \{0,1\}^k} \in \HK^k(\Aut_1(X)_\bullet)$ then the configuration $(\phi_\omega(x_0))_{\omega \in \{0,1\}^k}$ is a cube in $C^k(X)$.

In other words given $x_0 \in X$, the map
\begin{align*}
  \Aut_1(X) &\to X \\
       \phi &\mapsto \phi(x_0)
\end{align*}
is a cubespace morphism if $\Aut_1(X)$ is given its Host--Kra cubes (and hence this restricts to a cubespace morphism on $\Aut^\circ_1(X)$).  It will not be an isomorphism, as it is typically not injective: $x_0$ may have non-trivial stabilizer.  But we can define another map of cubespaces
\begin{align*}
  \Aut^\circ_1(X) / \stab(x_0) &\to X \\
       \phi \, \stab(x_0) &\mapsto \phi(x_0)
\end{align*}
which is well-defined and again a cubespace morphism.

The hope is that this is in fact surjective, and furthermore an isomorphism of cubespaces (which is a strictly stronger property).  If in addition $\Aut^\circ_1(X)$ were a Lie group and $\stab(x_0)$ a discrete and co-compact subgroup, we would have identified $X$ with a nilmanifold, as required.

However, it is not completely obvious \emph{a priori} that $\Aut(X)$ is not just the trivial group, in which case this map would be far from surjective.  Establishing that $X$ has ``enough'' automorphisms is the most difficult step in this strategy.

We will offer a few further clues as to the structure of this part of the proof.  Suppose for now that $s = 2$, and so $X$ is a nilspace of degree $2$.  One source of non-trivial automorphisms of $X$ is provided by the action of the top structure group $A_2(X)$ on $X$ described above.  Since this action fixes fibers of the quotient $X \to \pi_1(X)$ this still falls short of showing surjectivity.

However, we similarly have plenty of automorphisms of $\pi_1(X)$, given by the action of the next structure group $A_1(X)$ on $\pi_1(X)$.  In fact (assuming $X$ is ergodic) this action is simply transitive.

So, we know that we can move by elements of $\Aut^\circ_1(X)$ along fibers of $\pi_1$,\footnote{Note we have used here the extra assumption that $A_2(X)$ is connected.} and also that we can move between fibers by automorphisms of $\pi_1(X)$.  If we could show that these latter elements of $\Aut_1^\circ(\pi_1(X))$ can be lifted to elements of $\Aut_1^\circ(X)$, this would show that the action of $\Aut_1^\circ(X)$ is transitive, as desired.

With the formal definition of $\Aut_1(X)$ in place (which we deferred to \cite{GMV2}*{Definition 2.7}), it is not hard to show the following fact; this is done in \cite{GMV2}*{Proposition 3.2}.

\begin{lemma}
  Elements of $\Aut_1(X)$ commute with $\pi_{s-1}$, and so there is a natural group homomorphism $\Aut_1(X) \to \Aut_1(\pi(X))$.  Hence similarly we get a group homomorphism $\Aut^\circ_1(X) \to \Aut^\circ_1(\pi(X))$.
\end{lemma}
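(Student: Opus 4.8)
The plan is to show that \emph{every} cubespace automorphism of $X$ descends to one of the canonical quotient $\pi_{s-1}(X)$, and then to promote this to the filtered and identity-component statements. Write $\pi=\pi_{s-1}$ throughout, and recall $\pi_{s-1}(X)$ is a compact nilspace of degree $s-1$.

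First I would check that any $\phi\in\Aut(X)$ preserves $\sim_{s-1}$. By Definition \ref{df:canonical-rel}, $x\sim_{s-1}y$ iff there are $c,c'\in C^{s}(X)$ with $c(\vec{1})=x$, $c'(\vec{1})=y$ and $c(\omega)=c'(\omega)$ for all $\omega\neq\vec{1}$. Since $\phi$ is a cubespace morphism, $\phi\circ c,\phi\circ c'\in C^{s}(X)$, they still agree off $\vec{1}$, and take values $\phi(x),\phi(y)$ at $\vec{1}$; hence $\phi(x)\sim_{s-1}\phi(y)$, and the same applied to $\phi^{-1}$ shows $\phi$ respects $\sim_{s-1}$ in both directions. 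As $\sim_{s-1}$ is closed and $\pi(X)=X/\sim_{s-1}$ carries its quotient cubespace structure (Definition \ref{df:quotient}, with the properties of $\sim_{s-1}$ from Section \ref{sec:canonical-factors}), $\phi$ induces a homeomorphism $\bar\phi$ of $\pi(X)$ with $\pi\circ\phi=\bar\phi\circ\pi$ --- which is exactly the assertion that $\phi$ ``commutes with $\pi$''. Moreover $\bar\phi$ is a cubespace morphism: since $C^k(\pi(X))=\pi(C^k(X))$, any $\bar c\in C^k(\pi(X))$ is $\pi\circ c$ for some $c\in C^k(X)$, so $\bar\phi\circ\bar c=\pi\circ(\phi\circ c)\in C^k(\pi(X))$; its inverse $\overline{\phi^{-1}}$ is a morphism by the same argument, so $\bar\phi\in\Aut(\pi(X))$. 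The assignment $\phi\mapsto\bar\phi$ is visibly a group homomorphism ($\overline{\psi\circ\phi}=\bar\psi\circ\bar\phi$, $\overline{\id}=\id$), and it is continuous for the compact--open topologies, by uniform continuity of $\pi$ on the compact space $X$ together with surjectivity of $\pi$.

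Next I would restrict this homomorphism to $\Aut_1$. The filtration $\Aut(X)=\Aut_0(X)\supseteq\Aut_1(X)\supseteq\cdots$ of \cite{GMV2}*{Definition 2.7} is characterized purely in terms of the cubes of the underlying cubespace, and the identity $C^k(\pi(X))=\pi(C^k(X))$ pushes that condition forward from $X$ to $\pi(X)$ exactly as above: for instance, a configuration $(\phi_\omega(x_0))_\omega$ built from $(\phi_\omega)_\omega\in\HK^k(\Aut_1(X)_\bullet)$ is a cube of $X$, and applying $\pi$ pointwise turns it into the analogous configuration $(\bar\phi_\omega(\pi(x_0)))_\omega$ over $\pi(X)$, still a cube, which feeds into the corresponding membership condition for the $\bar\phi_\omega$. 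Unwinding \cite{GMV2}*{Definition 2.7} in this way yields $\phi\in\Aut_1(X)\Rightarrow\bar\phi\in\Aut_1(\pi(X))$, i.e.\ the desired homomorphism $\Aut_1(X)\to\Aut_1(\pi(X))$. Finally, since this homomorphism is continuous, it carries the connected set $\Aut^\circ_1(X)$ --- which contains $\id$ --- into a connected subset of $\Aut_1(\pi(X))$ containing $\id$, hence into the identity component $\Aut^\circ_1(\pi(X))$; this is the last assertion.

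The only step with genuine content is the transfer of the filtration condition, which really invokes the cube-theoretic definition of $\Aut_1$ deferred to \cite{GMV2}; everything else is bookkeeping. The one preliminary not to overlook is the input from Section \ref{sec:canonical-factors} that $\sim_{s-1}$ is a closed equivalence relation and $\pi(X)$ is a compact nilspace of degree $s-1$, without which $\Aut_1(\pi(X))$ and its identity component would not be defined to begin with.
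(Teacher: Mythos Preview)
Your proposal is correct and aligns with the paper's treatment: the paper does not prove this lemma here at all, but simply defers it to \cite{GMV2}*{Proposition 3.2}, noting that the formal definition of $\Aut_1(X)$ lives in \cite{GMV2}*{Definition 2.7}. You give strictly more detail than the paper does, correctly handling the descent of any $\phi\in\Aut(X)$ to $\pi(X)$ via preservation of $\sim_{s-1}$ and the quotient cubespace structure, and you rightly flag that the only substantive step---checking that $\phi\in\Aut_1(X)$ implies $\bar\phi\in\Aut_1(\pi(X))$---genuinely requires unwinding the cube-theoretic definition of $\Aut_1$ from \cite{GMV2}, which neither you nor the present paper can carry out without that definition in hand.
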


By the preceding discussion, the problem of showing surjectivity of the map $\Aut_1^\circ(X) \to X$ reduces to proving the following.

\begin{proposition}\label{prop:toral implies surjective}
  Suppose $X$ is an ergodic nilspace of degree $s$, whose structure groups $A_t$ are tori.  Then the map $\Aut^\circ_1(X) \to \Aut^\circ_1(\pi(X))$ from the previous proposition is surjective.
\end{proposition}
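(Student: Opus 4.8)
The plan is to induct on $s$ and reduce the statement to a lifting problem for the projection $\pi_{s-1}\colon X\to Y$, where $Y:=\pi_{s-1}(X)=\pi(X)$ and $A:=A_s(X)$; by hypothesis $A$ is a torus. If $s=1$ then $Y=\pi_0(X)$ is a point and $\Aut^\circ_1(\pi(X))$ is trivial, so there is nothing to prove. For $s\ge 2$ we may assume Theorem \ref{toral-structure-theorem} in degree $s-1$; since $Y$ is a compact ergodic nilspace of degree $s-1$ whose structure groups $A_1,\dots,A_{s-1}$ are tori (hence $Y$ is strongly connected), this identifies $Y$ with a nilmanifold, and in particular $\Aut^\circ_1(Y)$ is a connected Lie group. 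By the Weak Structure Theorem \ref{basic-cs-structure-theorem}, $\pi_{s-1}\colon X\to Y$ is a $\cD_s(A)$-extension: $A$ acts freely on $X$ with orbits the fibers of $\pi_{s-1}$, and $C^k(\cD_s(A))$ acts freely on $C^k(X)$ with orbits the fibers of $C^k(X)\to C^k(Y)$.

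First I would make a group-theoretic reduction. Let $H\subseteq\Aut^\circ_1(Y)$ be the set of $\psi$ admitting a lift $\phi\in\Aut^\circ_1(X)$, meaning $\pi_{s-1}\circ\phi=\psi\circ\pi_{s-1}$. Compositions and inverses of such lifts are again such lifts, and they stay in the subgroup $\Aut^\circ_1(X)$, so $H$ is a subgroup; as $\Aut^\circ_1(Y)$ is connected it therefore suffices to prove that $H$ contains a neighbourhood of $\id$, which we get from the following local statement: every $\psi$ close enough to $\id_Y$ in $\Aut_1(Y)$ admits a lift $\phi_\psi\in\Aut_1(X)$ depending continuously on $\psi$ with $\phi_{\id}=\id_X$. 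Indeed, granting this, composing the $\phi_{\psi_t}$ along a path $(\psi_t)$ from $\id_Y$ to $\psi$ inside the (locally path-connected) group $\Aut^\circ_1(Y)$ exhibits $\phi_\psi$ as connected to $\id_X$, hence in $\Aut^\circ_1(X)$.

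To prove the local statement I would use the toral hypothesis essentially, through an obstruction-theoretic argument. The $\cD_s(A)$-extension $X\to Y$ is classified up to isomorphism by a class $[X]$ in a degree $s+1$ cohomology group $H^{s+1}(Y,A)$ (degree $s+1$ because $\cD_s(A)$ has degree $s$): a cubespace morphism $\psi$ of $Y$ lifts to a morphism of $X$ precisely when $\psi^*[X]=[X]$, and one recovers a lift from any choice of trivialization of $\psi^*[X]-[X]$. The key point is that $H^{s+1}(Y,A)$ is \emph{discrete near} $[X]$ -- any continuous deformation of the classifying cocycle of $X$ is a coboundary. Granting this, $\psi$ near $\id_Y$ forces $\psi^*[X]$ near $[X]$, hence $\psi^*[X]=[X]$; and choosing the trivialization of $\psi^*[X]-[X]=0$ to be small and to vary continuously with $\psi$ produces $\phi_\psi$ with the required properties. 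Finally, to ensure $\phi_\psi$ is genuinely a $1$-translation one performs the whole construction within the filtration subgroup $\Aut_1$ rather than $\Aut$, using the analogous ``$1$-translation'' refinement of the cohomology (classifying morphisms $Y\to\cD_{s-1}(A)$ in the relevant degree), which is discrete near the corresponding class for the same reason. An alternative, more hands-on route to the same end is to choose $\phi$ on a continuous local section of $\pi_{s-1}$ by unique cube-completion, extend it $A$-equivariantly, and resolve the resulting finite ($n$-torsion of $A$) ambiguities using connectedness; this is the same obstruction in different language.

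I expect the main obstacle to be the rigidity claim above: that a connected group of automorphisms of the nilmanifold $Y$ cannot deform the $\cD_s(A)$-extension $X$, equivalently that the relevant nilspace cohomology over a nilmanifold base with toral fibre is discrete near $[X]$ (continuous cocycle deformations being coboundaries). This is the substantive input and is the sort of statement to be supplied by the structural analysis of such extensions elsewhere in the project. A secondary difficulty is purely organisational: keeping the construction inside $\Aut_1$ and checking continuity of $\psi\mapsto\phi_\psi$, so that membership in $\Aut^\circ_1(X)$ is automatic.
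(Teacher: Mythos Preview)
Your proposal is correct and follows essentially the same approach as the paper's own outline: reduce to lifting $1$-translations that are small perturbations of the identity, phrase the obstruction to lifting cohomologically, and use a discreteness/rigidity property of that cohomology to kill the obstruction for small $\psi$. Note that the paper does not actually prove this proposition here but only sketches the argument and defers the details to \cite{GMV2}; the one implementation detail the paper highlights (first construct a continuous lift that need not be a translation, then correct it via the $A_s$-action) is precisely your ``alternative, more hands-on route'', so you have identified both the abstract and the concrete versions of the same argument.
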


In the $s=2$ case, this would tell us that $\Aut^\circ_1(X)$ acts transitively on the fibers of $\pi_1$ (i.e.~given any two fibers, we can map some point on one fiber to some point on the other); and since we already know that $A_2$ acts transitively on each fiber, this would show that $\Aut_1^\circ(X)$ acts transitively on $X$ and so the map $\Aut_1^\circ(X) \to X$ is surjective.

Moving from surjectivity to isomorphism (and from the $s=2$ case to the general case) involves some extra work, but it is mainly technical.

To prove Proposition \ref{prop:toral implies surjective}, one begins by showing that if a $1$-translation on $\pi_1(X)$ is a small perturbation of the identity, then it lifts to a $1$-translation on $X$ (if $X$ is a nilspace of degree $2$).  The proof of this is roughly to relate failure of lifting to some kind of ``cocycle'', and therefore express obstructions to lifting in terms of some kind of ``cohomology'' (where we use the term in a very loose sense).  One then argues that this ``cohomology'' is somehow discrete, and so provided the $1$-translation is small enough, no obstructions arise.

These notions of ``cocycles'' and ``cohomology'' are useful tools, and we will allude to them again below.  For the relevant formal definitions of cocycles and coboundaries, see \cite{GMV2}*{Definition 4.8} and the subsequent discussion; for a more in-depth discussion of cohomology more generally, and its relation to extensions of cubespaces, see \cite{CS12}*{Section 2.10} (or \cites{Can1,Can2}).

The proof of Theorem \ref{toral-structure-theorem} is very closely modeled on the arguments of Antol\'\i n Camarena and Szegedy \cite{CS12}. The main difference is the following. In constructing the lift of a translation on the canonical quotient, we first construct (in the paper \cite{GMV2}) a continuous lift, which is not necessarily a translation. Then we use the action of the structure group to ``correct it'' to a translation. On the other hand in \cite{CS12}, a measurable lift is constructed first, which is shown to be continuous a posteriori.

\subsection{The inverse limit statement}

Given this structure theorem for (ergodic, compact, strongly connected) nilspaces whose structure groups are Lie, our remaining task is to deduce something for general (ergodic, compact, strongly connected) nilspaces.

Essentially this task reduces to showing some version of the following (see also \cite{CS12}*{Theorem 4}).

\begin{theorem}
  \label{thm:invlim}
  Let $X$ be an ergodic compact nilspace of degree $s$.

  Then there exists a sequence $X_n$ of ergodic compact nilspaces of degree $s$, such that $A_t(X_n)$ are Lie groups for all $n$ and all $1 \le t \le s$, and such that $X \cong \varprojlim X_n$ (with the inverse limit being in the sense of cubespaces).
\end{theorem}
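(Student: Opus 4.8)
The plan is to induct on the degree $s$, using the weak structure theorem (Theorem~\ref{basic-cs-structure-theorem}) to strip off the top structure group one step at a time, together with the classical fact that every compact metrizable abelian group is an inverse limit of compact abelian Lie groups.

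For the base case $s=1$: by the proposition of Section~\ref{sc:1step} we have $X \cong \cD_1(A)$ for the compact abelian group $A = A_1(X)$. Since $\widehat A$ is discrete it is the directed union of its finitely generated subgroups $H_n$, and dualizing gives $A = \varprojlim_n \widehat{H_n}$ with each $\widehat{H_n} \cong (\RR/\ZZ)^{d_n} \times F_n$ ($F_n$ finite) a compact abelian Lie group; functoriality of Pontryagin duality then realizes $X \cong \varprojlim_n \cD_1(\widehat{H_n})$ as an inverse limit of cubespaces, each with Lie structure group.

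For the inductive step, let $X$ be ergodic, compact, of degree $s \ge 2$, put $Y := \pi_{s-1}(X)$ (ergodic, compact, of degree $\le s-1$), and let $A := A_s(X)$, a compact abelian group acting freely on $X$ with orbits the fibers of $X \to Y$. By the inductive hypothesis $Y \cong \varprojlim_m Y_m$, with each $Y_m$ an ergodic compact nilspace all of whose structure groups are Lie; write $q_m \colon Y \to Y_m$ for the projections. As in the base case, choose a decreasing sequence of closed subgroups $K_n \le A$ with $\bigcap_n K_n = \{0\}$ and $A/K_n$ a compact abelian Lie group. Quotienting $X$ by the $K_n$-action gives, by the canonicity in Theorem~\ref{basic-cs-structure-theorem}, an ergodic compact nilspace $X/K_n$ with $\pi_{s-1}(X/K_n) = Y$ and $A_s(X/K_n) = A/K_n$. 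The crux is then to \emph{push} the quotient $q_m$ through these extensions: I would aim to produce, for a cofinal set of pairs $(m,n)$, an ergodic compact nilspace $X_{m,n}$ that is an $A/K_n$-extension of $Y_m$, together with a quotient morphism $X \to X_{m,n}$ lying over $q_m$, chosen compatibly as $m,n \to \infty$. Any such $X_{m,n}$ automatically has all structure groups Lie (its top one is $A/K_n$ and $\pi_{s-1}(X_{m,n}) = Y_m$), and a standard diagonal argument then assembles the $X_{m,n}$ into a single decreasing inverse system. Since $X$ is compact metrizable and the maps $X \to X_{m,n}$ separate points (the equivalence relations they induce intersect to the diagonal), the natural map $X \to \varprojlim_{m,n} X_{m,n}$ is a homeomorphism; and, using compactness, one checks that a configuration lies in $C^k(X)$ exactly when its image in each $X_{m,n}$ is a cube, so this is an isomorphism of cubespaces.

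The main obstacle is this push-through step: an $A/K_n$-extension of $Y$ need not descend along $q_m$ to an extension of $Y_m$, because the cohomological data (a ``cubic cocycle'') classifying $X/K_n$ naturally lives over $Y$ rather than over $Y_m$. The way through is a continuity property of the relevant cohomology with respect to the presentation $Y = \varprojlim Y_m$: every cocycle on $Y$ agrees, up to a coboundary and an arbitrarily small perturbation, with one pulled back from some $Y_m$; combined with the realization of cohomology classes as cubespace extensions, this lets the extension descend (after passing far enough down the system $(Y_m)$ and choosing $K_n$ finely) and produces the $X_{m,n}$. This cocycle approximation and realization machinery is the substantive input, developed in the companion papers \cites{GMV2,GMV3}; by contrast the remaining points — that quotients of ergodic nilspaces are ergodic, that structure groups transform as claimed under quotienting by a subgroup of the acting group, and the inverse-limit bookkeeping — are routine given the weak structure theorem.
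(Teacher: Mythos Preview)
Your proposal is correct and follows essentially the same route as the paper's outline: reduce the top structure group $A_s$ to a Lie quotient by taking $X/K_n$, then push the resulting extension along the approximating maps on $\pi_{s-1}(X)$, where the obstruction is cohomological and vanishes for sufficiently fine approximants; your organization by induction on $s$ is exactly what the paper means by ``this same argument repeated once for each $1 \le t \le s$''. One minor remark: your description of the resolution (approximate the cocycle by a pullback, then realize the class as an extension) is closer in spirit to the \cite{CS12} approach, whereas the paper emphasizes that in \cite{GMV3} the space $X_r$ is obtained directly as an explicit quotient of $X$ rather than by first pushing a cocycle and then reconstructing the extension --- but both rest on the same ``rigidity/discreteness of cohomology'' input, and you correctly defer that to the companion papers.
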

In fact the result as proven in \cite{GMV3}*{Theorem 1.26} is marginally stronger, but in ways that use terminology we have not yet discussed.

Note that this gives some partial information about general (ergodic, compact) nilspaces without any connectivity assumption.

This is not quite enough to deduce Theorem \ref{main-structural-v1} as we do not get the precise description of the maps appearing in the inverse limit.  A complete proof of a suitably stronger result can be found in \cite{GMV3}*{Theorems 1.27 and 1.28}.  
We give an outline below, ignoring these finer points.

For simplicity, we again assume that the structure groups $A_k(X)$ are connected for all $k \ge 1$,
rather than just that $X$ itself is strongly connected

The starting point is again the weak structure theorem, together with the result that any compact connected abelian group is an inverse limit of tori.

Again consider first the case $s=2$.  Since $A_2(X)$ is a compact connected abelian group, we may write $A_2(X) = \varprojlim K_m$ where $K_m$ are tori (with given surjective maps between them).  Furthermore, it is straightforward to quotient $X$ by the action of a subgroup of $A_2$ (the kernel of one of the projections $A \to K_m$) to deduce the following:

\begin{fact*}
  Under these hypotheses, $X$ is an inverse limit of nilspaces $X_m$ of degree $2$, with the property that $A_2(X_m)$ is a torus for all $m$.
\end{fact*}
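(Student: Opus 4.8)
The plan is to realise the top structure group $A:=A_2(X)$ as an inverse limit of tori, to quotient $X$ by the corresponding closed subgroups of $A$, to check that the quotients are again degree-$2$ nilspaces whose top structure group is the corresponding torus, and finally to recognise $X$ as their inverse limit. Recall from Theorem~\ref{basic-cs-structure-theorem} that $A$ acts freely and continuously on $X$ with orbits exactly the fibres of $X\to\pi_1(X)$; under the present hypotheses $A$ is moreover compact and \emph{connected}. Since $X$ has degree $2$ we have $A\ne\{0\}$, and as $X$, hence $A$, is metrizable the dual $\widehat A$ is countable; being torsion-free it is therefore the increasing union of nonzero finitely generated, hence free, subgroups $\Lambda_1\subseteq\Lambda_2\subseteq\cdots$. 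Writing $L_m:=\Lambda_m^{\perp}\le A$ for the annihilators, we obtain a decreasing chain of closed subgroups with $\bigcap_m L_m=\{0\}$ and $K_m:=A/L_m\cong(\RR/\ZZ)^{d_m}$ a torus of dimension $d_m\ge 1$. (This is exactly the assertion that a compact connected abelian group is an inverse limit of tori.)

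Next, for each $m$ the subgroup $L_m$ acts freely and continuously on $X$, so the orbit relation $\sim_m$ (with $x\sim_m x'$ iff $x'=\ell\cdot x$ for some $\ell\in L_m$) is a closed equivalence relation, being the image of the continuous map $L_m\times X\to X\times X$, $(\ell,x)\mapsto(x,\ell\cdot x)$, out of a compact space. By Definition~\ref{df:quotient} the quotient $X_m:=X/\!\sim_m$ is then a compact cubespace; the projection $q_m\colon X\to X_m$ is a morphism, and since $\sim_m$ refines the $A$-orbit relation it factors the canonical map $X\to\pi_1(X)$ through a morphism $X_m\to\pi_1(X)$, the residual group $K_m=A/L_m$ acting freely on $X_m$ with orbits the fibres of this morphism. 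One now verifies that $X_m$ is a compact ergodic nilspace of degree $2$ with $\pi_1(X_m)=\pi_1(X)$ and $A_2(X_m)\cong K_m$: ergodicity is immediate since $C^1(X_m)=q_m(C^1(X))=q_m(X^2)=X_m^2$; $3$-uniqueness follows by lifting two competing $3$-cubes of $X_m$ to $X$ and observing, via Remark~\ref{rem:cases}, that they differ by an element of $C^3(\cD_2(A))/L_m=C^3(\cD_2(K_m))$, which is trivial off $\vec{1}$ and hence everywhere by $3$-uniqueness of $\cD_2(K_m)$; and the $k$-completion axiom for all $k$, together with the identifications $\pi_1(X_m)=\pi_1(X)$ and $A_2(X_m)\cong K_m$, follows from the fact that $q_m$ is a fibration whose fibres are cosets of the compact group $L_m$, so that $X_m$ is a $K_m$-extension of the degree-$1$ nilspace $\pi_1(X)$ in the sense of the relative weak structure theory of Section~\ref{sec:elementary-v2}.

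I expect the genuine obstacle to lie precisely in this last verification, specifically the $k$-completion axiom for $X_m$: it requires lifting a $k$-corner of $X_m$ to a $k$-corner of $X$ \emph{compatibly across all of its lower faces} before completing in $X$ and projecting back down, and this compatible-lifting statement is exactly what the fibration machinery of Section~\ref{sec:elementary-v2} is built to provide rather than something to argue by hand. Granting that each $X_m$ is a degree-$2$ nilspace with top structure group $K_m$, the identification $X\cong\varprojlim X_m$ is then routine: the maps $X_{m'}\to X_m$ for $m'\ge m$ are induced by $L_{m'}\subseteq L_m$, the natural map $X\to\varprojlim X_m$ is injective because $\bigcap_m L_m=\{0\}$ and hence a homeomorphism by compactness, and $C^k(X)=\bigcap_m q_m^{-1}(C^k(X_m))$ holds because $\subseteq$ is clear ($q_m$ a morphism) while for $\supseteq$ one lifts the base cube $\pi_1(c)\in C^k(\pi_1(X))$ to a cube $\wt c\in C^k(X)$, writes the remaining data of $c$ as an $A$-valued configuration $a$ as in Remark~\ref{rem:cases}, and checks that the image of $a$ in each $\cD_2(K_m)$ is a cube; since $C^k(\cD_2(-))$ is cut out by $\ZZ$-linear equations it commutes with the inverse limit $A=\varprojlim K_m$, so $a\in C^k(\cD_2(A))$ and therefore $c\in C^k(X)$ by Remark~\ref{rem:cases}.
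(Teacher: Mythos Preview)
Your approach is correct and is exactly what the paper does: it states in one sentence that one quotients $X$ by the kernels $L_m$ of the projections $A_2(X)\to K_m$, and offers no further detail. Your flagged concern about $k$-completion for $X_m$ is not a genuine obstacle and does not require the full relative machinery of Section~\ref{sec:elementary-v2}. To see that $q_m$ is a fibration directly: given a $k$-corner $\lambda$ in $X$ and a compatible cube $c\in C^k(X_m)$, lift $c$ to some $\tilde c\in C^k(X)$ by the definition of the quotient cubespace; then on $\{0,1\}^k\setminus\{\vec1\}$ one has $\lambda(\omega)=\ell(\omega)\cdot\tilde c(\omega)$ for unique $\ell(\omega)\in L_m$, and part~(ii) of Theorem~\ref{basic-cs-structure-theorem} (applied to each lower face) together with the linearity of the $\cD_2$ cube equations shows that $\ell$ is a $k$-corner in $\cD_2(L_m)$. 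Since $\cD_2(L_m)$ is itself a fibrant nilspace, complete $\ell$ to a cube and set $\lambda(\vec1):=\ell(\vec1)\cdot\tilde c(\vec1)$; then $\lambda\in C^k(X)$ again by Theorem~\ref{basic-cs-structure-theorem}(ii), and $q_m(\lambda(\vec1))=c(\vec1)$. From here $X_m$ is fibrant by the remark after Lemma~\ref{lem:universal1}, and the rest of your argument goes through as written.
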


This will allow us to assume -- given a bit of work -- that $A_2(X)$ is itself a torus, as otherwise we can reduce to that case.

Meanwhile, $A_1(X)$ is also a connected compact abelian group so we can write $A_1(X) = \varprojlim M_r$ for some tori $M_r=(\RR/\ZZ)^{d_{r}}$.  We can deduce that $\pi_1(X)$ is an inverse limit of tori (in the sense of cubespaces).

The remaining challenge is to prove the following.

\begin{lemma}
  Let $X$ be an ergodic compact nilspace of degree $2$, such that $A_2(X)$ is a torus.  Suppose further that   $A_1(X) = \varprojlim M_r$ for some tori $M_r$. Then for all sufficiently large integers $r$, there exists a canonical (degree $2$, ergodic, compact) nilspace $X_r$ and a quotient map $X \to X_r$ such that $A_2(X_r) \cong A_2(X)$, $A_1(X_r) \cong M_r$ and the diagram
  \\[0.5\baselineskip]
  \[
  \begin{CD}
    X @>>> X_r \\
    @VV\pi_1V @VV\pi_1V \\
    \cD_1(A_1(X)) @>\phi>> \cD_1(M_r)
  \end{CD}
  \]
  \\[0.5\baselineskip]
  commutes, where $\phi \colon A_1(X) \to M_r$ is the map from the inverse limit.
\end{lemma}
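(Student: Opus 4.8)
The plan is to realise $X$ as an abelian extension of its degree-$1$ factor and to push that extension forward along $\phi$. Since $X$ is ergodic of degree $2$, its factor $\pi_1(X)$ is an ergodic compact nilspace of degree $1$, identified (as in the statement) with $\cD_1(B)$ for $B = A_1(X)$. Applying the Weak Structure Theorem (Theorem~\ref{basic-cs-structure-theorem}) to $X$ with $s=2$ exhibits $X$ as an abelian extension of $\cD_1(B)$ with structure group the torus $A := A_2(X)$; by the cohomology theory of abelian extensions of nilspaces (cf.\ \cite{CS12}*{Section~2.10}) such an extension is classified by a continuous cocycle, so $X \cong X_\sigma$ for a cocycle $\sigma$ on $\cD_1(B)$ valued in $\cD_2(A)$, with the class $[\sigma]$ well-defined modulo coboundaries. (Note that I cannot simply invoke Proposition~\ref{prop:toral implies surjective} for $X$ itself, because $A_1(X) = B$ need not be a torus; passing to the finite stages $M_r$ is precisely what repairs this.)

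Write $\mathrm{pr}_r \colon \cD_1(B) \to \cD_1(M_r)$ for the cubespace morphism induced by $\phi$, and $N_r := \ker\phi \le B$, a closed (hence compact) subgroup with $\bigcap_r N_r = \{0\}$ and $\operatorname{diam}(N_r) \to 0$. The heart of the proof will be to show that for all sufficiently large $r$ the class $[\sigma]$ is pulled back from $\cD_1(M_r)$, i.e.\ $[\sigma] = \mathrm{pr}_r^*[\sigma_r]$ for some cocycle $\sigma_r$ on $\cD_1(M_r)$ valued in $\cD_2(A)$. Granting this, I would set $X_r := X_{\sigma_r}$: then $X \cong X_\sigma \cong X_{\mathrm{pr}_r^*\sigma_r}$, which is exactly the pullback of the extension $X_r \to \cD_1(M_r)$ along $\mathrm{pr}_r$, so it comes with a canonical surjective morphism $q \colon X_{\mathrm{pr}_r^*\sigma_r} \to X_r$ covering $\mathrm{pr}_r$ whose fibres are the base-direction cosets of $N_r$; thus $X_r$ is the cubespace quotient (Definition~\ref{df:quotient}) of $X_{\mathrm{pr}_r^*\sigma_r}$ by this closed equivalence relation, and composing with $X \cong X_{\mathrm{pr}_r^*\sigma_r}$ gives the required quotient map $X \to X_r$. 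The square then commutes by construction. Since pulling back an abelian extension preserves the structure group and replaces the base by $\cD_1(M_r)$, the Weak Structure Theorem applied to $X_r$ yields $A_2(X_r)\cong A_2(X)$ and $A_1(X_r) \cong M_r$; and $X_r = X_{\sigma_r}$ is ergodic, compact and of degree $2$ because it is an abelian extension of the ergodic compact degree-$1$ nilspace $\cD_1(M_r)$ by the nontrivial torus $A$. Finally $X_r$ is canonical, being determined up to isomorphism by $[\sigma_r]$, which is in turn pinned down by $[\sigma]$.

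To prove that $[\sigma]$ descends to a finite stage, I would first run an averaging step: integrating $\sigma$ over the cosets of $N_r$ against Haar measure on $N_r$ produces a cocycle $\sigma'_r$ constant along those cosets, hence of the form $\mathrm{pr}_r^*\tau_r$ for a continuous cocycle $\tau_r$ on $\cD_1(M_r)$, and $\|\sigma - \mathrm{pr}_r^*\tau_r\|_\infty \to 0$ as $r \to \infty$ by uniform continuity of $\sigma$ together with $\operatorname{diam}(N_r) \to 0$. Thus for large $r$ the cocycle $\sigma - \mathrm{pr}_r^*\tau_r$ is uniformly small, and the remaining, and I expect hardest, step is a rigidity statement: a sufficiently small cocycle on $\cD_1(B)$ valued in $\cD_2(A)$ must be a coboundary, equivalently the relevant cohomology group of $\cD_1(B)$ with coefficients in $A$ is discrete, with no nonzero class lying near $0$. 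This is exactly the ``discreteness of the obstruction cohomology'' phenomenon used in the proof of Theorem~\ref{toral-structure-theorem} to lift small translations; making it precise here requires topologising the cohomology of the extension and proving a quantitative rigidity bound, which should ultimately reduce to a rigidity property of degree-$2$ nilspace extensions of tori by tori (provable by a Pontryagin-duality computation) together with a Mittag--Leffler argument transferring this discreteness along the tower $(M_r)$ to the inverse limit $B$. Once the rigidity is in hand, $\sigma$ is cohomologous to $\mathrm{pr}_r^*\tau_r$ for $r$ large, and we take $\sigma_r := \tau_r$.
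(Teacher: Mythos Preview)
The paper does not actually prove this lemma here: it appears in an outline section, and the full argument is deferred to the companion paper \cite{GMV3}. What the paper does do is describe two strategies and explicitly contrast them. Your proposal follows the first strategy --- the one the paper attributes to \cite{CS12}: classify the extension $X \to \cD_1(B)$ by a cocycle $\sigma$, push the cocycle forward to $\cD_1(M_r)$, and use a discreteness/rigidity property of the obstruction cohomology to show the small remainder is a coboundary once $r$ is large. The paper's own approach is deliberately different: rather than building $X_r$ from a pushed-forward cocycle, it realises $X_r$ directly as an explicit cubespace quotient of $X$ (in the sense of Definition~\ref{df:quotient}), avoiding the passage through measurable cocycle representatives that \cite{CS12} requires. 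Both routes rest on the same core idea you correctly identified --- that the obstructions live in a ``discrete'' cohomology and hence vanish when the fibres of $\cD_1(B) \to \cD_1(M_r)$ are small --- but the quotient construction sidesteps the topological bookkeeping needed to recover a continuous extension from cocycle data.

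One genuine gap in your sketch: the averaging step as written does not make sense, because $\sigma$ takes values in the torus $A = A_2(X)$, and one cannot integrate torus-valued functions against Haar measure. This is fixable --- for $r$ large the translates $n \mapsto \sigma(n \cdot c)$ with $n \in N_r$ lie in a small ball in $A$, which can be lifted uniquely to the Lie algebra $\RR^d$, averaged there, and projected back; alternatively one reduces to $\RR^d$-valued cocycles via the long exact sequence in cohomology --- but it must be said explicitly, and is exactly the kind of technicality that leads \cite{CS12} to work with measurable cocycles and then argue continuity \emph{a posteriori}. Your proposal would benefit from either spelling out this lift-and-average manoeuvre or, following the paper's preference, reformulating the construction so that $X_r$ is defined as a quotient of $X$ from the outset.
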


The problem is essentially one of ``pushing forward'' the degree $2$ cubespace structure on $X$ along the map $\phi$.  Unfortunately this operation does not make sense in general.

Again, though, it will turn out that the obstructions to building such a push-forward space are measured by a kind of cohomology; in fact the same one as was used to prove the toral structure theorem.  Again, we use a kind of discreteness result for this cohomology to argue that, if $M_r$ is sufficiently close to $\pi_1(X)$ in some sense, then these obstructions do not arise.

One way of measuring this ``closeness'' is the statement that the fibers of the map $\pi_1(X) \to M_r$ should have small diameter with respect to the metric on $\pi_1(X)$.  Since $\pi_1(X) = \varprojlim M_r$ holds in the sense of metric spaces, this will be true provided $r$ is sufficiently large.

Again, the general case follows a very similar pattern, with this same argument repeated once for each $1 \le t \le s$.

As before our approach has much in common with that in \cite{CS12}, but differs in some important respects.  Notably, the argument in that paper proceeds by establishing a correspondence between extensions of a given nilspace $X$ by a given compact abelian group $A$, up to isomorphism, and classes of measurable cocycles on $X$.  An intricate argument is required to recover a topological object (the extended nilspace) from the measurable data of the cocycle. Some cocycle theory is then used to ``push forward'' the cocycle arising from $X \to \pi_1(X)$ onto $\cD_1(M_r)$ to build $X_r$.  By contrast, our argument realizes $X_r$ as an explicit quotient of $X$, although many related tools are needed in the process.

\subsection{A dictionary of statements}
\label{sec:comparison}

For clarity and convenience, we summarize which of the main statements in these papers correspond to which in \cite{CS12}.

\begin{itemize}
  \item The result concerning nilspaces whose structure groups are Lie, Theorem \ref{toral-structure-theorem}, is analogous to \cite{CS12}*{Theorem 7}; see Remark \ref{rem:strong-connectivity} for a detailed comparison.
  \item The inverse limit theorem, Theorem \ref{thm:invlim}, is stated identically to \cite{CS12}*{Theorem 4} (though the proof strategies differ, as remarked above).
  \item Our main structure theorem, Theorem \ref{main-structural-v1}, has no explicit counterpart in \cite{CS12}, but should be compared to what one obtains by concatenating \cite{CS12}*{Theorems 4 and 7}; see Remarks \ref{rem:inverse-limit} and \ref{rem:connectivity}.
\end{itemize}
Theorem \ref{main-structural-v1} also depends on \cite{GMV3}*{Theorem 1.27}, 
to obtain the sharper inverse limit statement (in the sense of the second paragraph of Remark \ref{rem:inverse-limit}).  The latter has no counterpart in \cite{CS12}.

\section{The weak structure theory}
\label{sec:weak-overview}

We now turn to detailed statements and proofs of the weak structure theory.

In fact, we will approach this in two stages.  This section will present the ``standard'' weak structure theorem \ref{basic-cs-structure-theorem} that appears in \cite{CS12}*{Theorem 1}, and give most of the proofs (although these may differ in some places from those in \cite{CS12}).  However, elsewhere in the project we will need a ``relative'' analogue of all of these statements.  This relative theory is developed in Section \ref{sec:elementary-v2}; many of the proofs there will be closely analogous to those from this section.

The relative versions are a strict generalization of the non-relative ones, so logically the results in Section \ref{sec:elementary-v2} suffice.  Sometimes, therefore, we omit part of a non-relative proof and refer to the corresponding result from Section \ref{sec:elementary-v2}.  Usually we do not, but this is for essentially pedagogical reasons.

\subsection{Glueing}

As a preliminary, we introduce one further property of cubespaces.

\begin{definition}
  \label{defn:glueing}
  We say a cubespace $X$ has the \emph{glueing property}\index{glueing} if ``glueing'' two cubes along a common face yields another cube.

  Formally, suppose $c, c' \in C^k(X)$, and $c(\omega 1) = c'(\omega 0)$ for all $\omega \in \{0,1\}^{k-1}$.  (Here we use $\omega 0$ to denote $(\omega_1, \dots, \omega_{k-1}, 0)$ and so on.)

  Then the configuration
  \[
    c'' \colon \omega \mapsto \begin{cases} c(\omega) &\colon \omega_k = 0 \\ c'(\omega) &\colon \omega_k = 1 \end{cases}
  \]
  is in $C^k(X)$.
\end{definition}

The reason we have not defined this previously is that it follows from $k$-completion (for all $k$).

\begin{proposition}
  \label{prop:fibrant-glueing}
  Suppose a cubespace $(X, C^k(X))$ has $k$-completion for all $k$.  Then it satisfies the glueing property.
\end{proposition}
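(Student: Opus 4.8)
The plan is to derive the glueing property directly from $k$-completion, by completing the ``obvious candidate'' glued configuration vertex by vertex, or more efficiently, by exhibiting it all at once as a completion of a $(k+1)$-corner. First I would set up the two given cubes $c, c' \in C^k(X)$ with the matching condition $c(\omega 1) = c'(\omega 0)$ for all $\omega \in \{0,1\}^{k-1}$, and the proposed glued configuration $c''$. The natural move is to view $c''$ as living on a $k$-cube, but to realize $c''$ as a cube I want to build an auxiliary $(k+1)$-cube whose relevant faces force $c''$ into $C^k(X)$.

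Here is the construction I would carry out. Consider the configuration $\Lambda$ on $\{0,1\}^{k+1}$ that I will try to build so that one of its $k$-dimensional faces is exactly $c''$. Write a point of $\{0,1\}^{k+1}$ as $(\omega, \epsilon, \delta)$ with $\omega \in \{0,1\}^{k-1}$ and $\epsilon, \delta \in \{0,1\}$. I would like the face $\{\delta = \epsilon\}$ (a diagonal $k$-dimensional slice, which is the image of a morphism of discrete cubes $\{0,1\}^k \to \{0,1\}^{k+1}$) to carry $c''$. Concretely, define on the portion with $\delta = 0$ the cube obtained from $c$ (with $\epsilon$ playing the role of the last coordinate), on the portion with $\delta=1$ the cube obtained from $c'$, and check the overlap condition along $\delta=0,\epsilon=1$ versus $\delta=1,\epsilon=0$ is precisely the hypothesis $c(\omega 1)=c'(\omega 0)$. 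One then removes the single vertex $\vec 1 \in \{0,1\}^{k+1}$, verifies that every codimension-$1$ lower face of the resulting partial configuration is a $k$-cube (these lower faces are, up to morphisms of discrete cubes, either sub-faces of $c$, sub-faces of $c'$, or themselves small glued configurations in dimension $k-1$ — so one should really run an induction on $k$), apply $(k+1)$-completion to fill in $\vec 1$, and then restrict the resulting $(k+1)$-cube to the diagonal face to recover $c''\in C^k(X)$ via the cubespace axioms.

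An alternative, and arguably cleaner, route: induct on $k$. For the inductive step, restrict attention to $c''$ and complete it as follows. The configuration $c''$ restricted to $\{0,1\}^k \setminus \{\vec 1\}$ already has all its lower faces being cubes: the lower face $\{\omega_k = 0\}$ is $c$ restricted, the lower faces $\{\omega_i = 0\}$ for $i < k$ are glued configurations of two $(k-1)$-cubes (cubes by the inductive hypothesis applied to the appropriate faces of $c$ and $c'$). Hence $c''|_{\{0,1\}^k \setminus \{\vec 1\}}$ is a $k$-corner, so $k$-completion fills in the top vertex with some $x \in X$. It remains to check $x = c''(\vec 1) = c'(\vec 1)$; this is where I expect the real obstacle. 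In the general nilspace setting one does not have $k$-uniqueness for all $k$, only $(s+1)$-uniqueness, so the completion need not be forced to equal $c'(\vec 1)$ directly. I expect the fix is to instead build the full $(k+1)$-cube as in the first paragraph (which does not require uniqueness at all — one uses $(k+1)$-completion and then the morphism axioms to read off $c''$ as a genuine face, so $c''(\vec 1)$ comes along for free rather than being something one must match), so the main work is bookkeeping the faces of the $(k+1)$-dimensional gadget and running the induction cleanly. The hard part is therefore not a deep idea but getting the combinatorics of which faces of the auxiliary $(k+1)$-cube are which, and confirming each is a cube by the inductive hypothesis together with the cubespace axioms and $k$-completion.
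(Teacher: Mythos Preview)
Your first construction has the very same uniqueness problem that you correctly diagnose in your second approach. With $c$ on $\{\delta=0\}$ and $c'$ on $\{\delta=1\}$, the diagonal face $\{\delta=\epsilon\}$ does carry $c''$, but it passes through $\vec 1=(\vec 1_{k-1},1,1)$. When you delete $\vec 1$ and apply $(k+1)$-completion, the filled-in value need not equal $c'(\vec 1_{k-1},1)$, so the diagonal of the completed cube need not be $c''$. The proposed induction does not rescue this: the lower faces $\{\omega_i=0\}$ (for $i\le k-1$) are $k$-dimensional configurations with $\hat c=c|_{\omega_i=0}$ on $\{\delta=0\}$ and $\hat c'=c'|_{\omega_i=0}$ on $\{\delta=1\}$, i.e.\ instances of the \emph{same} auxiliary construction in one lower dimension. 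Even granting these are cubes inductively, you are still stuck at $\vec 1$ at the top level; and the base case of the induction already exhibits the same defect.

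The paper's fix is a different placement of $c$ and $c'$ inside $\{0,1\}^{k+1}$: write the last two coordinates as $(a,b)$ and put the \emph{shared} face $c(\cdot,1)=c'(\cdot,0)$ at $\{a=b=0\}$, the far end of $c$ at $\{a=1,b=0\}$, and the far end of $c'$ at $\{a=0,b=1\}$. The region $\{a=b=1\}$ is left entirely undefined. Two things follow. First, the defined region $S=\{0,1\}^{k+1}\setminus(\{0,1\}^{k-1}\times\{(1,1)\})$ is downward-closed, and every sub-cube below a point of $S$ is a face of $c$ or of $c'$; so no induction on $k$ is needed. One then invokes the extension-from-downward-closed-sets lemma (iterated corner completion, Corollary~\ref{cor:fibrant-extension-property}) rather than a single $(k+1)$-completion. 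Second, and this is the real point, the glued configuration $c''$ sits on the \emph{anti}-diagonal $\{a+b=1\}$, which lies entirely inside $S$; it never touches the region that was filled in, so whatever values completion produced there are irrelevant and $c''$ is read off verbatim as a sub-cube.
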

\newcommand*{\glueingpic}[0]{
  \inlinetikz{
    \begin{scope}[shift={(-3,0)}]
      \begin{scope}[x={(1, 0)}, y={(0, 1)}, z={(0.352, 0.317)}, scale=2]
        \draw[fill=black!40,fill opacity=.4] (0,0,0) -- (0,0,1) -- (0,1,1) -- (0,1,0) -- cycle;
        \draw[fill=black!40,fill opacity=.4]  (0,0,0) -- (1,0,0) -- (1,0,1) -- (0,0,1) -- cycle;
      \end{scope}
    \end{scope}
    \draw[->] (0, 1.5) -- (1, 1.5);
    \begin{scope}[shift={(2,0)}]
      \begin{scope}[x={(1, 0)}, y={(0, 1)}, z={(0.352, 0.317)}, scale=2]
        \draw[fill=black!40,fill opacity=.4] (0,0,0) -- (0,0,1) -- (0,1,1) -- (0,1,0) -- cycle;
        \draw[fill=black!40,fill opacity=.4]  (0,0,0) -- (1,0,0) -- (1,0,1) -- (0,0,1) -- cycle;
        \draw[dashed] (1,0,1) -- (1,1,1) -- (0,1,1);
        \draw[dashed] (1,0,0) -- (1,1,0) -- (0,1,0);
        \draw[dashed] (1,1,1) -- (1,1,0);
        \draw[fill=black] (1,1,0) circle[radius=0.02cm];
        \draw[fill=black] (1,1,1) circle[radius=0.02cm];
      \end{scope}
    \end{scope}
    \draw[->] (5.5, 1.5) -- (6.5, 1.5);
    \begin{scope}[shift={(7.5,0)}]
      \begin{scope}[x={(1, 0)}, y={(0, 1)}, z={(0.352, 0.317)}, scale=2]
        \draw[fill=black!40,fill opacity=.4] (0,0,0) -- (0,0,1) -- (0,1,1) -- (0,1,0) -- cycle;
        \draw[fill=black!40,fill opacity=.4]  (0,0,0) -- (1,0,0) -- (1,0,1) -- (0,0,1) -- cycle;
        \draw (1,1,1) -- (0,1,1) -- (0,0,1) -- (1,0,1) -- cycle;
        \draw (1,1,1) -- (1,1,0) -- (1,0,0) -- (1,0,1) -- cycle;
        \draw (1,1,1) -- (0,1,1) -- (0,1,0) -- (1,1,0) -- cycle;
        \draw[fill=black!50, opacity=.8] (0,1,0) -- (0,1,1) -- (1,0,1) -- (1,0,0) -- cycle;
        \draw (0,0,0) -- (1,0,0) -- (1,1,0) -- (0,1,0) -- cycle;
      \end{scope}
    \end{scope}
  }
}
\begin{proof}[Proof sketch]
  We will defer a complete proof until Section \ref{sec:elementary-v2}, by which time we will have developed the machinery for a clean argument.  Meanwhile, we will draw some pictures in the case $k=2$.

  The idea is to place the two $2$-cubes to be glued as faces of a partial $3$-cube configuration; complete that (in two steps) to a $3$-cube; and then observe that the ``glued'' configuration is a sub-cube of this, and hence a cube by the axioms.
  \glueingpic{}
\end{proof}

\subsection{The Heisenberg example}
\label{sec:hberg-example}

As motivation, we will examine an example called the Heisenberg nilmanifold.
This is the nilspace $\HK(\cH_\bullet)/\Gamma$ constructed as in Appendix \ref{app:hk}, where
  \[
    \cH = \left\{ \heis{x}{y}{z} \colon x,y,z \in \RR \right\}
  \]
is the Heisenberg group equipped with the filtration
$\cH = \cH_0 = \cH_1 \supseteq \cH_2 \supseteq \{\id\}$ where $\cH_2$ is the center,
\[
  \cH_2 = \left\{ \heis{0}{0}{z} \colon z \in \RR \right\}
\]
and $\Gamma$ is the discrete co-compact subgroup that consists of the elements of $\cH$ with integral entries.

Note that, for any filtered group, $G_j$ is a normal subgroup of $G$ (since $[G_0, G_j] \subseteq G_j$ by the filtration property), and so we may consider the group quotient $G \to G / G_j$.  This comes with an induced filtration $(G_i / G_j)_{i \ge 0}$ of degree $j-1$.

For $\cH$, the non-trivial case is $\cH / \cH_2$.  This is isomorphic to the abelianization $\RR^2$ with the degree $1$ filtration $\RR^2 = \RR^2 \supseteq \{0\}$.

We also get an induced map on the nilmanifold $\cH / \Gamma$, i.e.~$\pi \colon \cH / \Gamma \to (\cH / \cH_2) / (\Gamma / (\Gamma \cap \cH_2)) = \RR^2 / \ZZ^2$.  This map corresponds to
\[
  \pi \colon \heis{x}{y}{z} \Gamma \mapsto (x, y) \bmod 1 \in \RR^2 / \ZZ^2
\]
and one can check this is well-defined.  Hence, the nilmanifold $\cH / \Gamma$ has a quotient isomorphic to a torus $\RR^2 / \ZZ^2$, sometimes called the ``horizontal torus''.  Moreover, it is essentially automatic that this map is well-behaved with respect to the Host--Kra cubes $\HK^k(\cH_\bullet)/\Gamma$: the image of these cubes is precisely $\HK^k(\RR^2/\ZZ^2)$.

The fibers of this quotient map are of the form
\[
  \left\{ \heis{x}{y}{z'} \Gamma \colon z' \in \RR / \ZZ \right\}
\]
i.e.~each fiber has an action by the center $\cH_2$, or more precisely a simply transitive action by $\cH_2 / (\cH_2 \cap \Gamma) = \RR / \ZZ$.  Again, this action respects cubes in the following sense: given an element
\[
  c(\omega) = \heis{x_\omega}{y_\omega}{z_\omega} \Gamma
\]
in $\HK^3(\cH_\bullet)/\Gamma$, and another configuration
\[
  c'(\omega) = \heis{x_\omega}{y_\omega}{z'_\omega} \Gamma
\]
so $\pi(c) = \pi(c')$, then one can check that $c'$ is a cube if and only if
\[
  z_{000} - z_{001} - \dots + z_{110} - z_{111} =
  z'_{000} - z'_{001} - \dots + z'_{110} - z'_{111} \ .
\]
Equivalently, $c'$ is a cube if and only if it is obtained by acting on $c$ pointwise by an element of $\HK^3(\RR/\ZZ)$, where $\RR/\ZZ$ is given the degree $2$ filtration $\RR/\ZZ = \RR/\ZZ = \RR/ \ZZ \supseteq \{0\}$.

In summary, for the Heisenberg nilmanifold we have maps
\[
  \cH / \Gamma \xrightarrow{\pi} \RR^2 / \ZZ^2 \to \{\ast\}
\]
where for each map the fibers have a simply transitive action by a compact abelian group ($\RR/\ZZ$ and $\RR^2/\ZZ^2$ respectively); and there is an induced map
\[
  \HK^k(\cH_\bullet) / \Gamma \xrightarrow{\pi} \HK^k(\RR^2 / \ZZ^2) \to \{\ast\}
\]
where the fibers are given by $\HK^k$ of the corresponding compact abelian groups, equipped with the ``trivial'' filtrations of degree $2$ and $1$ respectively.

It is not hard to argue that a similar property holds for any nilmanifold of any degree, not just the Heisenberg nilmanifold.  For a filtered group of degree $s$, one gets a sequence of maps
\[
  G \to G / G_s \to G/G_{s-1} \to \dots \to G/G_2 \to G/G_1
\]
where typically $G = G_1$ and so the last step is the trivial group.  The kernel of each of these maps is an abelian group.  Again, these maps induce maps of nilmanifolds
\[
  G/\Gamma \to (G / G_s) / (\Gamma / (\Gamma \cap G_s)) \to (G / G_{s-1}) / (\Gamma / (\Gamma \cap G_{s-1})) \to \dots
\]
where the fibers of each map are now compact abelian groups.  This is a tower of extensions of nilmanifolds.  A similar compatibility property of the Host--Kra cubes to that stated above also holds.

In this discussion, we have made heavy use of the group structure of $G$ and the precise description of Host--Kra cubes. It is a slightly surprising but very important fact that this sequence of quotient maps can be recovered in the completely abstract setting of general nilspaces.  Indeed, for any nilspace $X$ we can define a tower
\[
  X \to X / \sim_{s-1} \to X / \sim_{s-2} \to \dots
\]
where $\sim_{s-1}$ is the equivalence relation from Definition \ref{df:canonical-rel}.  When $X = G/\Gamma$ as above, it turns out this is the same tower that we just constructed.  For general $X$, many of the properties discussed above -- notably, that the fibers are compact abelian groups, which act compatibly on cubes in the sense described -- can be proven in the abstract setting.

In other words, we do not need to know about the global group operation on $G$ to recover the tower of  quotients of the nilmanifold $G/\Gamma$, and indeed the same conclusions hold even for nilspaces that are not of the form $G/\Gamma$ for any group $G$.

\subsection{The canonical factors}
\label{sec:canonical-factors}

As just discussed, for a general compact cubespace $X$ with $k$-completion we are interested in the quotient $X / \sim_{s-1}$ of $X$, which is the general analogue of the quotients $(G / G_s) / (\Gamma / (\Gamma \cap G_s))$ of a filtered nilmanifold $G/\Gamma$.   The space $X / \sim_{s-1}$ was defined in Section \ref{sc:outline-weak}: see Definition \ref{df:quotient} (quotient cubespace) and Definition \ref{df:canonical-rel} (canonical equivalence relation).

Our first task is to verify the properties of $X / \sim_s$ that we claimed in Section \ref{sc:outline-weak} immediately after the definition (and a few more).  This discussion is very similar to \cite{CS12}*{Section 2.4} and \cite{HK08}*{Section 3.3}.

\begin{proposition}
  \label{prop-canonical-factors}
  Let $(X, C^k(X))$ be a compact cubespace with the glueing property, and let $s \ge 0$ be an integer.  Then the canonical equivalence relation $\sim_s$  is indeed a closed equivalence relation and satisfies the following
{\em universal replacement property}.\index{universal replacement property}
If $k \le s+1$ and $c \in C^{k}(X)$, $c' \colon \{0,1\}^k \to X$, $c(\omega) = c'(\omega)$ for all $\omega \ne \vec{1}$ and $c(\vec{1}) \sim_s c'(\vec{1})$, then $c' \in C^{k}(X)$.

  Moreover, $X / \sim_s$ has $(s+1)$-uniqueness, and $\sim_s$ is the smallest equivalence relation with this property.  Finally, if $X$ has $k$-completion for all $k$ then so does $X / \sim_s$, and hence $X / \sim_s$ is a compact nilspace of degree $s$.
\end{proposition}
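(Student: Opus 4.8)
The plan is to prove the assertions in the order: closedness of $\sim_s$ together with the ``easy'' axioms (reflexivity, symmetry); then the universal replacement property, from which transitivity of $\sim_s$ follows almost immediately; and finally to read off $(s+1)$-uniqueness of $X/\sim_s$, its minimality, and the descent of $k$-completion, all by essentially formal arguments built on universal replacement.

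To see that $\sim_s$ is closed, observe that it is the image of
$K=\{(c,c')\in C^{s+1}(X)^2 : c(\omega)=c'(\omega)\text{ for all }\omega\neq\vec1\}$
under the evaluation map $(c,c')\mapsto(c(\vec1),c'(\vec1))$; here $K$ is a closed, hence compact, subset of the compact space $X^{\{0,1\}^{s+1}}\times X^{\{0,1\}^{s+1}}$, so its continuous image in the Hausdorff space $X\times X$ is closed. Reflexivity holds because constant configurations are cubes (Remark \ref{rem:constant-cube}), and symmetry is clear from the symmetry of the definition in $c$ and $c'$.

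The crux, and the step I expect to be the main obstacle, is the universal replacement property. First I would reduce the case $k<s+1$ to $k=s+1$: given $c\in C^{k}(X)$ and $c'$ agreeing with $c$ off $\vec1$ with $c(\vec1)\sim_s c'(\vec1)$, precompose $c$ with the projection morphism $\{0,1\}^{s+1}\to\{0,1\}^{k}$, apply the $(s+1)$-dimensional statement, and restrict the result to the ``top'' $k$-dimensional face $\{(\omega',1,\dots,1)\}$; unwinding the definitions returns exactly $c'$, which is therefore a cube. For $k=s+1$ the content is a \emph{change of witness corner}: if $\lambda$ is an $(s+1)$-corner admitting completions to both $x$ and $y$, and $\bar c\in C^{s+1}(X)$ is \emph{any} cube with $\bar c(\vec1)=x$, then the configuration obtained from $\bar c$ by replacing its value at $\vec1$ by $y$ is again a cube. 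Following \cite{CS12}*{Section 2.4}, the plan is to prove this by assembling, via the glueing property, a suitable cube on $\{0,1\}^{s+2}$ whose faces and subcubes include $\bar c$ and the two completions of $\lambda$, and then extracting the desired configuration as a face of it; the delicate point is constructing this auxiliary cube, in particular routing around the fact that the corners $\lambda$ and $\bar c|_{\ne\vec1}$ need not share any faces. Granting universal replacement, transitivity is short: if $x\sim_s y$ via a corner $\lambda$ and $y\sim_s z$ via a corner $\mu$, then applying universal replacement to the cube ``$\mu$ completed by $y$'' — changing its value at $\vec1$ from $y$ to $x$, which is legitimate since $x\sim_s y$ — shows that $\mu$ also completes to $x$, so $\mu$ witnesses $x\sim_s z$.

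From here the remaining claims are formal. For $(s+1)$-uniqueness of $X/\sim_s$: if $\bar c,\bar c'\in C^{s+1}(X/\sim_s)$ agree off $\vec1$, lift them to $c,c'\in C^{s+1}(X)$; since $c(\omega)\sim_s c'(\omega)$ for every $\omega\neq\vec1$, applying universal replacement one vertex at a time — each time first applying a coordinate symmetry of the discrete cube carrying the vertex in question to $\vec1$, and using that such symmetries are morphisms of discrete cubes — adjusts $c'$ to a cube agreeing with $c$ at all $\omega\neq\vec1$, whence $c'(\vec1)\sim_s c(\vec1)$ by the definition of $\sim_s$, i.e.\ $\bar c=\bar c'$. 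Minimality is immediate: if $\sim$ is an equivalence relation with $X/\sim$ satisfying $(s+1)$-uniqueness and $x\sim_s y$, project a witness pair to $X/\sim$ and apply uniqueness to conclude $x\sim y$. Finally, assuming $X$ has $k$-completion for all $k$, one lifts: a $k$-corner $\bar\lambda$ in $X/\sim_s$ is lifted one $(k-1)$-face at a time to a $k$-corner $\lambda$ in $X$ (lift each lower face to a cube, then correct it on its overlaps with the faces already lifted by the same vertex-by-vertex use of universal replacement), after which $k$-completion in $X$ followed by projection completes $\bar\lambda$. Since $X/\sim_s$ is compact and metrizable — being the quotient of a compact metric space by a closed equivalence relation — this shows it is a compact nilspace of degree at most $s$, as required.
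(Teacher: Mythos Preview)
Your overall architecture is sound, and the formal deductions (transitivity from universal replacement, $(s+1)$-uniqueness, minimality) are correct. But there is a genuine gap at exactly the point you flag as delicate: the proof of universal replacement itself.

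You propose to build an $(s+2)$-cube containing $\bar c$ together with the two completions $c_x,c_y$ of the witness corner $\lambda$, and then read off $\bar c'$ as a face. The trouble, as you note, is that $\bar c$ and $\lambda$ share only the single vertex $x$; they have no common lower face along which to glue, so there is no obvious way to assemble such an $(s+2)$-cube. The paper resolves this by inserting an intermediate step you are missing: one first proves the characterization (Lemma~\ref{lem:canonical-characterization}) that $x\sim_s y$ if and only if the \emph{constant} corner $\omega\mapsto x$ completes to $y$, i.e.\ $\llcorner^{s+1}(x;y)\in C^{s+1}(X)$. This is done by a tri-cube (a $2^{s+1}$-fold glueing of copies of $c_x$ with one copy of $c_y$), which works precisely because the $c_x$'s glue along their shared faces. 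Once you know the witness corner can always be taken constant equal to $x$, universal replacement becomes a second tri-cube argument: place duplicated upper faces of $\bar c$ in $2^{s+1}-1$ of the slots and the cube $\llcorner^{s+1}(x;y)$ in the last, and the outer cube is $\bar c'$. The constant nature of the new witness is what makes all the glueings compatible. Without this reduction your construction has no evident starting point.

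A secondary issue: your lifting argument for $k$-completion only works as written for small $k$. Correcting a lifted $(k-1)$-face vertex-by-vertex requires universal replacement on cubes of dimension $k-1$, but the property is only stated (and proved) for dimensions $\le s+1$. The paper sidesteps this by instead showing that the projection $\pi_s$ is a fibration (see Proposition~\ref{relative-canonical-factor} and Remark~\ref{rem:completion for quotient}): for $k\ge s+1$ one completes the lifted corner in $X$ using fibrancy of $X$, projects, and then uses the already-established $(s+1)$-uniqueness of the quotient to conclude the projection agrees with the target cube at $\vec1$. This handles all $k$ uniformly.
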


\begin{remark}
  Note we are using a convention of stating results using $\vec{1}$ as the ``special'' vertex of the discrete cube $\{0,1\}^k$, whenever one is required.  However, using the various morphisms of discrete cubes from the definitions of a cubespace, which act transitively on $\{0,1\}^k$, these properties hold just as well for any fixed vertex in $\{0,1\}^k$.
\end{remark}

\begin{remark}
  \label{rem:sim_0}
  Consider the case $s=0$.  Then $x \sim_0 y$ if and only if $[x, y]$ is a $1$-cube.  It is immediate from the glueing property that this is an equivalence relation.  If $X$ is ergodic, then $X / \sim_0 = \{\ast\}$ is a one-point space.  More generally, this identifies the ``ergodic components'' of $X$.
\end{remark}

The following more explicit characterization of $\sim_s$ will be very helpful.
See also \cite{CS12}*{Lemma 2.3} and \cite{HK08}*{Proposition 3}.

\begin{lemma}
  \label{lem:canonical-characterization}
  We have $x \sim_s y$ if and only if the configuration $c\colon \{0,1\}^{s+1} \to X$ given by
  \[
    c(\omega) = \begin{cases} y & \colon \omega = \vec{1} \\ x & \colon \omega \ne \vec{1} \end{cases}
  \]
  is an $(s+1)$-cube.
\end{lemma}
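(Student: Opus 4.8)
The plan is to obtain both implications quickly from facts already in hand, the two key inputs being Remark~\ref{rem:constant-cube} (every constant configuration is a cube) and the universal replacement property established in Proposition~\ref{prop-canonical-factors}. Throughout, let $c_0 \colon \{0,1\}^{s+1} \to X$ denote the configuration with $c_0(\vec{1}) = y$ and $c_0(\omega) = x$ for all $\omega \ne \vec{1}$, and write $\underline{x}$ for the constant configuration equal to $x$, so that $\underline{x} \in C^{s+1}(X)$ by Remark~\ref{rem:constant-cube}. (As a sanity check, for $s=0$ this lemma is exactly Remark~\ref{rem:sim_0}.)

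For the implication ``if $c_0 \in C^{s+1}(X)$ then $x \sim_s y$'', I would simply observe that $\underline{x}$ and $c_0$ both lie in $C^{s+1}(X)$, agree at every vertex $\omega \ne \vec{1}$, and take the values $x$ and $y$ at $\vec{1}$ respectively; this is precisely a pair of cubes witnessing $x \sim_s y$ in the sense of Definition~\ref{df:canonical-rel}. Conversely, assuming $x \sim_s y$, I would invoke the universal replacement property of Proposition~\ref{prop-canonical-factors} in the case $k = s+1$, with reference cube $\underline{x}$ and modified configuration $c_0$: indeed $\underline{x} \in C^{s+1}(X)$, we have $\underline{x}(\omega) = x = c_0(\omega)$ for all $\omega \ne \vec{1}$, and $\underline{x}(\vec{1}) = x \sim_s y = c_0(\vec{1})$, so the proposition yields $c_0 \in C^{s+1}(X)$, which is what we want.

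The step I would flag is not a computation but a point of logical hygiene: one must check that the proof of Proposition~\ref{prop-canonical-factors} does not itself rely on this lemma, so that the $k = s+1$ instance of universal replacement is genuinely available (given the order of presentation, it is). It is also worth noting \emph{why} one cannot sidestep that proposition and argue directly: a witness to $x \sim_s y$ consists of two essentially arbitrary $(s+1)$-cubes agreeing off $\vec{1}$, and there is no way to ``flatten'' these to the constant cube $\underline{x}$ — while carrying the value $y$ along at the top vertex — using only reflections, face restrictions, glueing and corner-completion; that flattening is exactly the content of the universal replacement property. So in effect the substantive work behind this characterization has already been done in Proposition~\ref{prop-canonical-factors}, and the present lemma is a short repackaging of it.
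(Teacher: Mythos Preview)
Your argument is circular. You invoke the universal replacement property from Proposition~\ref{prop-canonical-factors} to prove the ``only if'' direction, and you assert that ``given the order of presentation'' this is available. But in the paper the order is: Proposition~\ref{prop-canonical-factors} is \emph{stated}, then Lemma~\ref{lem:canonical-characterization} is stated and proved, and only \emph{then} is Proposition~\ref{prop-canonical-factors} proved --- and that proof explicitly uses the lemma. In particular, the universal replacement clause is established via a tri-cube whose top-right small cube is $\llcorner^{s+1}(x;y)$, which is only known to be a cube \emph{because of} Lemma~\ref{lem:canonical-characterization}. So the $k=s+1$ instance of universal replacement is not available to you here.

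Your remark that ``there is no way to flatten these to the constant cube $\underline{x}$ \dots\ using only reflections, face restrictions, glueing and corner-completion'' is also mistaken: that is exactly what the paper does. Given witnesses $c,c'\in C^{s+1}(X)$ with $c(\vec 1)=x$, $c'(\vec 1)=y$ and $c(\omega)=c'(\omega)$ otherwise, one assembles a tri-cube by placing reflected copies of $c$ in all cells except one, and $c'$ in the remaining cell; the compatibility along shared faces is automatic, and the outer cube is precisely $\llcorner^{s+1}(x;y)$ by the glueing property. (The paper draws this for $s=1$; the same pattern works in general --- see also the proof of Lemma~\ref{lem:relative-canoncial-characterization}.) This direct glueing argument is the substantive content you need to supply.
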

\begin{proof}[Proof sketch]
  The ``if'' direction is straightforward, since the constant configuration $c'(\omega) = x$ for all $\omega$ is in $C^{s+1}(X)$ by the axioms (see Remark \ref{rem:constant-cube}).  We illustrate the ``only if'' direction with a picture when $s=1$; the general case is similar but notationally awkward.

  We know there exist $c, c'$ with $c(\omega) = c'(\omega)$ for $\omega \ne \vec{1}$, $c(\vec{1}) = x$ and $c'(\vec{1}) = y$.  Consider the picture
  \inlinetikz{
    \begin{scope}[scale=2]
      \draw[thin,black] (0,0) grid (2, 2);
      \node [below left] at (0, 0) {$x$};
      \node [above left] at (0, 2) {$x$};
      \node [below right] at (2, 0) {$x$};
      \node [above right] at (2, 2) {$y$};
      \node [left] at (0, 1) {$c(01)$};
      \node [right] at (2, 1) {$c(01)$};
      \node [below] at (1, 0) {$c(10)$};
      \node [above] at (1, 2) {$c(10)$};
      \node [above right] at (1, 1) {$c(00)$};
      \node at (0.5, 0.5) {$c$};
      \node at (0.5, 1.5) {$c$};
      \node at (1.5, 0.5) {$c$};
      \node at (1.5, 1.5) {$c'$};
    \end{scope}
  }
  It is clear each of the small cubes is a cube, as they are just rotated and reflected copies of $c$ or $c'$.  By two applications of the glueing property, the outer square is also a cube, as required.
\end{proof}

\begin{proof}[Proof sketch of Proposition \ref{prop-canonical-factors}]
  First we check that $\sim_k$ is an equivalence relation.  Symmetry and reflexivity are immediate from the original definition.  For transitivity, we apply the lemma; again we will give a pictorial sketch when $s=1$.  Suppose $x \sim_s y$, $y \sim_s z$ and consider
  \inlinetikz{
    \doublesquare{$x$}{$y$}{$z$}{$y$}{$y$}{$y$}{1}
  }
  where the left and right squares are cubes by the lemma and a reflection.  Then the outer rectangle is a cube by the glueing property.  But so is (say)
  \inlinetikz{
    \singlesquare{$y$}{$y$}{$x$}{$x$}
  }
  by a duplication operation (i.e.~using the cubespace axioms), and so $x \sim_s z$ from the definition, as required.

  Note that it is clear from the hypotheses about the spaces $C^k(X)$ being closed and $X$ being compact that $\sim_s$ is a closed relation.

  Next, we prove the universal replacement property.  First suppose $k=s+1$.  Take $c, c'$ as in the statement, and write $x = c(\vec{1})$, $y = c'(\vec{1})$.  Again we give a picture:
  \inlinetikz{
    \begin{scope}[scale=1.5]
      \draw[thin,black] (0,0) grid (2, 2);
      \node [below left] at (0, 0) {$c(00)$};
      \node [above left] at (0, 2) {$c(01)$};
      \node [below right] at (2, 0) {$c(10)$};
      \node [above right] at (2, 2) {$y$};
      \node [left] at (0, 1) {$c(01)$};
      \node [right] at (2, 1) {$x$};
      \node [below] at (1, 0) {$c(10)$};
      \node [above] at (1, 2) {$x$};
      \node [above right] at (1, 1) {$x$};
    \end{scope}
  }
  The bottom left square is just $c$, the top right is a cube by the lemma, and the other two small squares are cubes again by a duplication.  Hence the outer square is a cube by glueing, as required.

  Now suppose $k < s+1$.  Given a cube $c \in C^k(X)$, using appropriate morphisms of discrete cubes and the cubespace axioms, we may
  \begin{itemize}
    \item duplicate $c$ up to an $(s+1)$ cube $\tilde{c} \in C^{s+1}(X)$;
    \item change entries of $\tilde{c}$ repeatedly using the above; and
    \item restrict to some appropriate face of $\tilde{c}$ to obtain a cube of dimensions $k$ with the desired properties.
  \end{itemize}
  Hence we have universal replacement for all $k \le s+1$.

  We now argue the definition of $\sim_s$ and the universal replacement property imply $(s+1)$-uniqueness of the quotient.  Suppose $\tilde{c}$ and $\tilde{c}'$ are two cubes in $C^{s+1}(X/\sim_s)$ such that $\tilde{c}(\omega) = \tilde{c}'(\omega)$ for all $\omega \ne \vec{1}$.  By definition of the quotient cubespace, there are cubes $c, c' \in C^{s+1}(X)$ such that $\pi_s(c) = \tilde{c}$, $\pi_s(c') = \tilde{c}'$.  But by repeated application of the universal replacement property, \emph{any} configuration $c \colon \{0,1\}^{s+1} \to X$ such that $\pi_s(c) = \tilde{c}$ is a cube, and similarly for $c'$.  Hence we are free to choose $c, c'$ such that $c(\omega) = c'(\omega)$ for each $\omega \ne \vec{1}$. By definition of $\sim_s$, we now see that $c(\vec{1}) \sim_s c'(\vec{1})$, and so $\tilde{c}(\vec{1}) = \tilde{c}'(\vec{1})$ as required.

  As remarked after Definition \ref{df:canonical-rel}, the statement that $\sim_s$ is the smallest equivalence relation $\sim$ for which $X / \sim$ has $(s+1)$-uniqueness is clear.

  The final statement to prove is the completion property for $X/\sim_s$.  In fact this follows from more general statements about ``fibrations'', defined in Section \ref{sec:elementary-v2}, so we defer the proof to there (see Remark \ref{rem:completion for quotient}).
\end{proof}

Note it is clear that $X/\sim_s$ is also ergodic if $X$ is.

\begin{remark}
  We observe that this quotient $\pi_s$ as constructed is completely canonical: firstly in that it depends only on the cubespace $(X, C^k(X))$, and secondly that it has the following universal property: if $(Y, C^k(Y))$ is any other nilspace of degree $s$ then any cubespace morphism $X \to Y$ factors through $\pi(X)$.

  In particular, $\sim_s$ is trivial if and only if $X$ has $(s+1)$-uniqueness.
\end{remark}

The full strength of these canonical factors comes when they are chained together.  That is, as before we can construct a tower of maps
\[
  X \xrightarrow{\pi_s} \pi_s(X) \xrightarrow{\pi_{s-1}} \pi_{s-1}(X) \rightarrow \dots \xrightarrow{\pi_0} \pi_0(X)
\]
where $\pi_0(X) = \{\ast\}$, the one-point space, provided $X$ is ergodic.  To check this makes sense, we need to verify the following trivial observation.

\begin{proposition}
  \label{equiv-nested}
  The equivalence relations $\sim_s$ are nested, i.e.~if $t \ge s$ and $x \sim_t y$ then $x \sim_s y$.  Moreover, $\pi_s(\pi_t(X)) = \pi_s(X)$, i.e.~the definition of $\sim_s$ is not affected by first quotienting by $\sim_t$.
\end{proposition}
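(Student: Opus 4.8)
The plan is to prove the two statements in order, deducing the second from the first.

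For the nesting I would argue directly from Definition \ref{df:canonical-rel}. Fix $t\ge s$ and let $\rho\colon\{0,1\}^{s+1}\to\{0,1\}^{t+1}$ be the morphism of discrete cubes $\rho(\omega_1,\dots,\omega_{s+1})=(\omega_1,\dots,\omega_{s+1},1,\dots,1)$, i.e.\ the inclusion of $\{0,1\}^{s+1}$ as an upper face; the one property that matters is $\rho^{-1}(\vec 1)=\{\vec 1\}$. Then if $x\sim_t y$ is witnessed by $c,c'\in C^{t+1}(X)$ agreeing off $\vec 1$ with $c(\vec 1)=x$ and $c'(\vec 1)=y$, the configurations $c\circ\rho,c'\circ\rho$ lie in $C^{s+1}(X)$ by the cubespace axioms, still agree off $\vec 1$, and have value $x$ (resp.\ $y$) at $\vec 1$; hence $x\sim_s y$. (Alternatively one can run this through the face characterisation of Lemma \ref{lem:canonical-characterization}.)

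For the identity $\pi_s(\pi_t(X))=\pi_s(X)$, write $Y=X/{\sim_t}=\pi_t(X)$, let $q\colon X\to Y$ and $p\colon Y\to Y/{\sim_s}$ be the canonical (surjective) quotient morphisms, and consider the equivalence relation $R=\{(x,x')\in X^2:q(x)\sim_s q(x')\}$ on $X$, which contains $\sim_t$. Since $q$ and $p$ are surjective quotient maps, $pq$ descends to a bijection $X/R\to Y/{\sim_s}$; a routine check with the definition of the quotient cube structure (applied along $q$ and then along $p$) shows this is an isomorphism of cubespaces, so the whole statement reduces to proving $R={\sim_s}$ on $X$. One inclusion is immediate from the universal property in Proposition \ref{prop-canonical-factors}: $Y$ is a compact nilspace of degree $t$, hence has $k$-completion for all $k$, so Proposition \ref{prop-canonical-factors} applied to $Y$ with parameter $s$ gives that $X/R\cong Y/{\sim_s}$ has $(s+1)$-uniqueness; as $\sim_s$ is the smallest equivalence relation on $X$ whose quotient has that property, $\sim_s\subseteq R$.

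The remaining inclusion $R\subseteq{\sim_s}$ is the one genuinely substantive point, and I expect it to be the main obstacle. Given $(x,x')\in R$, I would pick $\bar c,\bar c'\in C^{s+1}(Y)$ agreeing off $\vec 1$ with $\bar c(\vec 1)=q(x)$ and $\bar c'(\vec 1)=q(x')$, and lift them (using the definition of the quotient cubespace $Y$) to $c,c'\in C^{s+1}(X)$ with $q\circ c=\bar c$ and $q\circ c'=\bar c'$. The catch is that this lift only yields $c(\vec 1)\sim_t x$, $c'(\vec 1)\sim_t x'$ and $c(\omega)\sim_t c'(\omega)$ for $\omega\ne\vec 1$, not equalities, so one cannot conclude $x\sim_s x'$ directly. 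To fix this I would feed in the nesting already proved (each such $\sim_t$-relation is in particular a $\sim_s$-relation) together with repeated applications of the universal replacement property of Proposition \ref{prop-canonical-factors} — at dimension $s+1$ and, via the discrete-cube symmetries, at an arbitrary vertex, as noted in the remark following that proposition — to ``align'' the two lifts: first replace $c(\vec 1)$ by $x$ and $c'(\vec 1)$ by $x'$, then replace $c'(\omega)$ by $c(\omega)$ for each $\omega\ne\vec 1$ one at a time. The result is a pair of cubes in $C^{s+1}(X)$ agreeing off $\vec 1$ with values $x,x'$ at $\vec 1$, which witnesses $x\sim_s x'$. Combining the two inclusions gives $R={\sim_s}$, and hence $\pi_s(\pi_t(X))=Y/{\sim_s}=X/{\sim_s}=\pi_s(X)$; everything apart from this alignment step is routine manipulation of the definitions.
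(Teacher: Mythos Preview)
Your nesting argument is exactly the paper's: restrict the witnessing $(t+1)$-cubes to an $(s+1)$-dimensional upper face through $\vec 1$.

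For the second assertion $\pi_s(\pi_t(X))=\pi_s(X)$, the paper's proof is in fact silent---the single sentence it gives only covers nesting, and the ``moreover'' is apparently regarded as immediate (or as an informal gloss on nesting). Your proposal supplies an honest argument for this part, and it is correct. The reduction to showing $R=\sim_s$ is clean; the inclusion $\sim_s\subseteq R$ via the minimality clause of Proposition~\ref{prop-canonical-factors} is tidy; and the alignment step for $R\subseteq\sim_s$---lift the witnesses, convert each $\sim_t$ discrepancy to a $\sim_s$ discrepancy via nesting, then use universal replacement vertex by vertex---is exactly the right use of Proposition~\ref{prop-canonical-factors} and the symmetry remark following it. So you have in effect written out what the paper left to the reader; what this buys you is a proof that actually establishes the cubespace isomorphism $\pi_s(\pi_t(X))\cong\pi_s(X)$ rather than merely a bijection of underlying sets.
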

\begin{proof}
  If $c, c' \in C^{t+1}(X)$ are any cubes verifying $x \sim_t y$ then any sub-cube of dimension $(s+1)$ containing $\vec{1}$ will verify $x \sim_s y$.
\end{proof}

\subsection{Structure groups and the weak structure theorem}
\label{sec:structure-groups}

Having established the existence of the tower
\[
  \dots \xrightarrow{\pi_s} \pi_s(X) \xrightarrow{\pi_{s-1}} \pi_{s-1}(X) \rightarrow \dots \xrightarrow{\pi_0} \pi_0(X)
\]
in the abstract, to further the analogy with the Heisenberg case (or more generally, the case of any nilmanifold) we need to describe the \emph{fibers} of each map $\pi_t$.  Recall these are expected to have the structure of a compact abelian group; or more precisely, to have a free and transitive action by a compact abelian group.

As in Section \ref{sc:outline-weak} we let $\cD_s(A)$ denote the Host--Kra cubespace on an Abelian group
equipped with the filtration $A = A_0 = A_1 = \dots = A_s \supseteq \{0\}$, which is a nilspace of degree $s$.
Recall also the statement of the weak structure theorem (Theorem \ref{basic-cs-structure-theorem}).

\begin{remark}
  Suppose $X$ is an ergodic nilspace of degree $1$, and consider the weak structure theorem (Theorem \ref{basic-cs-structure-theorem}) in the case $s=1$.  Since $\pi_0(X) = \{\ast\}$ (see Remark \ref{rem:sim_0}), the theorem asserts precisely that $X$ is isomorphic to $\cD_1(A)$ for some compact abelian group $A$, i.e.~$X$ bijects with $A$ and the cubes of $X$ are identified with the Host--Kra cubes of $A$ with the degree $1$ filtration.

  (This isomorphism is found by fixing $x_0 \in X$ and identifying $A \leftrightarrow X$ by $a \leftrightarrow a(x_0)$.  In particular this is canonical only up to the choice of $x_0$.)
\end{remark}

A proof of this case is sketched above in Section \ref{sc:1step}.  However, that proof does not generalize entirely cleanly to the case $s > 1$.  We will now sketch a slightly different argument that does generalize, but working again in the case $s=1$ for ease of notation.

First we will sketch how the proof would look if we already knew that $X = \cD_1(A)$ for some $A$.  We consider all the edges \footnote{An edge is just another name for a $1$-cube.} $[a, b]$ of $X$, and associate to each one the group element $(b - a)$.  We define an equivalence relation on edges by $[a,b] \sim [a', b']$ if $b - a = b' - a'$; so $A$ is precisely the set of equivalence classes of edges under this relation.

Given edges $[a, b]$ and $[b, c]$ we can concatenate them to get $[a, c]$.  If the associated elements of $A$ are $r = b - a$ and $s = c - b$ then $[a,c]$ is associated to $c - a = r + s$.  Thus we can recover the group addition operation on $A$ in a combinatorial fashion by concatenation.  The other operations are similarly easy to define.

The task is now to show that these same constructions make sense without any \emph{a priori} assumptions on $X$ other than that it is a nilspace of degree $1$.

\begin{proof}[Sketch proof of Theorem \ref{basic-cs-structure-theorem}, $s=1$]
  Since $X$ is ergodic, the edges $C^1(X)$ are in bijection with $X \times X$. For two edges $[x, y]$ and $[x', y']$ we write $[x,y] \sim [x',y']$ whenever
    \inlinetikz{\singlesquare{$x$}{$y$}{$x'$}{$y'$}}
  is a $2$-cube.  This is an equivalence relation by the cubespace axioms and glueing.  As a set, we define $A := (X \times X) / \sim$.

  \begin{claim}\label{claim:uniqueness}
    Let $[x,y]$ be an edge.  For any fixed $e \in X$ there is an unique $a \in X$ such that $[x,y] \sim [e,a]$.  In other words, the class of $[x,y]$ in $A$ has an unique representative of the form $[e, a]$.
  \end{claim}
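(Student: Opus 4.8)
The plan is to establish existence and uniqueness separately; both are short consequences of facts already in hand, namely ergodicity of $X$, $2$-completion, $2$-uniqueness (recall $X$ has degree $1$), and the fact recorded just above that $\sim$ is an equivalence relation.

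\emph{Existence.} First I would note that, since $X$ is ergodic, every pair of points of $X$ is a $1$-cube; in particular both $[x,y]$ and $[x,e]$ are edges. Hence the configuration on $\{0,1\}^2\setminus\{\vec{1}\}$ given by $(0,0)\mapsto x$, $(1,0)\mapsto y$, $(0,1)\mapsto e$ is a $2$-corner, because its two lower faces are exactly the edges $[x,y]$ and $[x,e]$. Since $X$ is a nilspace it has $k$-completion for every $k$, so applying $2$-completion yields a point $a\in X$ such that the square with $x,y$ along the bottom edge and $e,a$ along the top edge is a $2$-cube; that is, $[x,y]\sim[e,a]$.

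\emph{Uniqueness.} Suppose $[x,y]\sim[e,a]$ and $[x,y]\sim[e,a']$. By symmetry and transitivity of $\sim$ we obtain $[e,a]\sim[e,a']$, so the square with $e,a$ along the bottom edge and $e,a'$ along the top edge is a $2$-cube. On the other hand, duplicating the edge $[e,a]$ (equivalently, pulling it back along a coordinate projection $\{0,1\}^2\to\{0,1\}^1$, which is a morphism of discrete cubes) shows that the square with $e,a$ along the bottom and $e,a$ along the top is also a $2$-cube. These two $2$-cubes agree at every vertex of $\{0,1\}^2$ other than $\vec{1}$, so $2$-uniqueness of $X$ forces $a=a'$.

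There is no genuine obstacle here; the only points needing care are bookkeeping ones: checking that the two relevant lower faces really are $1$-cubes (so that $2$-completion applies), and lining up the distinguished vertex $\vec{1}$ correctly when invoking $2$-uniqueness. If one prefers not to appeal to transitivity of $\sim$, the same conclusion follows directly: reflect the $2$-cube witnessing $[x,y]\sim[e,a]$ in the coordinate axis that interchanges its top and bottom edges, then glue the result to the $2$-cube witnessing $[x,y]\sim[e,a']$ along their common edge $[x,y]$ (using the glueing property, available since $X$ has $k$-completion for all $k$); this produces precisely the $2$-cube with $e,a$ on the bottom and $e,a'$ on the top, and one finishes with $2$-uniqueness as before.
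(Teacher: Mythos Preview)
Your proof is correct and follows essentially the same approach as the paper: existence via $2$-completion of the corner $(x,y,e,*)$, and uniqueness via $2$-uniqueness. The paper compresses both into a single sentence (``unique completion of the corner''), and in particular handles uniqueness more directly than you do: since $[x,y]\sim[e,a]$ and $[x,y]\sim[e,a']$ mean precisely that the two squares $[[x,y],[e,a]]$ and $[[x,y],[e,a']]$ are $2$-cubes agreeing off $\vec{1}$, one invokes $2$-uniqueness immediately without the detour through transitivity of $\sim$ or glueing.
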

  \begin{proof}[Proof of claim]
    This follows trivially from unique completion of the corner
      \inlinetikz{\singlesquare{$x$}{$y$}{$e$}{$\ast$}}
    giving $a$ in the top right.
  \end{proof}

  We now define addition on $A$ by concatenation as suggested above.  More precisely, given $g, h \in A$, we fix an $e \in X$ and choose the unique representatives $g \sim [x, e]$ and $h \sim [e, y]$; then $g + h$ is defined to be the class of the concatenation $[x, y]$.

  \begin{claim}
    \label{well-defined-claim-v1}
    This definition of $g + h$ is well-defined; i.e.~the outcome does not depend on the choice of $e$.
  \end{claim}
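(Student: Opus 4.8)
The plan is to reduce the claim to a single compatibility statement about the relation $\sim$, and then establish that statement using the coordinate‑permuting symmetry of $2$‑cubes together with the transitivity of $\sim$ (which is already available, since $\sim$ was shown to be an equivalence relation when $A$ was defined).

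First I would unwind the definition. Fix $g,h\in A$ and two base points $e,e'\in X$. Claim \ref{claim:uniqueness} produces, for each class, a unique representative with a prescribed \emph{first} coordinate; applying the vertex‑symmetric version of that claim — valid because the morphisms of discrete cubes act transitively on the vertices of $\{0,1\}^2$, as noted after Proposition \ref{prop-canonical-factors}, so unique corner‑completion holds at any vertex of the square — also gives a unique representative with a prescribed \emph{second} coordinate. Write $[x,e]$ and $[x',e']$ for the unique representatives of $g$ with second coordinate $e$ and $e'$ respectively, and $[e,y]$, $[e',y']$ for the unique representatives of $h$ with first coordinate $e$ and $e'$. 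Then $g+h$ computed through $e$ is the class of the concatenation $[x,y]$, and computed through $e'$ it is the class of $[x',y']$; so the claim is precisely that $[x,y]\sim[x',y']$.

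The key input is the following symmetry: since swapping the two coordinates of $\{0,1\}^2$ is a morphism of discrete cubes, for any $a,b,c,d\in X$ one has $[a,b]\sim[c,d]$ if and only if $[a,c]\sim[b,d]$ (transpose the square witnessing the first relation to obtain one witnessing the second). Now the argument is three short steps. Since $[x,e]$ and $[x',e']$ both represent $g$, we have $[x,e]\sim[x',e']$, and transposing gives $[x,x']\sim[e,e']$. Since $[e,y]$ and $[e',y']$ both represent $h$, we have $[e,y]\sim[e',y']$, and transposing gives $[e,e']\sim[y,y']$. Transitivity of $\sim$ then yields $[x,x']\sim[y,y']$, and one final transpose gives $[x,y]\sim[x',y']$, as required.

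There is no deep obstacle here — the content is essentially bookkeeping — but two points warrant care and I would spell them out. The first is the existence and uniqueness of a representative of a class with a prescribed \emph{second} coordinate, which is not literally Claim \ref{claim:uniqueness} but follows from unique corner‑completion applied at the appropriate vertex of the square (using ergodicity to see that the edges bounding that vertex are automatically $1$‑cubes). The second is simply keeping orientations straight when applying the transpose symmetry, so that the three applications chain up correctly; the group‑theoretic sanity check (the relations $e-x=e'-x'$ and $y-e=y'-e'$ add to $y-x=y'-x'$) is a useful guide throughout.
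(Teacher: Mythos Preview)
Your proof is correct and is essentially the same as the paper's: the paper draws the two squares $[[x,e],[x',e']]$ and $[[e,y],[e',y']]$ side by side and invokes the glueing property along the shared column $[e,e']$ to conclude that the outer square $[[x,y],[x',y']]$ is a $2$-cube. Your transpose--transitivity--transpose argument is exactly this glueing, rephrased through the equivalence relation (recall that transitivity of $\sim$ was itself proved by glueing), so the two arguments differ only in packaging.
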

  \begin{proof}[Proof of claim]
    Considering the diagram
    \inlinetikz{\doublesquare{$x$}{$e$}{$y$}{$x'$}{$e'$}{$y'$}{1.0}}
    where $[x, e] \sim [x', e'] \sim g$ and $[e, y] \sim [e', y'] \sim h$, since the outer square is a $2$-cube by glueing we have $[x, y] \sim [x', y']$ as required.
  \end{proof}

  The identity $0 \in A$ is given by the class of constant edges $[x, x]$ (by $2$-uniqueness this is indeed a class of $\sim$), and inversion by the operation $[x, y] \mapsto [y, x]$.

  \begin{claim}
    The set $A$ with these operations forms a (topological) abelian group.
  \end{claim}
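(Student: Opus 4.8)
\medskip
\noindent\emph{Proof proposal.}
The plan is to separate the topological and algebraic content and to let Claims~\ref{claim:uniqueness} and~\ref{well-defined-claim-v1} (and, through them, the glueing property of Proposition~\ref{prop:fibrant-glueing}) do almost all of the work. First I would pin down the topology on $A$. Since $X$ is compact metric, so is $C^1(X)=X^2$, and $\sim$ is a closed subset of $X^2\times X^2$ because $C^2(X)$ is closed and rearranging four points of $X$ into a square configuration is continuous; hence $A=X^2/{\sim}$ is compact Hausdorff. Fixing a base point $e_0\in X$, Claim~\ref{claim:uniqueness} says that the continuous map $X\to A$, $a\mapsto$ (class of $[e_0,a]$), is a bijection, so it is a homeomorphism and $A$ is compact metrizable. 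I would keep this identification $A\cong X$ in reserve for the continuity check.

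Next I would dispatch the ``easy'' axioms. For the identity: duplicating the $1$-cube $[x,x']$ in a new direction gives a $2$-cube with bottom edge $[x,x]$ and top edge $[x',x']$, so all constant edges form a single class $0$; and for $g\sim[a,e]$ one has $g+0=$ (class of the concatenation $[a,e]\cdot[e,e]=[a,e]$), i.e.\ $g+0=g$. For inverses: reflection invariance of $2$-cubes shows $[x,y]\mapsto[y,x]$ descends to a well-defined $g\mapsto-g$, and $g+(-g)=$ (class of $[x,x])=0$. Commutativity will upgrade these to two-sided identities. Associativity is where the real content sits, but with Claims~\ref{claim:uniqueness} and~\ref{well-defined-claim-v1} available it reduces to bookkeeping: given $g,h,k$, fix any $w\in X$ and use Claim~\ref{claim:uniqueness} repeatedly to build a chain of representatives $[w,x]\sim g$, $[x,y]\sim h$, $[y,z]\sim k$. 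Computing $g+h$ with base point $x$ (legitimate by Claim~\ref{well-defined-claim-v1}) gives the class of $[w,y]$, and then $(g+h)+k$ with base point $y$ gives the class of $[w,z]$; symmetrically $h+k$ with base point $y$ is the class of $[x,z]$ and $g+(h+k)$ with base point $x$ is again the class of $[w,z]$. Hence $(g+h)+k=g+(h+k)$.

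For commutativity I would pick, for given $g,h$, a common corner: choose $e\in X$ and representatives $[e,x]\sim g$, $[e,y]\sim h$ via Claim~\ref{claim:uniqueness}. The configuration on $\{0,1\}^2\setminus\{\vec{1}\}$ with values $e$ at $00$, $x$ at $10$, $y$ at $01$ has both lower faces equal to $1$-cubes, so by $2$-completion it extends to a $2$-cube $c$ with $c(00)=e$, $c(10)=x$, $c(01)=y$, $c(11)=z$. Reading off the two ``horizontal'' edges of $c$ gives $[y,z]\sim g$, and precomposing $c$ with the axis-transposition morphism (again a $2$-cube) and reading off its ``horizontal'' edges gives $[x,z]\sim h$. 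Then $g+h$ computed with base point $x$ is the class of $[e,z]$, and $h+g$ computed with base point $y$ is also the class of $[e,z]$; so $g+h=h+g$, and with the previous paragraph the group axioms are in place.

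Finally, continuity. Transported through the homeomorphism $A\cong X$, addition becomes $(a,b)\mapsto\psi(\phi(a),b)$, where $\phi(a)$ is the unique point with $[\phi(a),e_0]\sim[e_0,a]$ and $\psi(u,v)$ is the unique point with $[e_0,\psi(u,v)]\sim[u,v]$, and negation becomes $a\mapsto$ (the unique $c$ with $[e_0,c]\sim[a,e_0]$). Each of these is a ``unique completion of a $2$-corner'' map, hence continuous by the standard fact that a closed relation on a compact space satisfying a uniqueness property is the graph of a continuous function. This would complete the proof that $A$ is a compact, hence topological, abelian group. The step I expect to be the real obstacle is not any of these individually but the fact that Claims~\ref{claim:uniqueness} and~\ref{well-defined-claim-v1}, on which associativity silently rests, already encode the three-dimensional completion/glueing property; that is the point where genuinely higher-dimensional input enters, and it is precisely the analogue of this input that will make the passage to general degree $s$ substantially more delicate.
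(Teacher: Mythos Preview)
Your proposal is correct and follows essentially the same approach as the paper: associativity via chaining representatives and invoking Claim~\ref{well-defined-claim-v1}, commutativity via completing a $2$-corner and reading off the resulting edge relations, and continuity via the homeomorphism $X\cong A$ coming from Claim~\ref{claim:uniqueness}. The only cosmetic difference is that the paper names the maps $r_e\colon x\mapsto[e,x]/{\sim}$ and $\ell_e\colon x\mapsto[x,e]/{\sim}$ explicitly and writes addition as a composite through them, whereas you package the same content as ``unique completion of a $2$-corner gives a closed-graph, hence continuous, function''; these are the same argument.
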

  \begin{proof}[Proof of claim]
    That the proposed identity and inverse operation do what they claim is a trivial check.  Associativity follows from associativity of concatenation, together with Claim \ref{well-defined-claim-v1}.  For commutativity, suppose $[x, e] \sim g$, $[e, y] \sim h$; by completing the corner to get $e'$ in
    \inlinetikz{\singlesquare{$e$}{$y$}{$x$}{$e'$}}
    we have $[e', y] \sim g$, $[x, e'] \sim h$ and hence $h + g \sim [x, y]\sim g+h$ as required.

    Issues of continuity are not difficult to justify.  Note that the equivalence relation $\sim$ is closed (as $C^2(X)$ is a closed subspace of $X^4$) and so $A$ is a compact metric space.  We now sketch the proof that $+:A\times A\rightarrow A$ is continuous.  For a fixed $e \in X$, the map $r_e \colon X \to A$ given by $x \mapsto [e,x]$ is clearly continuous (by the definition of the product and quotient topologies) and a bijection (by Claim \ref{claim:uniqueness}) so is a homeomorphism.  The same holds for $\ell_e \colon x \mapsto [x,e]$.  Hence, the composite
    \begin{align*}
      A \times A &\to X \times X \to A \\
      (g,h) &\mapsto (\ell_e^{-1}(g),r_e^{-1}(h)) \mapsto [\ell_e^{-1}(g),r_e^{-1}(h)] / \sim
    \end{align*}
    is also continuous; but this is the definition of $+$, as required.
  \end{proof}
  \vspace{\baselineskip}
  We now define the group action of $A$ on $X$.  Given $g \in A$ and $x \in X$ we take $g(x)$ to be the unique element of $X$ such that $g \sim [x, g(x)]$.  It is clear $0 \in A$ acts trivially; looking at the diagram
  \inlinetikz{\doublesquare{$x$}{$g(x)$}{$h(g(x))$}{$c$}{$e$}{$d$}{1.0}}
  where $g \sim [c, e]$ and $h \sim [e, d]$, we conclude this is indeed a group action by considering the outer square.  Moreover, it is trivially simply transitive.

  Finally we must investigate the cubes of $X$ in terms of $A$, i.e.\ prove (ii) from Theorem \ref{basic-cs-structure-theorem}.  For $0$ and $1$-cubes there is nothing to say.  Suppose $c$ is the configuration
  \inlinetikz{\singlesquare{$x$}{$y$}{$z$}{$w$}}
  in $X^{\{0,1\}^2}$.  By definition of $\sim$, this is a cube if and only if $[x, y] \sim [z, w]$.  Furthermore it is obtained by acting on the constant cube
  \inlinetikz{\singlesquare{$e$}{$e$}{$e$}{$e$}}
  by the elements of $A$ represented by $r := [e, x]$, $s := [e, y]$, $t := [e, z]$, $u := [e, w]$ respectively.

  By considering the concatenation of $[x,e]$, $[e,y]$ and using the definitions of the group operation on $A$, we find that $[x, y] \sim s - r$, and similarly $[z, w] \sim u - t$.   Hence, $c$ is a cube if and only if $[x,y] \sim [z,w]$, if and only if $s - r = u - t$; i.e.~if and only if $c$ is obtained by acting on a constant cube by an element of $C^2(\cD_1(A))$.  This is precisely what is required by (ii) in the case $k=2$.

  It is straightforward to deduce the cases $k \ge 3$ of Theorem \ref{basic-cs-structure-theorem}(ii) from the $k=2$ case, but we defer this argument to the more general setting of Section \ref{sec:elementary-v2} (see Theorem \ref{relative-structure-thm} and especially the proof of Lemma \ref{lem:relative cubes}).
\end{proof}

Several adjustments are required to make this argument work in the full case of Theorem \ref{basic-cs-structure-theorem}.  However, we will address these at the same time as stating and proving a version of Theorem \ref{basic-cs-structure-theorem} in greater generality.

Although the statement of Theorem \ref{basic-cs-structure-theorem} refers only to the map $\pi_{s-1} \colon X \to \pi_{s-1}(X)$, we can exploit the fact that $\pi_{s-1}(X)$ is itself a nilspace of degree $(s-1)$ to apply the theorem repeatedly, and obtain a tower
\[
  X \xrightarrow{\pi_{s-1}} \pi_{s-1}(X) \to \dots \xrightarrow{\pi_0} \pi_0(X) = \{\ast\}
\]
as promised.  We write $A_t$ (for $1 \le t \le s$, if $X$ is a nilspace of degree $s$) for the compact abelian groups that arise from the application of Theorem \ref{basic-cs-structure-theorem} to $\pi_t(X)$ at each stage; collectively, these are referred to as the \emph{structure groups} of $X$.

\section{The relative weak structure theory}
\label{sec:elementary-v2}

With these overviews concluded, we will now return to a formal account of the weak structure theory.  In this second pass, we will both give complete proofs, and also introduce a slightly greater degree of generality which will be useful to us in future.

\subsection{Fibrations}

One key definition missing from our initial treatment of cubespaces and nilspaces is the following notion of a \index{fibration}\emph{fibration} \footnote{This term was chosen by analogy with the notion of a Kan fibration in simplicial homotopy theory.  A simplicial set is called a \emph{Kan complex} or \emph{fibrant} if for all $k \ge 1$, any collection of $k$ compatible $(k-1)$-simplices (called a ``horn'') can be completed (or ``filled'') to a $k$-simplex with these as faces, analogously to completion of $k$-corners in a cubespace.  The relative analogue of this for a map between two simplicial sets is known as a \emph{Kan fibration}.  The survey \cite{friedman} is a very approachable introduction to these ideas.  There are other resemblances between these two theories which may not be entirely superficial, but we will not pursue this here.}, which is a particular kind of cubespace morphism.  This can be thought of as a relative version of the notion of the corner completion property.  An alternative heuristic is that fibrations are morphisms which are ``properly'' surjective in the category of cubespaces.

\begin{definition}
  \label{def:fibration}
  Let $f \colon X \to Y$ be a morphism between cubespaces $(X, C^k(X))$ and $(Y, C^k(Y))$.  We say $f$ is a \emph{fibration} if the following holds for any $k \ge 0$.

  Suppose $c \in C^k(Y)$ is a $k$-cube, and $\lambda \colon \{0,1\}^k \setminus \{\vec{1}\} \to X$ is a $k$-corner (see Definition \ref{defn:k-completion}) such that $f(\lambda(\omega)) = c(\omega)$ for all $\omega \in \{0,1\}^k \setminus \{\vec{1}\}$.  Then there exists $x \in X$ such that
  \begin{itemize}
    \item the configuration
      \begin{align*}
        \tilde{c} \colon \{0,1\}^k &\to X \\
                              \omega &\mapsto \begin{cases} \lambda(\omega) &\colon \omega \ne \vec{1} \\ x &\colon \omega = \vec{1} \end{cases}
      \end{align*}
      is in $C^k(X)$; and
    \item $f(x) = c(\vec{1})$ (and hence $f(\tilde{c}) = c$).
  \end{itemize}
\end{definition}

Informally, this definition can be stated as follows: given any $k$-corner $\lambda$ of $X$ and any completion of $f(\lambda)$ to a cube of $Y$, we can complete $\lambda$ to a cube in a compatible fashion.

\begin{remark}
  By taking $k=0$, we find that any fibration is surjective as a map $X \to Y$.  However, this is strictly weaker, as can be seen by taking $X = \cD_1(\RR/\ZZ)$, $Y = \cD_2(\RR/\ZZ)$ and $f \colon \RR/\ZZ \to \RR/\ZZ$ the identity map.  This is a surjective cubespace morphism, but taking $k=2$ and choosing $c$ to be any configuration in $(\RR/\ZZ)^4$ that is not a cube of $C^2(X)$ (but is in $C^2(Y) = (\RR/\ZZ)^4$) it is seen not to be a fibration.
\end{remark}

\begin{remark}
  Let $X$ be a cubespace and consider the unique morphism $f \colon X \to \{\ast\}$, the one-point cubespace.  Then $f$ is a fibration if and only if $X$ has $k$-completion for all $k$.

  Hence, we refer to spaces that have $k$-completion for all $k$ as \emph{fibrant}.\index{fibrant}
\end{remark}

By the previous remark, whenever we prove a statement about fibrant cubespaces, it is reasonable -- and sometimes useful -- to ask for a relative version that holds for fibrations.

\begin{remark}
  \label{rem:fiber-surjective}
  The notion of a fibration is intimately related to that of a \emph{fiber-surjective morphism} which appears in \cite{CS12}*{Section 2.8}.  There, a cubespace morphism $f \colon X \to Y$ is called fiber-surjective if for each $k \ge 0$, the image of any $\sim_k$ class in $X$ is a $\sim_k$ class of $Y$.  It is assumed in this context that both $X$ and $Y$ are nilspaces.

  It is not hard to check that if $X$ and $Y$ are nilspaces and $f \colon X \to Y$ is a cubespace morphism, then $f$ is a fibration if and only if it is fiber-surjective.  If $X$ and $Y$ are general cubespaces, it is not clear how to extend the definition of fiber-surjectivity in general, so no comparison is possible.  If one did extend it to a wider class of cubespaces, we note that the map $X \to \{\ast\}$ will surely be vacuously fiber-surjective for any $X$, whereas it is a fibration if and only if $X$ is fibrant, and hence the notions would be inequivalent whenever $X$ is not assumed to be fibrant.

  Our reasons for working with the definition of a fibration given here rather than with fiber-surjective morphisms are twofold.
  \begin{itemize}
    \item We will have cause later in the project to work with fibrations $f \colon X \to Y$ where $X$ and $Y$ are not nilspaces. Here the distinction matters, and the notion of a fibration is the correct one for what we need.
    \item The fact that the definition of a fibration cleanly extends the completion axiom means that many proofs that work for nilspaces tend to transfer without significant modification to relative versions for fibrations.  In the authors' view, this makes the theory cleaner in places, and suggests the fibration definition is perhaps the more natural one.
  \end{itemize}
  However, we stress that the difference is a technical one, and logically unimportant in almost all cases.
\end{remark}

We will now show a couple of lemmas, together with their proofs, to illustrate this concept in action.

\begin{lemma}[Lifting of ``partial cubes'']
  \label{extension-property}
  Let $f \colon X \to Y$ be a fibration between cubespaces $X$ and $Y$.

  Let $S \subseteq \{0,1\}^k$ be a downwards-closed subset, i.e.~if $\omega \in S$ and $\omega' \subseteq \omega$ then $\omega' \in S$.  Let $T$ be another such subset with $S \subseteq T$.

  Suppose we have configurations $A \colon S \to X$, $B \colon T \to Y$ such that
  \begin{enumerate}
    \item for each $\omega \in S$, we have $f(A(\omega)) = B(\omega)$;
    \item for each $\omega \in S$, the configuration
      \begin{align*}
        \{ \omega' \colon \omega' \subseteq \omega \} &\to X \\
                                              \omega' &\mapsto S(\omega')
      \end{align*}
      is a cube of $X$; and
    \item similarly for $Y$ and $T$.
  \end{enumerate}
  Then we can extend $A$ to a function $\tilde{A} \colon T \to X$ such that $f(\tilde{A}) = B$ and $\tilde{A}$ now has property (ii) with respect to $T$.
\end{lemma}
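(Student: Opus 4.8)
The plan is to induct on $|T \setminus S|$, peeling off one vertex at a time and invoking the fibration property of $f$ once per vertex. If $T = S$ there is nothing to do (take $\tilde A = A$), so suppose $T \setminus S \neq \emptyset$ and pick $\omega_0$ a minimal element of $T \setminus S$ with respect to inclusion. Then every $\omega' \subsetneq \omega_0$ lies in $S$: indeed $\omega' \in T$ since $T$ is downwards-closed, and $\omega' \notin T \setminus S$ by minimality of $\omega_0$. It therefore suffices to extend $A$ to a configuration $A'$ on $S' := S \cup \{\omega_0\}$ (still downwards-closed) satisfying hypotheses (i) and (ii) relative to $S'$; hypothesis (iii) for $(T, B)$ is unchanged, $|T \setminus S'| < |T \setminus S|$, and the induction hypothesis applied to $A'$, $B$, $S'$, $T$ then finishes the proof.

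For the single-vertex step, write $d = |\omega_0|$ and identify the face $F := \{\omega' : \omega' \subseteq \omega_0\}$ with $\{0,1\}^d$ via the evident morphism of discrete cubes (reading off the $d$ coordinates on which $\omega_0$ equals $1$), under which $\omega_0$ corresponds to $\vec{1}$. Under this identification, each codimension-$1$ lower face of $\{0,1\}^d$ is precisely a sub-face $\{\omega' : \omega' \subseteq \omega_1\}$, where $\omega_1$ is obtained from $\omega_0$ by flipping one of its $1$-coordinates to $0$; since $\omega_1 \subsetneq \omega_0$ we have $\omega_1 \in S$, so hypothesis (ii) says $A$ restricted to that sub-face is a $(d-1)$-cube of $X$. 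Hence $A|_{F \setminus \{\omega_0\}}$ is a $d$-corner of $X$ in the sense of Definition \ref{defn:k-completion}. Meanwhile $B|_F$ is a $d$-cube of $Y$ by hypothesis (iii) applied to $\omega_0 \in T$, and $f(A(\omega')) = B(\omega')$ for all $\omega' \in F \setminus \{\omega_0\}$ by hypothesis (i). Applying the fibration property of $f$ to this $d$-corner of $X$ and this $d$-cube of $Y$ (all of whose relevant vertices lie in $S$) yields a point $x \in X$ with $f(x) = B(\omega_0)$ such that extending $A|_{F \setminus \{\omega_0\}}$ by $x$ at $\omega_0$ is a $d$-cube of $X$. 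Define $A'$ to agree with $A$ on $S$ and with $A'(\omega_0) = x$. Then $f \circ A' = B|_{S'}$, and $A'$ has property (ii) relative to $S'$: for $\omega \in S$ the face of $A'$ below $\omega$ coincides with that of $A$, hence is a cube, while the face below $\omega_0$ is exactly the $d$-cube just produced.

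The only genuinely non-formal point is the bookkeeping in the middle paragraph: one must check that, under the identification of $F$ with $\{0,1\}^d$, the codimension-$1$ lower faces of $\{0,1\}^d$ correspond exactly to the sub-faces $\{\omega' : \omega' \subseteq \omega_1\}$ that hypothesis (ii) controls, so that $A|_{F \setminus \{\omega_0\}}$ really is a $d$-corner and the fibration property applies. Everything else is a direct unwinding of the definition of a fibration and of the behaviour of cubes under morphisms of discrete cubes; no topological input or completion hypotheses on $X$ or $Y$ are needed beyond the assumption that $f$ is a fibration.
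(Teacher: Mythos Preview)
Your proof is correct and follows essentially the same approach as the paper's: induct on $|T\setminus S|$, pick a minimal $\omega_0\in T\setminus S$, and apply the fibration property to the sub-cube below $\omega_0$. Your write-up is somewhat more careful than the paper's in spelling out why $A|_{F\setminus\{\omega_0\}}$ is a $d$-corner and why $S'$ remains downwards-closed, but the argument is the same.
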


Here we again used the identification of $\{0,1\}^k$ with the set of subsets of $[k]$ as in Section \ref{sec:hk}.
This rather general statement has the following more natural corollaries.

\begin{corollary}
  If $f \colon X \to Y$ is a fibration then the induced map $f \colon C^k(X) \to C^k(Y)$ is surjective for all $k \ge 0$.
\end{corollary}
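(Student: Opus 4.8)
The plan is to read this off directly from Lemma \ref{extension-property}: starting from the \emph{empty} partial cube in $X$, we extend it over a given cube $c$ of $Y$, one vertex at a time, until we reach a full $k$-cube of $X$ sitting above $c$.

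Concretely, fix $k \ge 0$ and $c \in C^k(Y)$; we must produce $\tilde c \in C^k(X)$ with $f(\tilde c) = c$. Apply Lemma \ref{extension-property} with $S = \emptyset$ (which is vacuously downwards-closed), $T = \{0,1\}^k$, $A \colon \emptyset \to X$ the empty configuration, and $B = c \colon T \to Y$. Hypotheses (i) and (ii) of the lemma are then vacuous, and hypothesis (iii) holds because for each $\omega \in \{0,1\}^k$ the restriction of $c$ to $\{\omega' \colon \omega' \subseteq \omega\}$ is a lower face of $c$, hence equals $c \circ \rho$ for a morphism of discrete cubes $\rho$, and so lies in $C^{|\omega|}(Y)$ by the cubespace axioms. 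The lemma then yields an extension $\tilde A \colon \{0,1\}^k \to X$ with $f \circ \tilde A = c$ which has property (ii) relative to $T$; specializing that property to $\omega = \vec{1}$, where $\{\omega' \colon \omega' \subseteq \vec{1}\}$ is all of $\{0,1\}^k$, shows that $\tilde A$ itself lies in $C^k(X)$. Taking $\tilde c = \tilde A$ gives the desired preimage, so $f \colon C^k(X) \to C^k(Y)$ is surjective.

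I expect no genuine difficulty here; the only thing worth confirming is that $S = \emptyset$ is an admissible input to Lemma \ref{extension-property}. If one prefers to avoid this degenerate case, one can instead take $S = \{\vec{0}\}$ (the bottom vertex), choose any $x_0 \in f^{-1}(c(\vec{0}))$ --- such a point exists because any fibration is surjective, by the $k=0$ instance of Definition \ref{def:fibration} --- put $A(\vec{0}) = x_0$, and run the identical argument, with hypotheses (i)--(iii) then immediate. Either way the corollary is an essentially formal consequence of the lemma.
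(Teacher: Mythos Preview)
Your proof is correct and takes exactly the same approach as the paper: the paper's entire proof is the single line ``Set $S = \emptyset$, $T = \{0,1\}^k$ in the lemma.'' Your version simply spells out the verification of the hypotheses and the reading-off of the conclusion.
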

\begin{proof}
  Set $S = \emptyset$, $T = \{0,1\}^k$ in the lemma.
\end{proof}

\begin{corollary}
  \label{cor:fibrant-extension-property}
  If $X$ is fibrant, $S \subseteq \{0,1\}^k$ is downward closed and $A \colon S \to X$ is a configuration satisfying property (ii) from the lemma, then $A$ extends to a cube, i.e.~there is some cube $c \in C^k(X)$ with $c|_S = A$.
\end{corollary}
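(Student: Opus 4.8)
The plan is to deduce this immediately from Lemma~\ref{extension-property} by specializing the target to the one-point cubespace. Recall that $X$ is fibrant precisely when the unique morphism $f \colon X \to \{\ast\}$ is a fibration. So I would apply Lemma~\ref{extension-property} with $Y = \{\ast\}$ and this $f$, with $S$ and $A$ as given, with $T = \{0,1\}^k$, and with $B \colon T \to \{\ast\}$ the unique (constant) configuration.

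It then remains to verify the three hypotheses of the lemma in this situation. Hypothesis (i), that $f(A(\omega)) = B(\omega)$ for $\omega \in S$, holds trivially since both sides equal $\ast$. Hypothesis (ii) is exactly the assumed property of $A$. Hypothesis (iii) is vacuous: since $\{\ast\}$ is the one-point cubespace, $C^\ell(\{\ast\}) = \{\ast\}^{\{0,1\}^\ell}$ for every $\ell$, so every configuration into $\{\ast\}$ — in particular every downward restriction of $B$ — is a cube.

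The lemma then produces $\tilde{A} \colon \{0,1\}^k \to X$ extending $A$, with $f(\tilde{A}) = B$ (which carries no content) and with property (ii) now holding with respect to all of $T = \{0,1\}^k$. Applying property (ii) at the vertex $\omega = \vec{1}$, for which $\{\omega' \colon \omega' \subseteq \vec{1}\} = \{0,1\}^k$, tells us exactly that $\tilde{A}$, viewed as a configuration on $\{0,1\}^k$, lies in $C^k(X)$. Taking $c = \tilde{A}$ gives a cube with $c|_S = A$, as required.

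There is essentially no obstacle here: the corollary is a direct specialization of the preceding lemma, and the only point worth recording is the (trivial) observation that a one-point target trivializes hypotheses (i) and (iii), which is the mechanism by which each relative statement about fibrations yields the corresponding absolute statement about fibrant spaces.
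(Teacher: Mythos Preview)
Your proof is correct and follows exactly the same approach as the paper: apply Lemma~\ref{extension-property} with $Y = \{\ast\}$ and $T = \{0,1\}^k$. You have simply spelled out the hypothesis checks and the reading of the conclusion in more detail than the paper does.
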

\begin{proof}
  Apply the lemma with $Y = \{\ast\}$, $T = \{0,1\}^k$.
\end{proof}

\begin{proof}[Proof of Lemma \ref{extension-property}]
  If $T = S$ there is nothing to do.  Suppose $\omega_0$ is a minimal element of $T \setminus S$.  If we can extend $A$ to a configuration on $S \cup \{\omega_0\}$ that still satisfies (i) and (ii), we will be done by iterating this process.

  An example of this set-up is shown in the following diagram, where $T = \{0,1\}^3 \setminus \{\vec{1}\}$, $S = \{000,001,010,011,100\}$ and $\omega_0 = 101$:
  \inlinetikz{
    \begin{scope}[shift={(-3,0)}]
      \begin{scope}[x={(1, 0)}, y={(0, 1)}, z={(0.352, 0.317)}, scale=2]
        \draw[fill=black!40,fill opacity=.4] (0,0,0) -- (0,0,1) -- (0,1,1) -- (0,1,0) -- cycle;
        \draw[line width=0.05cm,black!70]  (0,0,0) -- (1,0,0);
      \end{scope}
    \end{scope}
    \draw[->,dashed] (0, 1) -- (1, 1);
    \begin{scope}[shift={(2,0)}]
      \begin{scope}[x={(1, 0)}, y={(0, 1)}, z={(0.352, 0.317)}, scale=2]
        \draw[fill=black!40,fill opacity=.4] (0,0,0) -- (0,0,1) -- (0,1,1) -- (0,1,0) -- cycle;
        \draw[line width=0.05cm,black!70]  (0,0,0) -- (1,0,0);
        \draw[thick,dashed] (0,0,1) -- (1,0,1) -- (1,0,0);
        \draw[pattern=dots,draw=none] (0,0,0) -- (0,0,1) -- (1,0,1) -- (1,0,0) -- cycle;
        \draw[fill=black] (1,0,1) circle[radius=0.03cm] node[right] {$\omega_0$};
      \end{scope}
    \end{scope}
    \draw[->] (-1,-0.5) -- (0, -1);
    \draw[->] (2,-0.5) -- (1, -1);
    \begin{scope}[shift={(-0.5,-4)}]
      \begin{scope}[x={(1, 0)}, y={(0, 1)}, z={(0.352, 0.317)}, scale=2]
        \draw[fill=black!40,fill opacity=.4] (0,0,0) -- (0,0,1) -- (0,1,1) -- (0,1,0) -- cycle;
        \draw[fill=black!40,fill opacity=.4]  (0,0,0) -- (1,0,0) -- (1,0,1) -- (0,0,1) -- cycle;
        \draw[fill=black!40,fill opacity=.4]  (0,0,0) -- (1,0,0) -- (1,1,0) -- (0,1,0) -- cycle;
      \end{scope}
    \end{scope}
    \draw (-5, 1) node {$X\colon$};
    \draw (-5, -3) node {$Y\colon$};
  }

  Let $V = \{ \omega \in \{0,1\}^k \colon \omega \subseteq \omega_0 \}$; clearly this is a discrete sub-cube of dimension $k' = |\omega_0|$ (shown as the dotted square in the diagram, where $k'=2$).  Then $A|_{V \setminus \{\omega_0\}}$ is a $k'$-corner, and furthermore a partial lift  of the corresponding cube $B|_V$ of $Y$.

  By the fibration definition we can choose $x \in X$ such that $f(x) = B(\omega_0)$, and such that setting $A(\omega_0) = x$ we have $A|_V \in C^{k'}(X)$.  Hence this extended $A$ satisfies (i) and (ii) as required.
\end{proof}

We can also use this to give a full proof of Proposition \ref{prop:fibrant-glueing}; recall this stated that a fibrant space satisfies the glueing property.

\begin{proof}[Proof of Proposition \ref{prop:fibrant-glueing}]
  Essentially we formalize the diagram given above when $k=2$; for convenience we reproduce it now.
  \glueingpic
Formally: given $c, c' \in C^k(X)$ with $c(\omega 1) = c'(\omega 0)$ for all $\omega \in \{0,1\}^{k-1}$, let $S = \{0,1\}^{k+1} \setminus \left(\{0,1\}^{k-1} \times \{(1, 1)\}\right)$ and consider the configuration
  \begin{align*}
    d \colon S &\to X \\
                  \omega 0 0 & \mapsto c(\omega 1) = c'(\omega 0) \\
                  \omega 0 1 & \mapsto c'(\omega 1) \\
                  \omega 1 0 & \mapsto c(\omega 0) \ .
  \end{align*}
  One can verify this satisfies the hypotheses of Corollary \ref{cor:fibrant-extension-property}, and so there is a cube $\tilde{d} \in C^{k+1}(X)$ whose restriction to $S$ is $d$.  But then the ``glued'' configuration $c''$ as in Definition \ref{defn:glueing} is a sub-cube of $\tilde{d}$ as shown, and so is in $C^k(X)$ by the cubespace axioms.
\end{proof}

It is clear from the definition that a composition of fibrations is a fibration.
Another useful fact is the following universal property.

\begin{lemma}[``Universal property'']
\label{lem:universal1}
Let $f_{YX}:X\to Y$ and $f_{ZX}:X\to Z$ be fibrations between compact cubespaces.
Suppose that for every $y\in Y$ there is $z\in Z$ such that  $f_{YX}^{-1}(y)\subseteq f_{ZX}^{-1}(z)$.
Then there is a unique fibration $f_{ZY}: Y\to Z$ such that $f_{ZX}=f_{ZY}\circ f_{YX}$.

Equivalently, the following holds.
Let $f:X\to Y$ be a fibration and $g: Y\to Z$ be a map between two compact cubespaces.
If $g\circ f$ is a fibration then so is $g$.
\end{lemma}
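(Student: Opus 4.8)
The plan is to establish the two formulations of Lemma~\ref{lem:universal1} by showing they are equivalent reformulations of a single underlying construction, and then proving the key technical point: that a map $g$ whose composition with a fibration $f$ is a fibration must itself be a fibration. I would begin with the equivalence of the two statements. Given the first formulation, set $Z$ and the map $g\circ f$ as data; the hypothesis $f_{YX}^{-1}(y)\subseteq f_{ZX}^{-1}(z)$ for some $z$ is precisely the condition that $g\circ f$ is constant on fibres of $f$, which lets us \emph{define} $g\colon Y\to Z$ by $g(y)=z$ unambiguously. Conversely, starting from the second formulation with $g\circ f$ a fibration, the fibres of $f$ automatically refine the fibres of $g\circ f$ (since $g\circ f = g\circ f$ factors through $f$), giving the containment hypothesis of the first formulation with $f_{ZX}=g\circ f$. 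So it suffices to prove one of them; I would work with the second since it is the more operational.

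The first substantive step is to check $g$ is \emph{continuous} and a \emph{morphism of cubespaces}. Continuity is a routine point-set argument: $f$ is a continuous surjection between compact metric spaces, hence a quotient map, so $g$ is continuous iff $g\circ f$ is, which holds by hypothesis. For the morphism property, given $c'\in C^k(Y)$, use the corollary to Lemma~\ref{extension-property} (surjectivity of $f$ on $k$-cubes) to lift $c'$ to some $c\in C^k(X)$ with $f(c)=c'$; then $g(c') = g(f(c)) = (g\circ f)(c) \in C^k(Z)$ because $g\circ f$ is a morphism. Uniqueness of $g$ is immediate from surjectivity of $f$.

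The main obstacle --- and the heart of the lemma --- is verifying that $g$ satisfies the \emph{fibration} lifting property. Suppose we are given $c\in C^k(Z)$ and a $k$-corner $\mu\colon \{0,1\}^k\setminus\{\vec 1\}\to Y$ with $g(\mu(\omega))=c(\omega)$ for all $\omega\ne\vec 1$. I would proceed in two stages. First, use that $f\colon X\to Y$ is a fibration to lift $\mu$: apply Corollary~\ref{cor:fibrant-extension-property}-style reasoning, or more precisely Lemma~\ref{extension-property} with $S=\emptyset$, $T=\{0,1\}^k\setminus\{\vec 1\}$, $Y$ there being our $Y$ and $B=\mu$, to obtain a $k$-corner $\lambda\colon\{0,1\}^k\setminus\{\vec1\}\to X$ with $f(\lambda)=\mu$ and each lower face of $\lambda$ a cube of $X$. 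Now $(g\circ f)(\lambda(\omega)) = g(\mu(\omega)) = c(\omega)$ for $\omega\ne\vec1$, so $\lambda$ is a $k$-corner of $X$ sitting over the cube $c$ of $Z$ via the fibration $g\circ f$. Applying the fibration property of $g\circ f$, we get $x\in X$ with $(g\circ f)(x)=c(\vec1)$ and $\tilde\lambda := \lambda\cup\{\vec1\mapsto x\}\in C^k(X)$. Set $y := f(x)\in Y$. Then $\tilde\mu := \mu\cup\{\vec1\mapsto y\} = f(\tilde\lambda)\in C^k(Y)$ since $f$ is a morphism, it restricts correctly to $\mu$ off $\vec 1$, and $g(y) = g(f(x)) = (g\circ f)(x) = c(\vec1)$. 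This exhibits the required completion, so $g$ is a fibration, completing the proof. The only delicate point to get right is that the lift $\lambda$ produced by $f$ genuinely has all lower faces being cubes of $X$ (so that it is a legitimate $k$-corner to feed into the fibration property of $g\circ f$) --- this is exactly property~(ii) in Lemma~\ref{extension-property}, so invoking that lemma rather than the bare definition of a fibration is what makes the argument go through cleanly.
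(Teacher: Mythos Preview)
Your proposal is correct and follows essentially the same route as the paper's proof: both arguments reduce the first formulation to the second via the quotient-map continuity argument, check that $g$ is a cubespace morphism by lifting cubes through the surjective $f\colon C^k(X)\to C^k(Y)$, and then prove the fibration property for $g$ by invoking Lemma~\ref{extension-property} (with $S=\emptyset$, $T=\{0,1\}^k\setminus\{\vec 1\}$) to lift the given $Y$-corner to an $X$-corner, completing it against $c$ via the fibration $g\circ f$, and pushing the result back down through $f$. Your exposition of the equivalence between the two formulations is more explicit than the paper's, but the technical content is the same.
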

\begin{proof}
Since $X$, $Y$ and $Z$ are compact metric spaces, $f_{YX}$ and $f_{ZX}$ are quotient maps.
Hence the map $f_{ZY}$ (which is uniquely defined thanks to the condition imposed on the fibres
of $f_{YX}$ and $f_{ZX}$) is continuous.
It remains to show that it is also a fibration, which is precisely the second statement.

  For the second statement, we first show that $g$ is a cubespace morphism.
To this end, fix a cube $c\in C^k(Y)$ and let $\wt c\in C^k(X)$ be a cube such that $f(\wt c)=c$.
This exists since $f$ is a fibration.
This implies that $g(c)=g\circ f(\wt c)\in C^{k}(Z)$, hence $g$ is indeed a cubespace morphism.

Suppose $\lambda$ is a $k$-corner in $Y$ and $c \in C^k(Z)$ a compatible $k$-cube.  By Lemma \ref{extension-property} on $f$ with $S = \emptyset$, $T = \{0,1\}^k \setminus \{\vec{1}\}$, we can choose $\tilde{\lambda}$ a $k$-corner of $X$ such that $f(\tilde{\lambda}) = \lambda$ and so in particular $g \circ f(\tilde{\lambda})$ is compatible with $c$.

  Since $g \circ f$ is a fibration, we may extend $\tilde{\lambda}$ to a cube $c' \in C^k(X)$ such that $g \circ f(c') = c$.  So, $f(c') \in C^k(Y)$ has the required property.
\end{proof}

\begin{remark}
Taking $Z=\{*\}$ in the lemma, we see that if $f:X\to Y$ is a fibration and $X$ is fibrant then so is $Y$.

It also holds that the property of $s$-uniqueness is inherited by the image of a cubespace under a fibration, hence the image of a nilspace under a fibration is a nilspace.

Indeed, let $f:X\to Y$ be a fibration and suppose that $X$ has $s$-uniqueness.
Suppose that $x\sim_s y$ for some $x,y\in Y$.
We aim to show that $x=y$.
We first note that the configuration $c:\{0,1\}^s\to Y$ defined by
$\omega\mapsto x$ for $\omega\neq\vec1$ and $\vec1\mapsto y$ is a cube in $C^s(Y)$.
Let $\wt x\in f^{-1}(x)$ be arbitrary and let $\lambda:\{0,1\}^s\backslash\{\vec1\}\to X$ be the constant
corner configuration $\omega\mapsto \wt x$.

Since $f$ is a fibration, $\lambda$ can be completed to a cube $\wt c$ such that $f(\wt c(\vec 1))=y$.
Since $X$ has $s$-uniqueness, we have $\wt x=\wt c(\vec 1)$, which implies $x=f(\wt x)=f(\wt c(\vec 1))=y$,
as required.
\end{remark}

We also need to record the correct ``relative'' version of uniqueness.

\begin{definition}
  We say a cubespace morphism $f \colon X \to Y$ has \emph{$k$-uniqueness}\index{relative(ly)!$k$-uniqueness} if the following holds: if $c, c' \in C^k(X)$ are two cubes such that $f(c) = f(c')$ and $c(\omega) = c'(\omega)$ for all $\omega \ne \vec{1}$, then in fact $c = c'$.

  If $f$ is a fibration and $k$ is the smallest number such that $f$ has $k$-uniqueness, we say that $f$ has \emph{degree $(k-1)$}.
\index{fibration!degree}
\end{definition}

Again it is clear that a space $X$ has $k$-uniqueness if and only if the map $X \to \{\ast\}$ does.

\subsection{The structure theory in terms of fibrations}

We will now modify -- and prove -- statements of the weak structure theory discussed above, in ``relative form'', i.e.~in terms of general fibrations.

First we consider the \emph{canonical factors} $\pi_s$.  Everything here is a reasonably straightforward generalization of the corresponding arguments in Section \ref{sec:canonical-factors}.

\begin{definition}
  Let $f \colon X \to Y$ be a fibration, and let $k \ge 0$.  Define an equivalence relation $\sim_{f, s}$\index[nota]{$\sim_{f, s}$} on $X$ as follows: $x \sim_{f, s} x'$ if there exist two $(s+1)$-cubes $c, c'$ in $X$ such that $f(c)=f(c')$, $c(\vec{1}) = x$, $c'(\vec{1}) = x'$ and $c(\omega) = c'(\omega)$ at all other vertices $\omega \in \{0,1\}^{s+1} \setminus \{\vec{1}\}$.
\end{definition}

\begin{proposition}
  \label{relative-canonical-factor}
  Suppose $f \colon X \to Y$ is a morphism between compact cubespaces $X$, $Y$ that have the glueing property.

  Then the relation $\sim_{f, s}$ is a closed equivalence relation.  Moreover a ``universal replacement''\index{relative(ly)!universal replacement property} property holds: if $x \sim_{f, s} x'$, $k \le s+1$ and $c \in C^{k}(X)$ is a cube with $c(\vec{1}) = x$, then the configuration $c'$ given by
  \[
    c'(\omega) = \begin{cases} x' &\colon \omega = \vec{1} \\ c(\omega) &\colon \omega \ne \vec{1} \end{cases}
  \]
  is a cube.

  Now suppose further that $f$ is a fibration.  Writing $\pi  = \pi_{f,s} \colon X \to X / \sim_{f,s}$\index[nota]{$\pi_{f,s}$} for the projection map, we have that $\pi$ is a fibration and $f$ factors as $f \colon X \xrightarrow{\pi} X / \sim_{f, s} \xrightarrow{g} Y$ where $g$ is a fibration of degree at most $s$.
\end{proposition}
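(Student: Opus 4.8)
The plan is to prove the three groups of assertions in turn: that $\sim_{f,s}$ is a closed equivalence relation with the universal replacement property; then, assuming $f$ is a fibration, that $f$ factors as $g\circ\pi_{f,s}$ with $g$ a fibration of the claimed degree; and finally, as the crux, that $\pi_{f,s}$ itself is a fibration.

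For the first block I would not re-run the arguments of Section~\ref{sec:canonical-factors}, but reduce to Proposition~\ref{prop-canonical-factors}. The key observation is a relative form of Lemma~\ref{lem:canonical-characterization}: $x\sim_{f,s}x'$ holds if and only if $f(x)=f(x')$ and the configuration $\{0,1\}^{s+1}\to X$ equal to $x'$ at $\vec 1$ and to $x$ elsewhere lies in $C^{s+1}(X)$. Indeed, a witnessing pair $c,c'$ for $x\sim_{f,s}x'$ satisfies $f(x)=f(c(\vec 1))=f(c'(\vec 1))=f(x')$ and, forgetting the condition $f(c)=f(c')$, witnesses $x\sim_s x'$, so Lemma~\ref{lem:canonical-characterization} applies; conversely one pairs the displayed configuration with the constant cube at $x$, whose $f$-image equals that of the configuration precisely because $f(x)=f(x')$. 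Thus $x\sim_{f,s}x'$ iff $x\sim_s x'$ and $f(x)=f(x')$; this exhibits $\sim_{f,s}$ as the intersection of the closed equivalence relation $\sim_s$ (Proposition~\ref{prop-canonical-factors}) with the kernel of $f$, hence a closed equivalence relation, and universal replacement for $\sim_{f,s}$ is immediate from universal replacement for $\sim_s$ together with $\sim_{f,s}\subseteq\;\sim_s$.

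Now suppose $f$ is a fibration. Since $x\sim_{f,s}x'$ forces $f(x)=f(x')$, the map $f$ descends to a set map $g\colon X/\sim_{f,s}\to Y$ with $f=g\circ\pi_{f,s}$; as $X$ is compact and $\sim_{f,s}$ closed, $\pi_{f,s}$ is a quotient map, so $g$ is continuous. Granting that $\pi_{f,s}$ is a fibration (handled below), Lemma~\ref{lem:universal1}, applied with $\pi_{f,s}$ in the role of the fibration and $g\circ\pi_{f,s}=f$ the fibration to be factored, shows $g$ is a fibration. For the degree bound I would check $g$ has $(s+1)$-uniqueness: given $c,c'\in C^{s+1}(X/\sim_{f,s})$ with $g(c)=g(c')$ and $c(\omega)=c'(\omega)$ for $\omega\ne\vec 1$, lift to $\hat c,\hat c'\in C^{s+1}(X)$ and use universal replacement at dimension $s+1$ (which is permitted) to change the off-$\vec 1$ entries of $\hat c'$ into those of $\hat c$, obtaining $\hat c''\in C^{s+1}(X)$ with $\pi_{f,s}(\hat c'')=c'$ and $\hat c''|_{\ne\vec 1}=\hat c|_{\ne\vec 1}$; since $f(\hat c)=g(c)=g(c')=f(\hat c'')$, the pair $\hat c,\hat c''$ witnesses $\hat c(\vec 1)\sim_{f,s}\hat c''(\vec 1)$, i.e.\ $c(\vec 1)=c'(\vec 1)$.

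It remains to show $\pi=\pi_{f,s}$ is a fibration; this is the main obstacle. (That $\pi$ is a morphism and $X/\sim_{f,s}$ a compact cubespace is routine, the former because $C^k(X/\sim_{f,s})$ is by definition $\pi(C^k(X))$.) Fix a $k$-cube $c$ of $X/\sim_{f,s}$, a representative $\hat c\in C^k(X)$, and a $k$-corner $\lambda$ of $X$ with $\pi\lambda=c|_{\ne\vec 1}$; then $\hat c(\omega)\sim_{f,s}\lambda(\omega)$, hence $f(\hat c(\omega))=f(\lambda(\omega))$, for each $\omega\ne\vec 1$. For $k\le s+1$ one replaces the off-$\vec 1$ entries of $\hat c$ by those of $\lambda$ one at a time using universal replacement at dimension $k$, producing a cube of $X$ extending $\lambda$ whose top vertex is still $\hat c(\vec 1)$, which projects to $c(\vec 1)$. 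The difficulty is genuinely $k>s+1$: universal replacement is false there (e.g.\ on $\cD_{s+1}(\RR/\ZZ)$, where $\sim_s$ is total but $C^{s+2}$ is not), so that strategy collapses and one must use the hypothesis that $f$ is a fibration. Since $f(\hat c)\in C^k(Y)$ completes the $k$-corner $f(\lambda)$ of $Y$, the fibration property of $f$ yields a cube $\tilde c_0\in C^k(X)$ with $\tilde c_0|_{\ne\vec 1}=\lambda$ and $f(\tilde c_0(\vec 1))=f(\hat c(\vec 1))$. It then suffices to see $\tilde c_0(\vec 1)\sim_s\hat c(\vec 1)$, for which one works purely in dimension $s+1$: choose an $(s+1)$-dimensional upper face $F$ of $\{0,1\}^k$ containing $\vec 1$, and apply universal replacement at dimension $s+1$ to $\hat c|_F$ to change its off-$\vec 1$ entries into those of $\tilde c_0|_F$ (legitimate, as $\hat c(\omega)\sim_{f,s}\tilde c_0(\omega)=\lambda(\omega)$ off $\vec 1$), producing a cube in $C^{s+1}(X)$ that agrees with $\tilde c_0|_F$ off $\vec 1$ and equals $\hat c(\vec 1)$ at $\vec 1$; by the definition of $\sim_s$ this gives $\hat c(\vec 1)\sim_s\tilde c_0(\vec 1)$. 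Combined with $f(\hat c(\vec 1))=f(\tilde c_0(\vec 1))$ we get $\hat c(\vec 1)\sim_{f,s}\tilde c_0(\vec 1)$, so $\pi(\tilde c_0(\vec 1))=\pi(\hat c(\vec 1))=c(\vec 1)$ and $\tilde c_0$ is the desired lift. The hard part, to summarise, is that the naive lifting argument via universal replacement fails above dimension $s+1$; the resolution is to use the fibration $f$ to produce a completion of $\lambda$ with the correct $f$-image, and then recover the correct $\sim_{f,s}$-class of its top vertex by an $(s+1)$-dimensional replacement on a face through $\vec 1$.
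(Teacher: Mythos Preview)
Your proof is correct and follows essentially the same strategy as the paper's. The two minor differences are: (i) for the equivalence relation and universal replacement, you reduce to the non-relative Proposition~\ref{prop-canonical-factors} via the identity $\sim_{f,s}\,=\,\sim_s\cap\ker f$, which the paper explicitly notes as a valid alternative in Remark~\ref{rem:completion for quotient} but chooses not to use since the non-relative version was only sketched; and (ii) in the fibration argument for $\pi_{f,s}$ at $k>s+1$, you recover the $\sim_{f,s}$-class of the top vertex by a direct universal-replacement argument on an $(s+1)$-face in $X$, whereas the paper first establishes $(s+1)$-uniqueness of $g$ and then invokes it on the projected cubes in $X/\sim_{f,s}$---these are the same computation viewed upstairs versus downstairs.
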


\begin{remark}\label{rem:completion for quotient}
  Again we can consider the case $Y = \{\ast\}$ and $f \colon X \to \{\ast\}$ is the unique map.  So, $f$ is a fibration if and only if $X$ is a fibrant cubespace.  In this case, Proposition \ref{relative-canonical-factor} exactly restates Proposition \ref{prop-canonical-factors}.

  Note in particular that we now have a proof that $X / \sim_s$ is fibrant (since $g \colon X / \sim_s \to \{\ast\}$ is a fibration), which was previously omitted.

  For general $X$ and $Y$, since $x \sim_{f,s} y$ precisely if $x \sim_s y$ and $f(x) = f(y)$, the equivalence relation and universal replacement statements in Proposition \ref{relative-canonical-factor} are easily deducible from the non-relative case $Y = \{\ast\}$; i.e.\ these follow from Proposition \ref{prop-canonical-factors}.  However, since we gave only a proof sketch of these parts in the non-relative case, we lose nothing by starting over in full generality.
\end{remark}

Notwithstanding the increased generality, all the key ideas required for this result are contained in the sketches in Section \ref{sec:canonical-factors}.

We will first introduce some symbology for manipulating high-dimensional cubes.

\begin{definition}
  If $c$ and $c'$ are $k$-cubes, we will denote by $[c, c']$\index[nota]{$[c, c']$} the $(k+1)$-configuration
  \[
    \omega \mapsto \begin{cases} c(\omega_1 \dots \omega_k) &\colon \omega_{k+1} = 0 \\ c'(\omega_1 \dots \omega_k) &\colon \omega_{k+1} = 1 \ . \end{cases}
  \]
  Given an element $x \in X$, the notation $\square^k(x)$\index[nota]{$\square^k(x)$} denotes the constant $k$-cube $(\omega \mapsto x)$.  Given $x, y \in X$, we denote by $\llcorner^k(x;y)$\index[nota]{$\llcorner^k(x;y)$} the configuration
  \begin{align*}
    \{0,1\}^k &\to X \\
       \omega &\mapsto \begin{cases} x &\colon \omega \ne \vec{1} \\ y &\colon \omega = \vec{1} \ . \end{cases}
  \end{align*}
\end{definition}

We may combine these pieces of notation freely with each other and also with pictorial representations of cubes.  For instance, the notation $[\llcorner^2(x;y), [\square^1(z), \square^1(w)]]$ is a synonym for
\inlinetikz{
  \threecube{$x$}{$x$}{$x$}{$y$}{$z$}{$z$}{$w$}{$w$}{1}
}
and
\inlinetikz{
  \begin{scope}[shift={(-3,0.6)}]
    \singlesquare{$\square^1(x)$} {$\square^1(y)$} {$\square^1(z)$} {$\llcorner^1(w;w')$}
  \end{scope}
  \draw (0,1) node {$=$};
  \begin{scope}[shift={(1,0)}]
    \threecube{$x$}{$y$}{$x$}{$y$}{$z$}{$w$}{$z$}{$w'$}{1}
  \end{scope}
}
although we will rarely need to denote anything quite so unpleasant.

We will be generally cavalier about the ordering of the indices $\{1, \dots, k\}$ implied by successive applications of this notation, since it should always be clear from context what is meant, and because (thanks to the axioms) it makes little or no difference most of the time.

Finally, we will need to introduce some notation about \emph{tri-cubes} of the form appearing in the proof of Proposition \ref{prop-canonical-factors}, i.e.~a collection of $2^k$ $k$-cubes that all glue together at the middle. These are very useful gadgets introduced by Antol\'\i n Camarena and Szegedy in \cite{CS12}*{Section 2.3}.

\begin{definition}
  We say a collection $t = (t_\nu)_{\nu \in \{0,1\}^k}$\index[nota]{$(t_\nu)_{\nu \in \{0,1\}^k}$} constitutes a \emph{tri-cube}\index{tri-cube} if $t_\nu \in C^k(X)$ for each $\nu$, and if for each $\nu, \nu', \omega \in \{0,1\}^k$ such that for each $i \in \{1,\dots,k\}$ either $\nu_i = \nu'_i$ or $\omega_i = 1$, we have $t_\nu(\omega) = t_{\nu'}(\omega)$.

  We say the \emph{outer cube}\index{outer cube} of $t$ is the configuration $\omega \mapsto (t_\omega(\vec{0}))$.
\end{definition}
Note this corresponds to a picture (when $k=2$):
\inlinetikz{
  \begin{scope}[scale=2]
    \draw[thin,black] (0,0) grid (2, 2);
    \node [below left] at (0, 0) {$t_{00}(00)$};
    \node [above left] at (0, 2) {$t_{01}(00)$};
    \node [below right] at (2, 0) {$t_{10}(00)$};
    \node [above right] at (2, 2) {$t_{11}(00)$};
    \node [below left] at (0, 1) {$t_{00}(01)$};
    \node [above left] at (0, 1) {$t_{01}(01)$};
    \node [below right] at (2, 1) {$t_{10}(01)$};
    \node [above right] at (2, 1) {$t_{11}(01)$};
    \node [below left] at (1, 0) {$t_{00}(10)$};
    \node [below right] at (1, 0) {$t_{10}(10)$};
    \node [above left] at (1, 2) {$t_{01}(10)$};
    \node [above right] at (1, 2) {$t_{11}(10)$};
    \node [below left] at (1, 1) {$t_{00}(11)$};
    \node [below right] at (1, 1) {$t_{10}(11)$};
    \node [above left] at (1, 1) {$t_{01}(11)$};
    \node [above right] at (1, 1) {$t_{11}(11)$};
  \end{scope}
}
and hence some of the $t_\nu$ are reflected relative to the standard orientation.

\begin{proposition}
  If $X$ is a cubespace with the glueing property and $(t_\nu)$ is a tri-cube in $X$, then the outer cube is in $C^k(X)$.
\end{proposition}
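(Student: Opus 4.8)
The idea is to reinterpret a tri-cube as a single configuration on the ``subdivided cube'' $\{0,1,2\}^k$, and then to ``compress'' one coordinate at a time using the glueing property. First I would observe that giving a tri-cube $t=(t_\nu)_{\nu\in\{0,1\}^k}$ is the same as giving a configuration $\Phi\colon\{0,1,2\}^k\to X$ all of whose $2^k$ unit sub-cubes are (after reflection) $k$-cubes of $X$: concretely $t_\nu(\omega)=\Phi(\psi_\nu(\omega))$, where $\psi_\nu(\omega)_i=\omega_i$ if $\nu_i=0$ and $\psi_\nu(\omega)_i=2-\omega_i$ if $\nu_i=1$, and the compatibility conditions in the definition of a tri-cube are exactly what is needed for the $t_\nu$ to paste into a well-defined $\Phi$. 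Under this dictionary the outer cube is the restriction of $\Phi$ to the corner set $\{0,2\}^k$, re-indexed by $\{0,1\}^k$; that is, $\omega\mapsto\Phi(2\omega)=t_\omega(\vec0)$. So it suffices to show this restriction is a $k$-cube.

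I would then proceed by processing the coordinates $1,2,\dots,k$ one at a time, maintaining the invariant that after the $j$-th step one has a family of genuine $k$-cubes $c^{(j)}_\rho\in C^k(X)$, indexed by $\rho\in\{0,1\}^{k-j}$, of the shape
\[
  c^{(j)}_\rho(\omega)=t_{(\omega_1\cdots\omega_j)\rho}\bigl(0,\dots,0,\omega_{j+1},\dots,\omega_k\bigr)
\]
(with $j$ leading zeros). For $j=0$ this just says $c^{(0)}_\nu=t_\nu$; for $j=k$ it says $c^{(k)}_\varnothing(\omega)=t_\omega(\vec0)$, which is exactly the outer cube. To pass from stage $j$ to stage $j+1$, for each $\rho\in\{0,1\}^{k-j-1}$ one glues $c^{(j)}_{0\rho}$ and a suitable reflection of $c^{(j)}_{1\rho}$ along the $(j+1)$-st coordinate direction: one permutes that coordinate into the last slot, applies Definition~\ref{defn:glueing}, and permutes back, all of which is legitimate by the cubespace axioms (permutations and reflections of coordinates are morphisms of discrete cubes). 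Unwinding the glueing formula shows that the result is precisely $c^{(j+1)}_\rho$. After $k$ steps one is left with the outer cube, exhibited as an element of $C^k(X)$.

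The step requiring genuine care -- and the \emph{main obstacle} -- is verifying at each stage that the glueing property actually applies, i.e.\ that $c^{(j)}_{0\rho}$ and the reflected $c^{(j)}_{1\rho}$ agree on a common codimension-one face. Using the explicit formula above, this reduces to equalities of the form $t_{\mu 0\rho}(\omega)=t_{\mu 1\rho}(\omega)$ where the two multi-indices differ only in the $(j+1)$-st coordinate and where $\omega_{j+1}=1$; and these are exactly instances of the tri-cube compatibility condition, applied with $\nu=\mu 0\rho$, $\nu'=\mu 1\rho$ (for $i\neq j+1$ one has $\nu_i=\nu'_i$, and for $i=j+1$ one has $\omega_i=1$, so the defining hypothesis holds at every coordinate). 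Beyond this everything is bookkeeping: tracking which coordinates have been reflected along the way, and checking that the glued configuration really is given by the claimed formula. The base cases $k=0$ (and, if one prefers to see a concrete instance, $k=1$, where one simply glues the two edges $[t_0(0),t_0(1)]$ and $[t_1(1),t_1(0)]$ across their common vertex $t_0(1)=t_1(1)$) are immediate, which anchors the induction.
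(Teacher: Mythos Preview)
Your proposal is correct and follows essentially the same approach as the paper: repeated application of the glueing property, processing one coordinate direction at a time. The paper's proof is a two-line sketch (``glue together each pair $t_{\eta 0}$ and $t_{\eta 1}$ for $\eta \in \{0,1\}^{k-1}$, and then repeat for each of the other $(k-1)$ coordinates in turn''), whereas you have supplied the explicit inductive invariant and carefully verified the face-matching condition from the tri-cube compatibility axiom; but the underlying argument is identical.
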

\begin{proof}
  This follows from repeated application of the glueing property.  By assumption we can glue together each pair $t_{\eta 0}$ and $t_{\eta 1}$ for $\eta \in \{0,1\}^{k-1}$, and then repeat for each of the other $(k-1)$ coordinates in turn.
\end{proof}

\begin{proof}[Proof of Proposition \ref{relative-canonical-factor}]
  The proof is strongly analogous to that from Section \ref{sec:canonical-factors}.  We first prove an analogue of Lemma \ref{lem:canonical-characterization}.

  \begin{lemma}
    \label{lem:relative-canoncial-characterization}
    We have that $x \sim_{f,s} y$ if and only if $f(x) = f(y)$ and also $\llcorner^{s+1}(x; y) \in C^{s+1}(X)$.
  \end{lemma}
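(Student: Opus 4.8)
Here is how I would prove the lemma. The guiding principle is that this is the non-relative Lemma~\ref{lem:canonical-characterization} with the map $f$ carried passively along: $f$ enters only through the trivial bookkeeping of the two values $f(x)$ and $f(y)$, so I would follow the proof of Lemma~\ref{lem:canonical-characterization} (equivalently, the tri-cube part of the proof of Proposition~\ref{prop-canonical-factors}) essentially verbatim, working inside the domain $X$, which by hypothesis has the glueing property.

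For the ``if'' direction, suppose $f(x)=f(y)$ and $\llcorner^{s+1}(x;y)\in C^{s+1}(X)$. I would simply write down the two $(s+1)$-cubes demanded by the definition of $\sim_{f,s}$: take $c=\square^{s+1}(x)$, which is a cube by Remark~\ref{rem:constant-cube}, and $c'=\llcorner^{s+1}(x;y)$. Then $c(\vec1)=x$, $c'(\vec1)=y$, the two configurations agree at every $\omega\neq\vec1$, and $f\circ c$ and $f\circ c'$ are both identically equal to $f(x)=f(y)$, so $f(c)=f(c')$. Hence $x\sim_{f,s}y$.

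For the ``only if'' direction, suppose $x\sim_{f,s}y$ is witnessed by cubes $c,c'\in C^{s+1}(X)$ with $f(c)=f(c')$, $c(\vec1)=x$, $c'(\vec1)=y$, and $c(\omega)=c'(\omega)$ for all $\omega\neq\vec1$. Evaluating $f(c)=f(c')$ at $\vec1$ immediately gives $f(x)=f(y)$, the first half of the claim. For the second half, $\llcorner^{s+1}(x;y)\in C^{s+1}(X)$, I would build a tri-cube. Let $r$ denote the discrete-cube morphism $\omega\mapsto\vec1-\omega$ (reflection in every coordinate), and set $t_\nu=c\circ r$ for every $\nu\neq\vec1$ and $t_{\vec1}=c'\circ r$; all of these are cubes since $r$ is a morphism of discrete cubes. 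I would then check that $(t_\nu)_{\nu\in\{0,1\}^{s+1}}$ is a tri-cube: for two indices both different from $\vec1$ the constituent cubes coincide, so nothing is to be verified, while for a pair $\{\nu,\vec1\}$ with $\nu\neq\vec1$ the vertices $\omega$ that the tri-cube condition constrains all have $\omega_i=1$ for some coordinate $i$ with $\nu_i=0$, hence satisfy $r(\omega)\neq\vec1$, so that $c(r(\omega))=c'(r(\omega))$ because $c$ and $c'$ agree off $\vec1$. The outer cube $\omega\mapsto t_\omega(\vec0)$ of this tri-cube equals $c(\vec1)=x$ for $\omega\neq\vec1$ and $c'(\vec1)=y$ for $\omega=\vec1$, i.e.\ it is precisely $\llcorner^{s+1}(x;y)$; since $X$ has the glueing property, the outer cube of a tri-cube is a cube, and we are done. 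Note that $f$ is used nowhere in this step, so alternatively one may observe $x\sim_s y$ and quote Lemma~\ref{lem:canonical-characterization} directly.

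I do not expect a serious obstacle here. The only step calling for any care is the verification that $(t_\nu)$ really is a tri-cube, and that reduces, as indicated, to the single observation that the vertices the tri-cube condition constrains are never $\vec0=r^{-1}(\vec1)$ — so the only place where $c$ and $c'$ might disagree is never consulted. The $s=1$ instance of this bookkeeping is already carried out, with a picture, in Section~\ref{sec:canonical-factors}, and the general case is identical; the presence of $f$ adds nothing to it.
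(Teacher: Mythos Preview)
Your proof is correct and follows essentially the same approach as the paper: both construct the tri-cube $(t_\nu)$ with $t_\nu$ equal to (a reflected copy of) $c$ for $\nu\neq\vec1$ and $c'$ for $\nu=\vec1$, whose outer cube is $\llcorner^{s+1}(x;y)$. The only cosmetic difference is that the paper swaps $\vec0$ and $\vec1$ at the outset rather than composing with the reflection $r$; your treatment is slightly more explicit in writing out the ``if'' direction and the tri-cube compatibility check.
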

  \begin{proof}
The strategy is to construct a tri-cube whose outer cube is $\llcorner^{s+1}(x; y)$.

    By assumption we are given cubes $c, c' \in C^{s+1}(X)$ such that $f(c) = f(c') = d$, $c(\omega) = c'(\omega)$ for all $\omega \ne \vec{0}$ and $c(\vec{0}) = x$, $c'(\vec{0}) = y$.  (Note we have switched $\vec{1}$ for $\vec{0}$ for convenience.)  From this it is immediate that $f(x) = f(y)$.

    Now define the family $(t_\nu)_{\nu \in \{0,1\}^{s+1}}$ by
    \[
      t_\nu = \begin{cases} c &\colon \nu \ne \vec{1} \\ c' &\colon \nu = \vec{1} \ . \end{cases}
    \]
    It is clear from the hypotheses that this is a tri-cube with outer cube $\llcorner^{s+1}(x; y)$ as required.
  \end{proof}

  \begin{claim*}
    The relation $\sim_{f,s}$ is an equivalence relation.
  \end{claim*}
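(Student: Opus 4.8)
The strategy is to check reflexivity, symmetry and transitivity of $\sim_{f,s}$ directly, using the characterization in Lemma~\ref{lem:relative-canoncial-characterization} together with the glueing property of $X$. Reflexivity is immediate: for $x\in X$ the constant cube $\square^{s+1}(x)$ lies in $C^{s+1}(X)$ by Remark~\ref{rem:constant-cube}, and the pair $(\square^{s+1}(x),\square^{s+1}(x))$ witnesses $x\sim_{f,s}x$. Symmetry is equally immediate: if $c,c'\in C^{s+1}(X)$ witness $x\sim_{f,s}y$, then since the requirement $f(c)=f(c')$ is symmetric, the pair $(c',c)$ witnesses $y\sim_{f,s}x$.

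The substance is transitivity. Suppose $x\sim_{f,s}y$ and $y\sim_{f,s}z$. Lemma~\ref{lem:relative-canoncial-characterization} gives $f(x)=f(y)=f(z)$ and $\llcorner^{s+1}(x;y),\llcorner^{s+1}(y;z)\in C^{s+1}(X)$; applying the symmetry just established and then restricting along a codimension-one face through $\vec{1}$ also puts $\llcorner^{s+1}(y;x)\in C^{s+1}(X)$ and $\llcorner^{s}(y;x)\in C^{s}(X)$. Let $p\in\{0,1\}^{s+1}$ be the vertex adjacent to $\vec{1}$ obtained by flipping the last coordinate. I would first build a cube $c_1\in C^{s+1}(X)$ equal to $x$ at $p$, to $z$ at $\vec{1}$, and to $y$ at every other vertex: take the reflection of $\llcorner^{s+1}(y;x)$ in the last coordinate (which carries $x$ at $p$ and is constantly $y$ on the facet $\{\omega_{s+1}=1\}$), take $\llcorner^{s+1}(y;z)$ (which carries $z$ at $\vec{1}$ and is constantly $y$ on the facet $\{\omega_{s+1}=0\}$), and glue them along those two constant facets, invoking the glueing property. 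Next I would build a cube $c_2\in C^{s+1}(X)$ equal to $x$ at both $p$ and $\vec{1}$ and to $y$ elsewhere, namely $c_2=[\llcorner^{s}(y;x),\llcorner^{s}(y;x)]$, which is a cube by the cubespace axioms (duplication of $\llcorner^{s}(y;x)$ in a new coordinate). Then $c_1$ and $c_2$ agree at every $\omega\neq\vec{1}$, with $c_1(\vec{1})=z$ and $c_2(\vec{1})=x$; and since $f(x)=f(y)=f(z)$, both $f(c_1)$ and $f(c_2)$ equal the constant cube $\square^{s+1}(f(y))$, so $f(c_1)=f(c_2)$. Hence $(c_2,c_1)$ witnesses $x\sim_{f,s}z$, and transitivity is proved. (Closedness of $\sim_{f,s}$ follows at once, since $C^{s+1}(X)$ is closed, $X$ is compact, and $f$ is continuous.)

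The only delicate point is transitivity, and precisely the fact that at this stage $X$ is assumed only to have the glueing property, not $k$-completion, so one may not ``complete corners'' to produce the auxiliary cube $c_2$; the duplication construction above manufactures it using only the axioms and the $\llcorner$-cubes supplied by the hypotheses. As a consistency check one may also note that $x\sim_{f,s}y$ holds exactly when $x\sim_s y$ and $f(x)=f(y)$, so $\sim_{f,s}$ is the intersection of the equivalence relation $\sim_s$ of Proposition~\ref{prop-canonical-factors} with the equivalence relation ``$f(x)=f(y)$'', hence is an equivalence relation; but since the non-relative case was only sketched, we give the self-contained argument above.
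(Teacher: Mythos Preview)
Your proof is correct and follows essentially the same approach as the paper. Both arguments handle transitivity by glueing $\llcorner^{s+1}(y;x)$ and $\llcorner^{s+1}(y;z)$ along their common constant face $\square^s(y)$ to produce your $c_1$ (which the paper writes as $\llcorner^s(\square^1(y);[x,z])$), and then compare it with the duplicated cube $c_2=[\llcorner^s(y;x),\llcorner^s(y;x)]$ (the paper's $\llcorner^s(\square^1(y);[x,x])$); the two agree off $\vec{1}$ and have equal $f$-image, yielding $x\sim_{f,s}z$.
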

  \begin{proof}[Proof of claim]
    Again, symmetry and reflexivity are immediate.  For transitivity, assume $x \sim_{f,s} y \sim_{f,s} z$.  Then by the lemma, $f(x) = f(y) = f(z)$ and $\llcorner^{s+1}(y; x)$, $\llcorner^{s+1}(y; z)$ are cubes.  By glueing along the common face $\square^s(y)$, we find that $\llcorner^s(\square^1(y); [x,z])$ is an $(s+1)$-cube.  But so is $\llcorner^s(\square^1(y); [x,x])$, by restricting to a face of $\llcorner^{s+1}(y; x)$ and then duplicating.  Since these latter two cubes have the same image under $f$, we have $x \sim_{f,s} z$ by the definition.
  \end{proof}

  Again, the fact that this relation is closed is straightforward from the definition, the hypothesis that $C^{s+1}(X)$ is closed, continuity of $f$ and compactness.

  We turn to the ``universal replacement'' statement.
  \begin{claim*}
    If $x \sim_{f, s} x'$, and $c \in C^{s+1}(X)$ is a cube with $c(\vec{1}) = x$, then the configuration $c'$ given by
    \[
      c'(\omega) = \begin{cases} x' &\colon \omega = \vec{1} \\ c(\omega) &\colon \omega \ne \vec{1} \end{cases}
    \]
    is a cube.
  \end{claim*}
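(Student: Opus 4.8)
The plan is to realize $c'$ as the outer cube of a tri-cube in $X$, and then invoke the proposition proved just above (the outer cube of a tri-cube in a cubespace with the glueing property is a cube). Since $X$ is assumed to have the glueing property, this delivers $c' \in C^{s+1}(X)$ immediately. This is the exact analogue of the pictorial argument for the non-relative case in Proposition \ref{prop-canonical-factors}, now organised via tri-cubes rather than ad hoc glueings.

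First I would extract the input: since $x \sim_{f,s} x'$, Lemma \ref{lem:relative-canoncial-characterization} gives $\llcorner^{s+1}(x;x') \in C^{s+1}(X)$; composing with the reflection of $\{0,1\}^{s+1}$ in all coordinates (a morphism of discrete cubes, swapping $\vec 0$ and $\vec 1$), the configuration $r$ with $r(\vec 0) = x'$ and $r(\omega) = x$ for all $\omega \neq \vec 0$ is also in $C^{s+1}(X)$. Next I build the family $(t_\nu)_{\nu \in \{0,1\}^{s+1}}$. For $\nu \neq \vec 1$ let $\rho_\nu \colon \{0,1\}^{s+1} \to \{0,1\}^{s+1}$ be the morphism of discrete cubes whose $i$-th output coordinate is the constant $1$ when $\nu_i = 1$ and is $\omega_i$ when $\nu_i = 0$, and set $t_\nu := c \circ \rho_\nu \in C^{s+1}(X)$; set $t_{\vec 1} := r$. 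One checks $t_{\vec 0} = c$, that $t_\nu(\vec 1) = c(\rho_\nu(\vec 1)) = c(\vec 1) = x$ for every $\nu \neq \vec 1$, and $t_{\vec 1}(\vec 1) = x$, so all pieces share the common centre value $x$.

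Then I would verify the tri-cube compatibility: given $\nu, \nu', \omega$ with $\nu_i = \nu'_i$ or $\omega_i = 1$ for each $i$, I claim $t_\nu(\omega) = t_{\nu'}(\omega)$. If $\nu,\nu' \neq \vec 1$, then coordinatewise $\rho_\nu(\omega)$ and $\rho_{\nu'}(\omega)$ agree where $\nu_i = \nu'_i$, and where they differ the hypothesis gives $\omega_i = 1$, forcing both outputs to $1$; hence $\rho_\nu(\omega) = \rho_{\nu'}(\omega)$ and so $t_\nu(\omega) = t_{\nu'}(\omega)$. If $\nu' = \vec 1 \neq \nu$, the hypothesis forces $\omega_i = 1$ whenever $\nu_i = 0$, so $\rho_\nu(\omega) = \vec 1$ and $t_\nu(\omega) = c(\vec 1) = x$, while also $\omega \neq \vec 0$ (some $\nu_i = 0$) so $t_{\vec 1}(\omega) = r(\omega) = x$. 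Finally, the outer cube is $\omega \mapsto t_\omega(\vec 0)$: for $\omega \neq \vec 1$ one has $\rho_\omega(\vec 0) = \omega$, so $t_\omega(\vec 0) = c(\omega) = c'(\omega)$, and $t_{\vec 1}(\vec 0) = r(\vec 0) = x' = c'(\vec 1)$; thus the outer cube is precisely $c'$, and we conclude as above.

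The argument is essentially purely combinatorial — neither $f$, closedness, nor compactness enters this particular claim. The only point requiring a little care, and the one I expect to be the genuine (if modest) obstacle, is setting up the ``corner piece'' $t_{\vec 1}$ correctly: it must carry the new value $x'$ at its outermost vertex and the shared centre value $x$ at every other vertex, which is why one uses the all-coordinate reflection of $\llcorner^{s+1}(x;x')$ rather than $\llcorner^{s+1}(x;x')$ itself; once the orientations are pinned down, the compatibility checks are routine. (The remaining cases $k < s+1$ of the universal replacement property then follow, as in the non-relative sketch, by duplicating $c$ to an $(s+1)$-cube, applying this claim, and restricting to a face.)
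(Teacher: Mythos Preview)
Your proof is correct and follows essentially the same approach as the paper's. Indeed, your map $\rho_\nu$ computes $\rho_\nu(\omega)_i = \max(\nu_i,\omega_i) = \min(\nu_i+\omega_i,1)$, so your $t_\nu = c\circ\rho_\nu$ for $\nu\neq\vec 1$ and your reflected $t_{\vec 1}$ coincide exactly with the paper's formula; you have moreover written out the ``tedious check'' that the paper omits.
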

  As in the proof of Proposition \ref{prop-canonical-factors}, this easily implies the same statement for all $k < s+1$.
  \begin{proof}[Proof of claim]
We construct a tri-cube whose outer cube is $c'$.  We set
    \[
      t_\nu(\omega) = \begin{cases} c((\min(\nu_i + \omega_i, 1))_{i=1}^{s+1}) &\colon \nu \ne \vec{1} \\ x &\colon \nu = \vec{1}, \omega \ne \vec{0} \\ x' &\colon \nu = \vec{1}, \omega = \vec{0} \ . \end{cases}
    \]
    which is a very elaborate way of stating the generalization of the diagram from the proof of Proposition \ref{prop-canonical-factors}.  Now, $t_{\vec{0}}$ is just $c$, the other $t_\nu$ except $t_{\vec{1}}$ are obtained from $c$ by duplication of an upper face, and $t_{\vec{1}}$ is a reflected copy of $\llcorner^{s+1}(x; x')$.  So, all the $t_\nu$ are cubes, and a tedious check verifies that $(t_\nu)$ is a tri-cube with outer cube $c'$ as required.
  \end{proof}

  So, we have a projection $\pi_{f,s} \colon X \to X / \sim_{f,s}$, and by construction $f$ factors through $\pi_{f,s}$, i.e.~$f = g \circ \pi_{f,s}$ for some well-defined function $g \colon X / \sim_{f,s} \to Y$.

  \begin{claim*}
  The map $g$ has $(s+1)$-uniqueness.
  \end{claim*}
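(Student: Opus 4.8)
The plan is to mirror, essentially verbatim, the argument in the proof of Proposition \ref{prop-canonical-factors} that deduced $(s+1)$-uniqueness of $X/\sim_s$ from the universal replacement property, only now carrying the map $f$ along for the ride. So I would start with two cubes $\tilde{c}, \tilde{c}' \in C^{s+1}(X/\sim_{f,s})$ satisfying $g(\tilde{c}) = g(\tilde{c}')$ and $\tilde{c}(\omega) = \tilde{c}'(\omega)$ for all $\omega \ne \vec{1}$, with the goal of showing $\tilde{c}(\vec{1}) = \tilde{c}'(\vec{1})$. By definition of the quotient cubespace (Definition \ref{df:quotient}) I would first pick lifts $c, c' \in C^{s+1}(X)$ with $\pi_{f,s}(c) = \tilde{c}$ and $\pi_{f,s}(c') = \tilde{c}'$. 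The useful initial observation is that, since $f = g \circ \pi_{f,s}$, the hypothesis $g(\tilde{c}) = g(\tilde{c}')$ already forces $f(c) = f(c')$ as configurations in $C^{s+1}(Y)$, so no further work is needed on the $f$-side beyond bookkeeping.

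The heart of the argument is to adjust $c'$ so that it agrees with $c$ at every vertex except $\vec{1}$, while staying inside $C^{s+1}(X)$ and without changing its $\pi_{f,s}$-image. For each $\omega \ne \vec{1}$ one has $\pi_{f,s}(c(\omega)) = \tilde{c}(\omega) = \tilde{c}'(\omega) = \pi_{f,s}(c'(\omega))$, i.e.\ $c(\omega) \sim_{f,s} c'(\omega)$. So, enumerating the vertices of $\{0,1\}^{s+1} \setminus \{\vec{1}\}$ and applying the universal replacement property of Proposition \ref{relative-canonical-factor} one vertex at a time (using, as in the remark following Proposition \ref{prop-canonical-factors}, that universal replacement may be performed at any designated vertex by precomposing with a suitable automorphism of the discrete cube, not only at $\vec{1}$), I would successively overwrite the value of $c'$ at each such vertex by the corresponding value of $c$. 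Each step preserves membership in $C^{s+1}(X)$, and because the new value is $\sim_{f,s}$-equivalent to the one it replaces, the $\pi_{f,s}$-image is unchanged throughout. After finitely many steps this yields a cube $c'' \in C^{s+1}(X)$ with $\pi_{f,s}(c'') = \tilde{c}'$, with $c''(\omega) = c(\omega)$ for all $\omega \ne \vec{1}$, and with $c''(\vec{1}) = c'(\vec{1})$.

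It then only remains to invoke the definition of $\sim_{f,s}$ with the pair $(c, c'')$. These are both $(s+1)$-cubes, they agree at every vertex other than $\vec{1}$, and $f(c) = f(c'')$ — indeed $f(c'')(\omega) = g(\pi_{f,s}(c''(\omega))) = g(\tilde{c}'(\omega)) = g(\tilde{c}(\omega)) = f(c)(\omega)$ for every $\omega$, using $\pi_{f,s}(c'') = \tilde{c}'$ and $g(\tilde{c}) = g(\tilde{c}')$. Hence $c(\vec{1}) \sim_{f,s} c''(\vec{1}) = c'(\vec{1})$, which is precisely the statement $\tilde{c}(\vec{1}) = \tilde{c}'(\vec{1})$, and since $\tilde{c}$ and $\tilde{c}'$ already agree elsewhere, $\tilde{c} = \tilde{c}'$.

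I do not expect a genuine obstacle here: every ingredient (the quotient cubespace structure, $\sim_{f,s}$ being a closed equivalence relation, and the universal replacement property) has already been proven above, and the relative statement differs from the non-relative one of Proposition \ref{prop-canonical-factors} only in tracking $f$, which is automatic from the factorization. The one point needing a little care is verifying that the vertex-by-vertex overwriting simultaneously preserves both membership in $C^{s+1}(X)$ and the $\pi_{f,s}$-image; but this is immediate once one notes that at each stage the value being replaced and the value replacing it are $\sim_{f,s}$-equivalent.
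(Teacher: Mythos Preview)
Your proof is correct and follows essentially the same approach as the paper's own proof: lift the two quotient cubes to $X$, use repeated universal replacement to force agreement on all vertices except $\vec{1}$, and then invoke the definition of $\sim_{f,s}$. The only difference is that you are more explicit than the paper in verifying that the modified lifts still satisfy $f(c) = f(c'')$, which is indeed required by the definition of $\sim_{f,s}$ but is left implicit in the paper's brief treatment.
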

  \begin{proof}[Proof of claim]
    Indeed, given two $(s+1)$-cubes $c, c'$ of $X / \sim_{f,s}$ with $c(\omega) = c'(\omega)$ for all $\omega \ne \vec{1}$ and $g(c) = g(c')$, we know there are cubes $\tilde{c}, \tilde{c}'$ of $X$ such that $\pi_{f,s}(\tilde{c}) = c$, $\pi_{f,s}(\tilde{c}') = c'$.  By repeated application of universal replacement we may assume $\tilde{c}(\omega) = \tilde{c'}(\omega)$ for each $\omega \ne \vec{1}$.  But then $\tilde{c}(\vec{1}) \sim_{f,s} \tilde{c}'(\vec{1})$ by the definition of $\sim_{f,s}$, and so $c = c'$.
  \end{proof}

  \begin{claim*}
    The map $\pi_{f,s}$ is a fibration.
  \end{claim*}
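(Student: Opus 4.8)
The plan is to verify the fibration property of $\pi := \pi_{f,s}$ directly from Definition \ref{def:fibration}, treating low and high cube dimensions by different means. As preliminaries I would record: $\pi$ is a cubespace morphism, immediate from Definition \ref{df:quotient}; $\pi$ is continuous, since $\sim_{f,s}$ is closed and $X$ is compact; and the factor map $g\colon X/\sim_{f,s}\to Y$ with $f=g\circ\pi$ is also a morphism, because any $c\in C^k(X/\sim_{f,s})$ lifts to some $\hat c\in C^k(X)$ with $\pi(\hat c)=c$, whence $g(c)=f(\hat c)\in C^k(Y)$.

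So fix $k\ge 0$, a cube $c\in C^k(X/\sim_{f,s})$, and a $k$-corner $\lambda\colon\{0,1\}^k\setminus\{\vec{1}\}\to X$ with $\pi(\lambda(\omega))=c(\omega)$ for all $\omega\ne\vec{1}$; I must produce $x\in X$ completing $\lambda$ to a cube $\tilde c\in C^k(X)$ with $\pi(x)=c(\vec{1})$. In the range $k\le s+1$ I would use the universal replacement property established above. Pick $\hat c\in C^k(X)$ with $\pi(\hat c)=c$; then $\hat c(\omega)\sim_{f,s}\lambda(\omega)$ for each $\omega\ne\vec{1}$. Running over those vertices one at a time, in each step precomposing with the reflection of $\{0,1\}^k$ that sends $\vec{1}$ to the active vertex so that universal replacement applies there, I replace the value $\hat c(\omega)$ by $\lambda(\omega)$: each such step keeps the configuration in $C^k(X)$ and, as it only swaps a vertex for a $\sim_{f,s}$-equivalent one, leaves the projection to $X/\sim_{f,s}$ equal to $c$. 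The outcome is a cube $\hat c^{\ast}\in C^k(X)$ agreeing with $\lambda$ off $\vec{1}$ and with $\pi(\hat c^{\ast})=c$, so $x:=\hat c^{\ast}(\vec{1})$ works.

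In the range $k\ge s+1$ I would instead exploit that $f$ is a fibration. Since $g$ is a morphism, $g(c)\in C^k(Y)$, and $f(\lambda(\omega))=g(c(\omega))$ for all $\omega\ne\vec{1}$, so $\lambda$ is a $k$-corner of $X$ compatible with the cube $g(c)$ of $Y$; completing along $f$ gives $\tilde c\in C^k(X)$ extending $\lambda$ with $f(\tilde c(\vec{1}))=g(c)(\vec{1})$. Set $c':=\pi(\tilde c)\in C^k(X/\sim_{f,s})$; then $c'$ agrees with $c$ off $\vec{1}$ and $g(c')=g(c)$ as configurations $\{0,1\}^k\to Y$. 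Restricting $c$ and $c'$ to any $(s+1)$-dimensional face of $\{0,1\}^k$ containing $\vec{1}$ — which exists because $k\ge s+1$ — produces two $(s+1)$-cubes that agree at every vertex but the top one and have the same image under $g$; the $(s+1)$-uniqueness of $g$ proved just above then forces $c(\vec{1})=c'(\vec{1})=\pi(\tilde c(\vec{1}))$, so $x:=\tilde c(\vec{1})$ works, and $\pi_{f,s}$ is a fibration.

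I do not expect a conceptual obstacle here; the one point that dictates the case split is that the universal replacement property is only available for cubes of dimension at most $s+1$, so for larger $k$ one cannot manipulate cubes of $X/\sim_{f,s}$ directly and must instead lift along $f$ and appeal to the already-established uniqueness of $g$. The remaining work — carrying out the single-vertex replacements and selecting the appropriate face — is routine bookkeeping with morphisms of discrete cubes.
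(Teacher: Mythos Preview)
Your proof is correct and follows essentially the same approach as the paper: the paper also splits into low-dimensional cubes (handled via universal replacement on a lifted cube) and high-dimensional cubes (handled by completing along $f$ and invoking the already-proved $(s+1)$-uniqueness of $g$ on a suitable face). The only cosmetic difference is the location of the case boundary---the paper puts $k=s+1$ solely in the high-dimensional case rather than in both---and the paper phrases the low-dimensional step slightly more tersely (noting that \emph{any} lift of $c$ is a cube, rather than replacing vertices one by one), but the content is identical.
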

  \begin{proof}[Proof of claim]
    Suppose we are given a $k$-corner $\lambda$ in $X$ and a compatible $c \in C^k(X) / \sim_{f,s}$.

    First consider the case $k < s+1$.  By definition there is a cube $\tilde{c}$ of $X$ with $\pi_{f,s}(\tilde{c}) = c$, and furthermore by repeated application of universal replacement, \emph{any} configuration $\tilde{c} \colon \{0,1\}^k \to X$ with $\pi_{f,s} \circ \tilde{c} = c$ is a cube.  Hence taking $\lambda(\vec{1})$ to be an arbitrary point of $\pi_{f,s}^{-1}(c(\vec{1}))$ must work.

    If $k \ge s+1$, we may use the fact that $f$ is a fibration to complete $\lambda$ to a cube $\tilde{c}$ such that $f(\tilde{c}) = g(c)$.  But now $\pi_{f,s}(\tilde{c})$ and $c$ are two cubes of $X / \sim_{f,s}$ whose image under $g$ is the same, and which are equal except possibly at the vertex $\vec{1}$.  Now we may restrict $\pi_{f,s}(\tilde{c})$ and $c$ to an upper face $F$ of $\{0,1\}^k$ of dimension $(s+1)$ (if necessary) and use $(s+1)$-uniqueness of $g$ to deduce that $\pi_{f,s}(\tilde{c}(\vec1)) = c(\vec1)$, and the claim follows.
  \end{proof}

  We now note that $g$ is therefore a fibration by the universal property (Lemma \ref{lem:universal1}).  This completes the proof of Proposition \ref{relative-canonical-factor}.
\end{proof}

\subsection{The (relative) structure groups}

With the theory of relative canonical factors in place, we are in a position to attack the relative analogue of Theorem \ref{basic-cs-structure-theorem}.  First, we need one further ``relative'' definition.

\begin{definition}
  A morphism $f \colon X \to Y$ is called \emph{(relatively) $k$-ergodic} if for any $1 \le \ell \le k$ and $c \in C^\ell(Y)$, any configuration in $f^{-1}(c)$ is a cube of $X$.

  We say it is \emph{relatively ergodic}\index{relative(ly)!ergodic} if it is relatively $1$-ergodic.
\end{definition}

The direct analogue of Theorem \ref{basic-cs-structure-theorem} is as follows.

\begin{theorem}
  \label{relative-structure-thm}
  Suppose $X$ and $Y$ are compact ergodic cubespaces that obey the glueing condition.  Let $f \colon X \to Y$ be a relatively $s$-ergodic fibration of degree at most $s$.
  Then there exists a compact Abelian group $A=A(f)$\index[nota]{$A(f)$} acting continuously on $X$ such that:
  \begin{enumerate}
    \item the action of $A$ on $X$ is free, and its orbits are precisely the fibers of $f$;
    \item this induces a (free) pointwise action of $C^k(\cD_s(A))$ on $C^k(X)$, whose orbits are precisely the fibers of the map $f \colon C^k(X) \to C^k(Y)$.
  \end{enumerate}
\end{theorem}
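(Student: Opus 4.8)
The plan is to imitate, relative to $f$, the proof of Theorem~\ref{basic-cs-structure-theorem} in the case $s=1$ together with the arguments of Section~\ref{sec:canonical-factors}: the role of an ``edge'' is played by a pair $(x,y)$ of points of $X$ in a common fibre of $f$, and the role of a ``$2$-cube'' by an $(s+1)$-cube of $X$. I would begin with a preliminary observation that makes the statement a genuine relative analogue of the absolute theorem. By relative $s$-ergodicity, if $f(x)=f(y)$ then $\llcorner^s(x;y)$ lies over the constant $s$-cube $\square^s(f(x))\in C^s(Y)$ and hence $\llcorner^s(x;y)\in C^s(X)$; combined with Lemma~\ref{lem:relative-canoncial-characterization} (applied with parameter $s-1$) this gives $f(x)=f(y)$ if and only if $x\sim_{f,s-1}y$, so the fibres of $f$ are exactly the $\sim_{f,s-1}$-classes and $\pi_{f,s-1}$ may be identified with $f$. (Note also that $X$ is automatically ergodic and $f$ surjective.)

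Next, call a pair $(x,y)$ with $f(x)=f(y)$ an \emph{arrow}, and set $(x_0,y_0)\approx(x_1,y_1)$ when the $(s+1)$-configuration $[\llcorner^s(x_0;y_0),\llcorner^s(x_1;y_1)]$ belongs to $C^{s+1}(X)$. Reflexivity is duplication, symmetry is a coordinate reflection, and transitivity comes from glueing two such cubes along their common $s$-face $\llcorner^s(x_1;y_1)$; closedness of $\approx$ is clear from compactness and continuity of $f$. Define $A=A(f):=\{(x,y):f(x)=f(y)\}/\!\approx$, a compact metric space. The pivotal lemma is: for fixed $e\in X$ and any arrow $(x,y)$ there is a \emph{unique} $z\in f^{-1}(f(e))$ with $(e,z)\approx(x,y)$. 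For existence, the restriction to $\{0,1\}^{s+1}\setminus\{\vec{1}\}$ of $[\llcorner^s(x;y),\llcorner^s(e;\cdot)]$ is an $(s+1)$-corner of $X$ (its lower faces are $s$-cubes, either duplicates of a $1$-cube or $\llcorner^s(x;y)$, which is a cube by relative $s$-ergodicity) lying over the corresponding corner of the $(s+1)$-cube $[\square^s(f(x)),\square^s(f(e))]$ of $Y$, so the fibration property of $f$ furnishes the completion $z$ with $f(z)=f(e)$; uniqueness is immediate from $(s+1)$-uniqueness of $f$ (as $f$ has degree $\le s$). It follows that for each $e$ the maps $z\mapsto[(e,z)]$ and $z\mapsto[(z,e)]$ are homeomorphisms $f^{-1}(f(e))\to A$.

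I would then transport the group structure and the action from the $s=1$ proof. Take $0$ to be the class of constant arrows $(x,x)$, negation to be $(x,y)\mapsto(y,x)$, and --- fixing $e$ and writing $g=[(x,e)]$, $h=[(e,y)]$ --- define $g+h:=[(x,y)]$; independence of $e$, associativity and commutativity are each verified by a short glueing argument (normalising with reflections and duplications), exactly as in Section~\ref{sec:canonical-factors}, and continuity of $+$ follows from the homeomorphisms above, so $A$ is a compact abelian group. Define $g\cdot x$ to be the unique $z$ with $g=[(x,z)]$; then $0$ acts trivially, $(g+h)\cdot x=g\cdot(h\cdot x)$ by the glueing diagram, the action is free, and it is transitive on each fibre of $f$ because $[(x,x')]\cdot x=x'$ whenever $f(x)=f(x')$. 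This establishes part (i). For part (ii), let $A^{\{0,1\}^k}$ act pointwise on $X^{\{0,1\}^k}$; since the $a_\omega$ move within fibres, $f(a\cdot c)=f(c)$, and for $k\le s$ one has $C^k(\cD_s(A))=A^{\{0,1\}^k}$ and relative $s$-ergodicity gives at once both that $a\cdot c\in C^k(X)$ for $c\in C^k(X)$ and that the pointwise $a$ relating two $k$-cubes with equal $f$-image lies in $C^k(\cD_s(A))$. The substance is at level $k=s+1$: (a) $a\in C^{s+1}(\cD_s(A))$ and $c\in C^{s+1}(X)$ imply $a\cdot c\in C^{s+1}(X)$, and (b) if $c,c'\in C^{s+1}(X)$ with $f(c)=f(c')$ then $\sum_{\omega}(-1)^{|\omega|}a_\omega=0$ in $A$ for the pointwise $a$. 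Granting (a), (b) follows by replacing $a_{\vec{1}}$ with the value forcing the alternating sum to vanish, applying (a), and comparing with $c'$ via $(s+1)$-uniqueness of $f$; the cases $k\ge s+2$ then reduce to $k=s+1$ by restricting to $(s+1)$-faces and using Proposition~\ref{pr:Ds} for $\cD_s(A)$ together with glueing to reassemble cubes of $X$. I would package all of part (ii) as a separate lemma.

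The main obstacle is direction (a) at $k=s+1$: proving that $C^{s+1}(\cD_s(A))$ really does act by cubes. Here the plan is to decompose $a$ into Host--Kra generators $[b]_{F_S}$ with $|S|\le s$ (so that it suffices to treat a single generator, which alters $c$ by a constant $b\in A$ along an upper face of dimension $\ge 1$), and to realise $a\cdot c$ via a tri-cube / glueing construction --- for instance with pieces $t_\nu(\omega)=a_{\nu\vee\omega}\cdot c(\nu\vee\omega)$ --- in which every piece except the ``identity'' one is a degenerate configuration that is a cube by the $k\le s$ case, while the remaining piece is pinned down by completing an $(s+1)$-corner through $f$ and invoking $(s+1)$-uniqueness. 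Reconciling this combinatorial bookkeeping with the fibration is the delicate point, and it completes the argument that was deferred after the $s=1$ case of Theorem~\ref{basic-cs-structure-theorem} to the present section.
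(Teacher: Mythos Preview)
Your overall architecture matches the paper's proof closely: the definition of arrows, the equivalence relation via $[\llcorner^s(x;x'),\llcorner^s(y;y')]\in C^{s+1}(X)$, the unique-representative lemma, the group law by concatenation, the action, and the case split $k\le s$, $k=s+1$, $k>s+1$ for part (ii) are all exactly what the paper does.  The paper also packages part (ii) as a separate lemma, as you suggest.

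There is one genuine gap, at precisely the point you flag as ``the main obstacle'': your proof of direction (a) at $k=s+1$.  The tri-cube you write down, $t_\nu(\omega)=a_{\nu\vee\omega}\cdot c(\nu\vee\omega)$, has outer cube $a\cdot c$ and, as you note, $t_\nu$ is a duplicated lower-dimensional cube for $\nu\ne\vec0$; but $t_{\vec0}=a\cdot c$ itself, so the tri-cube hypothesis that all pieces be cubes is exactly the conclusion you want.  Your suggestion to ``pin down'' this piece by completing an $(s+1)$-corner and invoking uniqueness does not close the circle: completion gives \emph{some} cube $c''$ with the right corner and the right $f$-image at $\vec1$, but uniqueness only lets you compare $c''$ with another \emph{cube}, not with the configuration $a\cdot c$.

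The paper resolves this with a different idea that you are missing: the \emph{edge cubespace} $\cE(X)$, whose $k$-cubes are the $(k+1)$-cubes of $X$.  One works not with generators $[b]_{F_S}$ for arbitrary $|S|\le s$ but with $[b]_F$ for $F$ a face of dimension~$1$ (an edge).  Viewing $c\in C^{s+1}(X)$ as an $s$-cube in $\cE(X)$, the edge $[x,y]$ sitting at $F$ is a single vertex; by the very definition of the $A$-action one has $[x,y]\sim_{s-1}[b.x,b.y]$ in $\cE(X)$, and the universal replacement property for $\sim_{s-1}$ on $\cE(X)$ (which inherits glueing from $X$) then replaces that vertex without leaving $C^s(\cE(X))=C^{s+1}(X)$.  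Since such $[b]_F$ generate $C^{s+1}(\cD_s(A))$, this proves (a).  The same edge-cubespace viewpoint is what the paper uses to prove commutativity for $s>1$ (the ``four points in one fibre'' lemma: $[x,y]\sim[z,w]\Rightarrow[x,z]\sim[y,w]$), which is not quite the short glueing argument you allude to from the $s=1$ sketch.
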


We will see that the proof is constructive and $A$ is defined canonically in terms of the cube structures and $f$.

The key point about these hypotheses is that they hold whenever $f$ is the canonical projection map $\pi_{g,s-1} \colon \pi_{g,s}(X) \to \pi_{g,s-1}(X)$ defined with respect to some other fibration $g \colon X \to Y$.  In this case, we write $A = A_s(g)$, the ``$s$-th structure group of the fibration''.  In this way we can iterate this theorem to obtain:

\begin{corollary}
  \label{relative-tower}
  Let $f\colon X \to Y$ be a fibration of degree at most $s$ between compact ergodic cubespaces $X$, $Y$ that obey the glueing condition.  Then we have a tower
  \[
    X = \pi_{f,s}(X) \xrightarrow{\pi_{f,s-1}} \pi_{f,s-1}(X) \to \dots \xrightarrow{\pi_{f,0}} \pi_{f,0}(X) = Y
  \]
  where the fibers of $\pi_{f,i-1}$ are identified with the compact abelian group $A_i(f)$ for each $i$, in the sense of Theorem \ref{relative-structure-thm}.
\end{corollary}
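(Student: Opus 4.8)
The plan is to peel off the tower one layer at a time, iterating Proposition~\ref{relative-canonical-factor} and feeding each resulting quotient map into Theorem~\ref{relative-structure-thm}. Write $X_i := \pi_{f,i}(X) = X/\sim_{f,i}$ for $0 \le i \le s$, and let $g_i \colon X_i \to Y$ be the fibration through which $f$ factors, so $f = g_i \circ \pi_{f,i}$. First I would pin down the two ends of the tower. Since $f$ has $(s+1)$-uniqueness, any two $(s+1)$-cubes witnessing $x \sim_{f,s} y$ must coincide, so $\sim_{f,s}$ is the trivial relation and $X_s = X$; and using ergodicity of $X$ (so that edges can be formed freely) one checks $x \sim_{f,0} y \iff f(x) = f(y)$, whence the continuous bijection $X_0 \to Y$ induced by $f$ is an isomorphism of cubespaces because a fibration is surjective on every $C^k$ (Lemma~\ref{extension-property}). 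Next I would record that the relations $\sim_{f,i}$ are nested, by restricting a pair of witnessing cubes to an $(i+1)$-dimensional face through $\vec{1}$, exactly as in Proposition~\ref{equiv-nested}; this produces the induced maps $h_i \colon X_i \to X_{i-1}$ satisfying $\pi_{f,i-1} = h_i \circ \pi_{f,i}$, which are fibrations by the universal property (Lemma~\ref{lem:universal1}).

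It then remains to check, for each $1 \le i \le s$, that $h_i$ meets the hypotheses of Theorem~\ref{relative-structure-thm} with its parameter set to $i$. The spaces $X_i$ are compact (quotients of a compact space by a closed relation) and ergodic (a quotient of an ergodic cubespace is ergodic), and they satisfy the glueing property; the last point needs the small auxiliary fact that glueing is inherited along any fibration $\varphi \colon X' \to Y'$, proved by lifting one of the two cubes of $Y'$ to $X'$, using Lemma~\ref{extension-property} to lift the second compatibly along the shared face, glueing in $X'$, and pushing the result forward by $\varphi$. For the degree bound, given $(i+1)$-cubes $\bar c, \bar c'$ of $X_i$ with $h_i(\bar c) = h_i(\bar c')$ and $\bar c = \bar c'$ off $\vec{1}$, I lift $\bar c$ to $X$ and use Lemma~\ref{extension-property} to lift $\bar c'$ to a cube agreeing with it off $\vec{1}$; since $g_i$ factors through $h_i$ the two lifts have equal image under $f$, so their $\vec{1}$-vertices are $\sim_{f,i}$-equivalent and hence $\bar c = \bar c'$, giving $(i+1)$-uniqueness, i.e.\ degree at most $i$. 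Finally, relative $i$-ergodicity follows once one observes (by transporting witnessing cubes to and from $X$ along $\pi_{f,i}$) that $\sim_{g_i,i-1}$ on $X_i$ is precisely the image of $\sim_{f,i-1}$, so that $h_i$ coincides with the canonical projection $\pi_{g_i,i-1}$; then Proposition~\ref{relative-canonical-factor} supplies the universal replacement property for $h_i$ in dimensions $\le i$, and any configuration over $X_i$ projecting under $h_i$ to a cube of dimension $\ell \le i$ can be reached from a lift of that cube (along the fibration $h_i$) by changing vertices one at a time via universal replacement, hence is a cube.

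With these verifications in place, Theorem~\ref{relative-structure-thm} applied to $h_i$ produces a compact abelian group, which we name $A_i(f)$, acting freely and continuously on $X_i$ with orbits the fibers of $h_i$, and acting pointwise on cubes through $C^k(\cD_i(A_i(f)))$ with orbits the fibers of $C^k(X_i) \to C^k(X_{i-1})$ --- precisely the asserted identification at the $i$-th stage. Running this over $i = s, s-1, \dots, 1$ and recalling $X_s = X$ and $X_0 = Y$ assembles the stated tower. I expect the only genuine work to be organizational: coaxing each consecutive quotient $h_i$ into exactly the shape demanded by Theorem~\ref{relative-structure-thm} --- a relatively $i$-ergodic fibration of degree at most $i$ between compact ergodic cubespaces with the glueing property --- for which the glueing-inheritance lemma and the identification $h_i = \pi_{g_i,i-1}$ do the bookkeeping. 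None of the individual steps is deep, and all are close variants of arguments already appearing in Sections~\ref{sec:canonical-factors} and~\ref{sec:elementary-v2}.
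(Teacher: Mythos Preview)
Your proposal is correct and follows essentially the same strategy as the paper: verify that each consecutive map $h_i \colon X_i \to X_{i-1}$ satisfies the hypotheses of Theorem~\ref{relative-structure-thm} (fibration, relative $i$-ergodicity, degree $\le i$) and then apply that theorem at each level. The paper's proof is terser---it records well-definedness via nesting, the fibration property via Lemma~\ref{lem:universal1}, $(t+2)$-uniqueness via Proposition~\ref{relative-canonical-factor}, and relative ergodicity via universal replacement, singling out the bottom step $t=0$ as requiring ergodicity of $X$ directly---but the content is the same. Your write-up is in fact slightly more careful: you explicitly verify glueing for the intermediate quotients (via the auxiliary ``glueing is inherited along fibrations'' lemma), which the paper's proof omits, and you make the identification $h_i = \pi_{g_i,i-1}$ explicit, which allows your universal-replacement argument to handle the $i=1$ case uniformly rather than as a special case.
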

\begin{proof}[Proof of Corollary \ref{relative-tower} from Theorem \ref{relative-structure-thm}]
  It suffices to know that each map $\pi_{f,t} \colon \pi_{f, t+1}(X) \to \pi_{f, t}(X)$ is well-defined, a fibration, has $(t+2)$-uniqueness and is $(t+1)$-ergodic.

  Well-definedness follows from the nested nature of the relations $\sim_{f,t}$; the proof of this is unchanged from Proposition \ref{equiv-nested}.

  The fact that it is a fibration follows from the fact that $\pi_{f,t+1} \colon X \to \pi_{f,t+1}(X)$ and $\pi_{f,t} \colon X \to \pi_{f,t}(X)$ are fibrations, and the universal property (Lemma \ref{lem:universal1}).

  The $(t+2)$-uniqueness statement is supplied by Proposition \ref{relative-canonical-factor} applied to $\pi_{f,t+1}(X)$.

  Finally, the $(t+1)$-ergodicity statement is precisely what is encoded by the ``universal replacement'' property from that same proposition as long as $t\geq 1$. For $t=0$ it does not follow from Proposition \ref{relative-canonical-factor} (indeed in general it does not hold). Instead for $t=0$ we use that assumption that $X$ is ergodic.
\end{proof}

Before embarking on the proof of Theorem \ref{relative-structure-thm}, we note a relative analogue of Proposition \ref{prop:high-cubes-boring} that will be useful for discussing high-dimensional cubes.

\begin{proposition}
  \label{prop:relative-boring-high-dimension}
  Let $s \ge 1$ be an integer, $f \colon X \to Y$ a fibration of degree at most $s$ and $k \ge s +1$.  Further let $c \colon X \to \{0,1\}^k$ be given.  Then $c \in C^k(X)$ if and only if $f(c)$ is a cube of $Y$ and further every face of $c$ of dimension $(s+1)$ is a cube of $X$.
\end{proposition}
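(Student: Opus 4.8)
The plan is to follow the proof of Proposition \ref{prop:high-cubes-boring} essentially verbatim, replacing the use of unique corner completion in a fibrant space by two inputs: the fibration property of $f$ (to build a completing cube over a given base cube) and the relative $(s+1)$-uniqueness that $f$ enjoys because it has degree at most $s$ (to pin down the last vertex).

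The ``only if'' direction is immediate from the axioms: any face of a $k$-cube is a cube, and $f(c) \in C^k(Y)$ since $f$ is a cubespace morphism. For the ``if'' direction I would induct on $k \ge s+1$. The base case $k = s+1$ is trivial, since the only $(s+1)$-dimensional face of $c$ is $c$ itself, so the hypothesis already says $c \in C^{s+1}(X)$ (the $f(c)$ hypothesis is not even needed here).

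For the inductive step assume $k \ge s+2$, that $f(c) \in C^k(Y)$, and that every $(s+1)$-dimensional face of $c$ is a cube of $X$. First I would check that $\lambda := c|_{\{0,1\}^k \setminus \{\vec 1\}}$ is a $k$-corner in the sense of Definition \ref{defn:k-completion}: each lower face of codimension $1$, regarded via a morphism of discrete cubes as a configuration $\{0,1\}^{k-1} \to X$, has the property that its image under $f$ is a $(k-1)$-cube of $Y$ (a face of $f(c)$) and that all of its $(s+1)$-dimensional faces are $(s+1)$-dimensional faces of $c$, hence cubes of $X$; so by the inductive hypothesis it lies in $C^{k-1}(X)$. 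Since $f$ is a fibration and $f(c) \in C^k(Y)$ restricts over $\lambda$ (i.e.\ $f(\lambda(\omega)) = f(c)(\omega)$ for all $\omega \ne \vec 1$), the fibration definition (Definition \ref{def:fibration}) produces $x \in X$ with $f(x) = f(c)(\vec 1)$ such that the completion $c'$ of $\lambda$ by $c'(\vec 1) = x$ is a cube of $X$ with $f(c') = f(c)$ and $c'(\omega) = c(\omega)$ for all $\omega \ne \vec 1$.

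It remains to show $c(\vec 1) = c'(\vec 1)$. Here I would choose any upper face $F$ of $\{0,1\}^k$ of dimension $s+1$ (possible since $k \ge s+1$); note $\vec 1 \in F$, and $F$ is parametrized by a morphism of discrete cubes carrying the top vertex of $\{0,1\}^{s+1}$ to $\vec 1$. Then $c|_F \in C^{s+1}(X)$ by hypothesis, $c'|_F \in C^{s+1}(X)$ as a face of $c'$, the two agree off $\vec 1$, and $f(c|_F) = f(c)|_F = f(c')|_F$. Since $f$ has degree at most $s$ it has $(s+1)$-uniqueness, which forces $c|_F = c'|_F$, hence $c(\vec 1) = c'(\vec 1)$ and $c = c' \in C^k(X)$. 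I do not expect any genuine obstacle here; the only thing to be careful about is bookkeeping of which dimension of face is invoked at each stage and the fact that the identifications of faces with standard discrete cubes can be chosen to send the distinguished vertex to $\vec 1$, so that $(s+1)$-uniqueness of $f$ applies (this insensitivity to the choice of special vertex is noted in the remark following Proposition \ref{prop-canonical-factors}).
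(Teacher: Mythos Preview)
Your proof is correct and follows essentially the same approach as the paper's: induct on $k$, use the inductive hypothesis to verify that $c|_{\{0,1\}^k\setminus\{\vec 1\}}$ is a $k$-corner, complete it via the fibration property to a cube $c'$ lying over $f(c)$, and then apply relative $(s+1)$-uniqueness on an upper $(s+1)$-face to conclude $c(\vec 1)=c'(\vec 1)$. You have simply spelled out more of the bookkeeping (in particular, the verification that each lower face of $\lambda$ is a cube) than the paper does.
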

\begin{proof}
  The ``only if'' part is clear.  For the converse, again we argue by induction on $k$, the case $k=s+1$ being trivial.  Given $c$, we consider the restriction to $\{0,1\}^k \setminus \{\vec{1}\}$, which by inductive hypothesis is a $k$-corner.  Hence we may complete this corner relative to $f(c)$; i.e.~there is a cube $c'$ such that $c'(\omega) = c(\omega)$ for all $\omega \ne \vec{1}$ and $f(c'(\vec{1})) = f(c(\vec{1}))$.  Now by restricting to any upper face of $c$ and $c'$ of dimension $(s+1)$ and invoking $(s+1)$-uniqueness, we find $c(\vec{1}) = c'(\vec{1})$ also and hence $c = c'$ is a cube.
\end{proof}

The remainder of this section is spent proving Theorem \ref{relative-structure-thm}.

\begin{proof}[Proof of Theorem \ref{relative-structure-thm}]
  Again, this account is very similar to the proof of Theorem \ref{basic-cs-structure-theorem}, and the reader will find it significantly more comprehensible if they are already familiar with the sketch proof of that theorem given above in Section \ref{sec:structure-groups}.

  Before, we defined the structure group in terms of equivalence classes of edges in $C^1(X)$, under the equivalence relation $[x,y] \sim [z,w]$ if and only if $[[x,y],[z,w]] \in C^2(X)$.

  One way to rationalize this construction is in terms of the \emph{edge cubespace} $\cE(X)$, which we define now.

  \begin{definition}  \label{defn:edge-cubespace}
    Let $X$ be a cubespace.  The \emph{edge cubespace}\index{edge cubespace} of $X$, denoted $\cE(X)$\index[nota]{$\cE(X)$}, is the cubespace whose base space consists of the edges $C^1(X) \subseteq X \times X$, and whose $k$-cubes are the configurations $c \in (X \times X)^{\{0,1\}^k} \cong X^{\{0,1\}^{k + 1}}$ that correspond to $(k+1)$-cubes of $X$.
  \end{definition}

  It is easy to check that this does indeed define a cubespace.  Moreover, if $X$ has the glueing property, this is inherited by $\cE(X)$.  It follows that we may consider the canonical equivalence relations $\sim_{k}$ on $\cE(X)$; by Proposition \ref{relative-canonical-factor} (in the case $Y = \{\ast\}$; see Remark \ref{rem:completion for quotient}) these are equivalence relations and have the universal replacement property.

  It follows that $[x,y] \sim [z,w]$ as above if and only if $[[x,y],[z,w]]$ is an edge of $\cE(X)$, hence if and only if $[x,y] \sim_0 [z,w]$ where $\sim_0$ denotes the canonical equivalence relation on $\cE(X)$.

  In order to generalize the proof of Theorem \ref{basic-cs-structure-theorem} to $s > 1$, it is therefore tempting to try replacing $\sim_0$ by $\sim_{s-1}$ in the above and attempting to re-run the argument.  A little further thought shows this is not quite the right thing to do, since
  \begin{itemize}
    \item the space $\cE(X)$ is too large when $s > 1$: we want a pair to represent the element of the structure group that takes one to the other, but this only makes sense for pairs $[x,y]$ that lie in the same fiber of $f$;
    \item the condition $[x,y] \sim_{s-1} [x',y']$ turns out to be too restrictive on $Y$, since it implies that $x$ and $x'$ lie in the same fiber of $f$, and hence we cannot relate pairs lying over different fibers.
  \end{itemize}

  However, a second attempt at this does work, and generalizes Theorem \ref{basic-cs-structure-theorem} fairly cleanly.

  We define
  \[
   \cM= \cM_f(X) = \{ [x,y] \in X\times X \colon f(x) = f(y) \}
  \]
  i.e.~the space of pairs that lie in the same fiber of $f$, and specify an equivalence relation $\sim$ on $\cM$ by $[x,x'] \sim [y, y']$ if and only if $[x,y] \sim_{s-1} [x',y']$ in $\cE(X)$, where $\sim_{s-1}$ denotes the canonical equivalence relation on that space.  Note that this definition involves a ``$90^\circ$ rotation'', e.g.~pairing up $[x,y]$ rather than $[x,x']$; this is not an error and is necessary for the definition to make sense.

  Note that, by Lemma \ref{lem:relative-canoncial-characterization}, this is equivalent to saying $[x,x'] \sim [y,y']$ if and only if $[\llcorner^s(x;x'), \llcorner^s(y;y')]$ is a cube of $X$.

  We will define the structure group $A = A_f$ to be $\cM / \sim$.  As above, addition on $A$ will be given by concatenation of edges, negation by $[x,y] \mapsto [y,x]$ and the action on $X$ by a cube completion.

  We now turn to the details.

  \begin{claim*}
    The relation $\sim$ is a closed equivalence relation on $\cM$.
  \end{claim*}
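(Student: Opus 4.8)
The plan is to verify the four required properties---closedness, reflexivity, symmetry, and transitivity---working throughout with the corner characterization just recorded, namely that $[x,x'] \sim [y,y']$ if and only if $[\llcorner^s(x;x'),\llcorner^s(y;y')] \in C^{s+1}(X)$. One should bear in mind that, because of the ``$90^\circ$ rotation'' built into the definition of $\sim$, reflexivity and transitivity are \emph{not} mere consequences of the corresponding properties of $\sim_{s-1}$ on $\cE(X)$: reflexivity of $\sim$, for instance, amounts to the assertion that the constant edges $[x,x]$ and $[x',x']$ are $\sim_{s-1}$-equivalent in $\cE(X)$, which is a genuine statement about distinct edges rather than reflexivity of $\sim_{s-1}$.

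Closedness is immediate: the assignment $([x,x'],[y,y']) \mapsto [\llcorner^s(x;x'),\llcorner^s(y;y')]$ is a continuous map from $\cM \times \cM$ to $X^{\{0,1\}^{s+1}}$---it merely reindexes coordinates---and $C^{s+1}(X)$ is closed by the cubespace axioms, so $\sim$ is the preimage of a closed set (using also that $\cM$ is closed in $X \times X$). For reflexivity, I would take $[x,x'] \in \cM$ and observe that $f \circ \llcorner^s(x;x') = \llcorner^s(f(x);f(x')) = \square^s(f(x))$, a constant and hence legitimate $s$-cube of $Y$; since $f$ is relatively $s$-ergodic, this forces $\llcorner^s(x;x') \in C^s(X)$, and duplicating this $s$-cube along a fresh coordinate (a morphism of discrete cubes) gives $[\llcorner^s(x;x'),\llcorner^s(x;x')] \in C^{s+1}(X)$, i.e.\ $[x,x'] \sim [x,x']$. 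This is the one place where the relative $s$-ergodicity hypothesis is genuinely needed. Symmetry then follows by reflecting the $(s+1)$-cube $[\llcorner^s(x;x'),\llcorner^s(y;y')]$ in its last coordinate, which is again a morphism of discrete cubes and produces $[\llcorner^s(y;y'),\llcorner^s(x;x')]$.

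Transitivity is the step where the glueing hypothesis on $X$ enters, and it is the only one calling for real care. Given $[x,x'] \sim [y,y']$ and $[y,y'] \sim [z,z']$, the two $(s+1)$-cubes $[\llcorner^s(x;x'),\llcorner^s(y;y')]$ and $[\llcorner^s(y;y'),\llcorner^s(z;z')]$ share the common $s$-face $\llcorner^s(y;y')$, which occurs as the ``top'' face of the first and the ``bottom'' face of the second; glueing them along this face yields the $(s+1)$-cube $[\llcorner^s(x;x'),\llcorner^s(z;z')]$, whence $[x,x'] \sim [z,z']$. This is exactly the relative analogue of the transitivity argument in the proof of Proposition \ref{prop-canonical-factors} (via Lemma \ref{lem:canonical-characterization}). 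None of this is hard; the only thing to get right is the bookkeeping---keeping track of which coordinate each glueing is performed along---together with the observation, made above, that reflexivity and transitivity really do require the two structural hypotheses and are not automatic.
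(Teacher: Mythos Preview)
Your proof is correct and follows essentially the same approach as the paper: closedness by continuity and closedness of $C^{s+1}(X)$; reflexivity via relative $s$-ergodicity to conclude $\llcorner^s(x;x')\in C^s(X)$ and then duplication; symmetry by reflection; and transitivity by glueing the two $(s+1)$-cubes along the shared face $\llcorner^s(y;y')$. Your remark that the ``$90^\circ$ rotation'' prevents reflexivity and transitivity from being inherited trivially from $\sim_{s-1}$ on $\cE(X)$ is a correct and worthwhile observation that the paper does not make explicit.
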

  \begin{proof}[Proof of claim]
    For reflexivity we are asked to verify that for all $[x, x'] \in \cM$ the configuration $[\llcorner^s(x; x'), \llcorner^s(x; x')]$ is an $(s+1)$-cube of $X$; equivalently by duplication, that $\llcorner^s(x; x')$ is an $s$-cube.  Since $f(x) = f(x')$ this is immediate from the assumption of relative $s$-ergodicity.

    Symmetry is clear: if $[\llcorner^s(x; x'), \llcorner^s(y; y')]$ is a cube then so is $[\llcorner^s(y; y'), \llcorner^k(x; x')]$.

    For transitivity, suppose $[x, x'], [y, y'], [z, z'] \in \cM$ and $[x, x'] \sim [y, y']$, $[y, y'] \sim [z, z']$.  Equivalently, $[\llcorner^s(x; x'), \llcorner^s(y; y')]$ and $[\llcorner^s(y; y'), \llcorner^s(z; z')]$ are cubes.  It is immediate by glueing that $[\llcorner^s(x;x'), \llcorner^s(z;z')]$ is a cube, and hence $[x,x'] \sim [z,z']$ as required.

    Once again, the topological properties are an easy check.
  \end{proof}

  Hence, as a set, $A = \cM / \sim$ is well-defined.  As in the proof of Theorem \ref{basic-cs-structure-theorem}, we have considerable freedom to choose representatives for $a \in A$.
  \begin{claim*}
    Given $a \in A$ and $x \in X$, there exists an unique $x' \in X$ such that $a \sim [x,x']$.
  \end{claim*}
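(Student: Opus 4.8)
The plan is to establish existence and uniqueness separately, each time reducing to a property $f$ already has: the fibration axiom for existence, and $(s+1)$-uniqueness of $f$ (equivalently, that $f$ has degree at most $s$) for uniqueness. Throughout I would use the characterization recorded just above the claim, namely that for $[x,x'],[y,y']\in\cM$ one has $[x,x']\sim[y,y']$ exactly when $[\llcorner^s(x;x'),\llcorner^s(y;y')]\in C^{s+1}(X)$; by reflecting the last coordinate this is the same as $[\llcorner^s(y;y'),\llcorner^s(x;x')]\in C^{s+1}(X)$, and I will use this latter form so that the vertex to be filled in is exactly $\vec1$ (this matters because the fibration axiom, Definition \ref{def:fibration}, is stated with $\vec1$ distinguished).

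For existence, fix a representative $[y,y']$ of $a$, so $f(y)=f(y')$, and let $x\in X$ be given. I would build the desired cube $[\llcorner^s(y;y'),\llcorner^s(x;x')]$ by corner completion along $f$. Consider the configuration $\lambda$ on $\{0,1\}^{s+1}\setminus\{\vec1\}$ whose $\{\omega_{s+1}=0\}$-face is $\llcorner^s(y;y')$ and whose remaining vertices all carry the value $x$. Its lower faces are $\llcorner^s(y;y')$ itself and, for $i\le s$, the configuration $[\square^{s-1}(y),\square^{s-1}(x)]$ (obtained by duplication); each of these has $f$-image a cube of $Y$ — using ergodicity of $Y$ together with the duplication axiom — so by relative $s$-ergodicity of $f$ each is a cube of $X$, and hence $\lambda$ is an $(s+1)$-corner. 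Moreover $f(\lambda)$ is exactly the $\vec1$-deleted corner of $c:=[\square^s(f(y)),\square^s(f(x))]$, which is a cube of $Y$ since $Y$ is ergodic, and $c(\vec1)=f(x)$. Feeding the corner $\lambda$ and the completion $c$ of $f(\lambda)$ into the fibration axiom yields $x'\in X$ with $f(x')=f(x)$ and with $\lambda$ extended by $x'$ at $\vec1$ — that is, $[\llcorner^s(y;y'),\llcorner^s(x;x')]$ — a cube of $X$. By the characterization this says $[x,x']\in\cM$ and $[x,x']\sim[y,y']$, i.e. $[x,x']$ represents $a$.

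For uniqueness, suppose $x'$ and $x''$ both represent $a$ over $x$; then $[x,x']\sim[x,x'']$ by symmetry and transitivity of $\sim$, so $c:=[\llcorner^s(x;x'),\llcorner^s(x;x'')]\in C^{s+1}(X)$ and $f(x')=f(x)=f(x'')$. Compare $c$ with $c':=[\llcorner^s(x;x'),\llcorner^s(x;x')]$, the latter being a cube because $\llcorner^s(x;x')$ has $f$-image $\square^s(f(x))$ and so is a cube of $X$ by relative $s$-ergodicity, and duplication is permitted. Now $c$ and $c'$ agree at every vertex except $\vec1$, and $f(c)=f(c')=\square^{s+1}(f(x))$, so $(s+1)$-uniqueness of $f$ forces $c=c'$ and hence $x''=x'$.

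I do not foresee a genuine obstacle here; the step doing the real work is the single application of the fibration axiom, which is precisely what guarantees that the completing vertex $x'$ lands in the fibre $f^{-1}(f(x))$ rather than somewhere else — this is why the argument needs $f$ to be a fibration and not merely $X$ fibrant. The remaining verifications (identifying the lower faces of $\lambda$, and checking that the various $Y$-side configurations are cubes) are routine consequences of ergodicity of $Y$ and the cubespace axioms. The only point requiring mild care is the choice of coordinates so that the distinguished vertex of every corner considered is $\vec1$, which is arranged above.
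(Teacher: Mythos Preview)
Your proof is correct and follows essentially the same approach as the paper. The paper's version is terser: it observes in one breath that since $f$ is a fibration of degree at most $s$, the $(s+1)$-corner $[\llcorner^s(y;y'),\llcorner^s(x;-)]$ (lying above the $Y$-cube $[\square^s(f(y)),\square^s(f(x))]$) has a \emph{unique} completion in the fibre $f^{-1}(f(x))$, getting existence and uniqueness simultaneously; you unwind this into separate existence and uniqueness arguments, the latter routed through transitivity of $\sim$ before invoking $(s+1)$-uniqueness, but the underlying ingredients are identical.
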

  \begin{proof}[Proof of claim]
    Suppose $[y,y']$ is any representative of $a$.  Necessarily $f(y) = f(y')$.  Hence the configuration $[\llcorner^s(y; y'), \llcorner^s(x; -)]$ is an $(s+1)$-corner of $X$ lying above the cube $[\square^s(f(y)), \square^s(f(x))]$ of $Y$ (which is a cube by ergodicity of $Y$ and duplication): the fact that it is a corner is trivial by $s$-ergodicity of $f$.  Since $f$ is a fibration of degree at most $s$, there is an unique $x' \in f^{-1}(f(x))$ completing this $(s+1)$-corner to a cube of $X$, as required.
  \end{proof}

  Hence we may now define addition on $A$ by concatenation of edges as before.  Given $a, b \in A$ we may fix $e \in X$ and choose representatives $[x, e] \sim a$, $[e, y] \sim b$ and define $a + b$ to be the class of $[x,y]$.  The identity is the class of $[e,e]$ for any $e$, and negation is done by sending $[x,y] \mapsto [y,x]$.

  \begin{claim*}
    These operations are well-defined, and give $A$ the structure of a (topological) abelian group.
  \end{claim*}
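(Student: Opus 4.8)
The plan is to transcribe the $s=1$ argument from the proof of Theorem~\ref{basic-cs-structure-theorem} in Section~\ref{sec:structure-groups}, replacing every edge $[p,q]$ by the configuration $\llcorner^s(p;q)$ and every $2$-cube by an $(s+1)$-cube, and using Lemma~\ref{lem:relative-canoncial-characterization} as the working description of $\sim$ (so that $[p,p']\sim[q,q']$ holds precisely when $[\llcorner^s(p;p'),\llcorner^s(q;q')]\in C^{s+1}(X)$). Two facts carry all the bookkeeping: first, whenever $f(p)=f(p')$ the configuration $\llcorner^s(p;p')$ is a cube of $X$, since $f(\llcorner^s(p;p'))=\square^s(f(p))$ is a (constant) cube of $Y$ and $f$ is relatively $s$-ergodic; second, $X$ has the glueing property, so two $(s+1)$-cubes agreeing along a common $s$-face may be glued.

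First I would check well-definedness of $+$. Given two basepoints $e,e'$ and the unique representatives $[x,e]\sim[x',e']\sim a$, $[e,y]\sim[e',y']\sim b$ supplied by the previous claim, the relation $\sim$ produces the $(s+1)$-cubes $[\llcorner^s(x;e),\llcorner^s(x';e')]$ and $[\llcorner^s(e;y),\llcorner^s(e';y')]$, and the task is to assemble these --- together with auxiliary faces supplied by relative $s$-ergodicity and by ergodicity of $Y$ --- into a configuration whose relevant $s$-faces are $\llcorner^s(x;y)$ and $\llcorner^s(x';y')$, giving $[x,y]\sim[x',y']$; this is cleanest to phrase as building a tri-cube in the sense of the proof of Proposition~\ref{relative-canonical-factor}. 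The identity is the $\sim$-class of the constant pairs $[e,e]$, which form a single $\sim$-class by $(s+1)$-uniqueness of $f$, and $0+a=a=a+0$ is immediate by duplication; negation is $[x,y]\mapsto[y,x]$, well-defined by reflecting cubes, with $a+(-a)=0$ by restricting the relevant cube to a face and duplicating. Associativity follows formally once $+$ is shown well-defined, since concatenation of edge-pairs is associative. For commutativity, given $[x,e]\sim a$, $[e,y]\sim b$, I would complete a suitable $(s+1)$-corner to obtain $e'$ with $[e',y]\sim a$ and $[x,e']\sim b$, whence $b+a\sim[x,y]\sim a+b$ --- the exact analogue of the single-square completion used for $s=1$.

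It remains to promote $A$ to a topological group. Since $C^{s+1}(X)$ is closed, $f$ is continuous and $X,Y$ are compact, the relation $\sim$ is closed, so $A=\cM/\sim$ is a compact metric space. For continuity of $+$, fix $e$: the maps $\ell_e\colon x\mapsto[x,e]/\sim$ and $r_e\colon x\mapsto[e,x]/\sim$ are continuous bijections from the fiber $f^{-1}(f(e))$ onto $A$ (bijectivity is the unique-representative claim), hence homeomorphisms, and $+$ is the composite $(a,b)\mapsto(\ell_e^{-1}(a),r_e^{-1}(b))\mapsto[\ell_e^{-1}(a),r_e^{-1}(b)]/\sim$ of continuous maps --- note that the intermediate pair lies in $f^{-1}(f(e))\times f^{-1}(f(e))$, hence in $\cM$ --- and continuity of negation is similar. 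The one genuinely delicate point is the glueing in the well-definedness argument: unlike the $s=1$ case, the two $(s+1)$-cubes in hand do not share an $s$-face directly, so one must interpose faces supplied by relative $s$-ergodicity (equivalently, package the whole configuration as a tri-cube) before glueing; everything else is a routine diagram chase using only the cubespace axioms, the glueing property, and the characterization of $\sim$ from Lemma~\ref{lem:relative-canoncial-characterization}.
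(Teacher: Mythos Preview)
Your outline has the right spirit but misses the key structural shortcut the paper exploits, and this leaves real gaps in two places.

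The paper's argument pivots on the $90^\circ$ rotation built into the definition of $\sim$: the relation $[x,x']\sim[y,y']$ on $\cM$ is \emph{by definition} the relation $[x,y]\sim_{s-1}[x',y']$ on the edge cubespace $\cE(X)$. This reframing makes well-definedness of addition a one-liner: from $[x,e]\sim[x',e']$ and $[e,y]\sim[e',y']$ one reads off $[x,x']\sim_{s-1}[e,e']$ and $[e,e']\sim_{s-1}[y,y']$ in $\cE(X)$, and transitivity of $\sim_{s-1}$ (already established in Proposition~\ref{relative-canonical-factor}) gives $[x,x']\sim_{s-1}[y,y']$, i.e.\ $[x,y]\sim[x',y']$. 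Your tri-cube proposal, by contrast, is underspecified: you have only two of the $2^{s+1}$ sub-cubes a tri-cube requires, they do not share any face, and it is not at all clear how relative $s$-ergodicity supplies the rest so that the outer cube is the configuration you want. The acknowledgement that this step is ``genuinely delicate'' is apt, but no mechanism is given.

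The commutativity step has a similar gap. Completing a corner to find $e'$ with $[x,e']\sim b$ works, but you then need $[e',y]\sim a$ as well. In the $s=1$ sketch this follows because swapping the two axes of the $2$-cube gives exactly the other relation; for $s>1$ the configurations $[\llcorner^s(x;e'),\llcorner^s(e;y)]$ and $[\llcorner^s(e';y),\llcorner^s(x;e)]$ are \emph{not} related by any morphism of discrete cubes, so the second relation does not come for free. The paper handles this by proving a separate swap lemma (if $x,y,z,w$ lie in one fiber and $[x,y]\sim[z,w]$ then $[x,z]\sim[y,w]$), again via $\cE(X)$: one interprets the relevant $(s+1)$-cube as an $s$-cube in $\cE(X)$ and invokes the universal replacement property of $\sim_{s-1}$ there. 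Once that lemma is in hand, commutativity follows exactly as you outline.

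The remaining parts of your sketch (identity, negation, associativity, continuity) are fine and match the paper.
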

  \begin{proof}[Proof of claim]
    We first check well-definedness: that is, that these operations do not depend on the choice of representatives, or -- in the case of addition -- on the choice of $e$.

    It is clear from the definition of $\sim$ (in terms of $\sim_s$ in $\cE(X)$) that $[x,x] \sim [y,y]$ for any $x,y \in X$ and hence the identity class really is a class.  Similarly, if $[x,y] \sim [x',y']$ then $[y,x] \sim [y',x']$ and hence negation is well-defined.

    Now suppose $[x,y] \sim [x',y']$ and $[y,z] \sim [y',z']$; we wish to show $[x,z] \sim [x',z']$ as this will show addition is well-defined.  But this says $[x,x'] \sim_{s-1} [y,y']$ and $[y,y'] \sim_{s-1} [z,z']$ in $\cE(X)$, and hence $[x,x'] \sim_{s-1} [z,z']$ in $\cE(X)$ by transitivity and so $[x,z] \sim [x',z']$ as required.

    Associativity of addition is now clear from associativity of concatenation. Similarly, that the identity and inverses behave as they should is straightforward.

    We now argue commutativity.  We will first need a lemma.
    \begin{lemma}
      If $x,y,z,w \in X$ all lie in the same fiber of $f$ and $[x,y] \sim [z,w]$ then $[x,z] \sim [y,w]$.
    \end{lemma}
    \begin{proof}[Proof of lemma]
      Note if $s=1$ this is trivial: both are equivalent to
      \inlinetikz{\singlesquare{$x$}{$y$}{$z$}{$w$}}
      being a cube.

 For $s>1$, we argue as follows.
Choose $u$ in the same fiber arbitrarily.
By relative $s$-ergodicity,
   \inlinetikz{\singlesquare{$\llcorner^{s-1}(u;x)$}{$\llcorner^{s-1}(u;x)$}{$\llcorner^{s-1}(u;z)$}{$\llcorner^{s-1}(u;z)$}}
      is an $(s+1)$-cube in $X$, or equivalently an $s$-cube in $\cE(X)$.

      Note that $[x,y]\sim[z,w]$ if and only if $[x,z]\sim_{s-1}[y,w]$ in $\cE(X)$.  In the previous diagram, interpreted as an $s$-cube in $\cE(X)$, the edge $[x,z]$ appears twice.  If we replace one of these edges by $[y,w]$, giving
      \inlinetikz{\singlesquare{$\llcorner^{s-1}(u;x)$}{$\llcorner^{s-1}(u;y)$}{$\llcorner^{s-1}(u;z)$}{$\llcorner^{s-1}(u;w)$}}
      then this lies in $C^s(\cE(X))$ if and only if $[x,z] \sim [y,w]$.  Indeed, ``only if'' follows from the definition of $\sim_{s-1}$ on $\cE(X)$, and for ``if'' we apply the universal replacement property for $\cE(X)$ (which has the glueing property, as was mentioned after Definition \ref{defn:edge-cubespace}).

      However, thinking of this configuration now as an element of $X^{\{0,1\}^{s+1}}$, it is unchanged, up to a morphism of discrete cubes, on exchanging $y$ and $z$.  Hence, we see that it is a cube in $C^{s+1}(X)$ if and only if $[x,z]\sim[y,w]$, and also if and only if $[x,y] \sim [z,w]$, as required.
\end{proof}

    So, now suppose we have $a, b \in A$, and pick representatives $a \sim [x,e]$, $b \sim [e,y]$ and hence $a+b \sim [x,y]$.  Further pick $e'$ such that $[x,e'] \sim b$.  Note $x,y,e,e'$ all lie in the same fiber and $[x,e'] \sim [e,y]$, so by the lemma $[x,e] \sim [e',y]$.  But then by concatenation,
    \[
      b + a \sim [x,e'] + [e',y] \sim [x,y] \sim a + b
    \]
    as required.

    Once again, arguing continuity of these operations is straightforward from closedness of the equivalence relations, closedness of $C^k(X)$, continuity of $f$ and compactness.
This concludes the proof of the claim.
  \end{proof}

  We now define the group action of $A$ on $X$.  Given $a \in A$ and $x \in X$ we take $a(x)$ (also notated $a . x$) to be the unique element of $X$ such that $a \sim [x, a(x)]$.  It is clear $0 \in A$ acts trivially.  To confirm it is an action, we consider that $a \sim [x, a(x)]$ and $a' \sim [a(x), a'(a(x))]$ but then by definition of addition $a+a' \sim [x, a'(a(x))]$.

  Moreover, it is clear that this action respects $f$ (i.e.~$x$ and $a(x)$ lie in the same fiber), and that it is simply transitive on fibers of $f$.  Again, continuity of the action is straightforward.  Hence we have shown part (i) of Theorem \ref{relative-structure-thm}.

  We now turn to part (ii), which describes the cubes $C^k(X)$ in terms of $C^k(Y)$ and $A$.  Recall that we wanted to show that the action of $A$ on $X$ induces a pointwise action of $C^k(\cD_s(A))$ on $C^k(X)$ whose orbits are precisely the fibers of $f \colon C^k(X) \to C^k(Y)$.  This can be rephrased as follows.

  \begin{lemma}\label{lem:relative cubes}
    Suppose $k\ge 0$, $c \in C^k(X)$ and $c' \colon \{0,1\}^k \to X$ is another configuration such that $f(c) = f(c')$.  Define a configuration $a \colon \{0,1\}^k \to A$ by
    \[
      a(\omega) \sim [c(\omega), c'(\omega)] \ .
    \]
    Then $c'$ is a cube if and only if $a \in C^{k}(\cD_s(A))$.
  \end{lemma}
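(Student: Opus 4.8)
The plan is to reduce the whole statement to the single case $k=s+1$. For $0\le k\le s$ both sides hold automatically: since $c$ is a cube and $f$ a morphism, $f(c')=f(c)\in C^k(Y)$, so $c'\in C^k(X)$ by relative $s$-ergodicity of $f$, while $a\in C^k(\cD_s(A))$ because $\cD_s(A)$ is $s$-ergodic, so $C^k(\cD_s(A))=A^{\{0,1\}^k}$ for $k\le s$. For $k\ge s+2$ I would apply Proposition \ref{prop:relative-boring-high-dimension}: $c'\in C^k(X)$ if and only if $f(c')=f(c)$ is a cube of $Y$ — which it is — and every $(s+1)$-dimensional face of $c'$ is a cube of $X$; similarly, by Proposition \ref{pr:Ds}, $a\in C^k(\cD_s(A))$ if and only if every $(s+1)$-face of $a$ is a cube of $\cD_s(A)$. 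Restricting the triple $(c,c',a)$ to an $(s+1)$-face $G$ produces $(c|_G,c'|_G,a|_G)$, where $c|_G$ is still a cube and $a|_G$ is still the associated difference configuration, so the $k=s+1$ case applied to each such $G$ yields the general case.

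It therefore remains to treat $k=s+1$. Here I would first dispose of the ``only if'' direction assuming the ``if'' direction. Given cubes $c,c'\in C^{s+1}(X)$ with $f(c)=f(c')$, the restriction $a|_{\{0,1\}^{s+1}\setminus\{\vec{1}\}}$ is trivially an $(s+1)$-corner of $\cD_s(A)$, and since $\cD_s(A)$ is a nilspace it completes to some $\hat a\in C^{s+1}(\cD_s(A))$ agreeing with $a$ off $\vec{1}$. By the ``if'' direction applied to $c$ and $\hat a$, the configuration $\omega\mapsto\hat a(\omega).c(\omega)$ is a cube of $X$; it agrees with $c'$ at every vertex $\ne\vec{1}$ and has the same image as $c'$ under $f$, so $(s+1)$-uniqueness of $f$ (which holds since $f$ has degree at most $s$) forces it to equal $c'$. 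Freeness of the action (Theorem \ref{relative-structure-thm}(i)) then gives $a(\vec{1})=\hat a(\vec{1})$, hence $a=\hat a\in C^{s+1}(\cD_s(A))$.

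For the ``if'' direction at $k=s+1$ — given $a\in C^{s+1}(\cD_s(A))=\HK^{s+1}(A_\bullet)$ and $c\in C^{s+1}(X)$, show that $c'(\omega)=a(\omega).c(\omega)$ is a cube — I would decompose $a$ as a product of the generators of $\HK^{s+1}(A_\bullet)$, each of the form $[g]_F$ with $F$ a face of $\{0,1\}^{s+1}$ of dimension $\ell\ge1$ (dimension $0$ being excluded since $A_{s+1}=\{0\}$) and $g\in A$, and process these factors one at a time. This reduces the problem to the following \emph{elementary move}: if $d\in C^{s+1}(X)$, $F$ is a face of dimension $\ell$ with $1\le\ell\le s$, and $g\in A$, then the configuration obtained from $d$ by replacing $d|_F$ with $g.(d|_F)$ is again a cube of $X$ (note $g.(d|_F)\in C^\ell(X)$ by relative $s$-ergodicity, since it lies over the cube $f(d|_F)$ of $Y$). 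Establishing this elementary move is the step I expect to be the main obstacle: it requires unwinding the construction of the $A$-action — through the relation $\sim_{s-1}$ on the edge cubespace $\cE(X)$ and the $\llcorner^s$-type configurations used to represent elements of $A$ — and then building an appropriate tri-cube whose outer cube is the moved configuration, in the spirit of the $s=1$ computation sketched after Theorem \ref{basic-cs-structure-theorem} and the tri-cube manipulations in the proof of Proposition \ref{relative-canonical-factor}. I expect this to succeed by an inner induction on the codimension $s+1-\ell$ of $F$, with the base case a gluing argument; once it is in hand, iterating over the factors of $a$ (and using the cubespace axioms to absorb any reordering of coordinates) finishes the proof.
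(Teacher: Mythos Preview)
Your overall architecture matches the paper exactly: the reduction to $k=s+1$ via relative $s$-ergodicity for $k\le s$ and Propositions \ref{prop:relative-boring-high-dimension}, \ref{pr:Ds} for $k\ge s+2$, and the deduction of ``only if'' from ``if'' via corner completion in $\cD_s(A)$ and $(s+1)$-uniqueness of $f$, are precisely what the paper does.

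The part you leave incomplete --- the ``elementary move'' for arbitrary face dimensions $\ell$, with a proposed induction on codimension --- is where the paper's proof is sharper. The paper does not handle faces of all dimensions. Instead it observes that the elements $[b]_F$ with $F$ a face of dimension \emph{one} already generate $C^{s+1}(\cD_s(A))$ (since $A$ is abelian this is the subgroup of configurations with vanishing alternating sum, and any such is a sum of edge-supported configurations). For these one-dimensional faces, the elementary move is immediate: view $c'\in C^{s+1}(X)$ as an $s$-cube in the edge cubespace $\cE(X)$, and then acting by $b$ on the edge $[x,y]$ corresponding to $F$ gives $[b.x,b.y]$; by the very definition of $\sim$ on $\cM$ one has $[x,y]\sim_{s-1}[b.x,b.y]$ in $\cE(X)$, so the universal replacement property for $\sim_{s-1}$ on $\cE(X)$ (Proposition \ref{relative-canonical-factor}) yields another cube. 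No tri-cube construction or induction is needed.

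So your plan is correct but overcomplicated at the crucial step, and the step you flag as ``the main obstacle'' dissolves once you restrict to edges and invoke universal replacement on $\cE(X)$ directly.
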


  Recall again that the cubespace $\cD_s(A)$ was defined by the Host--Kra construction applied to the compact abelian group $A$ with the filtration $A_0 = \dots = A_s \supseteq A_{s+1} = \{0\}$.  We note the following features:
  \begin{itemize}
    \item it is $s$-ergodic, i.e.~every element of $A^{\{0,1\}^k}$ is a cube for $0 \le k \le s$; and
    \item it is a nilspace of degree $s$.
  \end{itemize}

  \begin{proof}[Proof of Lemma \ref{lem:relative cubes}]
    We will consider three cases.
    \setcounter{case}{0}
    \begin{case}
      $0 \le k \le s$.  By relative $s$-ergodicity, we have that $c'$ is always a cube of $X$.  But by $s$-ergodicity of $\cD_s(A)$, $a$ is always a $k$-cube, so this is consistent.
    \end{case}
    \begin{case}
      $k = s+1$.  For fixed $c$, let $T \subseteq A^{\{0,1\}^{s+1}}$ denote the set of all elements $a \colon \{0,1\}^{s+1} \to A$ such that $c' = a . c$ is a cube; so our goal is to show $T = C^{s+1}(\cD_s(A))$.  Then we claim
    \begin{enumerate}
      \item for $a \in T$, and for $F \subseteq \{0,1\}^{s+1}$ a face of dimension $1$ and $b \in A$, we have $a + [b]_F \in T$; \footnote{Recall (Definition \ref{def:cube-group}) that $[b]_F$ denotes a configuration that is equal to $b$ on $F$ and zero elsewhere.}
      \item if $a \in T$ and $a(\omega) = 0$ for all $\omega \ne \vec{1}$, then $a = 0$.
      \end{enumerate}
      To see (i), note that we can think of any cube $c'$ of dimension $(s+1)$ as a cube of dimension $s$ in the edge cubespace $\cE(X)$.  Let $[x,y]$ denote the edge corresponding to $F$ in $c'$.  Then $[x,b.x] \sim [y,b.y] \sim b$ in $\cM$ and so $[x,y] \sim_{s-1} [b.x,b.y]$ in $\cE$.  By the universal replacement property of $\sim_{s-1}$ (Proposition \ref{relative-canonical-factor}) applied to $\cE(X)$ (which certainly has the glueing property), replacing the vertex $[x,y]$ of the $s$-cube $c'$ of $\cE(X)$ by $[b.x,b.y]$ yields another $s$-cube, as required.

      Part (ii) is immediate from relative $(s+1)$-uniqueness.

      Now, from (i) it follows that $T$ is a union of cosets of $C^{s+1}(\cD_s(A))$, since such $[b]_F$ generate $C^{s+1}(\cD_s(A))$.  In particular this includes $C^{s+1}(\cD_s(A))$ itself as trivially $0 \in T$.  Now if $a \in T$, by completing the $(s+1)$-corner $a|_{\{0,1\}\setminus \{\vec{1}\}}$ in the nilspace $\cD_s(A)$, we can find an element $a' \in \cD_s(A)$ such that $a'(\omega) = a(\omega)$ for all $\omega \ne \vec{1}$.  So $a - a' \in T$; but by (ii) this means $a = a'$ and hence $a \in C^{s+1}(\cD_s(A))$.
    \end{case}
    \begin{case}
      $k > s+1$.  Given $c$, $c'$, $a$ as in the statement, we note that $c'$ is a cube if and only if every face of dimension $(s+1)$ is a cube (by Proposition \ref{prop:relative-boring-high-dimension}), if and only if every face of $a$ of dimension $(s+1)$ is a cube (by the previous case), if and only if $a$ is a cube of $\cD_s(A)$ (by Proposition \ref{prop:high-cubes-boring}).
    \end{case}
  \end{proof}
This concludes the proof of Theorem \ref{relative-structure-thm}.
\end{proof}

\appendix

\section{An extended exposition of Host--Kra cubes}
\label{app:hk}

Here, we provide a fairly detailed exposition of the theory of the Host--Kra construction of cubes (or if you prefer, parallelepipeds) in nilpotent groups and nilmanifolds.  Much of this is modelled closely on \cite{GT10}*{Appendix E}; however, we provide some further results and closely worked examples.

\subsection{Some examples of filtered groups}

We recall from Section \ref{sec:hk} the definition of a filtered group.  The following are some examples that will be referred to several times in what follows.

\begin{example}
  \label{ex-A}
  The simplest case is something like $G = \RR$ with $G_0 = G_1 = \RR$ and $G_i = \{0\}$ for $i \ge 2$.  This corresponds to the lower central series filtration on the abelian group $\RR$.
\end{example}

\begin{example}
  \label{ex-B}
  Again take $G = \RR$ and $G_0 = G_1 = G_2 = \RR$ and $G_i = \{0\}$ for $i \ge 3$.  This is now a (somewhat trivial) degree $2$ filtration on $\RR$.
\end{example}

\begin{example}
  \label{ex-C}
  Take $G = \RR^2$, and $G_0 = G_1 = \RR^2$, $G_2 = \{0\} \times \RR$ and $G_i = \{0\}$ for $i \ge 3$.  This is again a degree $2$ filtration on an abelian group.
\end{example}

\begin{example}
  \label{ex-D}
  For a non-commutative case, we consider the Heisenberg group $\cH$, defined as the group of upper diagonal $3 \times 3$ matrices with ones on the diagonal, i.e.
  \[
    \cH = \left\{ \heis{x}{y}{z} \colon x,y,z \in \RR \right\} \ .
  \]
  We give this its lower central series filtration.  Note that
  \[
    [\cH,\cH] = Z(\cH) = \left\{ \heis{0}{0}{z} \colon z \in \RR \right\}
  \]
  and so $\cH_0 = \cH_1 = \cH$, $\cH_2 = Z(\cH)$ as above and $\cH_i = \{\id\}$ for $i \ge 3$.  So again we have a filtration of degree $2$.
\end{example}

\subsection{More on $\HK^k(G_\bullet)$}

The definition of $\HK^k(G_\bullet)$ in Section \ref{sec:hk} is fairly natural, but at the same time computationally unhelpful for some purposes.  There, we specify $\HK^k(G_\bullet)$ by giving an explicit generating set as a subgroup of $G^{\{0,1\}^k}$; but because we could in principle take arbitrarily long products in these generators, this does not give an algorithm for determining whether a given configuration $\{0,1\}^k \to G$ lies in the Host--Kra cube group or not.

It turns out that both these problems can be solved fairly straightforwardly: we can obtain a completely explicit description of $\HK^k(G_\bullet)$ in terms of bounded length products, and at the same time obtain such an algorithm.

We recall some notation.
If $S\subseteq [k]$, we write $F_S$ for the face of $\{0,1\}^k$ that consists of the vertices $\omega$ with
$S\subseteq\omega$.
If $x\in G$ and $F\subseteq \{0,1\}^k$, we write $[x]_F$ for the element of
$G^{\{0,1\}^k}$ defined by $\omega\mapsto x$ if $\omega\in F$
and $\omega\mapsto \id$ otherwise.

\begin{proposition}
  \label{prop:hk-decomposition}
  Take any integer $k \ge 0$, and fix an ordering $S_1=\emptyset, \dots, S_{2^k}=\{0,1\}^k$ for the subsets of $[k]$ that respects inclusion, i.e.~if $S_i \subseteq S_j$ then $i \le j$.  Then any configuration $(g_\omega)_{\omega \in \{0,1\}^k} \in G^{\{0,1\}^k}$ has a unique representation as an ordered product
  \[
    \prod_{i=1}^{2^k} [x_i]_{F_{S_i}}=[x_1]_{F_{S_1}}\cdots[x_{2^k}]_{F_{S_{2^k}}}
  \]
  for parameters $x_i \in G$, $1 \le i \le 2^k$.  The value of $x_i$ is a function only of $\{g_\omega \colon \omega \subseteq S_i\}$, and moreover can be expressed as a fixed word of bounded length in these elements.  Similarly $g_\omega$ depends only on $\{x_i \colon S_i \subseteq \omega\}$.

  Finally, we have that $(g_\omega) \in \HK^k(G_\bullet)$ if and only if $x_i \in G_{|S_i|}$ for all $i$.  Hence, there is a homeomorphism
  \begin{align*}
    \HK^k(G_\bullet) &\cong \prod_{i=1}^{2^k} G_{|S_i|} \\
                     &\cong \prod_{r=1}^k G_r^{k \choose r } \ .
  \end{align*}
  However, this is \emph{not} an isomorphism of groups.
\end{proposition}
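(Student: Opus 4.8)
\emph{Plan.} The proposition bundles three assertions: the unique ordered representation of an arbitrary configuration, the membership criterion for $\HK^k(G_\bullet)$, and the resulting homeomorphism. I would prove them in that order, treating the first as elementary bookkeeping, the second via a Hall-style collection process, and the third as a routine consequence.

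\emph{Unique representation and the explicit formula.} Fix the inclusion-respecting ordering $S_1 = \emptyset, \dots, S_{2^k} = [k]$. Evaluating $\prod_{i=1}^{2^k}[x_i]_{F_{S_i}}$ at a vertex $\omega$ (identified with a subset of $[k]$) gives $\prod_{i : S_i \subseteq \omega} x_i$, the product taken in increasing order of $i$, since $[x_i]_{F_{S_i}}(\omega) = x_i$ exactly when $\omega \supseteq S_i$ and $\id$ otherwise. As the $S_i$ exhaust all subsets of $[k]$, the system $g_\omega = \prod_{i : S_i \subseteq \omega} x_i$ is equivalent to the single requirement $(g_\omega) = \prod_i [x_i]_{F_{S_i}}$. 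Because the ordering respects inclusion, $x_i$ occurs \emph{last} in the word for $g_{S_i}$, so one solves recursively
\[
  x_i = \Bigl(\prod_{j < i,\ S_j \subseteq S_i} x_j\Bigr)^{-1} g_{S_i}.
\]
This simultaneously gives existence and uniqueness of the representation for arbitrary $(g_\omega)$, and, on unfolding the recursion, exhibits $x_i$ as a fixed word in $\{g_\omega : \omega \subseteq S_i\}$ of length bounded in terms of $k$ alone. Conversely $g_\omega = \prod_{i : S_i \subseteq \omega} x_i$ shows $g_\omega$ depends only on $\{x_i : S_i \subseteq \omega\}$.

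\emph{The membership criterion.} The ``if'' direction is immediate: if every $x_i \in G_{|S_i|} = G_{\codim F_{S_i}}$, each $[x_i]_{F_{S_i}}$ is a generator of $\HK^k(G_\bullet)$, hence so is the product. For the converse I would run a collection process on an arbitrary word in the generators, using two rewriting rules. First, terms on a common face merge: $[x]_F[y]_F = [xy]_F$, with $xy \in G_{\codim F}$ if $x,y$ are. Second, a vertex-by-vertex computation (using that $[a]_{F_S},[b]_{F_T}$ are supported on $F_S, F_T$ and that $F_S \cap F_T = F_{S\cup T}$) gives
\[
  [b]_{F_T}\,[a]_{F_S} = [a]_{F_S}\,[b]_{F_T}\,[[b,a]]_{F_{S \cup T}},
\]
where $[b,a] \in [G_{|T|}, G_{|S|}] \subseteq G_{|S|+|T|} \subseteq G_{|S\cup T|}$, the last inclusion because $|S\cup T|\le|S|+|T|$. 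Since the ordering respects inclusion, $S\cup T$ never precedes $S$ or $T$, and $S\cup T = S$ forces $T\subseteq S$; this is exactly what makes collection terminate. Concretely, maintaining the invariant ``after stage $i$ the word reads $[x_1]_{F_{S_1}}\cdots[x_i]_{F_{S_i}}\cdot w_i$ with every face in $w_i$ of index $>i$'', one collects all $F_{S_{i+1}}$-terms of $w_i$ to its front via the second rule (each swap spawning a commutator on a strictly later face, so not an $F_{S_{i+1}}$-term), merges via the first rule, and advances; after $2^k$ stages the tail vanishes. Thus every element of $\HK^k(G_\bullet)$ is $\prod_i[x_i]_{F_{S_i}}$ with $x_i\in G_{|S_i|}$, and by uniqueness these are the canonical coordinates; together with ``if'', $(g_\omega)\in\HK^k(G_\bullet)$ iff all $x_i\in G_{|S_i|}$. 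I expect the termination/correctness of this collection — in particular verifying the commutators always land in the right filtration subgroup, which is precisely where $[G_i,G_j]\subseteq G_{i+j}$ is used — to be the main point requiring care; everything else is formal.

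\emph{The homeomorphism.} By the above, $(g_\omega)\mapsto(x_i)$ is a bijection $\HK^k(G_\bullet)\to\prod_{i=1}^{2^k}G_{|S_i|}$; both it and its inverse are obtained by composing the continuous group operations of $G$ on finitely many coordinates ($x_i$ a fixed word in the $g_\omega$, and $g_\omega$ a fixed word in the $x_i$), hence continuous, so this is a homeomorphism. Grouping subsets of $[k]$ by cardinality rewrites $\prod_{i=1}^{2^k}G_{|S_i|}$ as $\prod_r G_r^{\binom{k}{r}}$. That this is not a group isomorphism is visible from the second rewriting rule: the commutator terms $[[b,a]]_{F_{S\cup T}}$ mean multiplication in coordinates is not coordinatewise.
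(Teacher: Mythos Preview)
Your proof is correct and follows essentially the same approach as the paper's: the recursive solution for the $x_i$ is identical, and the membership criterion is handled in both cases via the commutator identity $[x]_{F_S}[y]_{F_T}[x]_{F_S}^{-1}[y]_{F_T}^{-1}=[[x,y]]_{F_{S\cup T}}$ together with the filtration property $[G_{|S|},G_{|T|}]\subseteq G_{|S|+|T|}\subseteq G_{|S\cup T|}$. The only cosmetic difference is in bookkeeping --- the paper shows the set of ``nice'' ordered products is closed under right-multiplication by a single generator (moving that generator left and cleaning up error terms by induction on $|S_j|$), whereas you run a position-by-position Hall collection on an arbitrary word --- but these are two standard ways to organize the same rewriting argument.
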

\begin{proof}[Proof sketch]
  We will give the key ideas of the proof; for any missing details, see \cite{ben-book}.

  The first part is fairly straightforward.  Given $(g_\omega)$, we can solve for the variables $x_i$ recursively in order from $1$ to $2^k$.  Indeed, suppose $1 \le i \le 2^k$ and $x_j$ have already been chosen for $j < i$.  It is clear that we must have
  \[
    g_{S_i} = \left(\prod_{j \in \{1, \dots, i-1\},\ S_j \subsetneq S_i} x_j\right) \cdot x_i
  \]
  and we can achieve this uniquely by setting
  \[
    x_i := \left(\prod_{j \in \{1, \dots, i-1\},\ S_j \subsetneq S_i} x_j\right)^{-1} g_{S_i}  \ .
  \]
  It is clear inductively from this construction that $x_i$ only depends on $\{g_\omega \colon \omega \subseteq S_i\}$.

  Now we consider the statement about $\HK^k(G_\bullet)$.  Certainly any element of the stated form with $x_i \in G_{|S_i|}$ is in $\HK^k(G_\bullet)$; conversely, since all the generators of $\HK^k(G_\bullet)$ have this form, we will be done if we can show that the set of such elements is closed under multiplication.

  In particular, it would suffice to show that for any $1 \le j \le 2^k$ and $y \in G_{|S_j|}$, we have
  \[
    \left(\prod_{i=1}^{2^k} [x_i]_{F_{S_i}} \right) [y]_{F_{S_j}} = \prod_{i=1}^{2^k} [x'_i]_{F_{S_i}}
  \]
  for some new choice of variables $x'_i$.

  We note the obvious product identity
  \[
    [x]_F [y]_F = [x y]_F
  \]
  and the slightly less obvious commutator identity
  \[
    [x]_F [y]_{F'} [x]_{F}^{-1} [y]_{F'}^{-1} = [ x y x^{-1} y^{-1} ]_{F \cap F'} \ .
  \]
  If $F = F_S$ and $F' = F_T$ then $F \cap F' = F_{S \cup T}$.  By the filtration property, if $x \in G_{|S|}$ and $y \in G_{|T|}$ then $x y x^{-1} y^{-1} \in G_{|S| + |T|} \subseteq G_{|S \cup T|}$ and so the left hand side is again a generator of the Host--Kra cube group.  Similarly if $x, y \in G_{|S|}$ then so is $x y$, and so $[x y]_{F_S}$ is again a generator.

  This means that we can move the new term $[y]_{F_{S_j}}$ to the left in the product, generating new error terms as we go, but ones of strictly ``lower order'' as measured by the larger size of the set $S$ concerned.  Arguing by induction on $|S_j|$, we can iteratively clean up these errors, and the process clearly terminates when $|S_j| = k$.
\end{proof}

\begin{remark}
  One can be completely explicit about the various expressions that arise in this proof, e.g.~the expressions for $x_i$ in terms of $(g_\omega)$.  These turn out to be products with alternating inverse signs, traversing ``Gray codes'' around the vertices of $\{0,1\}^k$.  This observation was first made in \cite{CS12}*{Section 1.2}; the reader could also consult \cite{ben-book} for an exposition.
\end{remark}

This Proposition allows us to compute some spaces of Host--Kra cubes in the case of our example filtered groups.

\begin{example}
  \label{ex-A-group-cubes}
  Consider Example \ref{ex-A}.  Then $\HK^0(G) = G$, $\HK^1(G) = G^2$ and
  \[
    \HK^2(G) = \{ (x,y,z,w) \in G^4 \colon x - y - z + w = 0 \}
  \]
  i.e.~the Host--Kra $2$-cubes correspond exactly to parallelograms.  Similarly, $\HK^k(G)$ consists precisely of parallelepipeds of dimension $k$ in the usual sense.
\end{example}

\begin{example}
  \label{ex-B-group-cubes}
  In Example \ref{ex-B}, we have $\HK^0(G) = G$, $\HK^1(G) = G^2$ and also $\HK^2(G) = G^4$.  The interesting case is $\HK^3(G)$ which is given by
  \[
    \HK^3(G) = \left\{ (x_{000}, \dots, x_{111}) \in \RR^{\{0,1\}^3} \colon x_{000} - x_{001} - x_{010} + x_{011} - \dots - x_{111} = 0 \right\}
  \]
  i.e.~all tuples whose alternating sum is zero.  Indeed, if we write $(x_\omega)$ as
  \[
    \sum_{i=1}^{8} [y_i]_{F_{S_i}}
  \]
  as in the proposition, the conditions on $y_i$ just reduce to $y_8 = 0$.  It is not hard to extract the explicit formula (in the abelian case)
  \[
    y_8 = x_{000} - x_{001} - x_{010} + x_{011} - \dots - x_{111}
  \]
  from the proof of the proposition.
\end{example}

\begin{example}\label{ex:Ds}
We consider the degree $s$ filtration on an Abelian group
\[
A=A_0=A_1=\ldots=A_s\supseteq A_{s+1}=\{0\},
\]
because of the important role it plays in the weak structure theory.

Specifically, we verify the claim made in Proposition \ref{pr:Ds} that
a configuration $c:\{0,1\}^{s+1}\to A$ is a cube in $\cD_s(A)$
(i.e. $c\in \HK^{s+1}(A_\bullet)$) if and only if \footnote{Here $|\omega|$ denotes the number of $1$ entries in $\omega$.}
\[
L(c):=\sum_{\omega\in\{0,1\}^{s+1}}(-1)^{|\omega|}c(\omega)=0.
\]

We first observe that $L(c)$ is an additive function on configurations, since
the group is commutative.
If we write $c=\sum[x_i]_{F_{S_i}}$ as in Proposition \ref{prop:hk-decomposition},
we get
\[
L(c)=\sum L([x_i]_{F_{S_i}}).
\]

We note that $L([x]_F)=0$ for any face $F$ of dimension at least $1$ and
$L([x]_F)=\pm x$ for a face $F$ of dimension $0$, as follows directly from the
definition of $L$.
Thus $L(c)=\pm x_{2^n}$.
Now the claim follows from  Proposition \ref{prop:hk-decomposition}, which
asserts that $c\in \HK^s(A_\bullet)$ if and only if $x_{2^n}\in A_{s+1}=\{0\}$.
\end{example}

\begin{example}
  For Example \ref{ex-C}, the Host--Kra $k$-cubes are the direct product of the $k$-cubes from Examples \ref{ex-A-group-cubes} and \ref{ex-B-group-cubes}.  In particular, $\HK^3(G)$ consists of tuples $(x_\omega, y_\omega)$ such that $(x_\omega)$ form a parallelepiped and the alternating sum of $(y_\omega)$ is zero.
\end{example}

\begin{example}
  \label{ex-D-cubes}
  For the Heisenberg group (Example \ref{ex-D}), despite the non-abelian group law the situation is similar to the previous example.  Certainly $\HK^0 = \cH$, $\HK^1 = \cH^2$ as usual.  A configuration
  \[
    \omega \mapsto \heis{x_\omega}{y_\omega}{z_\omega}
  \]
  is in $\HK^2(\cH_\bullet)$ if and only if it has the form
  \[
    [g_1]_{F_{\emptyset}} [g_2]_{F_{\{1\}}} [g_3]_{F_{\{2\}}} [g_4]_{F_{\{1,2\}}}
  \]
  for some $g_1,g_2,g_3 \in \cH$ and $g_4 \in \cH_2$.  In the ``abelian'' variables $x_\omega$, $y_\omega$ this condition becomes
  $x_{00} - x_{01} - x_{10} + x_{11} = 0$ and $y_{00} - y_{01} - y_{10} + y_{11} = 0$; and for $z_\omega$ we have complete freedom.

  Now we consider
  \[
    \omega \mapsto \heis{x_\omega}{y_\omega}{z_\omega}
  \]
  which it turns out is in $\HK^3(\cH_\bullet)$ if and only if:
  \begin{itemize}
    \item $(x_\omega)$ is a parallelepiped;
    \item $(y_\omega)$ is a parallelepiped;
    \item the alternating sum $\sum_{\omega \in \{0,1\}^3} (-1)^{|\omega|} z_\omega$ is zero.
  \end{itemize}
  The first two points are fairly easy to justify: since this calculation really takes place on the abelianization of $\cH$ which is just $\RR^2$, this strongly resembles Example \ref{ex-A-group-cubes}.

  For the third, we again use a decomposition
  \[
    \prod_{i=1}^8 [g_i]_{F_{S_i}}
  \]
  and consider the constraint $g_8 = \id$ and in particular its $z$-coordinate.  As in Example \ref{ex-B-group-cubes}, the contribution from $z_\omega$ is just this alternating sum.  It is not immediately obvious that this expression should not also involve terms of the form $x_\omega y_{\nu}$ arising from the cross-terms in the group operation, but a calculation shows that these do indeed drop out.
\end{example}

\subsection{Some topological properties of the Host--Kra groups}

In the second paper of this project \cite{GMV2}, we will use some topological properties
of the Host--Kra groups.
These are easy to deduce from Proposition \ref{prop:hk-decomposition},
so we record them now for convenient reference.

\begin{lemma}\label{lm:hk-homomorphisms}
Let $G_\bullet$, $H_\bullet$ be two filtered topological groups.
Let $\tau:G\to H$ be a homomorphism such that
$\tau(G_i)\subseteq\tau(H_i)$.
Then $\tau$ induces a homomorphism $\tau: \HK^n(G_\bullet)\to\HK^n(H_\bullet)$
for each $n$ by pointwise application on the vertices.

If $\tau:G_i\to H_i$ is open for each $i$, then so is the induced homomorphism
 $\tau: \HK^n(G_\bullet)\to\HK^n(H_\bullet)$
for each $n$.
\end{lemma}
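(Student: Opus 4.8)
The plan is to reduce the statement to the elementary fact that a finite product of open maps is open, using the coordinatization of Host--Kra cube groups furnished by Proposition \ref{prop:hk-decomposition}.

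First I would fix, once and for all, an ordering $S_1 = \emptyset, \dots, S_{2^n} = [n]$ of the subsets of $[n]$ respecting inclusion, and recall that Proposition \ref{prop:hk-decomposition} gives, for \emph{any} filtered group, a homeomorphism $\HK^n(G_\bullet) \to \prod_{i=1}^{2^n} G_{|S_i|}$ sending a configuration $(g_\omega)$ to its coordinate vector $(x_i)$. The point I would extract carefully from the proof of that proposition is that each $x_i$ is obtained by a \emph{fixed group word} $w_i$ --- built only from multiplication and inversion, depending on $n$ and the chosen ordering but not on the group --- applied to the entries $(g_\omega)_{\omega \subseteq S_i}$; this is visible by unwinding the recursion $x_i = \bigl(\prod_{j : S_j \subsetneq S_i} x_j\bigr)^{-1} g_{S_i}$ down to the $g_\omega$.

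Next I would observe that these coordinatizations intertwine the induced map. Indeed, for $(g_\omega) \in \HK^n(G_\bullet)$ the image $(\tau(g_\omega))$ has $i$-th coordinate $w_i\bigl((\tau(g_\omega))_{\omega \subseteq S_i}\bigr) = \tau\bigl(w_i((g_\omega)_{\omega \subseteq S_i})\bigr) = \tau(x_i)$, since $\tau$ is a homomorphism and $w_i$ is a group word. Hence, under the homeomorphisms of Proposition \ref{prop:hk-decomposition}, the induced map $\tau \colon \HK^n(G_\bullet) \to \HK^n(H_\bullet)$ is identified with the product map $\prod_{i=1}^{2^n} \tau|_{G_{|S_i|}} \colon \prod_i G_{|S_i|} \to \prod_i H_{|S_i|}$; here I use the hypothesis $\tau(G_i) \subseteq H_i$ to make each restriction well-defined, and the hypothesis that each such restriction $\tau \colon G_i \to H_i$ is open. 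Finally I would invoke that a finite product of open maps is open: an open subset of $\prod_i G_{|S_i|}$ is a union of basic boxes $\prod_i U_i$, whose image under $\prod_i \tau|_{G_{|S_i|}}$ is $\prod_i \tau(U_i)$, open since each $\tau(U_i)$ is open, and images commute with unions. Transporting back through the homeomorphisms shows $\tau \colon \HK^n(G_\bullet) \to \HK^n(H_\bullet)$ is open.

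The only mild obstacle is the bookkeeping in the second step: one must check that the word $w_i$ is genuinely independent of the group (so that it is literally the same word on the $G$-side and the $H$-side) and that it uses only multiplication and inversion (so that the homomorphism $\tau$ commutes with it). Once this is spelled out from the proof of Proposition \ref{prop:hk-decomposition}, everything else is routine point-set topology.
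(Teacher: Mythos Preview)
Your proposal is correct and follows essentially the same approach as the paper: both arguments pass to face coordinates via Proposition \ref{prop:hk-decomposition}, observe that in those coordinates the induced map is just the product $\prod_i \tau|_{G_{|S_i|}}$ (because the coordinate-change words are group-independent and $\tau$ is a homomorphism), and then invoke that a finite product of open maps is open. Your write-up is simply more explicit than the paper's terse version about why the face-coordinate description intertwines $\tau$.
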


\begin{lemma}\label{lm:hk-subgroups}
Let $G_\bullet$, $H_\bullet$ be two filtered topological groups.
Suppose $G_i\subseteq H_i$ for each $i$.
If $G_i$ are open in $H_i$ (resp., connected) for each $i$, then
$\HK^n(G_\bullet)$ is also open in $\HK^n(H_\bullet)$ (resp., connected) for each $n$.
\end{lemma}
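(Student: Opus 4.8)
The plan is to reduce both assertions to the explicit product description of $\HK^n$ in Proposition \ref{prop:hk-decomposition}, exploiting that this description is natural in the filtered group. Throughout, fix $n \ge 0$ and an ordering $S_1 = \emptyset, \dots, S_{2^n} = [n]$ of the subsets of $[n]$ respecting inclusion, as in that proposition, and understand the hypothesis $G_i \subseteq H_i$ to mean in particular that $G$ is a topological subgroup of $H$, so that $\HK^n(G_\bullet)$ inherits the subspace topology from $\HK^n(H_\bullet)$ (in the ``open'' case this can anyway be arranged automatically, since $G = G_0$ is then open in $H$).

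The key observation I would record is that, for a configuration $(g_\omega)_{\omega \in \{0,1\}^n}$ with values in $G$, the coordinates $x_1, \dots, x_{2^n}$ in its decomposition $\prod_{i} [x_i]_{F_{S_i}}$ are built from the $g_\omega$ by the recursive formulas appearing in the proof of Proposition \ref{prop:hk-decomposition}, which use only multiplication and inversion and hence do not depend on the ambient group. Therefore, viewing $(g_\omega)$ as taking values in $H$ via the inclusion yields the very same coordinates $x_i$, and the inclusion $\iota\colon \HK^n(G_\bullet) \hookrightarrow \HK^n(H_\bullet)$ — which is a morphism of Host--Kra cube groups by the first part of Lemma \ref{lm:hk-homomorphisms} — fits into a commuting square
\[
\begin{CD}
\HK^n(G_\bullet) @>\cong>> \prod_{i=1}^{2^n} G_{|S_i|} \\
@V\iota VV @VVV \\
\HK^n(H_\bullet) @>\cong>> \prod_{i=1}^{2^n} H_{|S_i|}
\end{CD}
\]
whose horizontal arrows are the homeomorphisms of Proposition \ref{prop:hk-decomposition} and whose right-hand arrow is the product of the inclusions $G_{|S_i|} \hookrightarrow H_{|S_i|}$.

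Given the square, the lemma is then immediate from two elementary facts about \emph{finite} products. If each $G_i$ is open in $H_i$, then $\prod_{i=1}^{2^n} G_{|S_i|}$ is open in $\prod_{i=1}^{2^n} H_{|S_i|}$, so transporting along the homeomorphisms shows $\HK^n(G_\bullet)$ is open in $\HK^n(H_\bullet)$; alternatively, one may observe that the inclusions $G_i \hookrightarrow H_i$ are then open maps and invoke directly the second part of Lemma \ref{lm:hk-homomorphisms}. If instead each $G_i$ is connected, then $\prod_{i=1}^{2^n} G_{|S_i|}$ is connected, hence so is $\HK^n(G_\bullet)$. The only point needing any care is the naturality of the parametrization in Proposition \ref{prop:hk-decomposition} — i.e.\ that the words expressing the $x_i$ in terms of the $g_\omega$ are independent of the ambient group — together with the routine check that $\iota$ is a topological embedding and not merely a continuous injection; everything else is formal.
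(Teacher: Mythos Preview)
Your proof is correct and follows essentially the same approach as the paper: both use the face-coordinate homeomorphism of Proposition \ref{prop:hk-decomposition} and the fact that a finite product of open (resp.\ connected) subsets is open (resp.\ connected). You have simply made explicit the naturality of the face-coordinate parametrization under the inclusion $G_\bullet \hookrightarrow H_\bullet$, which the paper leaves implicit.
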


We recall from Proposition \ref{prop:hk-decomposition} that each element
$(g_\omega)$ can be written uniquely in the form
$\prod [x_i]_{F_{S_i}}$.
For convenient reference, we call $(g_\omega)$ the {\em vertex coordinates}
and $(x_i)$ the {\em face coordinates}.
We also recall that the two set of coordinates can be expressed from each other
by word maps of finite length.

\begin{proof}[Proof of Lemma \ref{lm:hk-homomorphisms}]
   That $\tau(\HK^n(G_\bullet)) \subseteq \HK^n(H_\bullet)$ is clear by considering the generators.

We observe that the face coordinates give rise to the same homomorphism by pointwise application, since the two sets of coordinates can be expressed from each other using word maps.
The last claim follows from the description with face coordinates, since a product of open maps is open.
\end{proof}

\begin{proof}[Proof of Lemma \ref{lm:hk-subgroups}]
We use the description with face coordinates and the fact that the direct product
of open (resp., connected) sets is open (resp., connected).
\end{proof}

\subsection{The Host--Kra group $\HK^k(G_\bullet)$ is a nilspace}

Another crucial corollary of Proposition \ref{prop:hk-decomposition} is a version of the \emph{corner constraint}.

\begin{corollary}
  \label{hk-group-uniqueness}
  Suppose $G_\bullet$ is a degree $s$ filtered group, set $k = s+1$ and suppose $g, g' \in \HK^k(G_\bullet)$ have the property that $g(\omega) = g'(\omega)$ for all $\omega \ne \vec{1}$.  \footnote{As ever, we write $\vec{1}$ for the element $(1, \dots, 1) \in \{0,1\}^k$.}

  Then $g(\vec{1}) = g'(\vec{1})$.
\end{corollary}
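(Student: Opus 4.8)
The plan is to pass to the unique ``face-coordinate'' description of elements of $\HK^k(G_\bullet)$ provided by Proposition \ref{prop:hk-decomposition}, and then observe that the only coordinate ``living at'' the vertex $\vec{1}$ is forced to be trivial by the degree hypothesis. Concretely, I would fix an ordering $S_1 = \emptyset, \dots, S_{2^k} = [k]$ of the subsets of $[k]$ respecting inclusion, and apply Proposition \ref{prop:hk-decomposition} to write
\[
  g = \prod_{i=1}^{2^k} [x_i]_{F_{S_i}}, \qquad g' = \prod_{i=1}^{2^k} [x'_i]_{F_{S_i}}
\]
with $x_i, x'_i \in G_{|S_i|}$ for all $i$. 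It suffices to show $x_i = x'_i$ for every $i$, since then $g = g'$ and in particular their values at $\vec{1}$ agree.

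First I would handle the indices $i$ with $S_i \neq [k]$. By Proposition \ref{prop:hk-decomposition}, $x_i$ is a fixed word of bounded length in the vertex coordinates $\{g_\omega : \omega \subseteq S_i\}$, and similarly for $x'_i$ in terms of $\{g'_\omega\}$. Since $S_i \subsetneq [k]$, every $\omega \subseteq S_i$ satisfies $\omega \neq \vec{1}$, and at all such vertices $g(\omega) = g'(\omega)$ by hypothesis. Hence $x_i = x'_i$ for all $i < 2^k$.

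It remains to treat the top index $i = 2^k$, where $S_{2^k} = [k]$ and $|S_{2^k}| = k = s+1$. Here the relevant group is $G_{|S_{2^k}|} = G_{s+1}$, which equals $\{\id\}$ precisely because $G_\bullet$ is a filtration of degree $s$. Thus $x_{2^k} = \id = x'_{2^k}$, completing the verification that all face coordinates of $g$ and $g'$ coincide, and therefore $g(\vec{1}) = g'(\vec{1})$.

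I do not expect any real obstacle: the single substantive point is that the face $F_{[k]}$ is just the vertex $\vec{1}$, of codimension $s+1$, so by the definition of $\HK^k(G_\bullet)$ it can only carry an element of $G_{s+1} = \{\id\}$; all remaining face coordinates are determined by vertices other than $\vec{1}$. Everything else is routine bookkeeping with the bijection between vertex and face coordinates from Proposition \ref{prop:hk-decomposition}.
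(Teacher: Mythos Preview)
Your proposal is correct and follows essentially the same approach as the paper: both apply Proposition~\ref{prop:hk-decomposition} to pass to face coordinates, observe that every coordinate $x_i$ with $S_i \subsetneq [k]$ is determined by the vertices $\omega \neq \vec{1}$ (hence agrees for $g$ and $g'$), and that the top coordinate lies in $G_{s+1} = \{\id\}$. The paper's version is just a terser rendering of the same argument.
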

\begin{proof}
  Apply Proposition \ref{prop:hk-decomposition} to $g$ and $g'$ to obtain parameters $x_i$, $x'_i$.  Every $x_i$ other than the last one $x_{2^k}$ does not depend on $g(\vec{1})$, and similarly for $g'$.  But $G_k = \{\id\}$ by assumption, so $x_{2^k} = x'_{2^k} = \id$ in the decomposition.  Hence $g = g'$.
\end{proof}

We consider the \emph{corner completion} property discussed at length above.

\begin{corollary}
  \label{hk-group-completion}
  Suppose we have a configuration $(g_\omega)_{\omega \in \{0,1\}^k \setminus \{\vec{1}\}}$ such that every lower face is in $\HK^{k-1}(G_\bullet)$.  Then there exists $g \in G$ such that setting $g_{\vec{1}} = g$ we have $(g_\omega)_{\omega \in \{0,1\}^k} \in \HK^k(G_\bullet)$.
In other words, the cubespace $(G, \HK^k(G_\bullet))$ is fibrant.
\end{corollary}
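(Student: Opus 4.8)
\emph{Proof proposal.} The plan is to build the completion explicitly using the ``face coordinate'' decomposition of Proposition \ref{prop:hk-decomposition}. Fix an inclusion-respecting ordering $S_1 = \emptyset, \dots, S_{2^k} = [k]$ of the subsets of $[k]$, so that $F_{S_{2^k}} = \{\vec 1\}$. Since, by Proposition \ref{prop:hk-decomposition}, the face coordinate $x_i$ of a configuration depends only on the vertex coordinates $\{g_\omega \colon \omega \subseteq S_i\}$, and since for $i < 2^k$ we have $S_i \ne [k]$ and hence $\vec 1 \not\subseteq S_i$, we may solve recursively for parameters $x_1, \dots, x_{2^k-1} \in G$ using only the given data $(g_\omega)_{\omega \ne \vec 1}$, exactly as in the first half of the proof of that proposition.

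The first key step is to check that $x_i \in G_{|S_i|}$ for each $i < 2^k$. For such $i$, pick any coordinate $j \in [k] \setminus S_i$ and restrict everything to the lower face $\{\omega \colon \omega_j = 0\}$, which we identify with $\{0,1\}^{[k]\setminus\{j\}}$. Restriction $G^{\{0,1\}^k} \to G^{\{0,1\}^{[k]\setminus\{j\}}}$ is a group homomorphism (projection onto a subset of coordinates), it sends $[x]_{F_S}$ to the identity when $j \in S$ and to $[x]_{F_S}$ (in dimension $k-1$) when $j \notin S$, and the subsets of $[k]$ not containing $j$, listed in the induced order, form an inclusion-respecting ordering of the subsets of $[k]\setminus\{j\}$. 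Hence the restriction of $\prod_{i=1}^{2^k}[x_i]_{F_{S_i}}$ to this lower face is $\prod_{i \colon j \notin S_i} [x_i]_{F_{S_i}}$, and by uniqueness of the decomposition in dimension $k-1$ (Proposition \ref{prop:hk-decomposition} again) this is \emph{the} face-coordinate decomposition of the restricted configuration. But that restricted configuration is in $\HK^{k-1}(G_\bullet)$ by hypothesis, so the membership criterion of Proposition \ref{prop:hk-decomposition} forces $x_i \in G_{|S_i|}$. As $i$ and then $j$ were arbitrary, this holds for all $i < 2^k$.

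The second step uses the degree hypothesis: since $G_\bullet$ has degree $s$ and $k = s+1$, we have $G_k = \{\id\}$, so we are forced to (and may) take $x_{2^k} = \id \in G_{|S_{2^k}|} = G_k$. Now set $g := $ the $\vec 1$-vertex coordinate of the configuration $\prod_{i=1}^{2^k}[x_i]_{F_{S_i}}$. By the membership criterion of Proposition \ref{prop:hk-decomposition}, this configuration lies in $\HK^k(G_\bullet)$; and for every $\omega \ne \vec 1$ its $\omega$-vertex coordinate depends only on $\{x_i \colon S_i \subseteq \omega\}$, which does not include $x_{2^k}$, so it equals the value prescribed by $(g_\omega)$. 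Thus $(g_\omega)_{\omega \in \{0,1\}^k}$ with $g_{\vec 1} = g$ is the desired element of $\HK^k(G_\bullet)$, which also proves that $(G, \HK^k(G_\bullet))$ is fibrant.

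I expect the only genuinely delicate point to be the interaction in the first step between restriction to a lower face and the face-coordinate decomposition — specifically, verifying that restriction kills exactly the terms $[x_i]_{F_{S_i}}$ with $j \in S_i$, preserves the others verbatim (now as cubes of dimension $k-1$), and respects the product, so that the uniqueness clause of Proposition \ref{prop:hk-decomposition} can legitimately be applied in dimension $k-1$. Everything else is bookkeeping.
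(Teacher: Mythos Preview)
Your argument is essentially the paper's own: use Proposition \ref{prop:hk-decomposition} to recover the face coordinates $x_1,\dots,x_{2^k-1}$ from the corner data, verify $x_i\in G_{|S_i|}$ by restricting to a lower face containing $S_i$ and invoking uniqueness of the decomposition in dimension $k-1$, then set $x_{2^k}=\id$. The restriction bookkeeping you worry about is exactly right.

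There is one genuine slip. In your ``second step'' you invoke a degree hypothesis (``since $G_\bullet$ has degree $s$ and $k=s+1$'') that is \emph{not} in the statement: the corollary is for arbitrary $k$, which is why it concludes that $(G,\HK^k(G_\bullet))$ is fibrant. You do not need $G_k=\{\id\}$; you only need $\id\in G_k$, which is automatic, so you may always take $x_{2^k}=\id\in G_{|S_{2^k}|}$. (The ``forced to'' clause is also wrong in general --- uniqueness of the completion holds only when $k=s+1$, cf.\ Corollary \ref{hk-group-uniqueness} --- but existence is all that is claimed here.) With that correction your proof is complete and matches the paper's. A minor cosmetic point: when you restrict ``$\prod_{i=1}^{2^k}[x_i]_{F_{S_i}}$'' to the lower face, $x_{2^k}$ has not yet been defined; it would be cleaner to restrict the partial product $\prod_{i=1}^{2^k-1}[x_i]_{F_{S_i}}$, which already agrees with the given corner on every lower face.
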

Note the previous corollary says precisely that the choice of $g$ is unique when $k=s+1$.
\begin{proof}
  By Proposition \ref{prop:hk-decomposition} we may define all the variables $x_i$ for $1 \le i < 2^k$ without knowing $g_{\vec{1}}$, such that
  \[
    (g_\omega) = \prod_{i=1}^{2^k-1}[x_i]_{ F_{S_i}}
  \]
  whenever this is defined.

We show that  $x_j \in G_{|S_j|}$.
To this end, we choose a lower face $E$ that contains the vertex $S_j$ and consider the product
\[
\prod_{i\in\{1,\ldots, 2^k-1\}:S_i\in E}^{2^k-1}[x_i]_{ F_{S_i}}|_E,
\]
where $\cdot|_E$ denotes restriction to $E$.
We observe that this is a product decomposition of a cube in $\HK^{\dim E}(G_\bullet)$ of the form that appears
in Proposition \ref{prop:hk-decomposition}.
The second part of that proposition now implies that  $x_j \in G_{|S_j|}$.

If we set $x_{2^k} = \id$, then
  \[
    \prod_{i=1}^{2^k-1} [x_i]_{F_{S_i}}
  \]
  is an element of $\HK^k(G_\bullet)$ extending $(g_\omega)$ as required.
\end{proof}

\subsection{Properties of cubes on nilmanifolds}

We recall from Definition \ref{defn:nilmanifold} that the notion of Host--Kra cubes on $G_\bullet$ induces one on nilmanifolds $G/\Gamma$.

Many of the nice properties of the Host--Kra cubes on $G$ are inherited on the nilmanifold $G/\Gamma$, even though the latter typically has no compatible group operation.  We will verify some instances of this that are of particular importance in the abstract setting.
We begin by showing that the Host--Kra cubes arising from a filtration of degree $s$ satisfy $(s+1)$-uniqueness.

\begin{proposition}
  \label{prop:nilmanifold-uniqueness}
  Take $G_\bullet$, $\Gamma$ as in the definition, where the filtration $G_\bullet$ has degree $s$.  Set $k = s+1$, and suppose $c, c' \in \HK^k(G_\bullet)/\Gamma$ have the property that $c(\omega) = c'(\omega)$ for all $\omega \ne \vec{1}$.  Then $c = c'$.
\end{proposition}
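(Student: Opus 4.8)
The plan is to reduce this statement on the nilmanifold to the group‑level version, Corollary \ref{hk-group-uniqueness}, which already gives uniqueness on $G$ itself. The idea is to lift $c$ and $c'$ to the group, cancel them against each other, and push the remaining ambiguity into the subgroup $\Gamma$, where we can re-run the same completion–uniqueness argument.

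First I would lift: since $c, c' \in \HK^k(G_\bullet)/\Gamma$, choose $\tilde c, \tilde c' \in \HK^k(G_\bullet)$ with $\pi(\tilde c) = c$, $\pi(\tilde c') = c'$ for the pointwise quotient map $\pi$. Because $c(\omega) = c'(\omega)$ for all $\omega \ne \vec{1}$, the pointwise product $d := \tilde c^{-1}\tilde c' \in \HK^k(G_\bullet)$ satisfies $d(\omega) \in \Gamma$ for every $\omega \ne \vec{1}$. So it suffices to prove the following claim: \emph{any $d \in \HK^k(G_\bullet)$ with $d(\omega) \in \Gamma$ for all $\omega \ne \vec{1}$ in fact has $d(\vec{1}) \in \Gamma$}; granting this, $\tilde c'(\vec{1}) = \tilde c(\vec{1})\,d(\vec{1}) \in \tilde c(\vec{1})\Gamma$, hence $c'(\vec{1}) = c(\vec{1})$, and therefore $c = c'$.

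To prove the claim, equip $\Gamma$ with the filtration $\Gamma_\bullet$ defined by $\Gamma_i := \Gamma \cap G_i$; this is indeed a filtration (the inclusion $[\Gamma_i,\Gamma_j] \subseteq \Gamma_{i+j}$ is inherited from $G_\bullet$), and it has degree at most $s$ since $\Gamma_{s+1} = \Gamma \cap \{\id\} = \{\id\}$. The key auxiliary fact, extracted directly from Proposition \ref{prop:hk-decomposition}, is that $\HK^m(G_\bullet) \cap \Gamma^{\{0,1\}^m} = \HK^m(\Gamma_\bullet)$ for every $m$: writing an element of the left‑hand side as $\prod_i [x_i]_{F_{S_i}}$ with $x_i \in G_{|S_i|}$, each $x_i$ is a fixed word in the vertices $\{g_\omega : \omega \subseteq S_i\} \subseteq \Gamma$, hence $x_i \in \Gamma \cap G_{|S_i|}$, so the element lies in $\HK^m(\Gamma_\bullet)$; the reverse inclusion is immediate since such a product has all vertices in $\Gamma$. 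Now every lower face of $d$ is a cube of $(G,\HK^\bullet(G_\bullet))$ all of whose vertices are indexed by $\omega \ne \vec{1}$ and hence lie in $\Gamma$, so by this fact each lower face of $d$ lies in $\HK^{k-1}(\Gamma_\bullet)$; i.e.\ $(d(\omega))_{\omega \ne \vec{1}}$ is a $k$-corner in the cubespace $(\Gamma, \HK^\bullet(\Gamma_\bullet))$. Applying Corollary \ref{hk-group-completion} to $\Gamma_\bullet$ completes this corner to some $\hat d \in \HK^k(\Gamma_\bullet) \subseteq \HK^k(G_\bullet)$ with $\hat d(\omega) = d(\omega)$ for $\omega \ne \vec{1}$ and $\hat d(\vec{1}) \in \Gamma$. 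Finally $d$ and $\hat d$ are two elements of $\HK^k(G_\bullet)$ agreeing away from $\vec{1}$, so Corollary \ref{hk-group-uniqueness} (applicable as $G_k = \{\id\}$) forces $d(\vec{1}) = \hat d(\vec{1}) \in \Gamma$, which is the claim.

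I expect the only genuine friction to be the coordinate identification $\HK^m(G_\bullet) \cap \Gamma^{\{0,1\}^m} = \HK^m(\Gamma_\bullet)$, which rests on the bounded‑word description of the face coordinates in Proposition \ref{prop:hk-decomposition}; everything else is a formal use of the two $\HK$-group corollaries. Note that no topological hypothesis on $\Gamma$ (discreteness, co‑compactness, compatibility) and no connectedness of $G$ is needed here — only that $\Gamma$ is a subgroup.
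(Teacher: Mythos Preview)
Your proof is correct and follows essentially the same approach as the paper: lift to $\HK^k(G_\bullet)$, form $d = \tilde c^{-1}\tilde c'$, and use the face-coordinate description from Proposition~\ref{prop:hk-decomposition} to show $d(\vec{1}) \in \Gamma$. The paper is slightly more direct---applying the decomposition once to $d$ itself and observing that $x_1,\dots,x_{2^k-1}\in\Gamma$ (as bounded words in the $\Gamma$-valued vertices) while $x_{2^k}\in G_k=\{\id\}$, hence $d(\vec{1})=\prod_i x_i\in\Gamma$---rather than detouring through $\HK^\bullet(\Gamma_\bullet)$ and the completion/uniqueness corollaries.
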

\begin{proof}
  We write $\pi \colon G \to G/\Gamma$ for the projection map.  Let $g, g' \in \HK^k(G_\bullet)$ be such that $\pi(g) = c$, $\pi(g') = c'$.  Hence $g^{-1} g'$ is in $\HK^k(G_\bullet)$, and all of its entries lie in $\Gamma$ except possibly the $\vec{1}$ entry.  Applying Proposition \ref{prop:hk-decomposition}, we find that all the coefficients $x_i$ in the expansion
  \[
    g^{-1} g' = \prod_{i=1}^{2^k} [x_i]_{F_{S_i}}
  \]
  lie in $\Gamma$, except possibly the last one $x_{2^k}$; but in fact $x_{2^k} \in G_k = \{\id\}$ and so $x_{2^k}$ is in $\Gamma$ also.  Hence, $(g^{-1} g')(\vec{1}) \in \Gamma$, and so $g(\vec{1}) \Gamma = g'(\vec{1}) \Gamma$ as required.
\end{proof}

We now turn to the completion property.
\begin{proposition}
  \label{prop:nilmanifold-completion}
  Take $G_\bullet$, $\Gamma$ as above, and suppose $(y_\omega)_{\omega \in \{0,1\}^k \setminus \{\vec{1}\}}$ is a configuration on $G/\Gamma$ such that every lower face is in $\HK^{k-1}(G_\bullet)/\Gamma$.  Then there exists $y \in G/\Gamma$ such that setting $y_{\vec{1}} = y$ results in an element of $\HK^k(G_\bullet)/\Gamma$.
\end{proposition}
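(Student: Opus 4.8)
The plan is to lift the problem up to the group $G$, where the completion property is already available (Corollary \ref{hk-group-completion}), and then transport the answer back down along the quotient map $\pi\colon G\to G/\Gamma$. The only real work is in arranging the lift so that it genuinely respects the Host--Kra structure on the lower faces, rather than merely projecting into it.

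Concretely, I would first reduce to the following \emph{lifting statement}: given the configuration $(y_\omega)_{\omega\in\{0,1\}^k\setminus\{\vec1\}}$ on $G/\Gamma$ whose restriction to every lower face lies in $\HK^{k-1}(G_\bullet)/\Gamma$, there is a configuration $(g_\omega)_{\omega\in\{0,1\}^k\setminus\{\vec1\}}$ on $G$ with $\pi(g_\omega)=y_\omega$ for all $\omega\neq\vec1$ and with the restriction of $(g_\omega)$ to every lower face lying in $\HK^{k-1}(G_\bullet)$ itself. Granting this, the cubespace $(G,\HK^k(G_\bullet))$ is fibrant by Corollary \ref{hk-group-completion}, so the corner $(g_\omega)_{\omega\neq\vec1}$ extends to some $g\in G$ with $(g_\omega)_{\omega\in\{0,1\}^k}\in\HK^k(G_\bullet)$; then $y:=\pi(g)$ does the job, since $\pi$ carries $\HK^k(G_\bullet)$ into $\HK^k(G_\bullet)/\Gamma$ by Definition \ref{defn:nilmanifold} and agrees with the prescribed data at the vertices $\omega\neq\vec1$.

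To prove the lifting statement I would build $(g_\omega)$ one vertex at a time, processing the $\omega\neq\vec1$ in order of increasing $|\omega|$, and maintaining the invariant that after each stage all the face coordinates $x_S$ (for proper subsets $S\subsetneq[k]$) produced by the decomposition of Proposition \ref{prop:hk-decomposition} applied to the partial configuration lie in $G_{|S|}$ --- equivalently, that every ``lower-corner'' subcube assembled so far is a genuine Host--Kra cube. When we come to add a vertex $\omega$ with $|\omega|=r+1$, the data already placed determines $P_\omega:=\prod_{S\subsetneq\omega}x_S$, and what is required is a point $g_\omega\in\pi^{-1}(y_\omega)$ with $P_\omega^{-1}g_\omega\in G_{r+1}$; that is, we must show $\pi^{-1}(y_\omega)\cap P_\omega G_{r+1}\neq\emptyset$. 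Since $\omega\neq\vec1$, it lies in some lower face $E_m=\{\eta:\eta_m=0\}$, and by hypothesis $y|_{E_m}$ lifts to an honest cube $h\in\HK^{k-1}(G_\bullet)$; restricting $h$ to the subcube $\{\omega'\subseteq\omega\}$ and comparing its face-coordinate decomposition with the one being built --- the two configurations agree modulo $\Gamma$ at every vertex strictly below $\omega$ --- one deduces, using Proposition \ref{prop:hk-decomposition} (face coordinates are word maps, and those not involving the top vertex lie in $\Gamma$ whenever the configuration does, while lying in the appropriate $G_{|S|}$) together with normality of $G_{r+1}$ in $G$, that $P_\omega^{-1}h_\omega\in G_{r+1}\Gamma$. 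As $h_\omega\in\pi^{-1}(y_\omega)$, the coset intersection is nonempty, so a valid $g_\omega$ exists and the invariant is preserved. Once all $\omega\neq\vec1$ are placed, the invariant in particular makes every genuine lower face (of dimension $k-1$) of $(g_\omega)$ into a $\HK^{k-1}(G_\bullet)$-cube, as needed for the reduction above.

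The main obstacle is exactly this coset-intersection step. A naive vertexwise lift of the corner will in general fail to have its lower faces in $\HK^{k-1}(G_\bullet)$, and reconciling the incrementally built lift with the ambient lift $h$ of a lower face requires careful bookkeeping with the face-coordinate decomposition and with the identification $\HK^{k-1}(G_\bullet)\cap\Gamma^{\{0,1\}^{k-1}}$ of the ``integral'' cubes recorded in Definition \ref{defn:nilmanifold}. Everything else --- the reduction to $G$, the final appeal to fibrancy of $(G,\HK^k(G_\bullet))$, and the elementary topological checks --- is routine.
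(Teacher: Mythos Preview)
Your approach is correct, and the underlying computational idea---using the face-coordinate decomposition of a $\Gamma$-valued configuration to see that its coordinates lie in $G_{|S|}\cap\Gamma$---is exactly the one the paper uses. However, the organization differs substantially, and the paper's route is markedly cleaner.

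The paper does not attempt to lift the corner $(y_\omega)$ directly. Instead it proves the stronger statement that $\pi\colon G\to G/\Gamma$ is a \emph{fibration}, and then invokes the general fact (the remark after Lemma~\ref{lem:universal1}) that the image of a fibrant cubespace under a fibration is fibrant. To verify the fibration property, one is \emph{given} a full cube $c\in\HK^k(G_\bullet)/\Gamma$ together with a corner $h$ in $G$ lying over $c|_{\{0,1\}^k\setminus\{\vec1\}}$; since $c$ lifts by definition to some $g\in\HK^k(G_\bullet)$, one simply forms $b=g^{-1}h$, a $\Gamma$-valued corner of $G$, decomposes it once via Proposition~\ref{prop:hk-decomposition} (all face coordinates lie in $\Gamma\cap G_{|S_i|}$), completes by setting the top coordinate to the identity, and reads off $h(\vec1)=g(\vec1)b(\vec1)$. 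No induction, no repeated appeals to different lower-face lifts.

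Your vertex-by-vertex construction is essentially an inlined proof of the special case of Lemma~\ref{extension-property} (with $S=\emptyset$, $T=\{0,1\}^k\setminus\{\vec1\}$) for this particular map, and at each inductive step you are re-doing the same $g^{-1}h$ trick locally on a subcube. This works---your coset-intersection claim $P_\omega^{-1}h_\omega\in G_{r+1}\Gamma$ is correct, and can be justified precisely by forming $c:=g^{-1}h$ on $\{\omega'\subseteq\omega\}$, noting it is a $\Gamma$-valued Host--Kra corner, completing it with top coordinate $\id$, and observing that $hc^{-1}$ is then a Host--Kra cube agreeing with $g$ below $\omega$---but your written justification (``one deduces, using Proposition~\ref{prop:hk-decomposition} \dots\ together with normality of $G_{r+1}$'') glosses over exactly this, and comparing the face decompositions of $g$ and $h$ \emph{separately} (as your wording suggests) does not work directly in the non-abelian setting. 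The paper's approach buys both a cleaner proof and a more reusable intermediate result (the fibration property of $\pi$).
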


\begin{proof}
  Since $(G, \HK^k(G_\bullet))$ is fibrant (Corollary \ref{hk-group-completion}), it suffices by the universal property (Lemma \ref{lem:universal1}) to verify that the projection $G \to G/\Gamma$ is a fibration.

  Unpacking the definitions, this says the following.  Suppose we are given a cube $c \in \HK^k(G_\bullet)/\Gamma$; by definition this has the form $\omega \mapsto g(\omega) \Gamma$ where $\omega \mapsto g(\omega)$ is a Host--Kra cube of $G_\bullet$.  Suppose we are given another $k$-corner
  \[
    h \colon \{0,1\}^k \setminus \{\vec{1}\} \to G
  \]
  such that $h(\omega) \Gamma = g(\omega) \Gamma$ for all $\omega \ne \vec{1}$.  Then we need to show there is an $h(\vec{1}) \in G$ such that $h(\vec{1}) \Gamma = g(\vec{1}) \Gamma$, and furthermore this completes $h$ to a cube of $G$.

  Consider the $k$-corner $b \colon \omega \mapsto g(\omega)^{-1} h(\omega)$.  By assumption this takes values in $\Gamma$.  By Proposition \ref{prop:hk-decomposition} we may write $b$ as an ordered product
  \[
    \prod_{i=1}^{2^k} [x_i]_{F_{S_i}}
  \]
  for some $x_i \in G_{|S_i|}$, where all but $x_{2^k}$ are determined.  Moreover, all of these elements lie in $\Gamma$.  Setting $x_{2^k} = \id$ (say) we complete $b$ to a cube in $\HK^k(G_\bullet)$ with entries in $\Gamma$, and then we can set $h(\vec{1}) = g(\vec{1}) b(\vec{1})$ as required.
\end{proof}

\subsection{Examples of cubes on nilmanifolds}

Our next task is to explore the consequences of Definition \ref{defn:nilmanifold} concerning cubes on nilmanifolds, in the setting of our favourite examples.

The abelian cases here are all fairly straightforward.

\begin{example}
  Suppose we take $\Gamma = \ZZ$ in Examples \ref{ex-A} or \ref{ex-B}, or $\Gamma = \ZZ^2$ in Example \ref{ex-C}.  Then $G/\Gamma$ actually does have a group structure in this cases; specifically, it is the torus $\RR/\ZZ$ or $\RR^2/\ZZ^2$ respectively.  Moreover, this compact group inherits a filtration from that on $\RR$ or $\RR^2$.

  In this case, the Host--Kra cubes on the ``nilmanifold'' $G/\Gamma$ agree with the Host--Kra construction on the group $G/\Gamma$.
\end{example}

In the case of the Heisenberg group (Example \ref{ex-D}) and the corresponding nilmanifolds, it is much harder to obtain a completely explicit description of the Host--Kra cubespaces $\HK^k(G_\bullet) / \Gamma$.  We do so anyway, both as an instructive exercise in its own right, and to emphasize some of the difficulties that arise in trying to obtain a simple description of nilspaces in general.

\begin{example}
  \label{ex-D-nilmanifold-cubes}
  Consider again the Heisenberg case (Example \ref{ex-D}) with $\Gamma$ the subgroup of $\cH$ consisting of matrices with integer entries.  A fundamental domain for the quotient $\cH/\Gamma$ consists of all matrices
  \[
    \left\{ \heis{x}{y}{z} \colon x,y,z \in [0,1) \right\}
  \]
  and the reduction map to the fundamental domain is
  \[
    \heis{x}{y}{z} \mapsto \heis{x}{y}{z} \heis{-\lfloor x \rfloor}{-\lfloor y \rfloor}{-\left\lfloor z - x \lfloor y \rfloor \right\rfloor} = \heis{\{x\}}{\{y\}}{\left\{z - x \lfloor y \rfloor \right\}}
  \]
  where $\lfloor\cdot\rfloor$, $\{\cdot\}$ denote the integer and fractional parts respectively.

  Let
  \[
    c(\omega) = \heis{x_\omega}{y_\omega}{z_\omega}
  \]
  be the representatives in the fundamental domain of an element of $(G/\Gamma)^{\{0,1\}^3}$.  We wish to find necessary and sufficient conditions for this to represent an element of $\HK^3(\cH_\bullet)/\Gamma$.

  By definition, this holds if and only if there is a configuration
  \[
    c'(\omega) = \heis{r_\omega}{s_\omega}{t_\omega}
  \]
  in $\HK^3(\cH_\bullet)$ such that $c(\omega)$ are precisely the representatives of $c'(\omega)$ in the fundamental domain; i.e.~if
  \[
    \heis{x_\omega}{y_\omega}{z_\omega} = \heis{\{r_\omega\}} {\{s_\omega\}} {\left\{t_\omega - r_\omega \lfloor s_\omega \rfloor \right \}}
  \]
  for all $\omega$.

  By the description of $\HK^3(\cH_\bullet)$ (see Example \ref{ex-D-cubes}) this implies that $(x_\omega \ \bmod 1)_{\omega \in \{0,1\}^3}$ and $(y_\omega \ \bmod 1)_{\omega \in \{0,1\}^3}$ must be three-dimensional parallelepipeds over $\RR/\ZZ$.  One might expect a corresponding identity
  \[
    z_{000} - z_{001} - z_{010} + z_{011} - \dots - z_{111} \equiv 0 \pmod{1}
  \]
  as was the case for $\HK^3(\cH_\bullet)$; but in fact this gets ``twisted'' by the non-abelian group action.
  For instance, taking the cube
  \[
    c(\omega) = \heis{1/2}{(\omega_1 + \omega_2 + \omega_3)/3}{0}
  \]
  in $\HK^3(\cH_\bullet)$, its projection to the fundamental domain is
  \[
    c'(\omega) = \begin{cases} \heis{1/2}{(\omega_1 + \omega_2 + \omega_3)/3}{0} &\colon \omega \ne \vec{1} \\ \heis{1/2}{0}{1/2} &\colon \omega = \vec{1} \end{cases}
  \]
  and so the alternating sum over the $z$ coordinate is $1/2 \bmod 1$. However, once this value $1/2$ as a function of $x_\omega, y_\omega$ is fixed, it \emph{is} true that e.g.
  \[
    c''(\omega) = \heis{1/2}{1/3(\omega_1 + \omega_2 + \omega_3) \bmod 1}{z_\omega}
  \]
  is in $\HK^3(\cH_\bullet)/\Gamma$ if and only if $z_{000} - z_{001} - \dots + z_{110} - z_{111} = 1/2 \pmod{1}$.

  It is possible to do this calculation in general.  We compute
  \begin{align*}
    \sum_\omega (-1)^{|\omega|} z_\omega &\equiv \sum_\omega (-1)^{|\omega|} (t_\omega - r_\omega \lfloor s_\omega \rfloor) \pmod{1} \\
                                   &\equiv - \sum_\omega (-1)^{|\omega|} r_\omega \lfloor s_\omega \rfloor \pmod{1}
  \end{align*}
  since the alternating sum over $t_\omega$ is known to be zero. Since $r_\omega - x_\omega$ is an integer, we can rewrite this as
  \[
    - \sum_\omega (-1)^{|\omega|} x_\omega \lfloor s_\omega \rfloor \pmod{1}
  \]
  and by exploiting the fact that $s_\omega \equiv y_\omega \pmod{1}$ together with the parallelepiped conditions for $(s_\omega)$ and $(x_\omega \ \bmod 1)$ and a little calculation, we can derive a rather inelegant formula
  \begin{align*}
    \sum_\omega (-1)^{|\omega|} z_\omega \equiv &x_{110} (y_{110} - y_{100} - y_{010} + y_{000}) \\
                                        + &x_{101} (y_{101} - y_{100} - y_{001} + y_{000}) \\
                                        + &x_{011} (y_{011} - y_{010} - y_{001} + y_{000}) \\
                                        + &x_{111} (y_{111} + 2 y_{000} - y_{001} - y_{010} - y_{100}) \ .
  \end{align*}
  Together with the previous conditions, this gives necessary and sufficient conditions for a configuration $(x_\omega, y_\omega, z_\omega)$ to be a Host--Kra cube of $\cH / \Gamma$.

  The specific form of this explicit ``cocycle'' on the right hand side  is not terribly important.  What is useful to bear in mind is that these conditions have the structure:
  \begin{itemize}
    \item the ``abelianization'' $(x_\omega \bmod 1, y_\omega \bmod 1) \in \RR^2 / \ZZ^2$ must be a parallelepiped; and
    \item the alternating sum $\sum_\omega (-1)^{|\omega|} z_\omega \pmod{1}$ is given by some function only of the $x$'s and $y$'s.
  \end{itemize}
\end{example}

\subsection{Polynomial sequences}

We finally give a brief discussion of \emph{polynomial sequences}.

One of the key features of filtered groups is that they admit a natural definition of a \emph{polynomial sequence}.  These are a class of functions $\ZZ \to G$, generalizing the notion of a polynomial $\ZZ \to \RR$.

\begin{definition}
  \label{defn:old-poly}
  For any $f \colon \ZZ \to G$ and $h, x \in \ZZ$ write $\partial_h f \colon x \mapsto f(x+h) f(x)^{-1}$.  A map $f \colon \ZZ \to G$ is called a \emph{polynomial sequence} if $\partial_{h_i} \dots \partial_{h_1} f (x) \in G_i$ for each $i \ge 0$ and $h_1, \dots, h_i \in \ZZ$.
\end{definition}

To justify the term ``polynomial'', we consider this definition in the context of our examples.

\begin{example}
  In Example \ref{ex-A}, the polynomial sequences are precisely the polynomials of degree at most $1$ on $\ZZ$, i.e.~maps of the form $n \mapsto a n + b$ for some $a,b \in \RR$.  Indeed, the condition translates to the functional equation $p(x) - p(x+h_1) - p(x+h_2) + p(x+h_1+h_2) = 0$ for all $x,h_1,h_2 \in \ZZ$, which characterizes affine-linear functions.
\end{example}

\begin{example}
  Similarly, in Example \ref{ex-B}, we get precisely the polynomials of degree at most $2$ on $\ZZ$, i.e.~maps of the form $n \mapsto a n^2 + b n + c$ for some $a,b,c \in \RR$.  As before, the definition expands to a functional equation for $p$,
  \[
    p(x) - p(x+h_1) - p(x+h_2) + p(x+h_1+h_2) - p(x+h_3) + p(x+h_1+h_3) + p(x+h_2+h_3) - p(x+h_1+h_2+h_3) = 0
  \]
  which has the required properties.  Alternatively, the definition holds if and only if $\partial_h p$ is a polynomial of degree $1$ for all $h \in \ZZ$.
\end{example}

\begin{example}
  Example \ref{ex-C} is a hybrid of these: the polynomial sequences are of the form $n \mapsto (p_1(n), p_2(n))$ where $p_1$ has degree at most $1$ and $p_2$ has degree at most $2$.
\end{example}

\begin{example}
  The polynomial sequences for Example \ref{ex-D} have the form
  \[
    n \mapsto \heis{a_1 n + a_0}{b_1 n + b_0}{c_2 n^2 + c_1 n + c_0}
  \]
  for any choice of $a_i, b_i, c_i \in \RR$.  This is no longer quite so clear, due to the non-abelian group operation implicit in the definition of a polynomial sequence.  However, it follows from a short computation, or from Proposition \ref{prp:poly-characterize} below.
\end{example}

These sequences are fairly hard to study directly from this definition.  For instance, it is true that if $p(n)$ and $p'(n)$ are polynomial sequences then so is $p(n) p'(n)$, but a direct argument is fairly involved.

The theory becomes much cleaner, however, if we invoke the notion of Host--Kra cube groups defined in Definition \ref{def:cube-group}.  The key result is the following.

\begin{proposition}\label{prp:poly-characterize}
  Let $G_\bullet$ be a filtered group and $p \colon \ZZ \to G$ be given.  Then $p$ is a polynomial sequence if and only if the following holds: for any $k \ge 0$ and $x, h_1, \dots, h_k \in \ZZ$, the configuration
  \begin{align*}
    \{0,1\}^k &\to G \\
       \omega &\mapsto p\left(x + \sum_{i=1}^k h_i \omega_i \right)
  \end{align*}
  lies in $\HK^k(G_\bullet)$.

Equivalently, $p$ is a polynomial sequence if and only if it maps $\HK^k(\ZZ_\bullet)$ into $\HK^k(G_\bullet)$ for each $k$, where $\ZZ$ is given the usual filtration $\ZZ_0 = \ZZ_1 = \ZZ$, $\ZZ_i = \{0\}$ for $i \ge 2$.
\end{proposition}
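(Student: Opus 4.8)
The plan is to reduce the statement to a one-dimension-at-a-time induction comparing Host--Kra cubes of $G_\bullet$ with those of the \emph{shifted filtration} $G_{\bullet+1}:=(G_{i+1})_{i\ge 0}$; this is again a filtration, since $[G_{i+1},G_{j+1}]\subseteq G_{i+j+2}\subseteq G_{(i+1)+(j+1)}$. Two preliminary observations make the set-up clean. First, $\HK^k(\ZZ_\bullet)$ is exactly the set of arithmetic parallelepipeds $\omega\mapsto x+\sum_{i=1}^k h_i\omega_i$: because $\ZZ_i=\{0\}$ for $i\ge 2$, the only generators are the constants and the $[h]_{F_{\{i\}}}$, and these (commuting) generate precisely these configurations; hence the two displayed conditions in the proposition literally coincide, and it suffices to prove the first. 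Second, for each $k$ the pair $(G,\HK^k(G_\bullet))$ is a cubespace: precomposition by a morphism of discrete cubes is a group homomorphism carrying each generator $[x]_{F_S}$ into the relevant Host--Kra group (using the remark after Definition \ref{def:cube-group} that $[x]_F\in\HK^k(G_\bullet)$ for any face $F$ of codimension $|S|$), so in particular $\HK^k(G_\bullet)$ is closed under restriction to a face and under ``duplication'' $c_0\mapsto[c_0,c_0]$.

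The first real step is the following \emph{shift lemma}: for $c\colon\{0,1\}^{k+1}\to G$, writing $c_0,c_1\colon\{0,1\}^k\to G$ for the restrictions of $c$ to the faces $\{\omega_{k+1}=0\}$ and $\{\omega_{k+1}=1\}$, one has $c\in\HK^{k+1}(G_\bullet)$ if and only if $c_0\in\HK^k(G_\bullet)$ and the configuration $d\colon\omega\mapsto c_1(\omega)c_0(\omega)^{-1}$ lies in $\HK^k(G_{\bullet+1})$. I would prove this by generator manipulation. For ``if'': $[c_0,c_0]\in\HK^{k+1}(G_\bullet)$ by duplication, and writing $d=\prod_i[y_i]_{F_{T_i}}$ with $y_i\in (G_{\bullet+1})_{|T_i|}=G_{|T_i|+1}$ gives $[\square^k(\id),d]=\prod_i[y_i]_{F_{T_i\cup\{k+1\}}}\in\HK^{k+1}(G_\bullet)$ since $|T_i\cup\{k+1\}|=|T_i|+1$; then $[\square^k(\id),d]\cdot[c_0,c_0]=c$. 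For ``only if'': $c_0\in\HK^k(G_\bullet)$ by restriction, hence $c\cdot[c_0,c_0]^{-1}=[\square^k(\id),d]\in\HK^{k+1}(G_\bullet)$; now apply Proposition \ref{prop:hk-decomposition} with an inclusion-respecting ordering of the subsets of $[k+1]$ that lists all subsets of $[k]$ first. Since this configuration is trivial on the bottom face, the uniqueness part of Proposition \ref{prop:hk-decomposition} forces every face-coordinate indexed by a subset of $[k]$ to be $\id$, and restricting the surviving factors $[z_i]_{F_{S_i}}$ (with $k+1\in S_i$) to the top face exhibits $d=\prod_i[z_i]_{F_{S_i\setminus\{k+1\}}}$ with $z_i\in G_{|S_i|}=(G_{\bullet+1})_{|S_i\setminus\{k+1\}|}$, so $d\in\HK^k(G_{\bullet+1})$.

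Next I would translate this into a statement about sequences and iterate. Applying the shift lemma to $c(\omega,\epsilon)=q(x+\sum_{i=1}^k h_i\omega_i+h_{k+1}\epsilon)$ and noting that $c_1(\omega)c_0(\omega)^{-1}=\partial_{h_{k+1}}q(x+\sum_{i=1}^k h_i\omega_i)$ with the paper's convention $\partial_h f(x)=f(x+h)f(x)^{-1}$, one obtains for any $q\colon\ZZ\to G$: the map $q$ sends every $(k{+}1)$-dimensional arithmetic parallelepiped into $\HK^{k+1}(G_\bullet)$ if and only if it sends every $k$-dimensional one into $\HK^k(G_\bullet)$ and, for every $h\in\ZZ$, $\partial_h q$ sends every $k$-dimensional arithmetic parallelepiped into $\HK^k(G_{\bullet+1})$. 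Now I would prove, by induction on $k$ and simultaneously for all filtered groups, the statement: $q$ sends every $k$-dimensional arithmetic parallelepiped into $\HK^k(G_\bullet)$ if and only if $\partial_{h_j}\cdots\partial_{h_1}q(x)\in G_j$ for all $0\le j\le k$ and all $h_1,\dots,h_j,x$. The case $k=0$ is vacuous on both sides. For the inductive step, feed the equivalence above into the inductive hypothesis applied to $(G_\bullet,q)$ and, for each $h$, to $(G_{\bullet+1},\partial_h q)$: the first supplies membership of the $j$-th derivatives in $G_j$ for $0\le j\le k$, and the second, ranging over $h$ (and using that Definition \ref{defn:old-poly} already quantifies over all step vectors), supplies it for $1\le j\le k+1$; together these are exactly the conditions for $k+1$. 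Letting $k\to\infty$ and comparing with Definition \ref{defn:old-poly} proves the first characterization, and the ``Equivalently'' clause then follows since ``$p$ maps $\HK^k(\ZZ_\bullet)$ into $\HK^k(G_\bullet)$ for all $k$'' is precisely ``$p$ sends every arithmetic parallelepiped into the corresponding Host--Kra group''.

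The only place where anything beyond bookkeeping enters is the ``only if'' half of the shift lemma, where the uniqueness in Proposition \ref{prop:hk-decomposition} is used to see that a Host--Kra cube which is trivial on a bottom face is, after dividing out $[c_0,c_0]$, supported on the top upper face; granting this, the shifted-filtration induction and the quantifier juggling over step vectors are routine.
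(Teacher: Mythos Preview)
The paper does not supply its own proof of this proposition; it simply defers to \cite{green-tao-quantitative}*{Section 6}. Your argument is correct, and in fact your shift lemma together with the induction through the shifted filtration $G_{\bullet+1}$ is precisely the mechanism used in that reference (and in Leibman's earlier work on polynomial maps into nilpotent groups), so your approach coincides with the standard one rather than offering an alternative route.

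One tiny imprecision worth flagging: the base case $k=0$ is not literally ``vacuous on both sides'' once you invoke the inductive hypothesis with the shifted filtration, since there $(G_{\bullet+1})_0=G_1$ may be a proper subgroup of $G$. What is true is that both sides reduce to the identical condition $q(x)\in G_0$, so the equivalence is a tautology in every case and the induction goes through unchanged.
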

For a proof, see \cite{green-tao-quantitative}*{Section 6}.  Hence, the notion of a polynomial map $\ZZ \to G_\bullet$ (which is the common term in the literature) coincides with the concept from the introduction of a map that sends cubes to cubes; or with the formal notion of a cubespace morphism.

In the case of maps $\ZZ \to G/\Gamma$, no criterion along the lines of \ref{defn:old-poly} is available.  The most common convention in the literature is to refer to a map $p \colon \ZZ \to G/\Gamma$ as polynomial if it is the projection of a polynomial map $p \colon \ZZ \to G$ in the existing sense.

Alternatively, one can simply use the notion of a cubespace morphism $\ZZ \to G/\Gamma$; i.e., a map that sends Host--Kra cubes of $\ZZ$ (i.e.~parallelepipeds) to Host--Kra cubes of $G/\Gamma$.

It turns out that these notions coincide.  However, this is not the case if $\ZZ$ is replaced by another group; e.g.~in the case of maps $\ZZ/N\ZZ \to G/\Gamma$, we find that there are no non-constant polynomial maps $\ZZ/N\ZZ \to G$ but plenty of maps $\ZZ/N\ZZ \to G/\Gamma$ that send cubes to cubes.  The reader is encouraged always to have in mind the latter notion.

\printindex

\printindex[nota]

\bibliography{higher-intro-bib}{}

\end{document}